\newcommand\shorttitle{Random Surface Growth and Karlin-McGregor Polynomials}
\newcommand\authors{T. Assiotis}
\ifodd\value{page}
\authors
\shorttitle
\newtheorem{thm}{Theorem}[section]
\newtheorem{cor}[thm]{Corollary}
\newtheorem{lem}[thm]{Lemma}
\newtheorem{defn}[thm]{Definition}
\newtheorem{rmk}[thm]{Remark}
\newtheorem{prop}[thm]{Proposition}
\date{}
\title{
\sc Random Surface Growth and Karlin-McGregor Polynomials}
\author{ \sc Theodoros Assiotis }
\begin{document}

\maketitle

\begin{abstract}
We consider consistent dynamics for non-intersecting birth and death chains, originating from dualities of stochastic coalescing flows and one dimensional orthogonal polynomials. As corollaries, we obtain unified and simple probabilistic proofs of certain key intertwining relations between multivariate Markov chains on the levels of some branching graphs. Special cases include the dynamics on the Gelfand-Tsetlin graph considered in the seminal work of Borodin and Olshanski in \cite{BorodinOlshanski} and the ones on the BC-type graph recently studied by Cuenca in \cite{Cuenca}. Moreover, we introduce a general inhomogeneous random growth process with a wall that includes as special cases the ones considered by Borodin and Kuan \cite{BorodinKuan} and Cerenzia \cite{Cerenzia}, that are related to the representation theory of classical groups and also the Jacobi growth process more recently studied by Cerenzia and Kuan \cite{CerenziaKuan}. Its most important feature is that, this process retains the determinantal structure of the ones studied previously and for the fully packed initial condition we are able to calculate its correlation kernel explicitly in terms of a contour integral involving orthogonal polynomials. At a certain scaling limit, at a finite distance from the wall, one obtains for a single level discrete determinantal ensembles associated to continuous orthogonal polynomials, that were recently introduced by Borodin and Olshanski in \cite{BorodinOlshanskiAsep}, and that depend on the inhomogeneities.
\end{abstract}
\tableofcontents

\section{Introduction}
\subsection{Determinantal structures in inhomogeneous random growth models}
This work revolves around two sets of closely related problems and ideas. One of them is, the construction of consistent dynamics on the levels of certain branching graphs and the other is, the exact computation of correlations in random stepped surface growth processes. 

These probabilistic models can be viewed as dynamics on (discrete) interlacing arrays, namely multilevel configurations of particles that satisfy some constraints (that we make precise below), see Figure \ref{figuresteppedsurface} below for an illustration. Such (2+1)-dimensional dynamics (2 space and 1 time dimensions) have been extensively studied in the past decade, see \cite{BorodinFerrari},\cite{BorodinKuan},\cite{CerenziaKuan},\cite{MacdonaldProcesses},\cite{BorodinPetrov},\cite{StochasticVertex}. In addition to being interesting in its own right a further motivation for this study is the following phenomenon: the exact solvability of a wide class of (1+1)-dimensional models such as the Totally Asymmetric Simple Exclusion Process (TASEP) is a by-product of the fact that they appear as projections of these higher dimensional models, see \cite{BorodinFerrari}. 

In many of these papers (see \cite{BorodinFerrari}, \cite{BorodinKuan}, \cite{CerenziaKuan}) the models considered give rise to determinantal point processes: for a point process on a discrete space $\mathfrak{X}$ we say that it is determinantal if for all $n\ge 1$ its correlation functions $\rho_n$ are given as determinants of a two variable function $\mathsf{K}:\mathfrak{X}\times \mathfrak{X} \to \mathbb{C}$:
\begin{align*}
\rho_n\left(\mathfrak{x}_1,\cdots,\mathfrak{x}_n\right)=\det \left[\mathsf{K}\left(\mathfrak{x}_i,\mathfrak{x}_j\right)\right]^n_{i,j=1}.
\end{align*}
Thus, all probabilistic information about the model is encoded in the function $\mathsf{K}$ and questions about its limit behaviour reduce to asymptotic analysis of $\mathsf{K}$.

In all aforementioned papers, the jump rates of particles on each level had a rather special algebraic dependence on their positions. The main novelty of our contribution is that we allow (essentially) arbitrary jump rates for individual particles depending on the position in the horizontal direction, while retaining the determinantal point process structure.

For many of the works in Integrable Probability the exact solvability of the models can be traced down to a rich duality structure, see \cite{BorodinCorwinDuality}, \cite{KuanDuality1}, \cite{KuanDuality2}. In this paper a key role is played by the famous Siegmund duality for birth and death chains, going back to Karlin and McGregor, see \cite{KarlinMcGregorClassification}, \cite{KarlinMcGregorDifferential}: Consider a birth and death chain in $I=\mathbb{N}$ (reflecting at 0) or a bilateral chain in $I=\mathbb{Z}$  with generator $\mathcal{D}$ given by the birth rates $\lambda(x)$ and death rates $\mu(x)$ (the positive functions $\lambda(\cdot), \mu(\cdot)$ can be essentially arbitrary modulo technicalities). Then we define its Siegmund dual (which is absorbed at $-1$ in the birth and death chain case) with generator $\hat{\mathcal{D}}$ and birth rates given by $\hat{\lambda}(x)=\mu(x+1)$ and death rates by $\hat{\mu}(x)=\lambda(x)$. The key property these dual chains satisfy is the following: if we consider two independent copies $X(t)$ and $\hat{X}(t)$ with generators $\mathcal{D}$ and $\hat{\mathcal{D}}$ respectively then for $x,y \in I$ and $t\ge 0$ we have:
\begin{align*}
\mathbb{P}_x\left(X(t) \le y\right)=\mathbb{P}_y\left(\hat{X}(t)\ge x\right).
\end{align*}

Then, from considering a coalescing flow of birth and death chains we obtain an explicit formula in terms of block determinants, describing a joint evolution $(X,Y)$ of interacting $\mathcal{D}$ and $\hat{\mathcal{D}}$-chains. To explain this further we need some notation. Let us denote the $n$-dimensional (discrete) Weyl chamber, where all the $x_i$ are either in $\mathbb{N}$ or $\mathbb{Z}$, by
\begin{align*}
W^n=\{(x_1,\cdots,x_n):x_1<\cdots<x_n\}.
\end{align*}
 Then, for $x\in W^{n+1}$ and $y\in W^n$ we will say that $x$ and $y$ \textit{interlace} and write $y\prec x$ if (note the position of $<$ and $\le$):
 \begin{align*}
x_1\le y_1<x_2\le \cdots \le y_n<x_{n+1}
 \end{align*}
 and denote by $W^{n,n+1}$ the space of such pairs $(x,y)$. 
 
 The joint evolution $(X,Y)$ takes values in $W^{n,n+1}$ and can be described as follows: $Y$ is autonomous and evolves as $n$ $\hat{\mathcal{D}}$-chains conditioned not to intersect by a Doob's $h$-transform and $X$ as $n+1$ $\mathcal{D}$-chains \textit{pushed} and \textit{blocked} by the $Y$-particles, when the process is on the boundary of $W^{n,n+1}$ (the interactions are local), in order for the interlacing to remain true. In particular, the $X$-particles never intersect. As a by-product of the special structure of these formulae, we obtain as part of our first set of results, that under special initial conditions of $(X,Y)$ the non-autonomous $X$-component is in fact distributed as a Markov chain. Its evolution being that of $n+1$ $\mathcal{D}$-chains conditioned not to intersect by an explicit Doob's transformation, given in terms of the original transform of the $Y$-component. 
 
 Analogous formulae, having essentially the same structure, are also obtained for $(X,Y)$ taking values in $W^{n,n}$ given by interlacing sequences of the form $y_1 \le x_1 <y_2 \le \cdots \le x_n$ (again we write $y\prec x$). It is then possible to concatenate such two-level couplings in a consistent fashion, to build a multilevel process with interlacing components such that, if started according to certain initial conditions each level evolves as a Markov chain in its own right with an explicit distribution.
 
 Then, we go on to consider a particular choice of such consistent multilevel dynamics, that we call the alternating construction. Its distribution at time $t\ge 0$ gives rise to a determinantal point process. To compute its correlation kernel explicitly we make heavy use of the spectral theory for birth and death chains and their associated orthogonal polynomials, developed by Karlin and McGregor in \cite{KarlinMcGregorClassification}, \cite{KarlinMcGregorDifferential}.

We now proceed to explain our main results in detail and how they relate to other works in the field of Integrable Probability.

\subsection{Intertwinings and consistent multilevel dynamics}
We make precise our first set of results. To begin, we need some definitions. We write $p_t(x,y)$ for the transition density of the $\mathcal{D}$-chain and $\pi(\cdot)$ for the measure with respect to which it is reversible. Similarly we write $\hat{p}_t(x,y)$ and $\hat{\pi}(\cdot)$ for the ones associated to its Siegmund dual, the $\hat{\mathcal{D}}$-chain.
We shall denote the Karlin-McGregor semigroup associated to $n$ $\mathcal{D}$-chains killed when they intersect by $\left(P_t^n;t\ge 0\right)$. This is given by the determinantal transition kernel:
\begin{align*}
p^n_t(x,y)=\det(p_t(x_i,y_j))_{i,j=1}^n.
\end{align*}
Similarly, we will write $\left(\hat{P}_t^n;t\ge0\right)$ for the one associated to $n$ $\hat{\mathcal{D}}$-chains. We also define the positive kernels:
\begin{align*}
\left(\Lambda_{n,n+1}f\right)(x)=\sum_{y\prec x}\prod_{i=1}^{n}\hat{\pi}(y_i)f(y), \ \
\left(\Lambda_{n,n}f\right)(x)=\sum_{y\prec x}^{}\prod_{i=1}^{n}\pi(y_i)f(y).
\end{align*}
Then, we have the following Theorem, proven as part of more general results in Section \ref{sectionintertwinins} (for the shortest path to a proof of this particular statement see Remark \ref{Proofremark}).
\begin{thm}\label{TheoremInterIntro} For $t \ge 0$:
\begin{align} 
P_t^{n+1}\Lambda_{n,n+1}&=\Lambda_{n,n+1}\hat{P}_t^{n},\\
\hat{P}_t^{n}\Lambda_{n,n}&=\Lambda_{n,n}P_t^{n}.
\end{align}
\end{thm}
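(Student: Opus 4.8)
The plan is to prove both relations directly at the level of kernels, reducing the multidimensional statement to the scalar Siegmund duality. It suffices to check that for $x\in W^{n+1}$ and $y'\in W^n$ one has $\sum_{x'\in W^{n+1}}p^{n+1}_t(x,x')\,\Lambda_{n,n+1}(x',y')=\sum_{y\in W^n}\Lambda_{n,n+1}(x,y)\,\hat p^n_t(y,y')$, where $\Lambda_{n,n+1}(x,y)=\prod_{i=1}^n\hat\pi(y_i)\mathbf{1}(y\prec x)$, and similarly for the second identity. I would first simplify the right-hand side using reversibility of the $\hat{\mathcal D}$-chain: pulling the weight $\hat\pi(y_i)$ into the $i$-th row of the determinant $\det(\hat p_t(y_i,y_j'))$, applying $\hat\pi(y_i)\hat p_t(y_i,y_j')=\hat\pi(y_j')\hat p_t(y_j',y_i)$ entrywise, then factoring $\hat\pi(y_j')$ out of the $j$-th column and transposing, rewrites the right-hand side as $\prod_{j}\hat\pi(y_j')$ times $\sum_{y\prec x}\det(\hat p_t(y_i',y_j))_{i,j=1}^n$. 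The same prefactor $\prod_j\hat\pi(y_j')$ appears on the left, so it cancels, and only a clean determinantal identity remains.

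The key structural observation is that both interlacing sums factor over \emph{disjoint} intervals, so the Weyl-chamber constraint is automatic. On the left, $y'\prec x'$ forces $x_1'\in(-\infty,y_1']$, $x_j'\in(y_{j-1}',y_j']$ for $2\le j\le n$, and $x_{n+1}'\in(y_n',\infty)$; since $y_1'<\dots<y_n'$ these intervals are disjoint, so summing $\det(p_t(x_i,x_j'))$ over this region amounts to summing each column $j$ independently over its interval. Writing $\Phi_t(a,b)=\mathbb P_a(X(t)\le b)=\sum_{z\le b}p_t(a,z)$, the $j$-th column becomes the telescoping vector $\Phi_t(x_i,y_j')-\Phi_t(x_i,y_{j-1}')$ (with $\Phi_t(x_i,y_0')=0$, and $1-\Phi_t(x_i,y_n')$ for the last column, using conservativeness). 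Telescoping column operations turn the first $n$ columns into $(\Phi_t(x_i,y_j'))_i$ and the last into the all-ones vector, so the left-hand sum equals the $(n+1)\times(n+1)$ determinant with entries $\Phi_t(x_i,y_j')$ in columns $1,\dots,n$ and $1$ in the last column.

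For the right-hand side the intervals $[x_j,x_{j+1})$, $j=1,\dots,n$, are likewise disjoint, so $\sum_{y\prec x}\det(\hat p_t(y_i',y_j))_{i,j=1}^n=\det\big(\sum_{y_j\ge x_j}\hat p_t(y_i',y_j)-\sum_{y_j\ge x_{j+1}}\hat p_t(y_i',y_j)\big)_{i,j=1}^n$. Here the Siegmund duality enters in its essential form: $\sum_{z\ge a}\hat p_t(b,z)=\mathbb P_b(\hat X(t)\ge a)=\mathbb P_a(X(t)\le b)=\Phi_t(a,b)$, so the entries become $\Phi_t(x_j,y_i')-\Phi_t(x_{j+1},y_i')$. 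It then remains to match the two determinants: in the $(n+1)\times(n+1)$ one, replace each of the first $n$ rows by its difference with the next row and expand along the last (all-ones) column to get exactly $\det(\Phi_t(x_i,y_j')-\Phi_t(x_{i+1},y_j'))_{i,j=1}^n$, which is the transpose of the right-hand side. The second intertwining $\hat P^n_t\Lambda_{n,n}=\Lambda_{n,n}P^n_t$ follows the same scheme — only the placement of the unbounded interval and of the all-ones row/column switch sides — and both are in fact instances of the general block-determinant formula for the coalescing-flow dynamics $(X,Y)$, from which Theorem \ref{TheoremInterIntro} can also be read off as the evolution of the $Y$-marginal.

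The main point to watch is analytic rather than combinatorial: one needs the standing hypotheses on $\lambda(\cdot),\mu(\cdot)$ ensuring that the $\mathcal D$- and $\hat{\mathcal D}$-chains are honest (conservative) Markov chains, so that $\Phi_t(a,\cdot)$ and its complement $1-\Phi_t(a,\cdot)$ behave correctly, the infinite interlacing sums converge absolutely, and the column-by-column reorganization (Fubini/rearrangement of the determinant expansion) is legitimate; the behaviour of $\Phi_t$ at the reflecting endpoint $0$ in the $I=\mathbb N$ case and at the absorbing state $-1$ of the dual must also be tracked so that the boundary column in the telescoping is handled consistently.
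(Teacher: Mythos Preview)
Your proposal is correct and is essentially the same approach as the paper's: both reduce the identity to telescoping sums of $p_t$ and $\hat p_t$ over the disjoint interlacing intervals, invoke the scalar Siegmund duality $\mathbb P_x(X(t)\le y)=\mathbb P_y(\hat X(t)\ge x)$ entrywise, and finish with elementary row/column operations on the resulting determinants. The only cosmetic difference is that the paper names the intermediate object --- the block-determinant two-level kernel $\mathsf q_t^{n,n+1}$ coming from the coalescing flow --- and factors the computation through it (Propositions~\ref{PropInter1} and~\ref{PropInter2}), whereas you compose those two steps into a single direct calculation; you in fact acknowledge this yourself in your closing remark about the coalescing-flow formula.
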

After a Doob's $h$-transformation (see Section \ref{sectionintertwinins}), by a strictly positive eigenfunction $h(\cdot)$ of either $\hat{P}_t^{n}$ or $P_t^{n}$, the relations above take the form:
\begin{align}\label{introconsistency}
P_{N+1}(t) \Lambda_N^{N+1}=\Lambda_N^{N+1}P_N(t) \ ,\ \forall t \ge 0.
\end{align}
Here, the semigroups $P_N(t),P_{N+1}(t)$ are Markov on $W^{\mathsf{n}(N)}$ and $W^{\mathsf{n}(N+1)}$ respectively, where $\mathsf{n}(N+1)\in \{\mathsf{n}(N),\mathsf{n}(N)+1\}$. For example, in \cite{BorodinFerrari} the case $\mathsf{n}(N)=N$ is considered while in \cite{BorodinKuan} one has $\mathsf{n}(N)=\lfloor \frac{N+1}{2}\rfloor$, see figure below for an illustration. Moreover, $\Lambda_N^{N+1}$ is a Markov kernel from $W^{\mathsf{n}(N+1)}$ to $W^{\mathsf{n}(N)}$. 

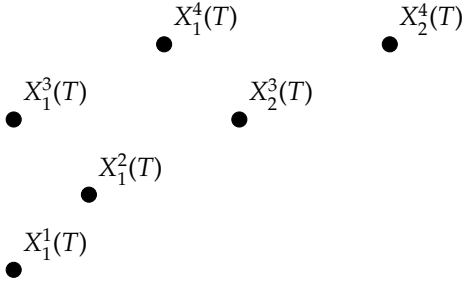
\begin{figure}
\captionsetup{singlelinecheck = false, justification=justified}
\begin{tikzpicture}

\draw[fill] (1,1) circle [radius=0.1];
\node[above right] at (1,1) {$X_1^{2}(T)$};
\draw[fill] (0,2) circle [radius=0.1];
\node[above right] at (0,2) {$X_1^{3}(T)$};
\draw[fill] (3,2) circle [radius=0.1];
\node[above right] at (3,2) {$X_2^{3}(T)$};
\draw[fill] (2,3) circle [radius=0.1];
\node[above right] at (2,3) {$X_1^{4}(T)$};
\draw[fill] (5,3) circle [radius=0.1];
\node[above right] at (5,3) {$X_2^{4}(T)$};
\draw[fill] (0,0) circle [radius=0.1];
\node[above right] at (0,0) {$X_1^{1}(T)$};

\end{tikzpicture}
\caption{Here $\mathsf{n}(1)=1, \mathsf{n}(2)=1, \mathsf{n}(3)=2, \mathsf{n}(4)=2$. The figure is a snapshot, at fixed time $T$ of the particle configuration.}
\end{figure}

As mentioned in the subsection above, these results are a by-product of a two-level coupling of interacting $\mathcal{D}$-chains and $\hat{\mathcal{D}}$-chains, coming from considering a coalescing stochastic flow, which remarkably admits an explicit transition kernel in terms of a block determinant. Although, aspects of this probabilistic argument appeared in the seminal work of Warren \cite{Warren} in the context of the Brownian web or Arratia flow, our exposition in Section \ref{SubsectionFlow} is new, being both elementary and completely self-contained.

\paragraph{Branching graphs} Sequences of stochastic evolutions satisfying (\ref{introconsistency}) can be recast in the framework of coherent dynamics on branching graphs. Let us briefly and informally describe this, all notions are made precise in Section \ref{sectionbranching}. We consider a graded graph $\Gamma$, with vertex set $\sqcup_{N}V_N$ such that $V_N=W^{\mathsf{n}(N)}$, where $\mathsf{n}(1)\le \mathsf{n}(2) \le \cdots,\mathsf{n}(i+1)-\mathsf{n}(i)\in \{0,1\}$. Two vertices $y\in V_N$ and $x \in V_{N+1}$ are connected by an edge if and only if $x$ and $y$ interlace (more precisely if $y\prec x$). We assign certain multiplicities (positive weights) to each edge and from this, see Section \ref{sectionbranching}, we can associate for all $N$ a natural Markov kernel $\Lambda_N^{N+1}$ from $V_{N+1}$ to $V_N$.

The semigroups $P_N(t)$ can be viewed as dynamics on the individual levels $V_N$ of $\Gamma$. We will moreover say that they are coherent with respect to the $\Lambda_N^{N+1}$ if (\ref{introconsistency}) holds. 

A motivation for studying such relations comes from the method of intertwiners of Borodin and Olshanski: it takes as input a tower of relations (\ref{introconsistency}) for all $N$ and produces a Markov process with semigroup $P_{\infty}(t)$ on the boundary $\Omega_{\Gamma}$ of the graph $\Gamma$. Informally the boundary $\Omega_{\Gamma}$ of $\Gamma$ is the space parametrizing extremal coherent sequences of probability measures: $\{\mu_N\}_{N\ge 1}$ on $\{V_N\}_{N \ge 1}$ that satisfy,
\begin{align*}
\mu_{N+1} \Lambda_{N}^{N+1}=\mu_N
\end{align*}
and that cannot be decomposed into convex combinations of other such sequences.

The method was applied in the context of two well known branching graphs related to representation theory, the Gelfand-Tsetlin graph in \cite{BorodinOlshanski} by Borodin and Olshanski and the type-BC graph in \cite{Cuenca} by Cuenca. In Section \ref{sectionExamples} we give alternative proofs of their main results, showing how they follow from Theorem \ref{TheoremInterIntro}. For a brief comparison between the proofs see Remark \ref{Proofremark}.

\paragraph{Push-Block dynamics}

Now, suppose we are given a sequence of processes with semigroups $\{ P_n(t)\}_{n=1}^N$ and Markov kernels $\{\Lambda^{n+1}_n\}^{N-1}_{n=1}$ satisfying (\ref{introconsistency}) derived from Theorem \ref{TheoremInterIntro}. We then construct a multilevel process $\left(\left(X^1(t),\cdots,X^N(t)\right);t \ge 0\right)$, taking values in $W^{\mathsf{n}(1)}\times \cdots \times W^{\mathsf{n}(N)}$ so that consecutive levels interlace and which satisfies a Gibbs property with respect to the Markov kernels $\Lambda_n^{n+1}$. The interactions between particles (coordinates $X_i^n$) are through the so called push-block dynamics. These dynamics were first introduced in the seminal paper \cite{BorodinFerrari} and the continuous Brownian analogue, where pushing and blocking are no longer distinguishable, in \cite{Warren}.

They are described informally as follows (see Section \ref{subsectionmultilevelconstruction} for the rigorous description): Each particle has two independent exponential clocks with (not necessarily equal) rates depending only on its position (and not of other particles) for jumping to the right and to the left respectively by one. Suppose the clock for jumping to the right of particle $X_i^n$ at level $n$ rings first. Then the particle will attempt to jump to the right by one; if the interlacing with level $n-1$ is no longer satisfied this jump is not allowed and we say the particle is blocked. Otherwise, it moves by one to the right, possibly triggering some pushing moves. Namely, if the interlacing is no longer preserved with the next level then the particle at level $n+1$ with respect to which the interlacing is broken also moves (instantaneously) to the right by one. This pushing is propagated to higher levels. 

Then, in Section \ref{subsectionconsistent} we prove a result of the following sort, that we state informally here (see Propositions \ref{CoherentdynamicsproptypeA} and \ref{CoherentdynamicsproptypeB} for the precise statements):

\begin{prop}[Informal statement]\label{IntroPropositionInformal}
Suppose the Markov kernels $\{\Lambda^{n+1}_n\}^{N-1}_{n=1}$ and semigroups $\left(P_n(t);t\ge0\right)^N_{n=1}$ obtained from Theorem \ref{TheoremInterIntro} satisfy the intertwining relations (\ref{introconsistency}) for $n=1,\cdots, N-1$. Then, there exists a Markovian coupling $\left(\left(X^1(t),\cdots,X^N(t)\right);t \ge 0\right)$ (taking values in an interlacing array) evolving according to push-block dynamics with explicit rates (see Subsections \ref{subsectionmultilevelconstruction} and \ref{subsectionconsistent} ) such that the following hold: Assume the process $\left(\left(X^1(t),\cdots,X^N(t)\right);t \ge 0\right)$ is initialized according to the Gibbs measure, where $\mathfrak{M}_N(\cdot)$ is an arbitrary probability measure on $W^{\mathsf{n}(N)}$:
\begin{align*}
\mathsf{Prob}^{\mathfrak{M}_N}_{1,\cdots,N}\left(x^1,\cdots,x^N\right)=\mathfrak{M}_N(x^N)\Lambda_{N-1}^{N}\left(x^N,x^{N-1}\right)\cdots\Lambda_{1}^{2}\left(x^2,x^{1}\right).
\end{align*}
Then, the distribution at time $T$ of $\left(X^1(T),\cdots,X^N(T)\right)$ is given by the evolved Gibbs measure:
\begin{align*}
\mathsf{Prob}^{\mathfrak{M}_NP_N(T)}_{1,\cdots,N}\left(x^1,\cdots,x^N\right)=\left[\mathfrak{M}_NP_N(T)\right](x^N)\Lambda_{N-1}^{N}\left(x^N,x^{N-1}\right)\cdots\Lambda_{1}^{2}\left(x^2,x^{1}\right).
\end{align*}
Moreover, for each $n=1,\cdots,N$ the projection to $\left(X^n(t);t\ge 0\right)$ is a Markov process evolving according to $\left(P_n(t);t\ge0\right)$.
\end{prop}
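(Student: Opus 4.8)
The plan is to establish the intertwining-to-coupling passage in two independent directions and then patch the levels together. First I would set up the elementary building block: a two-level Markov chain on $W^{n,n+1}$ (respectively $W^{n,n}$) whose dynamics are exactly the push-block rules restricted to two consecutive levels, with $Y$ (the lower level, $n$ particles) autonomous and governed by the Doob-transformed semigroup, and $X$ (the upper level, $n+1$ particles) subject to pushing and blocking. I would verify directly from the generator that (i) the $Y$-marginal is Markov with the prescribed semigroup, which is immediate since $Y$ is autonomous by construction, and (ii) if the pair is initialized according to the link $\Lambda$ — i.e. $Y \mid X = x$ has law $\Lambda(x,\cdot)$ — then at any later time $T$ the pair again has this conditional structure with the $X$-marginal evolved by $P_{n+1}(T)$, and moreover the $X$-marginal is itself Markov. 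The key algebraic input for this is precisely the intertwining $P_{n+1}(t)\Lambda_n^{n+1} = \Lambda_n^{n+1}P_n(t)$ from Theorem \ref{TheoremInterIntro} (in its $h$-transformed form \eqref{introconsistency}); the standard argument is that an intertwining of semigroups together with a consistent initialization forces the joint law to stay on the ``Gibbs manifold'', and that the non-autonomous component inherits a Markov property. I would phrase this as a lemma on two-level couplings, proven by checking that the claimed joint law solves the forward (Kolmogorov) equation associated to the two-level generator — using the intertwining to collapse the mixed terms — together with a uniqueness argument for the martingale problem.

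Next I would concatenate. Given the $N$-level push-block process, the crucial structural fact is that the interactions are local in the level index: a particle at level $n$ is blocked only by level $n-1$ and pushes only level $n+1$. Because of this, conditionally on the trajectory of levels $1,\dots,n$, the evolution of level $n+1$ (and above) depends on levels $\le n$ only through the current configuration $X^n(t)$ that does the pushing/blocking, so the pair $(X^n, X^{n+1})$ run together is exactly the two-level chain of the previous paragraph driven by the autonomous lower level. I would then run an induction on $N$: assuming the claim for the first $N-1$ levels (so the joint law at time $T$ is $\mathfrak{M}_{N-1}P_{N-1}(T)$ pushed down by the links, and each $X^n$ is Markov with semigroup $P_n$), append level $N$; the two-level lemma applied to $(X^{N-1}, X^N)$ — but with the roles reversed, since here it is the \emph{lower} index $N-1$ that is autonomous and $X^N$ that is non-autonomous, wait, no: in the multilevel array the higher level has more (or equally many) particles and is pushed by the lower one, so $X^N$ is the non-autonomous pushed/blocked component and $X^{N-1}$ feeds into it. Hmm — actually one must be careful: the coupling in Theorem \ref{TheoremInterIntro} has the $Y$-component (fewer particles, lower level) autonomous, so when we build upward the \emph{lowest} level is the ultimate driver. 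I would therefore instead induct from the top: fix the law of $X^N$ as $\mathfrak{M}_N$, let it evolve autonomously by $P_N(t)$, and then the two-level lemma (now with $X^N$ playing the role of the autonomous ``upper-in-particle-count but we relabel'' component... ) — the cleanest route is to note that Theorem \ref{TheoremInterIntro} gives the intertwining with $\Lambda$ mapping \emph{down} in level, and the two-level chain has the \emph{more refined} (more particles, here lower level $n$) component autonomous while the coarser one is non-autonomous; but then we need the coarser component $X^{n+1}$... I would resolve this by simply using whichever of the two intertwinings in Theorem \ref{TheoremInterIntro} matches the direction, constructing the two-level chain so that level $n$ is autonomous and level $n+1$ is the pushed/blocked non-autonomous one, and then reading off that level $n+1$ is Markov with semigroup $P_{n+1}$ — this is consistent with the statement that ``for each $n$ the projection to $X^n(t)$ is Markov with $P_n(t)$'', since the two-level lemma yields exactly that for the non-autonomous level.

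For the third ingredient, I would make rigorous the propagation-of-conditional-independence claim: initializing $(X^1(0),\dots,X^N(0))$ from the Gibbs measure $\mathsf{Prob}^{\mathfrak{M}_N}_{1,\dots,N}$, I need that at time $T$ the law is $\mathsf{Prob}^{\mathfrak{M}_N P_N(T)}_{1,\dots,N}$. This follows by the Markov-chain analogue of the ``intertwining implies Gibbs is preserved'' principle, iterated across levels: the Gibbs measure factorizes as $\mathfrak{M}_N(x^N)\prod_n \Lambda_n^{n+1}(x^{n+1},x^n)$, and the two-level lemma applied successively to each consecutive pair (using locality of interactions to justify that the conditional dynamics of each pair is the two-level chain) shows each link $\Lambda_n^{n+1}$ is reproduced at time $T$ while the top marginal becomes $\mathfrak{M}_N P_N(T)$. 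The bookkeeping here — carefully defining the $N$-level generator, writing the forward equation, and verifying the proposed time-$T$ law satisfies it by telescoping the intertwinings — is the main obstacle, because one must handle the boundary of the interlacing array (where pushing and blocking actually fire) and confirm that the local interaction structure genuinely decouples the conditional evolution of each consecutive pair. The probabilistic short-cut, which I would prefer, is to realize all levels on a common probability space via the coalescing-flow construction of Section \ref{SubsectionFlow}: there the $N$-level process is built explicitly from a single stochastic flow, the two-level couplings are automatically consistent by construction, and the Markov property of each projection plus the preservation of the Gibbs measure both drop out of the flow's deterministic dependence structure, reducing the whole proposition to the two-level statement plus a routine consistency check of the concatenation.
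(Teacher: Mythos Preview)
Your proposal is essentially correct and follows the paper's own route: a two-level lemma (the pushed/blocked upper level is Markov and the link is reproduced at every fixed time, given a Gibbs initialization) plus an induction on $N$ that exploits the autonomy of the first $N-1$ levels. The meandering in the middle about whether to induct from the top or the bottom is unnecessary --- the paper inducts exactly as you eventually settle on: assume the result for $N-1$ levels, observe that $(X^1,\dots,X^{N-1})$ is autonomous so by the induction hypothesis $X^{N-1}$ is Markov with semigroup $P_{N-1}$, then apply the two-level lemma to the pair $(X^{N-1},X^N)$ (with $X^{N-1}$ in the autonomous $Y$-role and $X^N$ in the pushed/blocked $X$-role) to conclude that $X^N$ is Markov with semigroup $P_N$ and that $X^{N-1}(T)\mid X^N(T)$ is given by the link.

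Two small points of divergence. First, for the two-level lemma itself the paper does not verify the forward equation as you propose; it invokes Rogers--Pitman directly (Proposition~\ref{MarkovFunctionProposition}), using the intertwining \eqref{MasterIntertwining1eq1} between $P_{n+1}^{h_{n+1}}$ and the explicit two-level semigroup $\mathsf{Q}_t^{n,n+1,\hat h_n}$. This is cleaner than your Kolmogorov-equation route and immediately yields the stronger conditional statement (Remark~(ii) after Theorem~2 of \cite{RogersPitman}) that the paper needs. Second, your ``probabilistic short-cut'' of building all $N$ levels simultaneously from a single coalescing flow is \emph{not} what the paper does for the multilevel result --- the flow gives only the two-level transition kernel (cf.\ Remark~\ref{ComparisonToBorodinMultilevel}), and the multilevel statement is obtained purely by the induction above. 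Your instinct that the conditional-independence bookkeeping is the delicate part is right; the paper handles it tersely by combining the autonomy of $(X^1,\dots,X^{N-1})$, the Markov (in level) structure of the Gibbs initial law, and the Rogers--Pitman conditional statement for the top pair.
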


By the special structure of the Markov kernels and semigroups involved (for certain initial measures $\mathfrak{M}_N$) we get that the evolved measure $\mathsf{Prob}^{\mathfrak{M}_NP_N(T)}_{1,\cdots,N}\left(x^1,\cdots,x^N\right)$ is given as a certain product of determinants. Such measures, by the celebrated Eynard-Mehta Theorem (see \cite{BorodinRains}), give rise to determinantal point processes with an extended correlation kernel $\mathsf{K}$, which can in principle be computed (see Remark \ref{RemarkEynardMehtaForEvolved}).

\subsection{Alternating construction}

We will now consider in some detail a particular choice of consistent dynamics. These dynamics give rise via Proposition \ref{IntroPropositionInformal} to an interacting interlacing particle system with a wall. Such a system can be mapped to a random growth and decay model of a stepped surface under a certain correspondence between particles and lozenges/cubes, see Figure \ref{figuresteppedsurface} for an illustration. We first need a definition. Denote by $\mathbb{GT}_\textbf{s}(\infty)$ the set of infinite symplectic Gelfand-Tsetlin patterns, interlacing arrays of the form, where all particles live in $\mathbb{N}$, with the \textit{origin} playing the role of a wall,
 \begin{align*}
\mathbb{GT}_\textbf{s}(\infty)=\bigg\{\mathbb{X}=\left(\mathbb{X}^{(0,1)},\mathbb{X}^{(1,1)},\mathbb{X}^{(1,2)},\cdots\right):\mathbb{X}^{(i,i)}\in W^i,\mathbb{X}^{(i,i+1)}\in W^{i+1};\mathbb{X}^{(i-1,i)}\prec\mathbb{X}^{(i,i)}\prec\mathbb{X}^{(i,i+1)}\bigg\}.
\end{align*}
Here, the similar notations $\mathbb{X}^{(i,i+1)}$ and $W^{i,i+1}$ should not be confused, $\mathbb{X}^{(i,i+1)}$ denotes a configuration on a single level (in fact it is the top level of $\left(\mathbb{X}^{(i,i)},\mathbb{X}^{(i,i+1)}\right)$ which takes values in $W^{i,i+1}$). The dynamics go as follows: Particles at level $\mathbb{X}^{(i,i+1)}$ evolve as $i+1$ independent $\mathcal{D}$-chains which are pushed and blocked by particles at level $\mathbb{X}^{(i,i)}$, which themselves evolve as $i$ independent $\hat{\mathcal{D}}$-chains that are in turn pushed and blocked by particles at level $\mathbb{X}^{(i-1,i)}$ and so forth, see Figure \ref{figuresteppedsurface} for an example. We call this the \textit{alternating construction}, since we alternate between using the jump rates for $\mathcal{D}$ and $\hat{\mathcal{D}}$-chains on odd and even levels. We think of the position-dependent jump, equivalently growth and decay, rates as inhomogeneities of the surface. 

To make the connection with Proposition \ref{IntroPropositionInformal}, the projection on level $\mathbb{X}^{(n,n+1)}$ evolves according to the semigroup with transition kernel:
\begin{align*}
\frac{h_{n,n+1}(y_1,\cdots,y_{n+1})}{h_{n,n+1}(x_1,\cdots,x_{n+1})}\det(p_t(x_i,y_j))_{i,j=1}^{n+1}
\end{align*}
and on level $\mathbb{X}^{(n,n)}$ according to:
\begin{align*}
\frac{h_{n,n}(y_1,\cdots,y_{n})}{h_{n,n}(x_1,\cdots,x_{n})}\det(\hat{p}_t(x_i,y_j))_{i,j=1}^n.
\end{align*}
Moreover, the harmonic functions $h_{n,n+1}(\cdot)$ and $h_{n,n}(\cdot)$ are given by:
\begin{align*}
h_{n,n+1}(\cdot)=(\Lambda_{n,n+1}\Lambda_{n,n}\cdots \Lambda_{1,1}\textbf{1})(\cdot) \ , \
h_{n,n}(\cdot)=(\Lambda_{n,n}\Lambda_{n-1,n}\cdots \Lambda_{1,1}\textbf{1})(\cdot).
\end{align*}

The distribution of this particle system at time $t$ determines a point process denoted by $\Xi^{t}$. Assume that all particles are initially \textit{fully packed} i.e. at levels $(i-1,i)$ and $(i,i)$ we have our $i$ particles at positions $0<1<2<\cdots<i-1$ (see Figure \ref{figuresteppedsurface}). We shall also use the following notation throughout: the variable $z=((n_1,n_2),x)$ will denote the level $(n_1,n_2)$ and (horizontal) position $x$ of the particle. 

\begin{figure}
\captionsetup{singlelinecheck = false, justification=justified}
\begin{tikzpicture}

\draw (0,0) to (8/3,8);
\draw (1,0) to (11/3,8);
\draw (2,0) to (14/3,8);
\draw (3,0) to (17/3,8);
\draw (0,0) to (3,0);
\draw (1/3,1) to (10/3,1);
\draw (2/3,2) to (11/3,2);
\draw (1,3) to (4,3);
\draw (4/3,4) to (13/3,4);
\draw (5/3,5) to (14/3,5);
\draw (2,6) to (5,6);
\draw (7/3,7) to (16/3,7);
\draw (8/3,8) to (17/3,8);
\draw (0,0) to (-1/3,1);
\draw (1/3,1) to (-1/3,3);
\draw (2/3,2) to (-1/3,5);
\draw (1,3) to (-1/3,7);
\draw (4/3,4) to (0,8);
\draw (5/3,5) to (1/3,9);
\draw (2,6) to (1,9);
\draw (7/3,7) to (5/3,9);
\draw (8/3,8) to (7/3,9);
\draw (-1/3,5) to (1,9);
\draw (-1/3,7) to (1/3,9);
\draw (-1/3,3) to (5/3,9);
\draw (-1/3,1) to (7/3,9);
\draw[fill,blue] (0,1) circle [radius=0.1];
\draw[fill] (1/3,2) circle [radius=0.1];
\draw[fill] (0,3) circle [radius=0.1];
\draw[fill] (2/3,3) circle [radius=0.1];
\draw[fill] (0,3) circle [radius=0.1];
\draw[fill] (2/3,3) circle [radius=0.1];
\draw[fill] (1/3,4) circle [radius=0.1];
\draw[fill] (1,4) circle [radius=0.1];
\draw[fill] (0,5) circle [radius=0.1];
\draw[fill] (2/3,5) circle [radius=0.1];
\draw[fill] (4/3,5) circle [radius=0.1];
\draw[fill] (1/3,6) circle [radius=0.1];
\draw[fill] (1,6) circle [radius=0.1];
\draw[fill] (5/3,6) circle [radius=0.1];
\draw[fill] (0,7) circle [radius=0.1];
\draw[fill] (2/3,7) circle [radius=0.1];
\draw[fill] (4/3,7) circle [radius=0.1];
\draw[fill] (2,7) circle [radius=0.1];
\draw[fill] (1/3,8) circle [radius=0.1];
\draw[fill] (1,8) circle [radius=0.1];
\draw[fill] (5/3,8) circle [radius=0.1];
\draw[fill] (7/3,8) circle [radius=0.1];
\draw[dotted,thick, red] (0,0)--(0,1) -- (1/3,2)--(0,3)--(1/3,4)--(0,5)--(1/3,6)--(0,7)--(1/3,8)--(0,9);
\draw[dotted,thick, red] (2/3,0)--(2/3,3) -- (1,4)--(2/3,5)--(1,6)--(2/3,7)--(1,8)--(2/3,9);
\draw[dotted,thick, red] (4/3,0)--(4/3,5) -- (5/3,6)--(4/3,7)--(5/3,8)--(4/3,9);
\draw[dotted,thick, red] (2,0)--(2,7) --(7/3,8)--(2,9);
\node[below] at (0,0) {$0$};
\node[below] at (2/3,0) {$1$};
\node[below] at (4/3,0) {$2$};
\node[below] at (2,0) {$3$};
\node[left] at (-1/3,1) {$(0,1)$};
\node[left] at (-1/3,2) {$(1,1)$};
\node[left] at (-1/3,3) {$(1,2)$};
\node[left] at (-1/3,4) {$(2,2)$};
\node[left] at (-1/3,5) {$(2,3)$};
\node[left] at (-1/3,6) {$(3,3)$};
\node[left] at (-1/3,7) {$(3,4)$};
\node[left] at (-1/3,8) {$(4,4)$};
\draw[->,ultra thick,blue] (0,1) to (0.5,1);

\end{tikzpicture}
\begin{tikzpicture}

\draw (1,0) to (11/3,8);
\draw (2,0) to (14/3,8);
\draw (3,0) to (17/3,8);
\draw (1,0) to (3,0);
\draw (4/3,1) to (10/3,1);
\draw (5/3,2) to (11/3,2);
\draw (2,3) to (4,3);
\draw (7/3,4) to (13/3,4);
\draw (8/3,5) to (14/3,5);
\draw (3,6) to (5,6);
\draw (10/3,7) to (16/3,7);
\draw (11/3,8) to (17/3,8);
\draw (0,0) to (-1/3,1);
\draw (0,2) to (-1/3,3);
\draw (1/3,3) to (-1/3,5);
\draw (2/3,4) to (-1/3,7);
\draw (1,5) to (0,8);
\draw (4/3,6) to (1/3,9);
\draw (5/3,7) to (1,9);
\draw (2,8) to (5/3,9);

\draw (-1/3,5) to (1,9);
\draw (-1/3,7) to (1/3,9);
\draw (-1/3,3) to (5/3,9);
\draw (-1/3,1) to (7/3,9);

\draw (1,0) to (2/3,1);
\draw (4/3,1) to (1,2);
\draw (5/3,2) to (4/3,3);
\draw (2,3) to (5/3,4);
\draw (7/3,4) to (2,5);
\draw (8/3,5) to (7/3,6);
\draw (3,6) to (8/3,7);
\draw (10/3,7) to (3,8);
\draw (11/3,8) to (10/3,9);
\draw (2/3,1) to (10/3,9);

\draw (0,0) to (1,0);
\draw (-1/3,1) to (2/3,1);
\draw (0,2) to (1,2);
\draw (1/3,3) to (4/3,3);
\draw (2/3,4) to (5/3,4);
\draw (1,5) to (2,5);
\draw (4/3,6) to (7/3,6);
\draw (5/3,7) to (8/3,7);
\draw (2,8) to (3,8);
\draw (7/3,9) to (10/3,9);

\draw[fill] (0,3) circle [radius=0.1];
\draw[fill] (5/3,3) circle [radius=0.1];
\draw[fill] (1,1) circle [radius=0.1];
\draw[fill] (4/3,2) circle [radius=0.1];

\draw[fill] (1/3,4) circle [radius=0.1];
\draw[fill] (2,4) circle [radius=0.1];
\draw[fill] (0,5) circle [radius=0.1];
\draw[fill] (2/3,5) circle [radius=0.1];
\draw[fill,blue] (7/3,5) circle [radius=0.1];
\draw[fill] (1/3,6) circle [radius=0.1];
\draw[fill] (1,6) circle [radius=0.1];
\draw[fill] (8/3,6) circle [radius=0.1];
\draw[fill] (0,7) circle [radius=0.1];
\draw[fill] (2/3,7) circle [radius=0.1];
\draw[fill] (4/3,7) circle [radius=0.1];
\draw[fill] (3,7) circle [radius=0.1];
\draw[fill] (1/3,8) circle [radius=0.1];
\draw[fill] (1,8) circle [radius=0.1];
\draw[fill] (5/3,8) circle [radius=0.1];
\draw[fill] (10/3,8) circle [radius=0.1];
\draw[->,ultra thick, blue] (7/3,5) to (17/6,5);
\end{tikzpicture}
\begin{tikzpicture}

\draw (1,0) to (7/3,4);
\draw (2,0) to (14/3,8);
\draw (3,0) to (17/3,8);
\draw (1,0) to (3,0);
\draw (4/3,1) to (10/3,1);
\draw (5/3,2) to (11/3,2);
\draw (2,3) to (4,3);
\draw (7/3,4) to (13/3,4);
\draw (11/3,5) to (14/3,5);
\draw (4,6) to (5,6);
\draw (13/3,7) to (16/3,7);
\draw (14/3,8) to (17/3,8);
\draw (0,0) to (-1/3,1);
\draw (0,2) to (-1/3,3);
\draw (1/3,3) to (-1/3,5);
\draw (2/3,4) to (-1/3,7);
\draw (1,5) to (0,8);
\draw (4/3,6) to (1/3,9);
\draw (5/3,7) to (1,9);
\draw (2,8) to (5/3,9);

\draw (-1/3,5) to (1,9);
\draw (-1/3,7) to (1/3,9);
\draw (-1/3,3) to (5/3,9);
\draw (-1/3,1) to (7/3,9);
\draw (3,5) to (13/3,9);

\draw (1,0) to (2/3,1);
\draw (4/3,1) to (1,2);
\draw (5/3,2) to (4/3,3);
\draw (2,3) to (5/3,4);
\draw (7/3,4) to (2,5);
\draw (10/3,4) to (3,5);
\draw (11/3,5) to (10/3,6);
\draw (4,6) to (11/3,7);
\draw (13/3,7) to (4,8);
\draw (14/3,8) to (13/3,9);
\draw (2/3,1) to (10/3,9);

\draw (0,0) to (1,0);
\draw (-1/3,1) to (2/3,1);
\draw (0,2) to (1,2);
\draw (1/3,3) to (4/3,3);
\draw (2/3,4) to (5/3,4);
\draw (1,5) to (3,5);
\draw (4/3,6) to (10/3,6);
\draw (5/3,7) to (11/3,7);
\draw (2,8) to (4,8);
\draw (7/3,9) to (13/3,9);

\draw[fill] (0,3) circle [radius=0.1];
\draw[fill] (5/3,3) circle [radius=0.1];
\draw[fill] (1,1) circle [radius=0.1];
\draw[fill] (4/3,2) circle [radius=0.1];

\draw[fill] (1/3,4) circle [radius=0.1];
\draw[fill] (2,4) circle [radius=0.1];
\draw[fill] (0,5) circle [radius=0.1];
\draw[fill] (2/3,5) circle [radius=0.1];
\draw[fill] (10/3,5) circle [radius=0.1];
\draw[fill] (1/3,6) circle [radius=0.1];
\draw[fill] (1,6) circle [radius=0.1];
\draw[fill] (11/3,6) circle [radius=0.1];
\draw[fill] (0,7) circle [radius=0.1];
\draw[fill] (2/3,7) circle [radius=0.1];
\draw[fill] (4/3,7) circle [radius=0.1];
\draw[fill] (4,7) circle [radius=0.1];
\draw[fill] (1/3,8) circle [radius=0.1];
\draw[fill] (1,8) circle [radius=0.1];
\draw[fill] (5/3,8) circle [radius=0.1];
\draw[fill] (13/3,8) circle [radius=0.1];

\end{tikzpicture}
\caption{The visualisation of a particle configuration of $\mathbb{GT}_\textbf{s}(\infty)$ as a stepped surface. In the first figure the fully packed initial condition is depicted. Particle $x_1^{(0,1)}$ wants to jump to the right and in doing so, pushes all the particles indexed $x_i^{(i-1,i)}$ and $x_i^{(i,i)}$ to the right by one as well, resulting in the surface shown in the second figure. Next, particle $x_3^{(2,3)}$ jumps to the right by one and this produces the stepped surface of the last figure.}\label{figuresteppedsurface}
\end{figure}
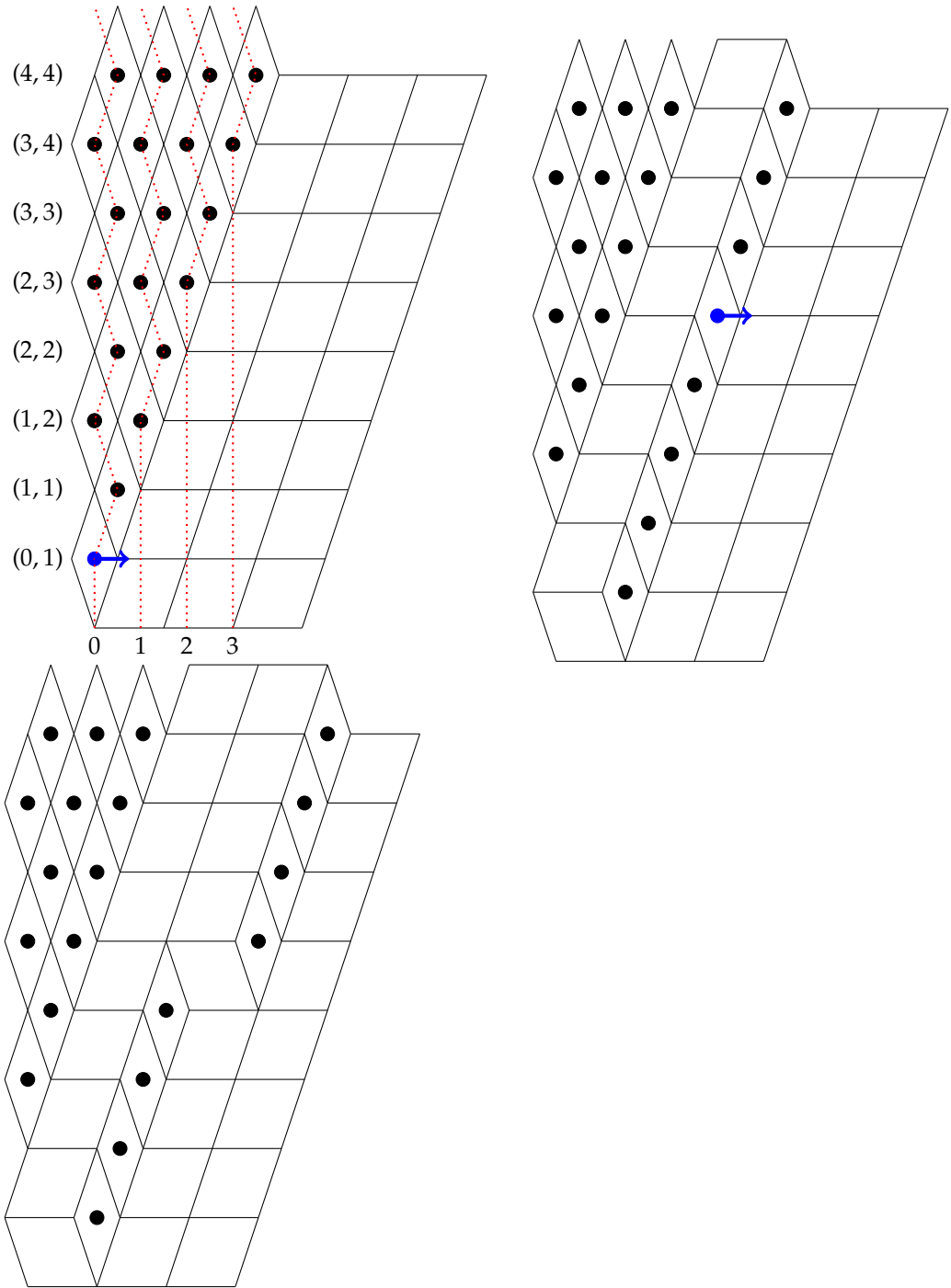

We now explain how the model in the seminal work of Borodin and Kuan \cite{BorodinKuan} related to the representation theory of the infinite dimensional orthogonal group $O(\infty)$ and its recent generalization by Cerenzia and Kuan \cite{CerenziaKuan} are special cases of this construction: they simply correspond to a particular choice of the rate functions $\lambda(\cdot), \mu(\cdot)$. The rates considered by Cerenzia and Kuan in \cite{CerenziaKuan}, depending on two real parameters $\alpha,\beta>-1$, are the following:
\begin{align*}
\lambda(n)&=\frac{n+\alpha+\beta+1}{2n+\alpha+\beta+1}\frac{2(n+\alpha+1)}{2n+\alpha+\beta+2},\\
\mu(n)&=\frac{n+\beta}{2n+\alpha+\beta}\frac{2n}{2n+\alpha+\beta+1}.
\end{align*}
For $\alpha=\beta=-\frac{1}{2}$ these specialize to the model studied by Borodin and Kuan in \cite{BorodinKuan} while for $-\alpha=\beta=\frac{1}{2}$ they specialize to the model studied by Cerenzia \cite{Cerenzia} related to the infinite dimensional symplectic group $Sp(\infty)$. 

We now briefly mention two simple examples of rates that as far as we know cannot be obtained at present from some other integrable model with determinantal structure. These correspond to regions with different speeds (with $r$ and $n^*$ positive constants):
\begin{align*}
\mu(n)\equiv 1  \ , \ \lambda (n)= r \textbf{1} (n\le n^*)+\textbf{1} (n > n^*)
\end{align*}
and periodic (in space) rates:
\begin{align*}
\mu(n)\equiv 1  \ , \ \lambda (n)= \begin{cases}
1, \ & n \ \textnormal{even},\\
r, \ & n  \ \textnormal{odd}
\end{cases}.
\end{align*}
Of course, much more complicated variations for the rates can be considered and our main result below still applies.

Finally, as explained in the previous subsection, the evolved Gibbs measures for these dynamics are given as products of determinants and by making use of (one of the many variants of) the famous Eynard-Mehta Theorem, in particular a generalization to interlacing particles first studied by Borodin and Rains (see \cite{BorodinRains}) it is standard that there is an underlying determinantal structure for this point process. However, to compute the correlation kernel $\mathcal{K}^t$ explicitly one needs to either invert a Gram matrix or solve a biorthogonalization problem, which is usually a formidable task.

\subsection{Explicit computation of correlation kernel and scaling limit}
It is at this point that a further insight is required in order to proceed. We make use of the spectral theory for one-dimensional birth and death chains first developed by Karlin and McGregor in \cite{KarlinMcGregorClassification} and \cite{KarlinMcGregorDifferential}. More precisely we define the polynomials $Q_i(x)$ through the three term recurrence:
 \begin{align*}
    Q_0(x)=1, -xQ_0(x)=-(\lambda(0)+\mu(0))Q_0(x)+\lambda(0)Q_1(x),\\
    -xQ_n(x)=\mu(n)Q_{n-1}(x)-(\lambda(n)+\mu(n))Q_n(x)+\lambda(n)Q_{n+1}(x).
\end{align*}
These are orthogonal with respect to the \textit{spectral measure} $\textnormal{d}\mathfrak{w}(x)$ on $\mathbb{R}_+$ with support $\mathfrak{I}$,
    \begin{align*}
    \int_{0}^{\infty}Q_i(x)Q_j(x)\textnormal{d}\mathfrak{w}(x)=\frac{1}{\pi(j)}\delta_{ij}.
    \end{align*}
Here and onwards we use the notational convention that $x$ is a continuous (spectral as we see below) variable and not the discrete horizontal position of particles as before (also $n$ in the three term recurrence above should not be confused with the index of vertical levels which in the setting of the alternating construction will always be denoted by a pair of indices).

If we view $\mathcal{D}_{k}$, the generator of the birth and death chain with rates $(\lambda(\cdot),\mu(\cdot))$, as a difference operator in the discrete variable $k$, then the three term recurrence takes the form of an eigenfunction relation, with eigenvalue $-x\le 0$:
\begin{align*}
\mathcal{D}_kQ_k(x)=-xQ_k(x).
\end{align*}
 These ingredients provide the following spectral expansion for the transition density of the chain:
    \begin{align*}
    p_t(i,j)=\pi(j)\int_{0}^{\infty}e^{-tx}Q_i(x)Q_j(x)\textnormal{d}\mathfrak{w}(x).
    \end{align*}
We now give one of the simplest explicit examples. It corresponds to the model of \cite{BorodinKuan}, with $\lambda(0)=1, \lambda(n)\equiv\mu(n)\equiv \frac{1}{2}, n \ge 1$ and $\pi(0)=1, \pi(n)\equiv 2, n \ge 1$. Then, the $Q_i$ are the Chebyshev polynomials of the first kind, orthogonal with respect to the spectral measure $\textnormal{d}\mathfrak{w}(x)=\frac{1}{\pi}x^{-\frac{1}{2}}(2-x)^{-\frac{1}{2}}dx$ on $[0,2]$:
\begin{align*}
    \int_{0}^{2}\frac{1}{\pi}x^{-\frac{1}{2}}(2-x)^{-\frac{1}{2}}Q_i(x)Q_j(x)dx=\begin{cases}
    \delta_{ij},  & i=0,\\
    \frac{1}{2}\delta_{ij}, & i\ge 1
    \end{cases}.
\end{align*}   
Moreover, the polynomials $Q_n(x)$ are given explicitly:
\begin{align*}
Q_n(x)=\sum_{k=0}^{n}\frac{(-n)_k(n)_k}{\left(\frac{1}{2}\right)_k}\frac{\left(\frac{x}{2}\right)^k}{k!}
\end{align*}
where $(a)_k=\prod_{i=1}^{k}(a+i-1)$ with $(a)_0=1$, is the Pochhammer symbol.

Coming back to the general discussion, one can also define the polynomials $\hat{Q}_k$ and measure $\hat{\mathfrak{w}}$ associated to the Siegmund dual chain and many relations exist between these dual polynomials, which can be found in Section \ref{sectionorthogonalpolynomials}.

We then go on to introduce and study in detail, from a probabilistic perspective in Sections \ref{multivariatepolynomialssection} to \ref{sectionevolutionoperators} (see also Subsection \ref{SubsectionFurtherResultsIntro} below), their multivariate versions: For $\nu \in W^{n}$, we consider the $n$-variate polynomials given by, with $x=\left(x_1, \cdots , x_n\right)$ in $\mathbb{R}^n$,
\begin{align*}
\mathfrak{Q}_{\nu}(x)=\frac{\det\left(Q_{\nu_i}(x_j)\right)_{i,j=1}^n}{\det\left(x_j^{i-1}\right)_{i,j=1}^n},\ \ \hat{\mathfrak{Q}}_{\nu}(x)=\frac{\det\left(\hat{Q}_{\nu_i}(x_j)\right)_{i,j=1}^n}{\det\left(x_j^{i-1}\right)_{i,j=1}^n}.
\end{align*}
We call these the Karlin-McGregor polynomials, since they were first introduced by Karlin and McGregor, in their original study of intersection probabilities of birth and death chains in \cite{KarlinMcGregorCoincidence}. We remark that the Karlin-McGregor polynomials have the same structure as the well-known Schur polynomials, indeed we arrive at them by replacing the monomials $x^{\nu_i}$ in the numerator determinant by general orthogonal polynomials $Q_{\nu_i}(x)$ on $[0,\infty)$. Note however that the Schur polynomials are not special cases of our construction; the monomials are not orthogonal on $[0,\infty)$ (they are orthogonal on the circle).

We can obtain the Markov kernels associated to the alternating construction from branching rules of these multivariate polynomials (see Section \ref{multivariatepolynomialssection}, also Remark \ref{RemarkMarkovKernelFromBranchingRule}). In particular the harmonic functions $h_{n,n+1}(\nu)$ and $h_{n,n}(\nu)$ are given by, here $\lambda_0=\lambda(0)$ is the rate of jumping from 0 to 1:
\begin{align*}
h_{n,n+1}(\nu)=(-1)^{\binom{n}{2}}\lambda_0^{\binom{n}{2}}\mathfrak{Q}_{\nu}(\vec{0}) \ , \ h_{n,n}(\nu)=(-1)^{\binom{n-1}{2}}\lambda_0^{\binom{n-1}{2}}\hat{\mathfrak{Q}}_{\nu}(\vec{0}).
\end{align*}

Moving on, it is only after expressing the entries of the determinants appearing in the distribution of the growth process starting from the fully packed initial condition in terms of these one dimensional orthogonal polynomials and the spectral measures, that it is possible to see/guess what the solution to the biorthogonalization problem is. Then we proceed to carefully check that it is indeed the solution. All of this is done in detail in Section \ref{sectioncorrelation}. 

The biorthogonalization problem could possibly admit a concise enough solution for other initial conditions as well, other than the fully packed, although we do not attempt to do this here; see for example \cite{BrownianStationaryInitial} where this is done for the stationary case of the Brownian motion analogue. Pursuing this further would be an interesting line of investigation.

Finally, after some more algebraic manipulations we arrive at the following result, proven as a special case of the more general Theorem \ref{correlationkernelmain} in the text:

\begin{thm}\label{IntroductionCorrelationKernelTheorem}
 Let $\mathfrak{I}$ be compact then the correlation functions $\{\rho^{t}_k\}_{k\ge0}$ of the point process $\Xi^{t}$, associated to the alternating construction starting from the fully packed initial condition, are determinantal:
 \begin{align}
  \rho^{t}_k(z_1,\cdots,z_k)\overset{\textnormal{def}}{=}\Xi^{t}(\{E \in \mathbb{GT}_{\mathbf{s}}(\infty) \textnormal{ s.t. } \{z_1,\cdots,z_k\} \subset E\})=\det\left(\mathcal{K}^{t}(z_i,z_j)\right)^k_{i,j=1}
 \end{align}
 where $\mathcal{K}^{t}$ is given by,
 \begin{align}
 \mathcal{K}^{t}\left(\left((n_1,n_2),i),(m_1,m_2),j)\right)\right)=\frac{1}{2\pi \mathsf{i}}\oint_{u \in\mathsf{C}(\mathfrak{I})}\int_{x\in\mathfrak{I}}^{} \tilde{\mathcal{P}_j}(u) \bar{\mathcal{P}}_i(x) \frac{x^{n_2}}{u^{m_2}}\frac{e^{-tx}}{(x-u)e^{-tu}} d\mathfrak{m}(x)du\nonumber\\
 +\textbf{1}\left((n_1,n_2)\ge (m_1,m_2)\right)\int_{\mathfrak{I}}^{}\bar{\mathcal{P}}_i(x) x^{n_2-m_2}\tilde{\mathcal{P}_j}(x) d\mathfrak{m}(x)
 \end{align}
 and,
 \begin{align}
 (\bar{\mathcal{{P}}},\tilde{\mathcal{P}},\mathfrak{m})=\begin{cases}
 (\pi_iQ_i,Q_j,\mathfrak{w}) \textnormal{ if } (n_1,n_2),(m_1,m_2)=(n,n+1),(m,m+1)\\
 (\pi_iQ_i,\hat{Q}_j,\mathfrak{w}) \textnormal{ if } (n_1,n_2),(m_1,m_2)=(n,n+1),(m,m)\\
 (\hat{\pi}_i\hat{Q}_i,Q_j,\hat{\mathfrak{w}}) \textnormal{ if } (n_1,n_2),(m_1,m_2)=(n,n),(m,m+1)\\
 (\hat{\pi}_i\hat{Q}_i,\hat{Q}_j,\hat{\mathfrak{w}}) \textnormal{ if } (n_1,n_2),(m_1,m_2)=(n,n),(m,m)
 \end{cases}.
 \end{align}
 The contour $\mathsf{C}(\mathfrak{I})$ is positively oriented and encircles the support $\mathfrak{I}$ and $0$.
 \end{thm}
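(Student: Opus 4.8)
The plan is to prove this theorem as a corollary of the more general Theorem~\ref{correlationkernelmain}, so the real content is the structure of that computation specialized to the fully packed initial condition. First I would set up the static picture: by the alternating construction and Proposition~\ref{IntroPropositionInformal}, the distribution of the particle array at time $t$ is an evolved Gibbs measure, which because the semigroups $P_t^{n+1}$, $\hat P_t^n$ and the kernels $\Lambda_{n,n+1}$, $\Lambda_{n,n}$ all have determinantal/block form, can be written as a product of determinants. The fully packed initial condition makes the ``initial'' determinantal factors collapse to a single explicit block (a Vandermonde-type or ``staircase'' term), which is exactly what allows the biorthogonalization to be solved in closed form. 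So step one is: write the joint distribution of $\Xi^t$ as a product of determinants of the Eynard--Mehta / Borodin--Rains type, with all entries expressed via the spectral expansion $p_t(i,j)=\pi(j)\int e^{-tx}Q_i(x)Q_j(x)\,d\mathfrak{w}(x)$ (and its hatted analogue), together with the relations between $Q$, $\hat Q$, $\mathfrak{w}$, $\hat{\mathfrak{w}}$ recorded in Section~\ref{sectionorthogonalpolynomials}.

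Second, I would invoke the Eynard--Mehta theorem (in the interlacing-particle form of Borodin--Rains) to reduce the correlation kernel to the solution of a biorthogonalization problem: one must produce functions $\{\Psi\}$ and $\{\Phi\}$ biorthogonal with respect to the relevant pairing, at which point $\mathcal K^t$ is assembled from them by the standard Eynard--Mehta formula together with the single-time ``$\mathbf 1((n_1,n_2)\ge(m_1,m_2))$'' convolution term. The heart of the proof is guessing and then verifying the biorthogonal families. Because the entries are built from the $Q_i$ and the staircase structure of the fully packed start, the natural ansatz is that one family is essentially $\{\pi_i Q_i\}$ (resp. $\{\hat\pi_i\hat Q_i\}$) integrated against $d\mathfrak{w}$ (resp. $d\hat{\mathfrak w}$), and the dual family is a contour integral of $Q_j$ (resp. $\hat Q_j$) against $u^{-m_2}$ with the resolvent factor $(x-u)^{-1}$; the contour $\mathsf C(\mathfrak I)$ encircling $\mathfrak I$ and $0$ is dictated by where the poles of $u^{-m_2}$ and the $(x-u)^{-1}$ factor sit. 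I would verify biorthogonality by residue calculus: expanding $1/(x-u)$ as a geometric series on the contour and using $\int Q_i Q_j\, d\mathfrak{w}=\pi(j)^{-1}\delta_{ij}$ reduces the orthogonality relations to the orthogonality of the Karlin--McGregor polynomials and the three-term recurrence; the eigenrelation $\mathcal D_k Q_k(x)=-xQ_k(x)$ handles the $e^{-tx}/e^{-tu}$ time factors.

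Third, once biorthogonality is checked, I would substitute back into the Eynard--Mehta formula and simplify: the double sum over intermediate levels telescopes thanks to the contour-integral representation (the geometric series in $x/u$ resums to $1/(x-u)$), producing the stated double integral $\frac{1}{2\pi\mathsf i}\oint\!\int \tilde{\mathcal P_j}(u)\bar{\mathcal P}_i(x)\frac{x^{n_2}}{u^{m_2}}\frac{e^{-tx}}{(x-u)e^{-tu}}\,d\mathfrak m(x)\,du$, while the ``diagonal'' part of Eynard--Mehta that is not killed by biorthogonality leaves precisely the indicator term $\mathbf 1((n_1,n_2)\ge(m_1,m_2))\int_{\mathfrak I}\bar{\mathcal P}_i(x)x^{n_2-m_2}\tilde{\mathcal P_j}(x)\,d\mathfrak m(x)$. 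The four cases of $(\bar{\mathcal P},\tilde{\mathcal P},\mathfrak m)$ correspond to whether the level indices $(n_1,n_2)$ and $(m_1,m_2)$ are of ``$(n,n+1)$'' ($\mathcal D$-chain, polynomials $Q$, measure $\mathfrak w$) or ``$(n,n)$'' ($\hat{\mathcal D}$-chain, polynomials $\hat Q$, measure $\hat{\mathfrak w}$) type, and follow by tracking which semigroup and which $\Lambda$ sit between the two levels, using the $Q$--$\hat Q$ and $\mathfrak w$--$\hat{\mathfrak w}$ relations to convert between them when the two levels are of different types. The compactness hypothesis on $\mathfrak I$ is used to guarantee absolute convergence of all the sums and interchange of sum and integral, and to ensure the contour $\mathsf C(\mathfrak I)$ can be taken to enclose $\mathfrak I\cup\{0\}$ in a bounded region.

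The main obstacle I anticipate is the verification step of the biorthogonalization — specifically, correctly matching the fully packed initial data to the right ``shifted'' indices in the determinantal entries (the staircase $0<1<\cdots<i-1$ introduces Pochhammer/shift bookkeeping), and then confirming that the contour-integral ansatz is genuinely dual, rather than dual up to a finite-rank correction, which is exactly the subtlety that the indicator term is there to absorb. A secondary technical point is justifying the resummation of the intermediate-level sum into the single resolvent factor $1/(x-u)$ uniformly on $\mathsf C(\mathfrak I)$; this is where choosing $|u|$ large enough relative to $\mathrm{supp}\,\mathfrak m=\mathfrak I$ (hence the requirement that the contour enclose both $\mathfrak I$ and $0$) is essential. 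Everything else — the spectral expansion, the Eynard--Mehta machinery, the orthogonality of the $Q_i$ — is quoted from earlier sections or standard references.
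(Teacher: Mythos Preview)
Your outline is correct and follows essentially the same route as the paper: write the evolved Gibbs measure as a product of determinants, apply the Borodin--Rains variant of Eynard--Mehta, solve the biorthogonalization using the spectral data $(Q_i,\hat Q_i,\mathfrak w,\hat{\mathfrak w})$, and collapse the resulting Eynard--Mehta sum into the double integral plus indicator term via finite geometric sums and residues. Two small points of divergence: the paper's biorthogonality verification (Lemma~\ref{orthogonality}) uses completeness of $\{Q_i\}$ in $L^2(\mathfrak I,\mathfrak w)$ to sum $\sum_i \pi_i Q_i(u)\langle Q_i,\cdot\rangle_{\mathfrak w}$ directly, rather than expanding $1/(x-u)$ as a geometric series, and you omit the separate span/basis check (Lemma~\ref{basis}) required by the Eynard--Mehta machinery to ensure your contour-integral family $\{\mathcal E\}$ actually spans the same space as the ``virt'' functions coming from the $\hat\phi$--convolutions --- this is a genuine step in the argument, though straightforward once stated.
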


That the interval of orthogonality $\mathfrak{I}$ needs to be compact appears to be a technical analytic requirement and presumably can be removed. Compactness ensures uniform convergence for orthogonal decompositions in terms of the polynomials $Q_i$ and allows for the interchange of summation and integration (see e.g. Remark 9.4); it is essentially only used in the computations in Sections 9 and 10. Because of the length of the paper we have not tried to remove this assumption. 

In the model of Cerenzia and Kuan \cite{CerenziaKuan} mentioned above the $Q_i(x)$ are the Jacobi polynomials, which specialize to the Chebyshev polynomials of the earlier works \cite{BorodinKuan}, \cite{Cerenzia}.

We then go on to consider a particular scaling limit, at a finite distance from the wall and make the connection with Borodin and Olshanski's work in \cite{BorodinOlshanskiAsep} on discrete determinantal ensembles associated to continuous orthogonal polynomials. 

More precisely, suppose we scale time as $t(N)=N \tau$ and the arguments of the kernel as $\left(\tilde{m}_1(N),\tilde{m}_2(N)\right)=\left(\lfloor N \eta\rfloor+m_1,\lfloor N \eta\rfloor+m_2\right)$ and $\left(\tilde{n}_1(N),\tilde{n}_2(N)\right)=\left(\lfloor N \eta\rfloor+n_1,\lfloor N \eta\rfloor+n_2\right)$
and let $\alpha=\frac{\eta}{\tau}$. Note that, we do not scale the horizontal positions which avoids hard asymptotics involving the orthogonal polynomials $Q_i,\hat{Q}_i$ or the spectral measures $\mathfrak{w}, \hat{\mathfrak{w}}$.
Then, we have the following theorem whose proof, based on a simple steepest descent analysis, can be found in subsection \ref{SubsectionScalingLimit}:
\begin{thm} With the notations above and with $I^+$ and $I^-$ denoting the upper and lower endpoints of $\mathfrak{I}$ respectively, we have:
\begin{align*}
\lim_{N\to \infty}\mathcal{K}^{t(N)}\left(\left(\left(\tilde{n}_1(N),\tilde{n}_2(N)\right),i),\left(\tilde{m}_1(N),\tilde{m}_2(N)\right),j)\right)\right)=\mathfrak{K}_{\alpha}\left(\left(\left(n_1,n_2\right),i),\left(m_1,m_2\right),j)\right)\right)\\
=\int_{I^-}^{I^+}\left[-\textbf{1}(x\ge \alpha)+\textbf{1}\left((n_1,n_2)\ge (m_1,m_2)\right)\right]\bar{\mathcal{P}}_i(x) x^{n_2-m_2}\tilde{\mathcal{P}_j}(x)d\mathfrak{m}(x).
\end{align*}
\end{thm}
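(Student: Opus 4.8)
The plan is to collapse the double contour integral in $\mathcal{K}^{t}$ to a single real integral by residues, and then to carry out a one--dimensional steepest descent (equivalently, a law of large numbers for a Poisson variable). First observe that the second, single--integral term of $\mathcal{K}^{t}$ is literally unchanged by the scaling: $\tilde n_2(N)-\tilde m_2(N)=n_2-m_2$, the relation $\left(\tilde n_1(N),\tilde n_2(N)\right)\ge\left(\tilde m_1(N),\tilde m_2(N)\right)$ is equivalent to $(n_1,n_2)\ge(m_1,m_2)$, and the case assignment of the triple $(\bar{\mathcal P},\tilde{\mathcal P},\mathfrak m)$ depends only on $n_2-n_1$ and $m_2-m_1$, which are preserved; hence this term already equals the corresponding term of $\mathfrak K_\alpha$. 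All the work is therefore in the double integral, call it $I_N$.

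For $I_N$, with $t=t(N)=N\tau$ and $M=\tilde m_2(N)=\lfloor N\eta\rfloor+m_2$, I would apply Fubini (legitimate since $\mathsf C(\mathfrak I)$ and $\mathfrak I$ are compact and the integrand is bounded), pull the $x$--dependent factors out, and evaluate the inner integral $\frac{1}{2\pi\mathsf i}\oint_{\mathsf C(\mathfrak I)}\frac{\tilde{\mathcal P}_j(u)e^{tu}}{u^{M}(x-u)}\,du$ by residues at the two enclosed poles $u=x$ (simple) and $u=0$ (order $M$). The residue at $u=x$ gives $-x^{-M}\tilde{\mathcal P}_j(x)e^{tx}$ and the one at $u=0$ gives $x^{-M}$ times the degree--$(M-1)$ Taylor polynomial at $0$ of $w\mapsto\tilde{\mathcal P}_j(w)e^{tw}$, so their sum is $-x^{-M}R_M(x,t)$, where $R_M(x,t):=\sum_{l\ge M}c_l(t)x^l$ is the tail of the Taylor series of $w\mapsto\tilde{\mathcal P}_j(w)e^{tw}$. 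Collecting factors and using $\tilde n_2(N)-M=n_2-m_2$ one obtains the exact identity
\[
I_N=-\int_{\mathfrak I}\bar{\mathcal P}_i(x)\,x^{n_2-m_2}\,e^{-tx}R_M(x,t)\,d\mathfrak m(x).
\]

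It remains to understand $e^{-tx}R_M(x,t)$ as $N\to\infty$. Writing $\tilde{\mathcal P}_j(w)=\sum_{p=0}^{j}a_pw^p$ and reorganising, one finds $e^{-tx}R_M(x,t)=\sum_{p=0}^{j}a_px^p\,\mathbb P\!\left(\mathrm{Poisson}(tx)\ge M-p\right)$. Since $tx=N\tau x$ while $M-p\sim N\eta$, the law of large numbers (or a Chernoff bound) gives $\mathbb P\!\left(\mathrm{Poisson}(N\tau x)\ge N\eta+O(1)\right)\to\mathbf 1(\tau x>\eta)=\mathbf 1(x>\alpha)$ for $x\ne\alpha$, hence $e^{-tx}R_M(x,t)\to\tilde{\mathcal P}_j(x)\mathbf 1(x>\alpha)$ pointwise on $\mathfrak I\setminus\{\alpha\}$. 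Equivalently this is a steepest descent for the representation $e^{-tx}R_M(x,t)=\frac{1}{2\pi\mathsf i}\oint_{|w|=\rho>x}\tilde{\mathcal P}_j(w)\frac{(x/w)^Me^{t(w-x)}}{w-x}\,dw$, whose phase is $g(w)-g(x)$ with $g(z)=\tau z-\eta\log z$ and whose critical point is $w=\alpha=\eta/\tau$: taking $\rho=\alpha$ when $x>\alpha$, which sweeps the circle past the pole $w=x$ and produces the residue $\tilde{\mathcal P}_j(x)$, and $\rho\in(x,\alpha)$ when $x<\alpha$, one makes the remaining circle integral $O(e^{-cN})$ because $\max_{|w|=\rho}\mathrm{Re}\,g(w)=g(\rho)<g(x)$ in the relevant range. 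The Poisson probabilities are at most $1$, so $\lvert e^{-tx}R_M(x,t)\rvert\le\sum_{p=0}^{j}\lvert a_p\rvert(I^+)^p$ uniformly in $N$ and $x\in\mathfrak I$; near $x=0$ (relevant only when $n_2<m_2$ and $0\in\mathfrak I$) one sharpens this with $\mathbb P(\mathrm{Poisson}(tx)\ge M-p)\le (tx)^{M-p}/(M-p)!$, supplying an extra power $x^{M-p}$ that absorbs any $x^{n_2-m_2}$ singularity once $N$ is large. Dominated convergence then gives $I_N\to-\int_{\mathfrak I}\mathbf 1(x\ge\alpha)\bar{\mathcal P}_i(x)x^{n_2-m_2}\tilde{\mathcal P}_j(x)\,d\mathfrak m(x)$ (the point $x=\alpha$ being $\mathfrak m$--negligible; otherwise read the indicator as $\mathbf 1(x>\alpha)$), and adding the unchanged second term yields exactly $\mathfrak K_\alpha$; compactness of $\mathfrak I=[I^-,I^+]$ makes the integral one over $[I^-,I^+]$ as stated.

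The main obstacle is the transition region $x\approx\alpha$: there the critical point $w=\alpha$ of the phase pinches the pole $w=x$, so the clean steepest--descent estimate degrades and is not uniform in $x$. The remedy, as sketched, is to not insist on a uniform exponential bound there but instead use the crude uniform bound from $\mathbb P(\cdot)\le1$ and let dominated convergence dispose of the single null point $x=\alpha$; the only other care needed is integrability against $d\mathfrak m$ near $x=0$ when $n_2<m_2$, handled by the Poisson tail above. Everything else --- Fubini, the residue bookkeeping, and the harmlessness of the fractional part of $N\eta$ (a bounded multiplicative factor on the compact contour) --- is routine.
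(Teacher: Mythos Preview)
Your proposal is correct, and it reaches the same conclusion as the paper by a genuinely different route. The paper works directly with the double integral: it writes the exponent as $N\bigl(f(u)-f(x)\bigr)$ with $f(z)=\tau z-\eta\log z$, observes that the unique positive critical point is $z=\alpha$, and then \emph{deforms the $u$--contour} $\mathsf C(\mathfrak I)$ to a rectangle with one vertical side at $\mathrm{Re}\,u=\alpha$. In the course of this deformation the pole $u=x$ is crossed precisely for those $x\in\mathfrak I$ with $x\ge\alpha$, and the collected residues give exactly the single integral $-\int_{I^-}^{I^+}\mathbf 1(x\ge\alpha)\bar{\mathcal P}_i(x)x^{n_2-m_2}\tilde{\mathcal P}_j(x)\,d\mathfrak m(x)$; on the new contour one has $\mathrm{Re}\bigl(f(u)-f(\alpha)\bigr)<0$ while $\mathrm{Re}\bigl(f(x)-f(\alpha)\bigr)\ge0$ on $\mathfrak I$, so the remaining double integral is $o(1)$ uniformly.

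You instead evaluate the $u$--integral \emph{exactly} at the outset by residues at the two enclosed poles $u=x$ and $u=0$, which collapses $I_N$ to a single real integral with the factor $e^{-tx}R_M(x,t)$. Your identification of this factor with a finite linear combination of Poisson tail probabilities $\mathbb P\bigl(\mathrm{Poisson}(N\tau x)\ge N\eta+O(1)\bigr)$ then turns the asymptotics into a law of large numbers, and the trivial bound $\mathbb P(\cdot)\le1$ supplies a dominating function on the bulk. This is more elementary than the paper's contour deformation and gives a pleasant probabilistic interpretation; it also makes the dominated--convergence step very clean. On the other hand, the paper's complex steepest--descent template is the systematic approach that transfers to other scaling regimes (e.g.\ edge or bulk limits where no Poisson identity is available and one needs second--order saddle analysis). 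Either way the ``single term'' with the indicator $\mathbf 1\bigl((n_1,n_2)\ge(m_1,m_2)\bigr)$ is dispatched by the same one--line observation you give.
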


Now, to a weight $\mathcal{W}(dx)$ on (some subset of) $\mathbb{R}$ for which the moment problem is determinate and a point $r\in \mathbb{R}$  one can associate a discrete determinantal point process with kernel denoted by $\mathsf{K}^{\mathcal{W}}_{r}\left(i,j\right)$ (see Remark \ref{DiscreteEnsemblesRemark} and \cite{BorodinOlshanskiAsep} for the exact details). Then, as explained in subsection \ref{SubsectionScalingLimit}, if restricted to single levels $\mathfrak{K}_{\alpha}\left(\left(\left(n,n+1\right),i),\left(n,n+1\right),j)\right)\right)$ gives rise to the determinantal ensemble with kernel $\mathsf{K}_{\alpha}^{\mathfrak{w}}(i,j)$ and also $\mathfrak{K}_{\alpha}\left(\left(\left(n,n\right),i),\left(n,n\right),j)\right)\right)$ gives rise to the ensemble governed by the kernel $\mathsf{K}_{\alpha}^{\hat{\mathfrak{w}}}(i,j)$. Thus, $\mathfrak{K}_{\alpha}\left(((n_1,n_2),i),(m_1,m_2),j)\right)$ provides a novel multilevel determinantal extension of these discrete ensembles, so that particles on consecutive levels interlace (by construction). Moreover, in this generality, it is the first time that these ensembles appear in a concrete interacting particle system.

\subsection{Further results}\label{SubsectionFurtherResultsIntro}
En route, to our computation of the correlation kernel $\mathcal{K}^t$ we introduce in Section \ref{sectioncoherentmeasures} a large class of coherent probability measures, with respect to the Markov kernels corresponding to the alternating construction. These depend on a set of parameters $(t;\alpha_1, \cdots, \alpha_{\mathfrak{N}})$ with $\mathfrak{N}\in \mathbb{N}$ such that  $t \ge 0$ and $\textnormal{Const}\ge \alpha_1 \ge \alpha_2 \ge \cdots \ge 0$, where $\textnormal{Const}$ has a natural interpretation in terms of the interval of orthogonality $\mathfrak{I}$ (see Remark \ref{remarkendpoint}). Their description is through the spectral theory explained above and the single variable function:
\begin{align} 
\psi(x)=\psi_{t,\vec{\alpha}}(x)=\prod_{i=1}^{\mathfrak{N}}(1-\alpha_ix)e^{-tx}.
\end{align}
By Kolmogorov's Theorem these measures give rise to a stochastic point process in $\mathbb{GT}_\textbf{s}(\infty)$ denoted by $\Xi^{\psi}$, which specializes to $\Xi^t$ when all the $\vec{\alpha}$ coordinates are identically zero. In Theorem \ref{correlationkernelmain} we show that $\Xi^{\psi}$ is a determinantal point process with an explicit kernel $\mathcal{K}^{\psi}$.

Moreover, combining the results of Proposition \ref{factorizationforcoherent} and Section \ref{SubsectionFactorizationImpliesExtremality} in the Appendix, we obtain that under a positive definiteness assumption (see Remark \ref{RemarkPositiveDefiniteness}) for the corresponding Karlin-McGregor polynomials these sequences of measures are actually extremal.

Finally, we observe that an inhomogeneous, with position dependent jumps, two species analogue of PushASEP (see \cite{BorodinFerrariPushASEP}), with at most two particles per site, arises if one looks at the rightmost particles in the interlacing array above. In particular the evolution of the particles $\left(X_1^{(0,1)}(t),X_1^{(1,1)}(t),X_2^{(1,2)}(t),\cdots;t\ge 0\right)$ is autonomous. Of course, the distribution of this $(1+1)$-dimensional model is completely characterized by Theorem \ref{IntroductionCorrelationKernelTheorem}.

\subsection{Outlook and questions}
Many directions and questions arise from this work. We indicate and briefly discuss the ones we find the most interesting:

\begin{itemize}
\item(\textbf{Scaling limits}) Study different scaling regimes for the inhomogeneous process introduced above. It is clear from simulations, performed by the author, that interesting behaviour arises when one introduces for example slow or fast regions, periodic or trigonometric rates. The analysis of course boils down to the associated one-dimensional orthogonal polynomials. Another question is whether in any of the possible scaling regimes perturbations of the rates still give the same asymptotic behaviour. This again will come down to universality statements for orthogonal polynomials.

\item(\textbf{Inhomogeneous TASEP}) Borodin-Ferrari studied in \cite{BorodinFerrari} a $(2+1)$-dimensional growth model taking values in a Gelfand-Tsetlin pattern. Each particle has an independent exponential clock of rate one for jumping to the right by one ($\mu(n)\equiv 0$) and particles as before interact through the push-block dynamics. The projection to the left most particles gives the Totally Asymmetric Simple Exclusion Process (TASEP). In fact the construction in \cite{BorodinFerrari} is much more general and allows for level dependent (constant in space) jump rates, which in terms of TASEP corresponds to particle dependent speeds. A natural question is to find the right, namely integrable, inhomogeneous (in space) generalization of the model in \cite{BorodinFerrari}. This could provide a route to some exact solvability in inhomogeneous TASEP which has thus far resisted many efforts. For a particular case, the slow bond problem (the bond from -1 to 0 is slow, in this paper's notation $\mu(n)\equiv 0, \lambda(n)=1-\epsilon \textbf{1}(n=-1)$), a breakthrough was achieved for the leading order behaviour using non-exactly solvable techniques, see \cite{SlowBondProblem}.

\item (\textbf{Boundary of generalized type BC-graph}) As mentioned above one can associate a branching graph to the alternating construction, that we call generalized type-BC branching graph, its multiplicities are given by general product form weights. Is it possible, at least for certain multiplicities, to describe its boundary? Moreover, what is the relation of such extreme coherent measures with dynamics on the graph. In the case of both the Gelfand-Tsetlin graph and the type-BC graph there is an exact correspondence with continuous time birth and death chain dynamics, discrete time Bernoulli and also geometric jumps. A more ambitious direction would be to develop some kind of perturbation theory for these graphs.
\end{itemize}

\subsection{Contents of the paper}
We quickly describe the contents of each section. In Section \ref*{sectioncoalescing}, we introduce all the relevant material on birth and death (or bilateral) chains that we need. We then introduce the coalescing flows and give our two-level couplings formulae. We moreover obtain our intertwining and Markov functions results. In Section \ref*{sectionpushblock}, we prove that the formulae describe the push-block dynamics by showing that they solve the corresponding backwards equations and that these are unique. Furthermore, we spell out a procedure for concatenating such two-level processes in order to build an interlacing array in a consistent manner. In Section \ref*{sectionbranching}, we define and collect some facts about branching graphs along with two classical examples, the Gelfand-Tsetlin graph and the BC-type branching graph, and the graph corresponding to the alternating construction. We also state the theorem of Borodin and Olshanski known as the method of intertwiners. In Section \ref*{sectionExamples}, we show how known and new examples of consistent dynamics can be obtained as corollaries of our first main result, including the ones in \cite{BorodinOlshanski} and \cite{Cuenca} and moreover, we characterize the ones arising from the coupling studied here that are coherent for the Gelfand-Tsetlin graph. In Sections \ref*{sectionorthogonalpolynomials} and \ref{multivariatepolynomialssection}, we introduce the Karlin-McGregor polynomials associated to $\mathcal{D}$ and $\hat{\mathcal{D}}$-chains and their multivariate analogues and prove some of their properties. In Section \ref*{sectioncoherentmeasures}, we introduce coherent measures (with respect to the Markov kernels associated to the alternating construction) $\mathcal{M}^{\psi}_{n-1,n},\mathcal{M}^{\psi}_{n,n}$ indexed by a function $\psi$ and investigate some of their properties. For $\psi_t(x)=e^{-tx}$ these correspond to the distribution at time $t$ of the push-block dynamics started from the fully packed initial condition as described in the paragraphs above. In Section \ref*{sectionevolutionoperators}, we introduce "evolution operators" for coherent measures denoted by $\mathfrak{P}^{g}_{n-1,n},\mathfrak{P}^{g}_{n,n}$, which when applied to $\mathcal{M}^{\psi}_{n-1,n},\mathcal{M}^{\psi}_{n,n}$ "evolve" these measures to $\mathcal{M}^{g\psi}_{n-1,n},\mathcal{M}^{g\psi}_{n,n}$. We also obtain some sufficient conditions for functions $\psi$ to give rise to bona fide probability measures (with positivity being the non-trivial issue here). In Section \ref{sectioncorrelation}, we finally prove our second main result, the explicit computation of the correlation kernel of the process described previously, this being an application of the Eynard-Mehta theorem (see e.g. \cite{BorodinRains}) along with some preliminaries. Finally, in the Appendix we collect a couple of technical proofs along with; essentially reproducing for our own and the reader's convenience, an argument of Okounkov and Olshanski that we found in \cite{OkounkovOlshanskiJack}, that uses de Finetti's theorem to give a sufficient condition for coherent measures with multiplicative "generating functions" to be extremal, based on a kind of positive definiteness property (an assumption) for the associated orthogonal polynomials.

\paragraph{Acknowledgements} I would like to thank Jon Warren for generously sharing his ideas during several very useful conversations. I would like to thank the anonymous referees for many useful comments and suggestions which led to a much improved version of the paper. Financial support from EPSRC through the MASDOC DTC grant number EP/HO23364/1 is gratefully acknowledged.

\section{Coalescing birth and death chains and intertwinings}\label{sectioncoalescing}
\subsection{General facts on birth and death chains and their duals}

We consider a birth and death chain on $I=\mathbb{N}$, or bilateral birth and death chain on $I=\mathbb{Z}$, denoted by $X$, given by the infinitesimal \textit{birth} $(\lambda(x))_{x\in I}$ and \textit{death} $(\mu(x))_{x\in I}$ rates and with matrix of transition rates denoted by $\mathcal{D}$,
\begin{align*}
\mathcal{D}(x,y)=\begin{cases}
\lambda(x) \ \ & y=x+1\\
-\lambda(x)-\mu(x) \ \ & y=x \\
\mu(x) \ \ & y=x-1
\end{cases}.
\end{align*}
We assume that $\lambda(x),\mu(x)>0$, for all $x \in \mathbb{Z}$ in the bilateral case and $\mu(0)=0$ in case of $I=\mathbb{N}$, i.e. that $0$ is reflecting ($\lambda(x)$ for $x\ge 0$ and $\mu(x)>0$ for $x\ge 1$). We moreover assume that, $\infty$ is a \textit{natural} boundary point, namely a process can neither reach in finite time or be started from such a point (similarly $-\infty$ is assumed \textit{natural} in case $I=\mathbb{Z}$), so that the rates uniquely determine our chain. Sufficient conditions for this, will be given later on below in this subsection. In order to be more concise, we will frequently refer to such a Markov chain with generator $\mathcal{D}$, as a $\mathcal{D}$-chain. Now, define the forward and backward discrete derivatives by,
\begin{align*}
(\nabla f)(x)=f(x+1)-f(x),
(\bar{\nabla} f)(x)=f(x-1)-f(x),\  x \in I,
\end{align*}
and observe that $\mathcal{D}$ can be regarded as a difference operator acting on functions, $f:I \to \mathbb{C}$ as follows,
\begin{align*}
(\mathcal{D}f)(x)=\lambda(x) (\nabla f)(x)+\mu(x) (\bar{\nabla} f)(x), \ x\in I.
\end{align*}
Denote by $p_t(x,y)$ the transition density of the $\mathcal{D}$-chain i.e. with $\left(X(t);t\ge0\right)$ denoting a realization of this chain governed by the family of measures indexed by starting positions, $\{\mathbb{P}_x\}_{x \in I}$ then, $p_t(x,y)=\mathbb{P}_x(X(t)=y)$. Furthermore, we denote by $\left(P_t;t\ge0\right)$ the Feller semigroup (that maps the space of functions vanishing at infinity to itself), it gives rise to (the fact that all these are well defined is discussed next). In particular, we will often use the notation:
\begin{align*}
P_t\textbf{1}_{[l,y]}(x)=\sum_{l\le z \le y}^{}p_t(x,z).
\end{align*}
We note that, under the conditions $(\ref{birthdeathcondition1})$ and $(\ref{birthdeathcondition2})$ below, $p_t(x,y)$ will be the unique solution to the Kolmogorov backward differential equation given by, $\forall t>0,x,y \in I$,
\begin{align*}
\frac{d}{dt}p_t(x,y)=\mathcal{D}_xp_t(x,y),
\end{align*}
subject further to the initial condition, positivity and sub-stochasticity assumptions:
\begin{align*}
p_0(x,y)=\delta_{x,y}\textnormal{ , } p_t(x,y) \ge 0 \textnormal{ and } \sum_{y \in I}^{} p_t(x,y) \le 1.
\end{align*}
Here, $\mathcal{D}_x$ acts as $\mathcal{D}$ on a (possibly multivariate) function in the variable labelled $x$. Now, define the \textit{symmetrizing} measure of the $\mathcal{D}$-chain (the measure with respect to which it is reversible) which we denote by $\pi$ as follows,
\begin{align*}
\pi(x)=\prod_{i=1}^{x}\frac{\lambda(i-1)}{\mu(i)} \ \ x\ge 1, \pi(0)=1, \pi(x)=\prod_{i=1}^{-x}\frac{\mu(x+i)}{\lambda(x+i-1)} \ , \ x\le -1.
\end{align*}

In the case of $I=\mathbb{N}$, we will enforce throughout this paper, the following two conditions (see \cite{Kemperman}, \cite{Vandoorn}),
\begin{align}
\sum_{j=0}^{\infty}\frac{1}{\lambda(j)\pi(j)}\sum_{i=0}^{j}\pi(i)&=\infty \label{birthdeathcondition1},\\ \sum_{j=0}^{\infty}\frac{1}{\lambda(j)\pi(j)}\sum_{i=j+1}^{\infty}\pi(i)&=\infty\label{birthdeathcondition2}.
\end{align}
Then, under conditions $(\ref{birthdeathcondition1})$ and $(\ref{birthdeathcondition2})$ the chain with generator $\mathcal{D}$ is uniquely determined by its rates, it is non-explosive and $p_t(x,y)$ is the unique (stochastic) solution to both the backwards and forwards equations (for proofs of these statements see for example \cite{Kemperman} or \cite{Vandoorn} and the references therein). Moreover, we have $p_t(x,y)\to 0$ as $y\to \infty$ and $p_t(x,y)\to 0$ as $x \to \infty $.

In the case of a bilateral chain, in order for both $-\infty$ and $+\infty$ to be natural boundaries, which in particular, ensures the uniqueness of solutions to both the backwards and forwards equation and non-explosiveness, we need the following four conditions.  The first two, (\ref{bilateralcondition1}) and (\ref{bilateralcondition2}), govern the behaviour at $+\infty$ and the last two, (\ref{bilateralcondition3}) and (\ref{bilateralcondition4}), at $-\infty$, for a proof see Theorem 2.5 and the discussion at the end of page 511 of \cite{Pruitt},
\begin{align}
\sum_{j=1}^{\infty}\frac{1}{\lambda(j)\pi(j)}\sum_{i=1}^{j}\pi(i)&=\infty \label{bilateralcondition1},\\ \sum_{j=1}^{\infty}\pi(j)\sum_{i=1}^{j-1}\frac{1}{\lambda(i)\pi(i)}&=\infty\label{bilateralcondition2},\\
\sum_{j=-\infty}^{-1}\frac{1}{\lambda(j)\pi(j)}\sum_{i=j+1}^{-1}\pi(i)&=\infty \label{bilateralcondition3},\\ \sum_{j=-\infty}^{-1}\pi(j)\sum_{i=j}^{-1}\frac{1}{\lambda(i)\pi(i)}&=\infty\label{bilateralcondition4}.
\end{align}

We now come to the definition (going back to the papers of Karlin and McGregor \cite{KarlinMcGregorClassification}, \cite{KarlinMcGregorDifferential}) of the dual chain $\hat{X}$, on $\mathbb{N}^-=\mathbb{N}\cup \{-1\}$ and $\mathbb{Z}$ respectively, that is given by the infinitesimal rates $\hat{\lambda}(x)=\mu(x+1)$ and $\hat{\mu}(x)=\lambda(x)$ and with generator:
\begin{align*}
\hat{\mathcal{D}}(x,y)=\begin{cases}
\hat{\lambda}(x)=\mu(x+1) \ \ & y=x+1\\
-\mu(x+1)-\lambda(x) \ \ & y=x \\
\hat{\mu}(x)=\lambda(x) \ \ & y=x-1
\end{cases}.
\end{align*}
Note that, in the case of $\mathbb{N}^-$ then, $-1$ is an \textit{absorbing state}. As before, in order to be concise and to emphasise the role of duality in this work, we will sometimes refer to this Markov chain as the $\hat{\mathcal{D}}$-chain and denote its transition density by $\hat{p}_t(x,y)$ (in case of a birth and death chain we only consider the transition density in $\mathbb{N}$, i.e it is the same as that of the process \textit{killed} at -1), its semigroup by $\left(\hat{P}_t;t\ge 0\right)$ and symmetrizing measure by $\hat{\pi}$. 

Now, it is not hard to check that, conditions (\ref{birthdeathcondition1}), (\ref{birthdeathcondition2}) and (\ref{bilateralcondition1}),(\ref{bilateralcondition2}),(\ref{bilateralcondition3}),(\ref{bilateralcondition4}) respectively hold for the rates $\left(\lambda,\mu\right)$, if and only if they hold for the dual rates $\left(\hat{\lambda},\hat{\mu}\right)$ and thus the dual chain is well posed with natural boundaries at $\pm \infty$ as well. 

With the above definitions in place, we arrive at the following key duality relation for birth and death chains, going back to Karlin's and McGregor's classic works \cite{KarlinMcGregorClassification} and \cite{KarlinMcGregorDifferential} (see also \cite{Siegmund}, \cite{CoxRosler}). The relation is also true for bilateral chains and we present, the admittedly almost identical, proof in the Appendix because we could not locate it in the literature. We also give a "graphical" proof in the next subsection.

\begin{lem}[Siegmund duality] \label{ConjugacyLemma}
For $x,y \in I$ and $t\ge 0$ we have, 
\begin{align}
P_t \textbf{1}_{[l,y]}(x)=\hat{P}_t \textbf{1}_{[x,\infty)}(y),
\end{align}
where $l=0$ if $I=\mathbb{N}$ or $l=-\infty$ if $I=\mathbb{Z}$ respectively.
\end{lem}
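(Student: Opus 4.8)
The plan is to prove the Siegmund duality
\[
P_t \mathbf{1}_{[l,y]}(x)=\hat{P}_t \mathbf{1}_{[x,\infty)}(y)
\]
by an analytic argument, verifying that both sides, viewed as functions of $t$, solve the same Cauchy problem for a well-posed linear system of ODEs and then invoking uniqueness of solutions. Concretely, fix $y\in I$ and define $u(t,x)=P_t\mathbf{1}_{[l,y]}(x)=\sum_{l\le z\le y}p_t(x,z)$, a function of the "spatial" variable $x\in I$. By the backward Kolmogorov equation for the $\mathcal D$-chain, $\partial_t u(t,x)=\mathcal D_x u(t,x)$ with $u(0,x)=\mathbf{1}_{[l,y]}(x)$. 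Similarly, fix $x\in I$ and define $v(t,y)=\hat P_t\mathbf{1}_{[x,\infty)}(y)$; by the backward equation for the $\hat{\mathcal D}$-chain, $\partial_t v(t,y)=\hat{\mathcal D}_y v(t,y)$ with $v(0,y)=\mathbf{1}_{[x,\infty)}(y)$. The heart of the matter is the purely algebraic identity relating the two difference operators through the "cumulative sum" transformation: if one writes, for a function $g$ on $I$, its partial sum $(Sg)(y)=\sum_{l\le z\le y}g(z)$, then one checks by direct computation using $\hat\lambda(x)=\mu(x+1)$, $\hat\mu(x)=\lambda(x)$ that
\[
\mathcal D_x\big(S^{(x)}\!\big)[\text{sum in second variable}] \;=\; S\big(\hat{\mathcal D}\,[\cdot]\big)
\]
in the appropriate sense — i.e. the $\mathcal D$-chain acting in the first variable on the cumulative distribution function corresponds to the $\hat{\mathcal D}$-chain acting in the second. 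Equivalently, one shows that $p_t(x,\cdot)$ summed up to $y$ and $\hat p_t(y,\cdot)$ summed from $x$ both satisfy the same evolution, so their difference $w(t,x,y)$ satisfies a homogeneous backward equation with zero initial data.

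The key steps, in order: (1) Record the boundary behaviour: $u(t,x)\to 0$ as $x\to\infty$ (this is exactly the statement $p_t(x,y)\to 0$ as $x\to\infty$, guaranteed by conditions (\ref{birthdeathcondition1})--(\ref{birthdeathcondition2}) or the bilateral analogues), and at the left endpoint $u$ is controlled by the reflecting/absorbing convention at $0$ (resp. $-1$); symmetrically for $v$. (2) Verify the discrete algebraic identity relating $\mathcal D$ applied in the $x$-variable to a cumulative sum and $\hat{\mathcal D}$ applied in the $y$-variable — this is a short computation with the explicit rates, and the shift in the duality ($-1$ as the absorbing state of $\hat{\mathcal D}$, and the appearance of $\mu(x+1)$) is precisely what makes the telescoping work. (3) Conclude that $w(t,x,y):=P_t\mathbf 1_{[l,y]}(x)-\hat P_t\mathbf 1_{[x,\infty)}(y)$ solves, as a function of $t$ with the other two variables as parameters, a backward equation (in either variable) with $w(0,x,y)=\mathbf 1_{[l,y]}(x)-\mathbf 1_{[x,\infty)}(y)=0$ (here one uses that $x\le y\iff$ the indicator equality holds pointwise, which is the combinatorial core). (4) Invoke the uniqueness part of the well-posedness hypothesis (natural boundary at $\infty$, conditions (\ref{birthdeathcondition1})--(\ref{birthdeathcondition2}) / (\ref{bilateralcondition1})--(\ref{bilateralcondition4}), which the excerpt notes transfer between $(\lambda,\mu)$ and $(\hat\lambda,\hat\mu)$) to conclude $w\equiv 0$.

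I expect the main obstacle to be step (4) together with the boundary bookkeeping in step (1): one must make sure that both $u(t,\cdot)$ and $v(t,\cdot)$ genuinely lie in the class of functions for which the backward equation has a \emph{unique} bounded solution — in particular that the function $\hat P_t\mathbf 1_{[x,\infty)}(y)$, which does \emph{not} vanish at $+\infty$ in $y$, is nonetheless pinned down by its initial data and the behaviour at the boundaries of $I$ (the absorbing state $-1$ for $\hat{\mathcal D}$ plays the role here). The cleanest way around this is to work with the complementary function $1-\hat P_t\mathbf 1_{[x,\infty)}(y)=\hat P_t\mathbf 1_{[l,x-1]}(y)$, which does decay, so that one compares two genuinely decaying solutions and the uniqueness statement from \cite{Kemperman}, \cite{Vandoorn} (resp. \cite{Pruitt}) applies directly. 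Since the excerpt also promises a "graphical" proof via the coalescing flow in the next subsection, an alternative — and perhaps more in the spirit of the paper — is to couple a $\mathcal D$-chain started at $x$ and a $\hat{\mathcal D}$-chain started at $y$ on the same graphical representation so that $\{X(t)\le y\}$ and $\{\hat X(t)\ge x\}$ are literally the same event; but for the self-contained analytic lemma as stated, the ODE-uniqueness route above is the one I would write out.
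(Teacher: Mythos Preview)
Your approach is in the same family as the paper's analytic proof (ODE plus uniqueness), but the packaging differs in a way that matters for step (3). You set up $u$ to solve the backward equation in the $x$-variable and $v$ to solve the backward equation in the $y$-variable; from this alone it does \emph{not} follow that $w=u-v$ solves a single backward equation ``in either variable''. The missing link is precisely what the paper isolates as the matrix identity $B\mathcal{D}=\hat{\mathcal{D}}^{\mathrm T}B$ (equivalently, the generator-level duality $\mathcal{D}_xH=\hat{\mathcal{D}}_yH$ for $H(x,y)=\mathbf{1}(x\le y)$). Rather than comparing two backward equations, the paper defines the candidate kernel $P_*(t)=A\hat P^{\mathrm T}(t)B$ (i.e.\ $p_*^t(x,y)=-\bar\nabla_y\sum_{w\ge x}\hat p_t(y,w)$), uses the backward equation for $\hat P$ together with $B\mathcal{D}=\hat{\mathcal{D}}^{\mathrm T}B$ to show $P_*(t)$ satisfies the \emph{forwards} equation $\partial_t P_*=P_*\mathcal{D}$, checks $P_*(0)=\mathrm{Id}$, $P_*(t)\ge 0$ and $\sum_y P_*^t(x,y)=1$, and then invokes uniqueness of stochastic solutions to the forwards equation. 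This sidesteps your boundary worry: uniqueness is applied in the class of stochastic matrices, not in a class of decaying functions, so there is no need to pass to complements.

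The paper also spends a paragraph on a point you do not mention: justifying the interchange of $\frac{d}{dt}$ with the infinite sum $\sum_{w\ge x}\hat p_t(y,w)$, via an equicontinuity/Arzel\`a--Ascoli argument using a uniform bound on $\sum_w|\partial_t\hat p_t(y,w)|$. Your sketch would need the same step. Finally, your aside about the graphical coupling is exactly right and matches the paper's second proof via the coalescing flow in Subsection~\ref{SubsectionFlow}.
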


\begin{rmk}
Note that, the $\ \hat{} \ $ operation is not an involution even in the case of $I=\mathbb{Z}$, unlike the diffusion process setting, see \cite{InterlacingDiffusions}. This is an artefact of the discrete world and will complicate things a little bit, since these asymmetries make keeping track of the positions of $\le$ and $<$ below important.
\end{rmk}

\subsection{Discrete coalescing flow and two-level process}\label{SubsectionFlow}

 First, we define the interlacing spaces our processes will take values in, with $I$ being either $\mathbb{N}$ or $\mathbb{Z}$, in particular all coordinates are integers, and with $l=0$ or $-\infty$ respectively, as follows,
\begin{align*}
W^{n}(I)&=\{x=(x_1,\cdots,x_n)\in I^n:l \le x_1 < \cdots < x_{n} < \infty \},\\
W^{n,n+1}(I)&=\{(x,y)=(x_1,\cdots,x_{n+1},y_1,\cdots,y_n)\in I^{2n+1}:l \le x_1 \le y_1 < x_2 \le \cdots < x_{n+1}< \infty \},\\
W^{n,n}(I)&=\{(x,y)=(x_1,\cdots,x_n,y_1,\cdots,y_n)\in I^{2n}:l \le y_1 \le x_1 < y_2 \le \cdots \le x_{n} < \infty \}.
\end{align*}
Also, define for $x\in W^{n}(I)$,
\begin{align*}
 W^{\bullet,n}(x)=\{y\in W^{\bullet}(I):(x,y)\in W^{\bullet,n}(I)\}.
\end{align*}
Similarly, define $W^{n,\bullet}(y)$,
\begin{align*}
 W^{n,\bullet}(y)=\{x\in W^{\bullet}(I):(x,y)\in W^{n,\bullet}(I)\}.
\end{align*}

\paragraph{Graphical construction of coalescing flow} We now describe the "graphical" construction of the coalescing flow of birth and death (or bilateral) chains. For each site of the lattice $x\in I$, we have independent Poisson processes, indexed by time $t \in \mathbb{R}$, of up $\uparrow$ arrows denoted by $\{N_x^{\uparrow}(t):t \in \mathbb{R}\}$ of (constant) rate $\lambda(x)$ and down $\downarrow$ arrows denoted by $\{N_x^{\downarrow}(t):t \in \mathbb{R}\}$ of (constant) rate $\mu(x)$. 

We now define the family of random maps $\{\boldsymbol{\Phi}_{s,t}:I \to I; s\le t\}$ as follows. For $x\in I$ and $s \le t$, the value $\boldsymbol{\Phi}_{s,t}(x)$ is arrived at by starting at time $s$ at site $x$ and following the direction of the arrows until time $t$. The site you are on at time $t$ is defined to be $\boldsymbol{\Phi}_{s,t}(x)$. There is a slight ambiguity in this definition at arrival times of the arrows and by convention we take the right continuous (in time) version of this map. See Figure \ref{figurecoalescing} for an illustration.

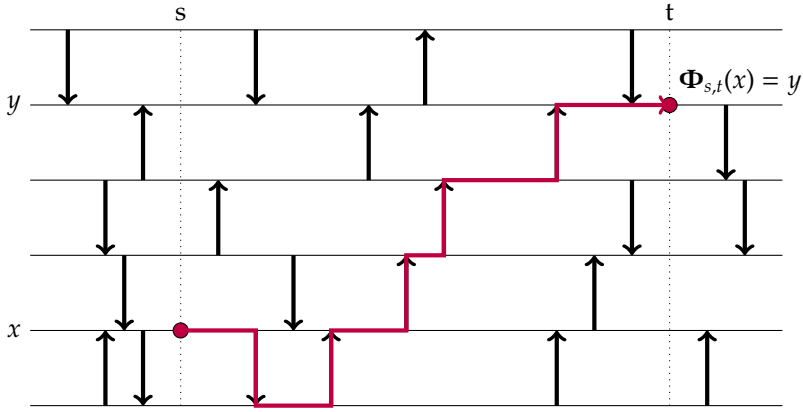
\begin{figure}[h]
\captionsetup{singlelinecheck = false, justification=justified}
\begin{tikzpicture}
\draw[dotted] (2,0) -- (2,5);
\node[above ] at (2,5) { s};
\draw[dotted] (8.5,0) -- (8.5,5);
\node[above ] at (8.5,5) {t};
\node[left ] at (0,1) {$x$};
\node[left ] at (0,4) {$y$};
\node[above right ] at (8.5,4) {$\boldsymbol{\Phi}_{s,t}(x)=y$};
\draw[thin] (0,0) -- (10,0);
\draw[thin] (0,1) -- (10,1);
\draw[thin] (0,2) -- (10,2);
\draw[thin] (0,3) -- (10,3);
\draw[thin] (0,4) -- (10,4);
\draw[thin] (0,5) -- (10,5);
\draw[fill=purple] (2,1) circle [radius=0.1];
\draw[fill=purple] (8.5,4) circle [radius=0.1];
\draw[->, ultra thick] (1,0) -- (1,1);
\draw[->, ultra thick] (1.5,1) -- (1.5,0);
\draw[->, ultra thick] (3,1) -- (3,0);
\draw[->, ultra thick] (4,0) -- (4,1);
\draw[->, ultra thick] (7,0) -- (7,1);
\draw[->, ultra thick] (9,0) -- (9,1);
\draw[->, ultra thick] (1.25,2) -- (1.25,1);
\draw[->, ultra thick] (3.5,2) -- (3.5,1);
\draw[->, ultra thick] (5,1) -- (5,2);
\draw[->, ultra thick] (7.5,1) -- (7.5,2);
\draw[->, ultra thick] (1,3) -- (1,2);
\draw[->, ultra thick] (2.5,2) -- (2.5,3);
\draw[->, ultra thick] (5.5,2) -- (5.5,3);
\draw[->, ultra thick] (8,3) -- (8,2);
\draw[->, ultra thick] (9.5,3) -- (9.5,2);
\draw[->, ultra thick] (1.5,3) -- (1.5,4);
\draw[->, ultra thick] (4.5,3) -- (4.5,4);
\draw[->, ultra thick] (7,3) -- (7,4);
\draw[->, ultra thick] (9.25,4) -- (9.25,3);
\draw[->, ultra thick] (0.5,5) -- (0.5,4);
\draw[->, ultra thick] (3,5) -- (3,4);
\draw[->, ultra thick] (5.25,4) -- (5.25,5);
\draw[->, ultra thick] (8,5) -- (8,4);
\draw[->, ultra thick, purple] (2,1)-- (3,1) -- (3,0) --(4,0) -- (4,1)--(5,1)--(5,2)--(5.5,2)--(5.5,3)--(7,3)--(7,4)--(8.5,4);
\end{tikzpicture}
\caption{The graphical construction of the coalescing flow $\left(\boldsymbol{\Phi}_{s,t}(\cdot);s \le t\right)$.}\label{figurecoalescing}

\end{figure}

It is clear from the construction, namely from the properties of the independent Poisson processes $\{N_x^{\uparrow},N_x^{\downarrow}:x \in I \}$, that almost surely $\boldsymbol{\Phi}_{\cdot,\cdot}(\cdot)$ satisfies: $\forall u\le s \le t \in \mathbb{R}$ and $h \in \mathbb{R}$ , $\boldsymbol{\Phi}_{t,t}=Id$,  $\boldsymbol{\Phi}_{s,t}\circ \boldsymbol{\Phi}_{u,s}=\boldsymbol{\Phi}_{u,t}$, $\boldsymbol{\Phi}_{s,t}\overset{\textnormal{law}}{=}\boldsymbol{\Phi}_{s+h,t+h}$ and $\boldsymbol{\Phi}_{s,t}$ and $\boldsymbol{\Phi}_{u,s}$ are independent. Moreover, $\boldsymbol{\Phi}_{s,t}(x)$ is distributed as a $\mathcal{D}$-chain ran from time $s$ to time $t$ starting from $x$ and the joint distribution of $\left((\boldsymbol{\Phi}_{s,t}(x_1),\boldsymbol{\Phi}_{s,t}(x_2));t\ge s\right)$ is that of two independent $\mathcal{D}$-chains starting from sites $x_1$ and $x_2$ at time $s$, that coalesce when they meet, since once they are at the same site they will follow the same arrows.

Now, define the dual flow for $s\le t$ by:
\begin{align*}
\boldsymbol{\Phi}_{s,t}^*(x)=\boldsymbol{\Phi}^{-1}_{-t,-s}(x)=\sup\{w \in I:\boldsymbol{\Phi}_{-t,-s}(w) \le x \}.
\end{align*}
Note that,
\begin{align*}
\boldsymbol{\Phi}_{s,t}^*\left(\boldsymbol{\Phi}_{u,s}^*(x)\right)=\sup\{w \in I:\boldsymbol{\Phi}_{-t,-s}(w)\le\boldsymbol{\Phi}_{u,s}^*(x)\}=\sup\{w \in I: \boldsymbol{\Phi}_{-s,-u} \circ \boldsymbol{\Phi}_{-t,-s}(w) \le x\}=\boldsymbol{\Phi}_{u,t}^*(x).
\end{align*}
More generally, the fact that this again satisfies the stochastic flow properties will be implied immediately from the pathwise construction below, which also identifies the dynamics of the random maps $\{\boldsymbol{\Phi}_{s,t}^*;s \le t \}$.
 
The following statements are purely deterministic. Suppose that on each site of the lattice $x\in I$ we have a countable number, with no accumulation points, of up $\uparrow$ and down $\downarrow$ arrows arriving at (distinct) time points $\{\cdots <t_{-1}^{x,\uparrow}<t_0^{x,\uparrow}<t_1^{x,\uparrow}<t_2^{x,\uparrow}<\cdots\}$ and $\{\cdots<t_{-1}^{x,\downarrow}<t_{0}^{x,\downarrow}<t_{1}^{x,\downarrow}<t_2^{x,\downarrow}<\cdots\}$ respectively (by convention, $t_{0}^{x,\cdot}$ denotes the first arrival after time-0). Define the maps $F_{\cdot,\cdot}(\cdot)$ as before: Start at time $s$ at site $x$ and follow the direction of the arrows until time $t$. The site you are at is defined to be $F_{s,t}(x)$. As before, there is some ambiguity in this definition at the arrival times $t_{\cdot}^{x,\uparrow},t_{\cdot}^{x,\downarrow}$ of arrows and again by convention we take the right continuous (in time) version of this map. In particular, if $t_l^{x,\uparrow}$ is the first arrow after time $s$ at site $x$ then $F_{s,t}(x)=x$ for $s\le t<t_l^{x,\uparrow}$ while $F_{s,t_l^{x,\uparrow}}(x)=x+1$ and so on.

Consider $F^{-1}_{s,t}(x)=\sup\{w \in I:F_{s,t}(w) \le x \}$ and our aim is to obtain a pathwise description for this map. We introduce the following two operations on the original/black arrows to get new/$\color{red}{red}$ arrows. It is important to note the minor asymmetry (coming from our choice of $\le$ in the definition of $F^{-1}_{s,t}$) in the operations below.

$\mathbf{1}.$ An up arrow $\uparrow$ at time $t$ from site $x$ to site $x+1$, becomes a $\color{red}{red}$ down arrow $\color{red}{\downarrow}$ from site $x$ to site $x-1$ at time $t$. See Figure \ref{figureuparrows} for an illustration.

\begin{figure}[h]
\captionsetup{singlelinecheck = false, justification=justified}
\begin{tikzpicture}

\node[left ] at (0,0) {x-1};
\node[left ] at (0,1) {x};
\node[left ] at (0,2) {x+1};
\node[above ] at (1.5,2) {t};
\draw[thin] (0,0) -- (3,0);
\draw[thin] (0,1) -- (3,1);
\draw[thin] (0,2) -- (3,2);
\draw[->, ultra thick] (1.5,1) -> (1.5,2);

\draw[->, ultra thick, red] (4,1) to [out=60, in=120] (6,1);

\node[above ] at (8.5,2) {t};
\draw[thin] (7,0) -- (10,0);
\draw[thin] (7,1) -- (10,1);
\draw[thin] (7,2) -- (10,2);
\draw[->, ultra thick,red] (8.5,1) -> (8.5,0);
\node[left ] at (7,0) {x-1};
\node[left ] at (7,1) {x};
\node[left ] at (7,2) {x+1};

\end{tikzpicture}
\caption{The transformation of up arrows.}\label{figureuparrows}

\end{figure}
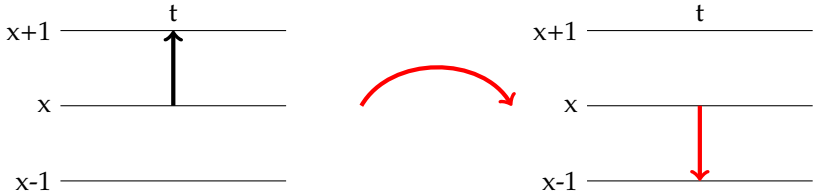

$\mathbf{2}.$ A down arrow $\downarrow$ at time $t$ from site $x+1$ to site $x$, becomes a $\color{red}{red}$ up arrow $\color{red}{\uparrow}$ from site $x$ to site $x+1$ at time $t$. See Figure \ref{figuredownarrows} for an illustration.

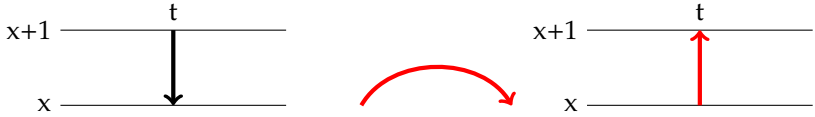
\begin{figure}[h]
\captionsetup{singlelinecheck = false, justification=justified}
\begin{tikzpicture}

\node[left ] at (0,0) {x};
\node[left ] at (0,1) {x+1};
\node[above ] at (1.5,1) {t};
\draw[thin] (0,0) -- (3,0);
\draw[thin] (0,1) -- (3,1);
\draw[->, ultra thick] (1.5,1) -> (1.5,0);

\draw[->, ultra thick, red] (4,0) to [out=60, in=120] (6,0);

\node[above ] at (8.5,1) {t};
\draw[thin] (7,0) -- (10,0);
\draw[thin] (7,1) -- (10,1);

\draw[->, ultra thick,red] (8.5,0) -> (8.5,1);
\node[left ] at (7,0) {x};
\node[left ] at (7,1) {x+1};

\end{tikzpicture}
\caption{The transformation of down arrows.}\label{figuredownarrows}

\end{figure}

Moreover, define the maps $G_{\cdot,\cdot}(\cdot)$, when evaluated at $G_{s,t}(x)$ as follows: Start at time $t$ at site $x$ and follow the direction of the $\color{red}{red}$ up and down arrows backwards until time $s$. The site you are at, is defined to be $G_{s,t}(x)$.

 We then have the following proposition, whose proof is deferred to the Appendix.

\begin{prop}\label{Pathwise}
For $x \in I $ and $s \le t$, we have $F_{s,t}^{-1}(x)=G_{s,t}(x)$.
\end{prop}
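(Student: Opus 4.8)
The plan is to establish the identity $F_{s,t}^{-1}(x)=G_{s,t}(x)$ by reducing the claim over an arbitrary time window $[s,t]$ to the case of a single arrow, and then composing. First I would observe that both $F_{s,t}$ and $G_{s,t}$ enjoy a flow/cocycle property: $F_{s,t}=F_{r,t}\circ F_{s,r}$ for $s\le r\le t$, hence (taking inverses, and using that $F_{s,t}$ is nondecreasing so the generalized inverse $w\mapsto\sup\{w:F(w)\le x\}$ behaves well under composition) $F_{s,t}^{-1}=F_{s,r}^{-1}\circ F_{r,t}^{-1}$; and likewise $G_{s,t}=G_{s,r}\circ G_{r,t}$ directly from the definition of following the red arrows backwards. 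Since in any bounded window $[s,t]$ there are only finitely many arrows (no accumulation points), by choosing the $r$'s to separate consecutive arrival times it suffices to prove the statement on a time interval containing exactly one black arrow, and on an interval containing none (where both maps are the identity, trivially). This is the skeleton of the argument; the one subtlety to be careful about is that $w\mapsto\sup\{w:F(w)\le x\}$ is a right-continuous nondecreasing map and composition of such generalized inverses is compatible with composition of the original monotone maps — a short lemma worth stating explicitly.

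Next, for the single-arrow case I would simply compute both sides. Suppose the unique arrow in $[s,t]$ is an up arrow at site $x_0$, i.e. $F_{s,t}$ fixes every site except it sends $x_0\mapsto x_0+1$; so $F_{s,t}$ maps $\{\dots,x_0-1,x_0\}$ to $\{\dots,x_0-1,x_0+1\}$, skipping $x_0$. Then $F_{s,t}^{-1}(x)=\sup\{w:F_{s,t}(w)\le x\}$ equals $x$ for $x\le x_0-1$, equals $x_0-1$ for $x=x_0$ (since $F_{s,t}(x_0)=x_0+1>x_0$ but $F_{s,t}(x_0-1)=x_0-1\le x_0$), and equals $x$ for $x\ge x_0+1$. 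On the other hand, under transformation $\mathbf{1}$ this black up arrow at $x_0$ becomes a red \emph{down} arrow from $x_0$ to $x_0-1$; following the red arrows backwards in time from $x$ over $[s,t]$, $G_{s,t}$ fixes every site except $x_0\mapsto x_0-1$. The two descriptions coincide. The down-arrow case is symmetric, using transformation $\mathbf{2}$: a black down arrow from $x_0+1$ to $x_0$ gives $F_{s,t}(x_0)=F_{s,t}(x_0+1)=x_0$ (a merge), so $F_{s,t}^{-1}(x_0)=x_0+1=\sup\{w:F(w)\le x_0\}$, while the induced red \emph{up} arrow sends $x_0\mapsto x_0+1$ under $G_{s,t}$; again they match. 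Here one must also check the right-continuity conventions in time line up on both sides, i.e. that the instant of the arrow is assigned consistently — this is the place where the chosen "right continuous version" convention is used, and it is exactly parallel for $F$ and $G$.

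Finally I would assemble the pieces: pick $s=r_0<r_1<\dots<r_k=t$ so that each $(r_{i-1},r_i]$ contains at most one arrow (possible since arrivals have no accumulation points in $[s,t]$), apply the single-arrow/no-arrow identity on each subinterval, and chain them together via the cocycle identities for $F^{-1}$ and for $G$. I expect the main obstacle to be not any deep idea but the careful bookkeeping around the generalized inverse: proving cleanly that $F_{s,t}^{-1}=F_{s,r}^{-1}\circ F_{r,t}^{-1}$ for monotone (but not injective, and not surjective) maps on $I$, with the $\sup$-convention, and checking the boundary behaviour at the wall ($l=0$ when $I=\mathbb N$) where a down arrow from $0$ is absent but the generalized inverse could a priori want to return a site below the wall. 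Handling that edge case — verifying $G_{s,t}$ and $F_{s,t}^{-1}$ agree at and near $x=l$ — is the only spot requiring a dedicated remark; everywhere else the argument is a routine induction on the number of arrows.
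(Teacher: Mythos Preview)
Your decomposition-and-composition strategy is genuinely different from the paper's. The paper argues directly by path comparison: it shows (i) $F_{s,t}(w)\le x\Rightarrow G_{s,t}(x)\ge w$, by attempting to follow the forward path $w\to x$ backward in time along red arrows and observing that the red backward path can only deviate upward from it; and (ii) $F_{s,t}(w)>x\Rightarrow G_{s,t}(x)<w$, by a first-meeting contradiction between the forward path from $w$ and the backward red path from $x$. These two claims give $\{w:F_{s,t}(w)\le x\}=\{w:w\le G_{s,t}(x)\}$ and hence the identity, with no induction on arrows at all.

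Your argument has a genuine gap at the step ``in any bounded window $[s,t]$ there are only finitely many arrows (no accumulation points)''. The no-accumulation hypothesis in the paper is \emph{per site}; since $I=\mathbb{N}$ or $\mathbb{Z}$ has infinitely many sites, there are in general infinitely many arrows in $[s,t]$ across the whole lattice (in the Poisson setting this is almost sure), so you cannot globally partition $[s,t]$ into finitely many single-arrow subintervals and the induction does not start. Your single-arrow computations and the cocycle identity $(F_{r,t}\circ F_{s,r})^{-1}=F_{s,r}^{-1}\circ F_{r,t}^{-1}$ for monotone integer maps are both correct, so the fix is a localization: for fixed $x$, argue (via non-explosion of the backward red path from $x$ and of the relevant forward paths) that both $G_{s,t}(x)$ and $F_{s,t}^{-1}(x)$ depend only on arrows at finitely many sites, restrict to a large finite box of sites where your induction is valid, and verify the truncation changes neither side. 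This works but adds a layer of analysis that the paper's direct two-claim argument sidesteps entirely.
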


 Observe that, if the processes $N_x^{\uparrow}$ of up arrows are independent Poisson processes of rate $\lambda(x)$ and down arrows $N_x^{\downarrow}$ are of rate $\mu(x)$ then the processes of $\color{red}{red}$ arrows $\color{red}N_x^{{\uparrow}},\color{red}N_x^{{\downarrow}}$, that are followed by $\boldsymbol{\Phi}^*$,  are independent Poisson processes with rates $\mu(x+1)$ and $\lambda(x)$ respectively. Thus, this construction identifies the dual flow as that of coalescing $\hat{\mathcal{D}}$-chains ran backwards in time. In particular, this also gives a graphical proof of the Siegmund duality Lemma \ref{ConjugacyLemma}.

\begin{rmk}
It is possible, and equivalent, to consider the dual flow $\boldsymbol{\Phi}^*$ on the (dual) lattice $I\pm \frac{1}{2}$. Then, the operations performed to obtain arrows followed by this flow backwards in time become symmetric.
\end{rmk}
 
We arrive at the following proposition for the finite dimensional distributions of the coalescing flow. The result is stated for times $0$ and $t$, but by stationarity it extends to arbitrary pairs of times.
\begin{prop}\label{propflowfdd}
For $z,z'\in W^n(I)$,
\begin{align*}
\mathbb{P}\big(\boldsymbol{\Phi}_{0,t}(z_i)\le z_i' \ , \ \textnormal{for} \ 1 \le i \le n\big)=\det \big(P_t\textbf{1}_{[l,z_j']}(z_i)-\textbf{1}(i<j)\big)_{i,j=1}^n.
\end{align*}
\end{prop}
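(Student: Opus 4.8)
The plan is to argue by induction on $n$. The base case $n=1$ is immediate, since $\boldsymbol{\Phi}_{0,t}(z_1)$ is distributed as a $\mathcal{D}$-chain started at $z_1$ and both sides equal $P_t\textbf{1}_{[l,z_1']}(z_1)$; this is the one-point case of Siegmund duality (Lemma \ref{ConjugacyLemma}). Before the inductive step I would record two features transparent from the graphical construction. First, the map $\boldsymbol{\Phi}_{0,t}\colon I\to I$ is non-decreasing, because two chains in the coalescing flow can never jump past one another, so $\boldsymbol{\Phi}_{0,t}(z_1)\le\cdots\le\boldsymbol{\Phi}_{0,t}(z_n)$; since also $z_1'<\cdots<z_n'$, the target event becomes the staircase event $\{\#\{k:\boldsymbol{\Phi}_{0,t}(z_k)\le z_i'\}\ge i\text{ for all }i\}$. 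Second, again from the graphical construction, $(\boldsymbol{\Phi}_{0,t}(z_1),\dots,\boldsymbol{\Phi}_{0,t}(z_n))$ has the law of $n$ independent $\mathcal{D}$-chains started at $z_1,\dots,z_n$ that coalesce pairwise at their meeting times — before any two have met the chains read off arrows from disjoint spacetime regions and so are independent. (From monotonicity one also has $\{\boldsymbol{\Phi}_{0,t}(z_i)\le z_i'\ \forall i\}=\{z_i\le\boldsymbol{\Phi}_{0,t}^{-1}(z_i')\ \forall i\}$ with $\boldsymbol{\Phi}_{0,t}^{-1}$ the dual coalescing flow of Proposition \ref{Pathwise}, consistent with Lemma \ref{ConjugacyLemma}, but this reformulation by itself does not produce the determinant.)

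For the inductive step, write $f_j(x)=P_t\textbf{1}_{[l,z_j']}(x)=\mathbb{P}_x(X(t)\le z_j')$, so the claim is $\mathbb{P}(\boldsymbol{\Phi}_{0,t}(z_i)\le z_i'\ \forall i)=\det(f_j(z_i)-\textbf{1}(i<j))_{i,j=1}^n$, and I would peel off the leftmost chain. Realize $(\boldsymbol{\Phi}_{0,\cdot}(z_2),\dots,\boldsymbol{\Phi}_{0,\cdot}(z_n))$ as the coalescing flow from $z_2<\cdots<z_n$ (it is unaffected by the first chain, which can only be absorbed into its block), run an independent $\mathcal{D}$-chain $Y$ from $z_1$, and let $\sigma$ be the first time $Y$ meets $\boldsymbol{\Phi}_{0,\cdot}(z_2)$, so that $\boldsymbol{\Phi}_{0,t}(z_1)=Y(t)$ on $\{\sigma>t\}$ and $\boldsymbol{\Phi}_{0,t}(z_1)=\boldsymbol{\Phi}_{0,t}(z_2)$ on $\{\sigma\le t\}$. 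Splitting the probability over these two events, using the independence of $Y$ from the other chains up to $\sigma$, and then applying the reflection principle (interchanging the two chains after $\sigma$ is measure preserving and fixes $\sigma$) together with an inclusion--exclusion in the single constraint $\boldsymbol{\Phi}_{0,t}(z_1)\le z_1'$, I would write the target probability as $f_1(z_1)$ times the $(n-1)$-point probability for $z_2<\cdots<z_n$ against $z_2'<\cdots<z_n'$, plus a reflected term in which $Y$ has been swapped past the block: the constraint $z_1'$ is carried onto the block and $Y$ must instead overshoot $z_2'$, giving $\mathbb{P}_{z_1}(X(t)>z_2')$ times an $(n-1)$-point probability against the still-increasing barriers $z_1'<z_3'<\cdots<z_n'$. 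Both pieces are $(n-1)$-point probabilities, hence determinants by the induction hypothesis; reconciling the shifted barriers with the indicator pattern then assembles the full $n\times n$ determinant, with the entries $f_j(z_1)-1=-\mathbb{P}_{z_1}(X(t)>z_j')$ for $j\ge 2$ coming precisely from the ``overshoot'' terms. I would carry out $n=2$ explicitly as the template: there the computation gives $\mathbb{P}(\cdot)=f_1(z_1)f_2(z_2)+f_1(z_2)(1-f_2(z_1))=f_1(z_1)f_2(z_2)-(f_2(z_1)-1)f_1(z_2)$, exactly the claimed $2\times 2$ determinant.

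The step I expect to be the main obstacle is the reflection/inclusion--exclusion bookkeeping for general $n$: the correction $-\textbf{1}(i<j)$ is a global feature of the matrix, so a bare cofactor expansion does not match an $(n-1)$-point probability, and it is the reflected ``overshoot'' contributions with their shifted barriers that reconcile the two; one must therefore track precisely how the barriers and the indicator pattern are permuted at each peeling step, check that all surplus terms produced by the nested inclusion--exclusions cancel in pairs under the ``earliest coincident pair, swap the tails'' involution, and handle the discrete-lattice subtleties — possible ties among the chains at time $t$, and the asymmetric placement of $\le$ versus $<$ (the asymmetry flagged in the remark after Lemma \ref{ConjugacyLemma}) — so that every strict or weak inequality falls on the correct side. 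An alternative route, avoiding reflections, is to fix $z$ and verify that both sides, viewed as functions of $t\ge 0$, solve the Kolmogorov backward equation of the $n$-point coalescing motion — the determinant satisfies the free equation $\frac{d}{dt}v=\sum_i\mathcal{D}_{z_i}v$ together with the matching condition along each wall $z_i=z_{i+1}$ encoding coalescence — with the common initial value at $t=0$, where both sides equal $\textbf{1}(z_i\le z_i'\ \forall i)$; the uniqueness guaranteed by conditions (\ref{birthdeathcondition1})--(\ref{bilateralcondition4}) then finishes the proof.
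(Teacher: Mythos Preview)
Your approach is genuinely different from the paper's, and the gap you flag as ``the main obstacle'' is real and not resolved by what you wrote. The paper does not induct on $n$ at all. Instead it observes that the event $\{\boldsymbol{\Phi}_{0,t}(z_i)=y_i\text{ for all }i\}$ with the $y_i$ \emph{distinct} is exactly the non-coalescence event, so by the Karlin--McGregor formula its probability is $\det(p_t(z_i,y_j))$. Summing over the strictly interlaced region $l\le y_1\le z_1'<y_2\le z_2'<\cdots$ and doing column operations gives
\[
\mathbb{P}\big(\boldsymbol{\Phi}_{0,t}(z_1)\le z_1'<\boldsymbol{\Phi}_{0,t}(z_2)\le z_2'<\cdots<\boldsymbol{\Phi}_{0,t}(z_n)\le z_n'\big)=\det\big(P_t\textbf{1}_{[l,z_j']}(z_i)\big)_{i,j=1}^n.
\]
The passage from this ``strictly separated'' event to the target event $\{\boldsymbol{\Phi}_{0,t}(z_i)\le z_i'\ \forall i\}$ is then a purely combinatorial identity expressing $\textbf{1}(\text{all}\le)$ as a signed sum over increasing subsequences of strictly-interlaced indicators; the paper cites Proposition~9 of \cite{Warren} for this, and that is where the correction $-\textbf{1}(i<j)$ enters. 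No reflection and no induction are needed.

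Your two-term recursion, by contrast, does not close for $n\ge 3$. Expanding the target determinant along the first row gives $n$ terms, and while the first two minors are indeed the $(n-1)$-point determinants for barriers $(z_2',\dots,z_n')$ and $(z_1',z_3',\dots,z_n')$, the remaining minors are \emph{not} of that form: e.g.\ for $n=3$ the $(1,3)$-minor is $\det\bigl(\begin{smallmatrix} f_1(z_2) & f_2(z_2)\\ f_1(z_3)&f_2(z_3)\end{smallmatrix}\bigr)$, which lacks the $-1$ in its $(1,2)$ entry and so is not $P_2(z_2,z_3;z_1',z_2')$. So the probabilistic recursion cannot be just ``$f_1(z_1)\cdot(\text{block against }z_2',\dots,z_n')$ plus one reflected overshoot term''; after swapping $Y$ past chain~2 one must iterate, swapping past chain~3 as well, and so on, producing all $n$ cofactors and then arguing that the discrepancy in the indicator pattern of the later minors cancels against further reflected pieces. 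That nested bookkeeping can presumably be made to work, but it is exactly the step you left open, and the paper's route via Karlin--McGregor plus the cited combinatorial lemma bypasses it entirely. Your alternative backward-equation route is also viable in principle, but note that you would need to verify the coalescence boundary condition (the determinant is invariant under $z_i\to z_{i+1}$ when they coincide, which is clear), check the initial condition (both sides equal $\textbf{1}(z_i\le z_i'\ \forall i)$ at $t=0$), and invoke a uniqueness statement for the \emph{multivariate} backward system; the conditions (\ref{birthdeathcondition1})--(\ref{bilateralcondition4}) you cite are one-dimensional, so you would still need an argument like that of Proposition~\ref{UniquenessBackwardsWithCemetery}.
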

\begin{proof}
By translating the non-intersection probability found in display (3) in \cite{KarlinMcGregorCoincidence} and the paragraph following it, to our setting we get for $(y_1,\cdots,y_n)\in W^n(I)$:
\begin{align*}
\mathbb{P}\big(\boldsymbol{\Phi}_{0,t}(z_i)=y_i,\ \textnormal{for} \ 1 \le i \le n \big)=\det\left(p_t(z_i,y_j)\right)^n_{i,j=1}.
\end{align*}
This is because of the following observation: the fact that the $\boldsymbol{\Phi}_{0,t}(z_i)$ are equal to distinct points $y_i$ is equivalent to non-coalescence/non-intersection in the time interval $[0,t]$ of the underlying independent $\mathcal{D}$-chains. Then, summing over $(y_1,\cdots,y_n)$ in $\{  l \le y_1 \le z'_1, z'_1+1\le y_2 \le z'_2, \cdots,z'_{n-1}+1\le y_n \le z_n' \}$ and successively adding column $j$ to column $j+1$ we obtain,
\begin{align*}
\mathbb{P}\big(\boldsymbol{\Phi}_{0,t}(z_1)\le z_1'<\boldsymbol{\Phi}_{0,t}(z_2)\le z_2'<\cdots<\boldsymbol{\Phi}_{0,t}(z_n)\le z_n'\big)=\det\big(P_t\textbf{1}_{[l,z_j']}(z_i)\big)_{i,j}^n.
\end{align*}
The result will then follow, by writing the indicator function of the event,
\begin{align*}
\{\boldsymbol{\Phi}_{0,t}(z_1)\le z_1',\boldsymbol{\Phi}_{0,t}(z_2)\le z_2',\cdots,\boldsymbol{\Phi}_{0,t}(z_n)\le z_n'\},
\end{align*}
in terms of an expansion of indicator functions of events of the form,
\begin{align*}
\big\{\boldsymbol{\Phi}_{0,t}(z_{i_1})\le z_{j_1}'<\boldsymbol{\Phi}_{0,t}(z_{i_2})\le z_{j_2}'<\cdots<\boldsymbol{\Phi}_{0,t}(z_{i_{k}})\le z_{i_{k}}'\big\},
\end{align*}
for increasing subsequences $i_1,\cdots,i_k$ and $j_1,\cdots,j_k$. This combinatorial fact is presented in detail in Proposition 9 of \cite{Warren}, to which the reader is referred to.
\end{proof}

We now come to the key definition of the time-dependent block determinant kernel, $\mathsf{q}_t^{n,n+1}((x,y),(x',y'))$ on $W^{n,n+1}(I)$.

\begin{defn}\label{blockdetdef1}
For $(x,y),(x',y')\in W^{n,n+1}(I)$ and $t\ge0$, define $\mathsf{q}_t^{n,n+1}((x,y),(x',y'))$ by,
\begin{align*}
& \mathsf{q}_t^{n,n+1}((x,y),(x',y'))=\\
&=\frac{\prod_{i=1}^{n}\hat{\pi}(y'_i)}{\prod_{i=1}^{n}\hat{\pi}(y_i)}(-1)^n\nabla_{y_1}\cdots\nabla_{y_n}(-1)^{n+1}\bar{\nabla}_{x'_1}\cdots\bar{\nabla}_{x'_{n+1}}\mathbb{P}\big(\boldsymbol{\Phi}_{0,t}(x_i)\le x_i',\boldsymbol{\Phi}_{0,t}(y_j)\le y_j' \ \ \textnormal{for all} \ \ i,j \big).
\end{align*}
\end{defn}
Note that,
\begin{align}\label{flowtransition1}
 \mathsf{q}_t^{n,n+1}((x,y),(x',y'))=\frac{\prod_{i=1}^{n}\hat{\pi}(y'_i)}{\prod_{i=1}^{n}\hat{\pi}(y_i)}\mathbb{P}\big(\boldsymbol{\Phi}_{0,t}(x_i)=x_i',\boldsymbol{\Phi}^{*}_{-t,0}(y'_j)=y_j \ \ \text{for all} \ \ i,j \big)
\end{align}
and that, using Proposition \ref{propflowfdd}, $\mathsf{q}_t^{n,n+1}$ can be written out explicitly,
\begin{align}
\mathsf{q}_t^{n,n+1}((x,y),(x',y'))=\det\
 \begin{pmatrix}
\mathsf{A}_t(x,x') & \mathsf{B}_t(x,y')\\
  \mathsf{C}_t(y,x') & \mathsf{D}_t(y,y') 
 \end{pmatrix},
\end{align}
where, using reversibility with respect to $\hat{\pi}$,
\begin{align*}
\mathsf{A}_t(x,x')_{ij} &= p_t(x_i,x_j')=-\bar{\nabla}_{x'_j}P_t \textbf{1}_{[l,x_j']} (x_i),  \\
\mathsf{B}_t(x,y')_{ij}&=\hat{\pi}(y'_j)(P_t \textbf{1}_{[l,y'_j]}(x_i) -\textbf{1}(j\ge i)),\\
\mathsf{C}_t(y,x')_{ij}&=\hat{\pi}^{-1}(y_i)\nabla_{y_i}\bar{\nabla}_{x'_j}P_t \textbf{1}_{[l,x'_j]}(y_i),\\
\mathsf{D}_t(y,y')_{ij}&=-\frac{\hat{\pi}(y'_j)}{\hat{\pi}(y_i)}\nabla_{y_i} P_t \textbf{1}_{[l,y'_j]}(y_i)=\hat{p}_t(y_i,y_j').
\end{align*}

We define the family of operators $\left(\mathsf{Q}_t^{n,n+1};t \ge 0\right)$, acting on bounded Borel functions on $W^{n,n+1}(I)$ by,
\begin{align*}
(\mathsf{Q}_t^{n,n+1}f)(x,y)=\sum_{(x',y') \in W^{n,n+1}(I)}^{}\mathsf{q}_t^{n,n+1}((x,y),(x',y'))f(x',y').
\end{align*}

We will say that the family of operators $\left(\mathfrak{P}(t);t\ge 0\right)$ defined on bounded Borel functions on a space $\mathcal{X}$ forms a sub-Markov semigroup on $\mathcal{X}$ if the following hold:
\begin{align}\label{definitionsubmarkov}
& \mathfrak{P}(0)=Id,\nonumber\\
&\mathfrak{P}(t) 1\le 1 \ , \textnormal{ for }t \ge 0,\nonumber\\
& \mathfrak{P}(t)f \ge 0 \ , \textnormal{ for} f \ge 0,\nonumber\\
& \mathfrak{P}(t+s)=\mathfrak{P}(t)\mathfrak{P}(s),\textnormal{ for }s,t \ge 0.
\end{align}

\begin{prop}\label{propositionsubmarkov}
$\left(\mathsf{Q}_t^{n,n+1};t \ge 0\right)$ forms a sub-Markov semigroup on $W^{n,n+1}(I)$. We can thus associate to it a Markov process  $(X,Y)=\left(\left(X(t),Y(t)\right);t\ge 0\right)$, with possibly finite lifetime, with state space $ W^{n,n+1}(I)$.
\end{prop}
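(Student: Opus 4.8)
The plan is to establish the four defining properties of a sub-Markov semigroup in \eqref{definitionsubmarkov} directly, using the probabilistic representation \eqref{flowtransition1} for $\mathsf{q}_t^{n,n+1}$ together with the flow properties of $\boldsymbol{\Phi}_{\cdot,\cdot}$ and $\boldsymbol{\Phi}^*_{\cdot,\cdot}$. The identity property $\mathsf{Q}_0^{n,n+1}=Id$ is immediate from $\boldsymbol{\Phi}_{t,t}=Id$, $\boldsymbol{\Phi}^*_{t,t}=Id$ and the fact that $\hat\pi(y_i')/\hat\pi(y_i)=1$ when $y=y'$; one checks that the only $(x',y')$ giving a nonzero contribution is $(x,y)$ itself, and the probability there is $1$. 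Positivity ($\mathfrak P(t)f\ge 0$ for $f\ge 0$) would follow once I show that $\mathsf{q}_t^{n,n+1}((x,y),(x',y'))\ge 0$ on $W^{n,n+1}(I)$, which is exactly the content of \eqref{flowtransition1}: the block determinant equals the probability of the joint event $\{\boldsymbol{\Phi}_{0,t}(x_i)=x_i' \text{ for all }i,\ \boldsymbol{\Phi}^*_{-t,0}(y_j')=y_j\text{ for all }j\}$ scaled by the positive ratio $\prod_i\hat\pi(y_i')/\prod_i\hat\pi(y_i)$, hence nonnegative. (To be careful I should note that \eqref{flowtransition1} is being taken as established in the text just above the proposition, so I can simply invoke it.)

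For sub-stochasticity, $\mathsf{Q}_t^{n,n+1}1\le 1$, I would sum \eqref{flowtransition1} over all $(x',y')\in W^{n,n+1}(I)$. The key point is that, conditionally on the flow $\boldsymbol{\Phi}$, the images $\boldsymbol{\Phi}_{0,t}(x_i)$ are automatically ordered $x_1'\le\cdots$ in the weak sense (coalescing chains preserve order), and the dual images $\boldsymbol{\Phi}^*_{-t,0}(y_j')$ interlace with them, so that $(x',y')\in W^{n,n+1}(I)$ whenever all $n+1$ forward images are distinct; the ratio of $\hat\pi$'s telescopes appropriately under the sum over $y'$. More precisely I would fix the realization of the forward flow, sum over $y'$ first — using that for fixed $x'$ the possible $y'$ with $\boldsymbol\Phi^*_{-t,0}(y_j')=y_j$ range over an interval and the weights $\hat\pi$ are exactly the reversing measure making $\sum_{y'}\hat\pi(y_j')[\cdots]$ collapse — and then sum over $x'$ to get $\mathbb{P}(\text{all }\boldsymbol\Phi_{0,t}(x_i)\text{ distinct})\le 1$. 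Alternatively, and perhaps more cleanly, I would argue from the explicit block-determinant form: summing the first $n+1$ rows' contributions using $\sum_{z}p_t(x_i,z)\le 1$ and the reversibility identities for $\mathsf B,\mathsf C,\mathsf D$ already recorded in the display before the proposition.

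The semigroup property $\mathsf{Q}_{t+s}^{n,n+1}=\mathsf{Q}_t^{n,n+1}\mathsf{Q}_s^{n,n+1}$ is the main obstacle and the heart of the argument. Here I would use the composition law $\boldsymbol\Phi_{0,t+s}=\boldsymbol\Phi_{s,t+s}\circ\boldsymbol\Phi_{0,s}$ together with the independence of $\boldsymbol\Phi_{0,s}$ and $\boldsymbol\Phi_{s,t+s}$, and the corresponding statement $\boldsymbol\Phi^*_{-(t+s),0}=\boldsymbol\Phi^*_{-(t+s),-t}\circ\boldsymbol\Phi^*_{-t,0}$ for the dual flow (established just after Proposition \ref{Pathwise}). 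Writing \eqref{flowtransition1} for time $t+s$ and inserting a sum over the intermediate configuration $(x'',y'')$ at time $s$ (for the forward chains) / time $-t$ (for the dual chains), the forward part factors by the Markov/flow property and the dual part factors because the dual flow is itself a (backward) coalescing flow of $\hat{\mathcal D}$-chains; the $\hat\pi$-ratios multiply through the intermediate layer by $\prod_i\hat\pi(y_i')/\prod_i\hat\pi(y_i)=\big(\prod_i\hat\pi(y_i')/\prod_i\hat\pi(y_i'')\big)\big(\prod_i\hat\pi(y_i'')/\prod_i\hat\pi(y_i)\big)$. The one genuinely delicate point — and where I expect to spend the most care — is justifying that the intermediate configuration $(x'',y'')$ indeed lies in $W^{n,n+1}(I)$ whenever it contributes, i.e. that one may restrict the intermediate sum to the interlacing space without changing the value. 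This holds because, on the event of interest, $x_i''=\boldsymbol\Phi_{0,s}(x_i)$ and $y_j''=\boldsymbol\Phi^*_{-t,0}(y_j')$, and coalescing forward chains together with their dual always produce an interlacing pair (the forward images are weakly ordered, the dual images weakly interlace them); configurations with a coalescence at the intermediate time contribute zero to the final (distinct-image) event and can be discarded. Once the semigroup identity is in hand, the existence of the associated (possibly sub-Markov, hence finite-lifetime) Markov process $(X,Y)$ on $W^{n,n+1}(I)$ follows from the standard construction of a Markov process from a sub-Markov transition semigroup on a countable state space, adjoining a cemetery state to absorb the defect $1-\mathsf{Q}_t^{n,n+1}1$.
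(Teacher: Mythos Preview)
Your proposal is correct and follows essentially the same route as the paper: identity and positivity are read off directly from the probabilistic representation \eqref{flowtransition1}, and the semigroup property is obtained exactly as you describe, via the flow composition $\boldsymbol{\Phi}_{0,s+t}=\boldsymbol{\Phi}_{s,s+t}\circ\boldsymbol{\Phi}_{0,s}$, independence of increments, stationarity, and conditioning on the intermediate configuration, with the restriction of the intermediate sum to $W^{n,n+1}(I)$ justified precisely by the coalescing/interlacing observation you single out.

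The one place where the paper proceeds differently is the sub-stochasticity check. Rather than summing over $y'$ first (your primary suggestion, which is workable but, as you sense, awkward), the paper sums over $x'\in W^{\star,n}(y')$ first in the block determinant; this collapses the $x$-block entirely and leaves exactly $\sum_{y'\in W^n(I)}\det(\hat p_t(y_i,y_j'))_{i,j=1}^n\le 1$, i.e.\ sub-stochasticity of the $\hat{\mathcal D}$ Karlin--McGregor semigroup. This is the clean direction because it is the $y$-marginal that is autonomous (cf.\ Proposition~\ref{PropInter1}), so integrating out $x'$ first is natural, whereas integrating out $y'$ first mixes the forward and dual events in \eqref{flowtransition1}. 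Your ``alternative'' via the explicit block-determinant form is pointing at the same computation; just note that the sum should be taken over the $x'$-variables (columns of $\mathsf A_t,\mathsf C_t$) rather than the $x$-rows.
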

\begin{proof}
We proceed to check the items found in display (\ref{definitionsubmarkov}).
The initial, or \textit{time-0}, condition follows immediately from the representation (\ref{flowtransition1}). The second property, follows from performing the sum $\sum_{x'\in W^{\star,n}(y')}^{}$ and then we are left with the sum,
\begin{align*}
\sum_{y'\in W^n(I)}^{}\det(\hat{p}_t(y_i,y'_j))^{n}_{i,j}\le 1, \forall y \in W^n, t \ge 0.
\end{align*}
The quite non-trivial at first sight \textit{positivity} preserving property again follows from representation (\ref{flowtransition1}). The semigroup property for the transition kernels $\mathsf{q}_t^{n,n+1}$, can be got in the following fashion. First, by making use of the composition identity $\boldsymbol{\Phi}_{0,s+t}=\boldsymbol{\Phi}_{s,s+t}\circ\boldsymbol{\Phi}_{0,s}$, then using the independence of $\boldsymbol{\Phi}_{s,s+t}$ and $\boldsymbol{\Phi}_{0,s}$, noting that $\boldsymbol{\Phi}_{s,s+t}\overset{\textnormal{law}}{=}\boldsymbol{\Phi}_{0,t}$ and conditioning on the values of $\boldsymbol{\Phi}_{0,s}(x_i)$ and $\boldsymbol{\Phi}^{*}_{-(s+t),-s}(y''_j)$ we obtain,
\begin{align*}
 &\mathsf{q}_{s+t}^{n,n+1}((x,y),(x'',y''))=\frac{\prod_{i=1}^{n}\hat{\pi}(y''_i)}{\prod_{i=1}^{n}\hat{\pi}(y_i)}\mathbb{P}\big(\boldsymbol{\Phi}_{0,s+t}(x_i)=x_i'',\boldsymbol{\Phi}^{*}_{-(s+t),0}(y''_j)=y_j \ \ \text{for all} \ \ i,j \big) \\
 &=\frac{\prod_{i=1}^{n}\hat{\pi}(y''_i)}{\prod_{i=1}^{n}\hat{\pi}(y_i)}\sum_{(x',y')\in W^{n,n+1}(I)}^{}\mathbb{P}\big(\boldsymbol{\Phi}_{0,s}(x_i)=x_i',\boldsymbol{\Phi}_{s,s+t}(x_i')=x_i'',\boldsymbol{\Phi}^{*}_{-s,0}(y'_j)=y_j,\boldsymbol{\Phi}^{*}_{-(s+t),-s}(y''_j)=y_j' \big)
 \\ 
& =\sum_{(x',y')\in W^{n,n+1}(I)}^{} \frac{\prod_{i=1}^{n}\hat{\pi}(y'_i)}{\prod_{i=1}^{n}\hat{\pi}(y_i)}\mathbb{P}\big(\boldsymbol{\Phi}_{0,s}(x_i)=x_i',\boldsymbol{\Phi}^{*}_{-s,0}(y'_j)=y_j \big)\\
&\times \frac{\prod_{i=1}^{n}\hat{\pi}(y''_i)}{\prod_{i=1}^{n}\hat{\pi}(y'_i)}\mathbb{P}\big(\boldsymbol{\Phi}_{s,s+t}(x'_i)=x_i'',\boldsymbol{\Phi}^{*}_{-(s+t),-s}(y''_j)=y_j' \big)\\
&=\sum_{(x',y')\in W^{n,n+1}(I)}^{} \mathsf{q}_s^{n,n+1}((x,y),(x',y'))\mathsf{q}_t^{n,n+1}((x',y'),(x'',y'')).
\end{align*} 
The reason we are restricting our sum, in the second line onwards, over $(x',y') \in W^{n,n+1}(I)$ is because by the coalescing property for $(x,y)\in W^{n,n+1}(I)$ we have that almost surely $\{\boldsymbol{\Phi}_{s,t}(x_i)=x_i',\boldsymbol{\Phi}^{*}_{-t,-s}(y'_i)=y_i\}$ is empty unless $(x',y') \in W^{n,n+1}(I)$. This then, concludes the proof of the proposition.
\end{proof}

We now aim to define a family of time-dependent kernels, $\mathsf{q}_t^{n,n}((x,y),(x',y'))$ on $W^{n,n}(I)$. We again, consider in a similar fashion a (discrete) \textit{stochastic coalescing flow} $\boldsymbol{\hat{\Phi}}_{s,t}$, now consisting of coalescing $\hat{\mathcal{D}}$-chains. Now, define its dual as follows (\textbf{note well} the minor but important asymmetry to the above considerations) $\boldsymbol{\hat{\Phi}}^{*}_{s,t}(y)=\inf \{w:\boldsymbol{\hat{\Phi}}_{-t,-s}(w)\ge y \}$. As before, we have an explicit formula for its finite dimensional distributions (also by stationarity the proposition extends to arbitrary pairs of times $s \le t$).
\begin{prop}
For $z,z'\in W^n(I)$,
\begin{align*}
\mathbb{P}\big(\boldsymbol{\hat{\Phi}}_{0,t}(z_i)\ge z_i' \ , \ \textnormal{for} \ 1 \le i \le n\big)=\det \big(\hat{P}_t\textbf{1}_{[z'_j,\infty)}(z_i)-\textbf{1}(j<i)\big)_{i,j=1}^n.
\end{align*}
\end{prop}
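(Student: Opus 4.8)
The plan is to mirror the proof of Proposition \ref{propflowfdd}, replacing the coalescing $\mathcal{D}$-flow by the coalescing $\hat{\mathcal{D}}$-flow $\boldsymbol{\hat{\Phi}}$ and reversing all the inequalities. First I would record the analogue of the Karlin--McGregor coincidence formula for the $\hat{\mathcal{D}}$-chains: for $(y_1,\dots,y_n)\in W^n(I)$,
\begin{align*}
\mathbb{P}\big(\boldsymbol{\hat{\Phi}}_{0,t}(z_i)=y_i,\ \textnormal{for}\ 1\le i\le n\big)=\det\big(\hat{p}_t(z_i,y_j)\big)_{i,j=1}^n,
\end{align*}
which holds for exactly the same reason as in the proof of Proposition \ref{propflowfdd}: the event that the $\boldsymbol{\hat{\Phi}}_{0,t}(z_i)$ are distinct points $y_i$ coincides with non-coalescence of the underlying independent $\hat{\mathcal{D}}$-chains on $[0,t]$, and then display (3) of \cite{KarlinMcGregorCoincidence} applies to the $\hat{\mathcal{D}}$-chains (which are themselves bona fide birth and death / bilateral chains, so the conditions (\ref{birthdeathcondition1}), (\ref{birthdeathcondition2}) or (\ref{bilateralcondition1})--(\ref{bilateralcondition4}) hold for them by the remarks preceding Lemma \ref{ConjugacyLemma}).

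Next I would sum this identity over the set
\begin{align*}
\{z'_1\le y_1<\infty,\ z'_2\le y_2\le z'_1-1,\ \dots\}
\end{align*}
— more precisely over $\{y_n\le y_{n-1}<\dots: y_i\ge z'_i\}$ arranged so that the regions for consecutive $y_i$ abut — and successively add column $j$ to column $j-1$ (the mirror of the column operation in Proposition \ref{propflowfdd}, now going in the direction of decreasing index because we are summing tails $[z'_j,\infty)$ rather than initial segments $[l,z'_j]$). The telescoping turns $\sum_{y_i\ge z'_i}\hat p_t(z_i,y_j)$-type entries into $\hat P_t\textbf{1}_{[z'_j,\infty)}(z_i)$ entries and yields
\begin{align*}
\mathbb{P}\big(\boldsymbol{\hat{\Phi}}_{0,t}(z_1)\ge z_1'>\boldsymbol{\hat{\Phi}}_{0,t}(z_2)\ge z_2'>\cdots\big)=\det\big(\hat P_t\textbf{1}_{[z'_j,\infty)}(z_i)\big)_{i,j=1}^n,
\end{align*}
using that the $\boldsymbol{\hat{\Phi}}_{0,t}(z_i)$ are automatically strictly decreasing in $i$ on the event that they are distinct (they start strictly increasing and coalesce, so distinctness forces the ordering to match, and the precise accounting is the one summarised in Proposition 9 of \cite{Warren}).

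Finally I would pass from the "strictly ordered" event to the plain intersection event $\{\boldsymbol{\hat{\Phi}}_{0,t}(z_i)\ge z'_i\ \text{for all }i\}$ by expanding its indicator as a signed sum of indicators of strictly-ordered events over increasing subsequences $i_1<\dots<i_k$ and $j_1<\dots<j_k$, exactly the combinatorial identity of Proposition 9 of \cite{Warren} used in Proposition \ref{propflowfdd}; the signs assemble into the $-\textbf{1}(j<i)$ correction (with the strict inequality $j<i$, rather than $i<j$, because the relevant ordering is reversed here), giving $\det\big(\hat P_t\textbf{1}_{[z'_j,\infty)}(z_i)-\textbf{1}(j<i)\big)_{i,j=1}^n$. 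The only genuinely delicate point is bookkeeping: making sure the asymmetry between $\ge$ and $>$ (the discrete-world artefact flagged in the remark after Lemma \ref{ConjugacyLemma}) is tracked consistently so that the indicator shift lands on $j<i$ and not on $i<j$ or $j\le i$; everything else is a routine transcription of the argument already given for Proposition \ref{propflowfdd}, with $p_t,P_t,[l,\,\cdot\,]$ replaced by $\hat p_t,\hat P_t,[\,\cdot\,,\infty)$ throughout.
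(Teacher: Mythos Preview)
Your approach is exactly the paper's: the proof there reads, in full, ``entirely analogous to the proof of Proposition \ref{propflowfdd} for $\boldsymbol{\Phi}$,'' and your proposal is precisely that transcription.

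There is, however, a concrete slip in your bookkeeping. The coalescing flow $\boldsymbol{\hat{\Phi}}$ is order-preserving, so from $z_1<\cdots<z_n$ one gets $\boldsymbol{\hat{\Phi}}_{0,t}(z_1)\le\cdots\le\boldsymbol{\hat{\Phi}}_{0,t}(z_n)$; the images are \emph{increasing} in $i$, not ``strictly decreasing'' as you wrote. Consequently the summation region after the Karlin--McGregor identity should be
\[
\{\,z'_1\le y_1\le z'_2-1,\ z'_2\le y_2\le z'_3-1,\ \dots,\ z'_n\le y_n<\infty\,\},
\]
yielding the intermediate event $z'_1\le\boldsymbol{\hat{\Phi}}_{0,t}(z_1)<z'_2\le\boldsymbol{\hat{\Phi}}_{0,t}(z_2)<\cdots<z'_n\le\boldsymbol{\hat{\Phi}}_{0,t}(z_n)$, and the column operation is to add column $j+1$ to column $j$ (for $j=n-1,\dots,1$). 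With this correction the rest of your argument --- the appeal to Proposition 9 of \cite{Warren} and the resulting $-\textbf{1}(j<i)$ shift --- goes through as you describe.
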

\begin{proof}
The proof is entirely analogous to the proof of the Proposition \ref{propflowfdd} for $\boldsymbol{\Phi}$.
\end{proof}

As before, we define the following kernels:

\begin{defn}\label{blockdetdef2}
For $(x,y),(x',y')\in W^{n,n}(I)$ and $t\ge0$, define $\mathsf{q}_t^{n,n}((x,y),(x',y'))$ by,
\begin{align*}
& \mathsf{q}_t^{n,n}((x,y),(x',y'))=\\
&=\frac{\prod_{i=1}^{n}\pi(y'_i)}{\prod_{i=1}^{n}\pi(y_i)}(-1)^n\bar{\nabla}_{y_1}\cdots\bar{\nabla}_{y_n}(-1)^n\nabla_{x'_1}\cdots\nabla_{x'_{n}}\mathbb{P}\big(\boldsymbol{\hat{\Phi}}_{0,t}(x_i)\ge x_i',\boldsymbol{\hat{\Phi}}_{0,t}(y_j)\ge y_j' \ \ \textnormal{for all} \ \ i,j \big).
\end{align*}
\end{defn}
Observe that,
\begin{align}\label{flowtransition2}
 \mathsf{q}_t^{n,n}((x,y),(x',y'))=\frac{\prod_{i=1}^{n}\pi(y'_i)}{\prod_{i=1}^{n}\pi(y_i)}\mathbb{P}\big(\boldsymbol{\hat{\Phi}}_{0,t}(x_i)=x_i',\boldsymbol{\hat{\Phi}}^{*}_{-t,0}(y'_j)=y_j \ \ \text{for all} \ \ i,j \big).
\end{align}
and that $\mathsf{q}_t^{n,n}$ can be written out explicitly,
\begin{align}
\mathsf{q}_t^{n,n}((x,y),(x',y'))=\det\
 \begin{pmatrix}
\mathsf{A}_t(x,x') & \mathsf{B}_t(x,y')\\
  \mathsf{C}_t(y,x') & \mathsf{D}_t(y,y') 
 \end{pmatrix},
\end{align}
where,
\begin{align*}
\mathsf{A}_t(x,x')_{ij} &= \hat{p}_t(x_i,x_j')=-\nabla_{x'_j}\hat{P}_t \textbf{1}_{[x_j',\infty)} (x_i) , \\
\mathsf{B}_t(x,y')_{ij}&=\pi(y'_j)(\hat{P}_t \textbf{1}_{[y'_j,\infty)}(x_i) -\textbf{1}(j \le i)),\\
\mathsf{C}_t(y,x')_{ij}&=\pi^{-1}(y_i)\bar{\nabla}_{y_i}\nabla_{x'_j}\hat{P}_t \textbf{1}_{[x'_j,\infty)}(y_i),\\
\mathsf{D}_t(y,y')_{ij}&=-\frac{\pi(y'_j)}{\pi(y_i)}\bar{\nabla}_{y_i} \hat{P}_t \textbf{1}_{[y'_j,\infty)}(y_i)=p_t(y_i,y_j').
\end{align*}

Define the family of operators $\left(\mathsf{Q}_t^{n,n};t\ge0\right)$, acting on bounded Borel functions on $W^{n,n}(I)$ by,
\begin{align*}
(\mathsf{Q}_t^{n,n}f)(x,y)=\sum_{W^{n,n}(I)}^{}\mathsf{q}_t^{n,n}((x,y),(x',y'))f(x',y').
\end{align*}
Then, with completely analogous considerations as for $\left(\mathsf{Q}_t^{n,n};t\ge0\right)$, we get that:
\begin{prop}\label{propositionsubmarkov2}
$\left(\mathsf{Q}_t^{n,n};t \ge 0\right)$ forms a sub-Markov semigroup on $W^{n,n}(I)$. We can thus associate to it a Markov process  $(X,Y)=\left(\left(X(t),Y(t)\right);t\ge 0\right)$, with possibly finite lifetime, with state space $ W^{n,n}(I)$.
\end{prop}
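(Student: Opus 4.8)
The plan is to transcribe, essentially line by line, the proof of Proposition \ref{propositionsubmarkov}, checking the four items of display (\ref{definitionsubmarkov}) for $\left(\mathsf{Q}_t^{n,n};t\ge0\right)$; the substitutions are cosmetic, with $\boldsymbol{\Phi}$, $\hat{\pi}$, $p_t$ and the $\sup/\le$-dual replaced throughout by $\boldsymbol{\hat{\Phi}}$, $\pi$, $\hat{p}_t$ and the $\inf/\ge$-dual (here the block entries have already been spelled out, with $\mathsf{A}_t(x,x')_{ij}=\hat{p}_t(x_i,x_j')$ and $\mathsf{D}_t(y,y')_{ij}=p_t(y_i,y_j')$). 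The first thing I would record is the probabilistic representation (\ref{flowtransition2}),
\[
\mathsf{q}_t^{n,n}((x,y),(x',y'))=\frac{\prod_{i=1}^n\pi(y_i')}{\prod_{i=1}^n\pi(y_i)}\,\mathbb{P}\big(\boldsymbol{\hat{\Phi}}_{0,t}(x_i)=x_i',\ \boldsymbol{\hat{\Phi}}^{*}_{-t,0}(y_j')=y_j\ \text{for all}\ i,j\big),
\]
which follows from Definition \ref{blockdetdef2} exactly as (\ref{flowtransition1}) followed from Definition \ref{blockdetdef1}: the forward differences $\nabla_{x_1'}\cdots\nabla_{x_n'}$ and backward differences $\bar{\nabla}_{y_1}\cdots\bar{\nabla}_{y_n}$ (together with the sign factors $(-1)^n(-1)^n$) telescope the inequality events $\{\boldsymbol{\hat{\Phi}}_{0,t}(x_i)\ge x_i'\}$ and $\{\boldsymbol{\hat{\Phi}}_{0,t}(y_j)\ge y_j'\}$ into the equality events describing the positions of the flow, and the identity $\boldsymbol{\hat{\Phi}}^{*}_{s,t}(y)=\inf\{w:\boldsymbol{\hat{\Phi}}_{-t,-s}(w)\ge y\}$ rewrites the $y$-part in terms of the dual flow.

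From (\ref{flowtransition2}) three of the four properties are immediate. At $t=0$ the only surviving term has $x'=x$, $y'=y$, so $\mathsf{Q}_0^{n,n}=Id$. The kernel is a nonnegative multiple of a probability, so $\mathsf{Q}_t^{n,n}f\ge 0$ whenever $f\ge 0$ — this is the "quite non-trivial at first sight" positivity that the block-determinant form hides. For sub-stochasticity I would first carry out the sum over $x'$ with $y'$ fixed (over all $x'$ with $(x',y')\in W^{n,n}(I)$); as in the $W^{n,n+1}$ argument this collapses the $\mathsf{A},\mathsf{B}$ columns of the block determinant and leaves
\[
\sum_{y'\in W^n(I)}\det\big(p_t(y_i,y_j')\big)_{i,j=1}^n\le 1\qquad\text{for all }y\in W^n(I),\ t\ge 0,
\]
which is the Karlin--McGregor non-intersection probability for $n$ $\mathcal{D}$-chains, hence at most $1$. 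The remaining property, the semigroup identity $\mathsf{q}_{s+t}^{n,n}=\sum_{(x',y')}\mathsf{q}_s^{n,n}\,\mathsf{q}_t^{n,n}$, is proved by the same Chapman--Kolmogorov computation: use $\boldsymbol{\hat{\Phi}}_{0,s+t}=\boldsymbol{\hat{\Phi}}_{s,s+t}\circ\boldsymbol{\hat{\Phi}}_{0,s}$ and the matching dual composition $\boldsymbol{\hat{\Phi}}^{*}_{-(s+t),0}=\boldsymbol{\hat{\Phi}}^{*}_{-s,0}\circ\boldsymbol{\hat{\Phi}}^{*}_{-(s+t),-s}$ (checked for the $\inf/\ge$ dual just as the analogous identity was checked for $\boldsymbol{\Phi}^{*}$ in the text), condition on the intermediate positions $\boldsymbol{\hat{\Phi}}_{0,s}(x_i)$ and $\boldsymbol{\hat{\Phi}}^{*}_{-(s+t),-s}(y_j'')$, and invoke independence of $\boldsymbol{\hat{\Phi}}_{s,s+t}$ from $\boldsymbol{\hat{\Phi}}_{0,s}$ together with stationarity $\boldsymbol{\hat{\Phi}}_{s,s+t}\overset{\textnormal{law}}{=}\boldsymbol{\hat{\Phi}}_{0,t}$; the $\pi$-prefactors telescope through the intermediate variable, and the intermediate sum may be restricted to $W^{n,n}(I)$ because, by the coalescing property, for $(x,y)\in W^{n,n}(I)$ the event in question is a.s. empty unless the intermediate pair lies in $W^{n,n}(I)$.

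Having checked (\ref{definitionsubmarkov}), a sub-Markov semigroup on the countable state space $W^{n,n}(I)$ extends canonically (append a cemetery state and send the missing mass there) to an honest Markov transition function, and Kolmogorov's extension theorem produces the Markov process $(X,Y)=\left(\left(X(t),Y(t)\right);t\ge 0\right)$ with possibly finite lifetime. The step I expect to demand the most care is the one the author flags with "note well": making sure the $\inf/\ge$ convention for $\boldsymbol{\hat{\Phi}}^{*}$ — as opposed to the $\sup/\le$ convention used for $\boldsymbol{\Phi}^{*}$ — is precisely the one that makes $\boldsymbol{\hat{\Phi}}_{0,t}\times\boldsymbol{\hat{\Phi}}^{*}_{-t,0}$ preserve the interlacing pattern $l\le y_1\le x_1<y_2\le\cdots\le x_n$ defining $W^{n,n}(I)$, and that the $\nabla$'s and $\bar{\nabla}$'s in Definition \ref{blockdetdef2} are attached to the correct variables with the correct signs, so that every term of (\ref{flowtransition2}) is genuinely a probability (in particular nonnegative). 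Once this bookkeeping of $\le$ versus $<$ and of $\nabla$ versus $\bar{\nabla}$ is pinned down, the rest is a verbatim copy of the proof of Proposition \ref{propositionsubmarkov}.
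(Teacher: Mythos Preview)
Your proposal is correct and mirrors exactly what the paper does: the paper gives no separate proof for Proposition \ref{propositionsubmarkov2} but simply states that the result follows ``with completely analogous considerations'' to Proposition \ref{propositionsubmarkov}, which is precisely the line-by-line transcription you carry out (probabilistic representation (\ref{flowtransition2}) for positivity and the time-$0$ condition, summing out $x'$ to reduce to the Karlin--McGregor sub-stochasticity bound, and the flow-composition/independence/stationarity argument for the semigroup property).
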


\subsection{Intertwinings}\label{sectionintertwinins}

We first denote the Karlin-McGregor semigroup associated to $n$ $\mathcal{D}$-chains by $\left(P_t^n;t\ge 0\right)$, that is given by the following transition density, with $x,y \in W^n(I)$ and $t\ge 0$,
\begin{align*}
p^n_t(x,y)=\det(p_t(x_i,y_j))_{i,j=1}^n.
\end{align*}
Similarly, define the Karlin-McGregor semigroup $\left(\hat{P}_t^n;t\ge0\right)$ associated to $n$ $\hat{\mathcal{D}}$-chains (\textit{killed} at $-1$ if $-1$ is an absorbing boundary point) given by its transition density, with $x,y \in W^n(I)$ and $t\ge0$,
\begin{align*}
\hat{p}^n_t(x,y)=\det(\hat{p}_t(x_i,y_j))_{i,j=1}^n.
\end{align*}

Now, define the positive kernels $\Lambda_{n,\star}$ (not necessarily of finite mass in the case of $\Lambda_{n,n}$) acting on Borel functions on $W^{n,\star}(I)$, whenever $f$ is summable, by where $\star \in \{n,n+1\}$,
\begin{align*}
(\Lambda_{n,n+1}f)(x)&=\sum_{y\in W^{n,n+1}(x)}^{}\prod_{i=1}^{n}\hat{\pi}(y_i)f(x,y), \ x \in W^{n+1}(I),\\
(\Lambda_{n,n}f)(x)&=\sum_{y\in W^{n,n}(x)}^{}\prod_{i=1}^{n}\pi(y_i)f(x,y), \ x \in W^n(I).
\end{align*}
Note that $\Lambda_{n,n+1}$ involves $\hat{\pi}$ while $\Lambda_{n,n}$ involves $\pi$. Moreover, observe that we can alternatively view $\Lambda_{n,\star}$ as kernels from $W^{\star}$ to $W^{n,\star}$, assigning to each $\mathfrak{x}\in W^{\star}$ a positive measure $\Lambda_{n,\star}(\mathfrak{x},\cdot)$ on $W^{n,\star}$ supported on $\{(x,y)\in W^{n,\star}:x=\mathfrak{x}\}$. Finally, abusing notation it is obvious that we can also view $\Lambda_{n,\star}$ as kernels from $W^{\star}$ to $W^{n}$ or as operators acting on Borel functions on $W^{n}$.

Now, consider the projection operators $\Pi_{\star,n}$, acting on bounded Borel functions on $W^{\star}$, induced by the projections on the $Y$-level, with $\star \in \{n-1,n\}$,
\begin{align*}
(\Pi_{\star,n}f)(x,y)=f(y), \ (x,y)\in W^{\star,n}.
\end{align*}

\begin{prop}\label{PropInter1} For $t \ge 0$, we have the following equalities,
\begin{align}
\Pi_{n-1,n}\hat{P}_t^{n-1}&=\mathsf{Q}_t^{n-1,n}\Pi_{n-1,n},\\
\Pi_{n,n}P_t^{n}&=\mathsf{Q}_t^{n,n}\Pi_{n,n}.
\end{align}
\end{prop}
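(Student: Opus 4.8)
The plan is to prove the two intertwining identities of Proposition \ref{PropInter1} by a direct probabilistic argument using the graphical representation of the coalescing flow, exploiting the fact that the $Y$-component of the two-level process is autonomous. I will treat the first identity $\Pi_{n-1,n}\hat{P}_t^{n-1}=\mathsf{Q}_t^{n-1,n}\Pi_{n-1,n}$ in detail; the second follows mutatis mutandis, replacing $\boldsymbol{\Phi}$ by $\boldsymbol{\hat\Phi}$, reversing inequalities, and swapping the roles of $\pi$ and $\hat\pi$.

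First I would unwind the statement. Both sides are operators acting on bounded Borel functions on $W^{n-1,n}(I)$, and since $\Pi_{n-1,n}f$ depends only on the $Y$-coordinate, it suffices to test against functions of the form $f(x,y)=g(y)$ with $g$ bounded Borel on $W^{n-1}(I)$. The left-hand side applied at $(x,y)$ is $\left(\hat P_t^{n-1}g\right)(y)=\sum_{y'\in W^{n-1}(I)}\det\!\big(\hat p_t(y_i,y_j')\big)_{i,j=1}^{n-1}\,g(y')$. The right-hand side applied at $(x,y)$ is $\sum_{(x',y')\in W^{n-1,n}(I)}\mathsf{q}_t^{n-1,n}\big((x,y),(x',y')\big)g(y')$. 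So the claim reduces to the marginalization identity
\begin{align*}
\sum_{x'\in W^{n,n-1}(y')}\mathsf{q}_t^{n-1,n}\big((x,y),(x',y')\big)=\det\!\big(\hat p_t(y_i,y_j')\big)_{i,j=1}^{n-1},
\end{align*}
for every fixed $x\in W^{n}(I)$ and every $y'\in W^{n-1}(I)$. This is precisely the statement that summing the block-determinant transition kernel over the $X'$-variable returns the Karlin--McGregor kernel of $n-1$ $\hat{\mathcal{D}}$-chains, independently of the $X$-initial condition. This is already half-visible in the proof of Proposition \ref{propositionsubmarkov}: there the second bullet of \eqref{definitionsubmarkov} was verified by performing the sum $\sum_{x'\in W^{\star,n}(y')}$ and reducing to $\sum_{y'}\det(\hat p_t(y_i,y_j'))\le 1$. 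I would reuse exactly that computation but stop one step earlier, keeping $y'$ fixed.

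The cleanest route is via the flow representation \eqref{flowtransition1}: $\mathsf{q}_t^{n-1,n}\big((x,y),(x',y')\big)=\frac{\prod_i\hat\pi(y_i')}{\prod_i\hat\pi(y_i)}\,\mathbb{P}\big(\boldsymbol{\Phi}_{0,t}(x_i)=x_i',\ \boldsymbol{\Phi}^*_{-t,0}(y_j')=y_j\ \text{for all }i,j\big)$. Summing over all admissible $x'\in W^{n,n-1}(y')$ removes the constraint on the $X$-endpoints entirely — almost surely the event $\{\boldsymbol{\Phi}^*_{-t,0}(y_j')=y_j\ \forall j\}$ forces $(x',y')$ to lie in $W^{n-1,n}(I)$ for the appropriate $x'$, and the $X$-particles land somewhere, so the sum of $\mathbb{P}\big(\boldsymbol{\Phi}_{0,t}(x_i)=x_i'\ \forall i\,\big|\, \boldsymbol{\Phi}^*_{-t,0}(y')=y\big)$ over $x'$ is $1$ (here I use that, conditionally on the dual flow output on the $y'$-particles, the $X$-particles still occupy distinct sites almost surely by the coalescing/non-intersection structure, and are consistent with $y'$). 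What remains is $\frac{\prod_i\hat\pi(y_i')}{\prod_i\hat\pi(y_i)}\,\mathbb{P}\big(\boldsymbol{\Phi}^*_{-t,0}(y_j')=y_j\ \forall j\big)$, and by the identification in the previous subsection of the dual flow $\boldsymbol{\Phi}^*$ as coalescing $\hat{\mathcal D}$-chains run backwards, together with Proposition \ref{propflowfdd} applied to $\hat{\mathcal D}$-chains (equivalently the Karlin--McGregor non-intersection formula and reversibility of $\hat p_t$ with respect to $\hat\pi$, exactly as in the computation of $\mathsf{D}_t$), this equals $\det\!\big(\hat p_t(y_i,y_j')\big)_{i,j=1}^{n-1}$. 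Finally, once the marginalization identity holds pointwise in $y'$, summing against $g$ and invoking Fubini (justified by the sub-Markov bound from Proposition \ref{propositionsubmarkov}, which gives absolute summability) yields the operator identity. For the second identity one runs the identical argument with $\boldsymbol{\hat\Phi}$, its dual $\boldsymbol{\hat\Phi}^*(y)=\inf\{w:\boldsymbol{\hat\Phi}_{-t,-s}(w)\ge y\}$ which is identified as coalescing $\mathcal{D}$-chains run backwards, and reversibility of $p_t$ with respect to $\pi$, giving $\sum_{x'}\mathsf{q}_t^{n,n}\big((x,y),(x',y')\big)=\det\!\big(p_t(y_i,y_j')\big)_{i,j=1}^{n}=p_t^n(y,y')$.

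The main obstacle I anticipate is the careful bookkeeping of the interlacing constraints and the one-sided inequalities ($\le$ versus $<$) when marginalizing over $x'$: one must check that summing over precisely the set $W^{n,n-1}(y')$ (rather than over all of $W^n(I)$) is exactly what is needed for the telescoping of $\bar\nabla$'s and $\nabla$'s implicit in Definition \ref{blockdetdef1} to collapse to an unconstrained $X$-endpoint, and that no boundary terms at $l$ or at $-1$ (the absorbing state of the $\hat{\mathcal D}$-chain) are lost. The Remark following Lemma \ref{ConjugacyLemma} flags exactly this kind of asymmetry as the delicate point. Everything else is a repackaging of Proposition \ref{propflowfdd}, the graphical identification of the dual flow, and reversibility.
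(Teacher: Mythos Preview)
Your proposal is correct and follows essentially the same approach as the paper: the paper's proof states that the identities ``follow directly from the probabilistic representations \eqref{flowtransition1} and \eqref{flowtransition2}; essentially we are taking the marginal,'' and your argument is a fleshed-out version of exactly this---summing the joint flow probability over $x'$, using monotonicity of $\boldsymbol{\Phi}_{0,t}$ together with the constraint $\boldsymbol{\Phi}^*_{-t,0}(y')=y$ to see that the sum over $x'\in W^{n,n-1}(y')$ exhausts all outcomes, and then identifying the remaining $Y$-marginal as $\det(\hat p_t(y_i,y_j'))$ via reversibility. The paper additionally sketches a second, purely algebraic route (summing the explicit block-determinant columnwise using multilinearity, so that the $\mathsf{A}_t$ and $\mathsf{C}_t$ columns telescope against the $\mathsf{B}_t$ and $\mathsf{D}_t$ columns), which directly handles the $\le$ versus $<$ bookkeeping you flag as the main obstacle; but this is offered as an alternative, not the primary argument.
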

\begin{proof}
These follow directly from the probabilistic representations (\ref{flowtransition1}) and (\ref{flowtransition2}); essentially we are taking the marginal. 

Alternatively, we can take the sum $\sum_{x'\in W^{\star,n}(y')}^{}$ in the explicit form of the transition kernels and use multilinearity of the determinant. For example, in the case of $\mathsf{Q}_t^{n-1,n}$ the statement of the proposition is a consequence of the following:
\begin{align*}
\sum_{x_j'=y'_{j-1}+1}^{y'_j} \mathsf{A}_t\left(x,x'\right)_{ij}&=P_t\textbf{1}_{[l,y'_j]}(x_i)-P_t\textbf{1}_{[l,y'_j]}(x_i),\\
\sum_{x_j'=y'_{j-1}+1}^{y'_j}\mathsf{C}_{t}(y,x')_{ij} &=-\hat{\pi}^{-1}(y_i)\nabla_{y_i} P_t \textbf{1}_{[l,y'_j]}(y_i)+\hat{\pi}^{-1}(y_{i})\nabla_{y_i} P_t \textbf{1}_{[l,y'_{j-1}]}(y_i).
\end{align*}
The case of $\mathsf{Q}_t^{n,n}$ is analogous.
\end{proof}

\begin{rmk}
This, being an instance of Dynkin's criterion, has the following probabilistic interpretation. The evolution of the $Y$-level is Markovian with respect to the filtration generated by the process $(X,Y)$. In the case of $W^{n-1,n}$, $Y$ evolves as $n-1$ $\hat{\mathcal{D}}$-chains killed when they intersect or when they hit $-1$ if $-1$ is absorbing and in the case of $W^{n,n}$ it evolves as $n$ $\mathcal{D}$-chains killed when they intersect. In particular, the finite lifetime of the joint process $(X,Y)$ corresponds to the killing time of $Y$.
\end{rmk}

Moreover, the following (intermediate) intertwining relations hold.

 \begin{prop}\label{PropInter2}For $t\ge 0$, we have the equalities of positive kernels,
 \begin{align}
 P_t^{n+1}\Lambda_{n,n+1}&=\Lambda_{n,n+1}\mathsf{Q}_t^{n,n+1},\label{intermediateintertwining1}\\
 \hat{P}_t^{n}\Lambda_{n,n}&=\Lambda_{n,n}\mathsf{Q}_t^{n,n}.\label{intermediateintertwining2}
 \end{align}
 \end{prop}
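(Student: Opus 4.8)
The plan is to prove Proposition \ref{PropInter2} by combining the marginal identities of Proposition \ref{PropInter1} with the Siegmund duality from Lemma \ref{ConjugacyLemma}, packaging everything through the block-determinant representation of $\mathsf{q}_t^{n,n+1}$ and $\mathsf{q}_t^{n,n}$. The key conceptual point is that the kernel $\mathsf{q}_t^{n,n+1}$ already encodes, via the representation (\ref{flowtransition1}), the simultaneous evolution $\boldsymbol{\Phi}_{0,t}(x_i)=x_i'$ of the $X$-chains together with $\boldsymbol{\Phi}^{*}_{-t,0}(y'_j)=y_j$ of the dual chains, and that applying $\Lambda_{n,n+1}$ on the left (summing over $y\prec x$ with weights $\prod\hat{\pi}(y_i)$) should collapse the dual part into a full $X$-only Karlin--McGregor transition, while applying it on the right turns the $X$-part of the block kernel into the interlacing constraint. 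So the proof is essentially a determinant manipulation: start from the explicit block form
\begin{align*}
\mathsf{q}_t^{n,n+1}((x,y),(x',y'))=\det\begin{pmatrix}\mathsf{A}_t(x,x') & \mathsf{B}_t(x,y')\\ \mathsf{C}_t(y,x') & \mathsf{D}_t(y,y')\end{pmatrix},
\end{align*}
and compute both sides of (\ref{intermediateintertwining1}) acting on a test function.

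First I would fix $\mathfrak{x}\in W^{n+1}(I)$ and a summable Borel function $f$ on $W^{n,n+1}(I)$, and write out $\left(P_t^{n+1}\Lambda_{n,n+1}f\right)(\mathfrak{x})=\sum_{x'\in W^{n+1}}p_t^{n+1}(\mathfrak{x},x')\sum_{y'\prec x'}\prod_i\hat{\pi}(y'_i)f(x',y')$ and, on the other side, $\left(\Lambda_{n,n+1}\mathsf{Q}_t^{n,n+1}f\right)(\mathfrak{x})=\sum_{y\prec \mathfrak{x}}\prod_i\hat{\pi}(y_i)\sum_{(x',y')}\mathsf{q}_t^{n,n+1}((\mathfrak{x},y),(x',y'))f(x',y')$. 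After interchanging the (absolutely convergent) sums, it suffices to prove the kernel identity
\begin{align}\label{kernelidentity}
\sum_{y\prec \mathfrak{x}}\prod_{i=1}^n\hat{\pi}(y_i)\,\mathsf{q}_t^{n,n+1}((\mathfrak{x},y),(x',y'))=p_t^{n+1}(\mathfrak{x},x')\prod_{i=1}^n\hat{\pi}(y'_i)\,\mathbf{1}(y'\prec x')
\end{align}
for all $x'\in W^{n+1}$, $y'\in W^n$. To establish (\ref{kernelidentity}) I would use the probabilistic form (\ref{flowtransition1}): the left side equals $\prod_i\hat{\pi}(y'_i)$ times $\sum_{y\prec\mathfrak{x}}\mathbb{P}\big(\boldsymbol{\Phi}_{0,t}(\mathfrak{x}_i)=x'_i,\ \boldsymbol{\Phi}^*_{-t,0}(y'_j)=y_j\big)$. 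Summing over all admissible $y$ simply removes the constraint on where the dual particles land (they must land somewhere, and the coalescing property forces $y\prec\mathfrak{x}$ automatically, making the sum unconstrained), leaving $\mathbb{P}\big(\boldsymbol{\Phi}_{0,t}(\mathfrak{x}_i)=x'_i \text{ for all } i,\ \boldsymbol{\Phi}^*_{-t,0}(y'_j)\in I \text{ for all }j\big)$. Here the remaining $y'$-event is the statement that the dual flow, started from $y'\prec x'$ at the (dual) points, stays a bona fide configuration; using the pathwise description of $\boldsymbol{\Phi}^*$ as coalescing $\hat{\mathcal{D}}$-chains and the interlacing propagated by the flow, this event has probability one exactly when $y'\prec x'$ and probability zero otherwise (because if $y'\not\prec x'$ the block-determinant/indicator structure of Proposition \ref{propflowfdd} already vanishes). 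The surviving factor is $\mathbb{P}\big(\boldsymbol{\Phi}_{0,t}(\mathfrak{x}_i)=x'_i \text{ for all }i\big)=\det(p_t(\mathfrak{x}_i,x'_j))_{i,j=1}^{n+1}=p_t^{n+1}(\mathfrak{x},x')$, which is (\ref{kernelidentity}). An alternative, purely algebraic route would be to perform the sum $\sum_{y\prec\mathfrak{x}}\prod\hat{\pi}(y_i)$ directly on the block determinant: by multilinearity the sum over each $y_i$ telescopes the rows $\mathsf{C}_t(y,x')_{ij}=\hat{\pi}^{-1}(y_i)\nabla_{y_i}\bar{\nabla}_{x'_j}P_t\mathbf{1}_{[l,x'_j]}(y_i)$ and $\mathsf{D}_t(y,y')_{ij}=\hat{p}_t(y_i,y'_j)$ against the weight $\hat{\pi}(y_i)$, and using Siegmund duality $P_t\mathbf{1}_{[l,x'_j]}(y_i)=\hat{P}_t\mathbf{1}_{[y_i,\infty)}(x'_j)$ to rewrite the cross blocks, the bottom block rows collapse onto the top ones, reducing the $(2n+1)\times(2n+1)$ determinant to the $(n+1)\times(n+1)$ determinant $\det(p_t(\mathfrak{x}_i,x'_j))$ times the interlacing indicator. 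The identity (\ref{intermediateintertwining2}) follows by the same argument with $\boldsymbol{\hat{\Phi}}$, $\pi$, and the roles of $\le,<$ (and of $P_t,\hat{P}_t$) interchanged, using the companion Proposition for $\boldsymbol{\hat{\Phi}}$.

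The main obstacle I anticipate is the careful bookkeeping at the boundary of the Weyl chamber and the asymmetry between $\le$ and $<$ in the interlacing definitions — precisely the point flagged in the Remark after Lemma \ref{ConjugacyLemma}. Concretely, one must verify that summing $\prod\hat{\pi}(y_i)$ over $y\in W^{n,n+1}(\mathfrak{x})$ reproduces exactly the indicator $\mathbf{1}(y'\prec x')$ with the correct strict/non-strict inequalities and no leftover boundary terms; this is where the telescoping of $\nabla_{y_i}$ against $\hat{\pi}(y_i)$ and the endpoint values of $P_t\mathbf{1}_{[l,\cdot]}$ (which vanish appropriately at $\infty$ and at the wall $l$, by the boundary assumptions (\ref{birthdeathcondition1})--(\ref{birthdeathcondition2}) or (\ref{bilateralcondition1})--(\ref{bilateralcondition4})) must be used. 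Justifying the interchange of the (possibly infinite) sums also needs the summability hypothesis on $f$ together with the sub-stochasticity of $\mathsf{Q}_t^{n,n+1}$ established in Proposition \ref{propositionsubmarkov}; for $\Lambda_{n,n}$, which need not be of finite mass, one restricts a priori to $f$ for which all sums converge absolutely, as in the statement. Once these measure-theoretic and combinatorial points are dispatched, the core of the proof is the one-line collapse of the block determinant via Siegmund duality, so I would present the probabilistic derivation of (\ref{kernelidentity}) as the main argument and relegate the determinant telescoping to a remark.
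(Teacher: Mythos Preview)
Your proposal is correct and follows essentially the same approach as the paper's own proof: the paper also argues that the identity ``directly follows from the probabilistic representations (\ref{flowtransition1}) and (\ref{flowtransition2})'' and then offers, as an alternative, exactly the telescoping-sum computation on the rows $\mathsf{C}_t,\mathsf{D}_t$ of the block determinant that you describe. Your probabilistic justification that summing over $y\prec\mathfrak{x}$ is unconstrained is the right one (on $\{\boldsymbol{\Phi}_{0,t}(\mathfrak{x}_i)=x'_i\}$ with $y'\prec x'$, the definition $\boldsymbol{\Phi}^*_{-t,0}(y'_j)=\sup\{w:\boldsymbol{\Phi}_{0,t}(w)\le y'_j\}$ forces $\mathfrak{x}_j\le\boldsymbol{\Phi}^*_{-t,0}(y'_j)<\mathfrak{x}_{j+1}$), and the indicator $\mathbf{1}(y'\prec x')$ in your (\ref{kernelidentity}) is redundant since $(x',y')\in W^{n,n+1}$ already.
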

\begin{proof}
 This, similarly to the Proposition above, directly follows from the probabilistic representations (\ref{flowtransition1}) and (\ref{flowtransition2}).
 
 Otherwise, we can take the sum $\sum_{y\in W^{n,\star}(x)}^{}$ using the explicit form of the transition densities and multilinearity. In particular, (\ref{intermediateintertwining1}) is a consequence of:
 \begin{align*}
\sum_{y_i=x_i}^{x_{i+1}-1}\hat{\pi}(y_i)\mathsf{C}_t(y,x')_{ij}&=\nabla_{x'_j} P_t \textbf{1}_{[l,x'_j]}(x_{i+1})-\nabla_{x'_j} P_t \textbf{1}_{[l,x'_j]}(x_{i}),\\
\sum_{y_i=x_i}^{x_{i+1}-1}\hat{\pi}(y_i)\mathsf{D}_t(y,y')_{ij}&=-\hat{\pi}(y'_j) P_t \textbf{1}_{[l,x'_j]}(x_{i+1})+ \hat{\pi}(y'_j)P_t \textbf{1}_{[l,y'_j]}(x_{i}).
 \end{align*}
 The proof of (\ref{intermediateintertwining2}) is analogous.
\end{proof}
Combining the two preceding propositions, we straightforwardly obtain the following intertwining relations for the Karlin-McGregor semigroups (where as remarked above we simply write $\Lambda_{n,\star}$ for $\Lambda_{n,\star}\Pi_{n,\star}$), for $t\ge0$,
\begin{align}
P_t^{n+1}\Lambda_{n,n+1}&=\Lambda_{n,n+1}\hat{P}_t^{n}\label{KMintertwining1},\\
\hat{P}_t^{n}\Lambda_{n,n}&=\Lambda_{n,n}P_t^{n}\label{KMintertwining2}.
\end{align}

This gives us a machine, for constructing positive eigenfunctions for these semigroups; in particular it is immediate that, with $\textbf{1}(\cdot)$ denoting the function which is constant and equal to 1 on $I$,
\begin{align}
h_{n,n+1}(\cdot)&=(\Lambda_{n,n+1}\Lambda_{n,n}\cdots \Lambda_{1,1}\textbf{1})(\cdot)\label{alternatingharmonic1},\\
h_{n,n}(\cdot)&=(\Lambda_{n,n}\Lambda_{n-1,n}\cdots \Lambda_{1,1}\textbf{1})(\cdot)\label{alternatingharmonic2},
\end{align}
are \textit{positive harmonic} functions for $P_t^{n+1}$ and $\hat{P}_t^{n}$ respectively. In the case of birth and death chains, these functions will come up in terms of the multivariate Karlin-McGregor polynomials, in relation to a general random growth process with a wall, in section \ref{multivariatepolynomialssection}.

Before proceeding, we need to make precise one more notion, referenced several times already. For a sub-Markovian semigroup $\left(\mathfrak{P}(t);t\ge0\right)$, with a strictly positive eigenfunction $\mathfrak{h}$, with eigenvalue $e^{\mathfrak{c}t}$, we define its Doob's $h$-transform,  $\left(\mathfrak{P}^{\mathfrak{h}}(t);t\ge0\right)$ by,
\begin{align*}
\left(\mathfrak{P}^{\mathfrak{h}}(t);t\ge0\right)\overset{\textnormal{def}}{=}\left(e^{-\mathfrak{c}t}\mathfrak{h}^{-1}\circ\mathfrak{P}(t)\circ \mathfrak{h};t\ge0\right),
\end{align*}
which now, a fact which can be readily checked, forms an honest Markov semigroup, $\mathfrak{P}^{\mathfrak{h}}(t)1=1$ (the definition extends to non time-dependent positive kernels).

Now, coming back to our two-level process, suppose $\hat{h}_{n}$ is a strictly positive eigenfunction for $\hat{P}_t^{n}$ namely, $\hat{P}_t^{n}\hat{h}_{n}=e^{\lambda_{n}t}\hat{h}_{n}$ then,
\begin{align*}
\left(P_t^{n+1}\Lambda_{n,n+1}\hat{h}_{n}\right)(\cdot)&=e^{\lambda_{n}t}\left(\Lambda_{n,n+1}\hat{h}_{n}\right)(\cdot),
\end{align*}
so that, $\Lambda_{n,n+1}\hat{h}_{n}$ is a strictly positive eigenfunction of $P_t^{n+1}$. Moreover, observe that if $\hat{h}_{n}$ is a positive eigenfunction for $\hat{P}_t^{n}$ then it is an eigenfunction (with the same eigenvalue) for $\mathsf{Q}_t^{n,n+1}$. We can thus define an honest Markov process, with semigroup $\left(\mathsf{Q}_t^{n,n+1,\hat{h}_{n}};t \ge 0\right)$, which is the $h$-transform of $\left(\mathsf{Q}_t^{n,n+1};t\ge 0\right)$ by $\hat{h}_{n}$. Also, define the strictly positive function $h_{n+1}(\cdot)$ by,
\begin{align*}
h_{n+1}(x)=(\Lambda_{n,n+1}\hat{h}_{n})(x), \ x\in W^{n+1}(I),
\end{align*}
and the Markov kernel $\Lambda^{\hat{h}_{n}}_{n,n+1}$ by (from the definition $h_{n+1}(x)=(\Lambda_{n,n+1}\hat{h}_{n})(x)$ it is immediate that $\Lambda^{\hat{h}_{n}}_{n,n+1}\textbf{1}=\textbf{1}$),
\begin{align*}
(\Lambda^{\hat{h}_{n}}_{n,n+1}f)(x)=\frac{1}{h_{n+1}(x)}\sum_{y\in W^{n,n+1}(x)}^{}\prod_{i=1}^{n}\hat{\pi}(y_i)\hat{h}_{n}(y)f(x,y), \ x \in W^{n+1}(I).
\end{align*}
Finally, defining $\left(P_t^{n+1,h_{n+1}};t\ge 0\right)$ to be the Karlin-McGregor semigroup $\left(P_t^{n+1};t\ge 0\right)$  that is $h$-transformed by $h_{n+1}$, we arrive at our first main result.
\begin{thm} \label{Master1} Let $\hat{h}_{n}$ be a strictly positive eigenfunction of $\hat{P}_t^{n}$, then with the notations of the paragraph above, we have the intertwining relations, for $t\ge 0$,
\begin{align} 
P_t^{n+1,h_{n+1}}\Lambda^{\hat{h}_{n}}_{n,n+1}&=\Lambda^{\hat{h}_{n}}_{n,n+1}\mathsf{Q}_t^{n,n+1,\hat{h}_{n}}\label{MasterIntertwining1eq1},\\
P_t^{n+1,h_{n+1}}\Lambda^{\hat{h}_{n}}_{n,n+1}&=\Lambda^{\hat{h}_{n}}_{n,n+1}\hat{P}_t^{n,\hat{h}_{n}}\label{MasterIntertwining1eq2}.
\end{align}
\end{thm}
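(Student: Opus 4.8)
The plan is to obtain both displays of Theorem \ref{Master1} as purely formal consequences — a Doob $h$-transform conjugation — of the intertwinings already proved in Propositions \ref{PropInter1} and \ref{PropInter2}; no further probabilistic input is needed. First I would record the two facts that make all the $h$-transforms legitimate. Applying (\ref{KMintertwining1}), i.e. $P_t^{n+1}\Lambda_{n,n+1}=\Lambda_{n,n+1}\hat P_t^{n}$, to $\hat h_n$ and using $\hat P_t^{n}\hat h_n=e^{\lambda_n t}\hat h_n$ shows that $h_{n+1}=\Lambda_{n,n+1}\hat h_n$ satisfies $P_t^{n+1}h_{n+1}=e^{\lambda_n t}h_{n+1}$; since $\hat h_n>0$ and $\Lambda_{n,n+1}$ is a positive kernel, $h_{n+1}$ is strictly positive, so $P_t^{n+1,h_{n+1}}$ is a genuine Markov semigroup, and likewise $\hat P_t^{n,\hat h_n}$. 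Similarly, applying the index-shifted first relation of Proposition \ref{PropInter1}, $\Pi_{n,n+1}\hat P_t^{n}=\mathsf Q_t^{n,n+1}\Pi_{n,n+1}$, to $\hat h_n$ shows that $\Pi_{n,n+1}\hat h_n$ is a strictly positive eigenfunction of $\mathsf Q_t^{n,n+1}$ with the same eigenvalue $e^{\lambda_n t}$, which is exactly what is needed to define $\mathsf Q_t^{n,n+1,\hat h_n}$ as a Markov semigroup.

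Next I would prove (\ref{MasterIntertwining1eq2}) by writing each transformed object in the conjugated form of the definition preceding the theorem, namely $P_t^{n+1,h_{n+1}}=e^{-\lambda_n t}\,h_{n+1}^{-1}\circ P_t^{n+1}\circ h_{n+1}$, $\hat P_t^{n,\hat h_n}=e^{-\lambda_n t}\,\hat h_n^{-1}\circ\hat P_t^{n}\circ\hat h_n$, and $\Lambda^{\hat h_n}_{n,n+1}=h_{n+1}^{-1}\circ\Lambda_{n,n+1}\circ\hat h_n$, and then simply computing
\begin{align*}
P_t^{n+1,h_{n+1}}\Lambda^{\hat h_n}_{n,n+1}
&=e^{-\lambda_n t}\,h_{n+1}^{-1}\circ P_t^{n+1}\circ h_{n+1}\circ h_{n+1}^{-1}\circ\Lambda_{n,n+1}\circ\hat h_n\\
&=e^{-\lambda_n t}\,h_{n+1}^{-1}\circ\big(P_t^{n+1}\Lambda_{n,n+1}\big)\circ\hat h_n\\
&=e^{-\lambda_n t}\,h_{n+1}^{-1}\circ\big(\Lambda_{n,n+1}\hat P_t^{n}\big)\circ\hat h_n\\
&=\big(h_{n+1}^{-1}\circ\Lambda_{n,n+1}\circ\hat h_n\big)\big(e^{-\lambda_n t}\,\hat h_n^{-1}\circ\hat P_t^{n}\circ\hat h_n\big)\\
&=\Lambda^{\hat h_n}_{n,n+1}\hat P_t^{n,\hat h_n},
\end{align*}
the one non-trivial step being the middle equality, which is (\ref{KMintertwining1}); every other step is a regrouping of nonnegative terms, legitimate by Tonelli even though $\Lambda_{n,n}$-type kernels need not a priori have finite mass. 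Running the same computation starting instead from the intermediate intertwining (\ref{intermediateintertwining1}), together with the fact from the previous paragraph that $\Pi_{n,n+1}\hat h_n$ is fixed (up to the factor $e^{\lambda_n t}$) by $\mathsf Q_t^{n,n+1}$, yields (\ref{MasterIntertwining1eq1}); alternatively one passes between (\ref{MasterIntertwining1eq1}) and (\ref{MasterIntertwining1eq2}) by composing on the right with the $h$-transform of Proposition \ref{PropInter1}, i.e. $\Pi_{n,n+1}\hat P_t^{n,\hat h_n}=\mathsf Q_t^{n,n+1,\hat h_n}\Pi_{n,n+1}$, obtained by conjugating that proposition in exactly the same way.

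The argument carries no genuine analytic obstacle; the only points that require care are bookkeeping ones. One must respect the convention $\Lambda_{n,\star}=\Lambda_{n,\star}\Pi_{n,\star}$, so that in (\ref{MasterIntertwining1eq1}) the kernel $\Lambda^{\hat h_n}_{n,n+1}$ is read as acting on functions on $W^{n,n+1}(I)$ while in (\ref{MasterIntertwining1eq2}) it acts on functions on $W^{n}(I)$; and one must check the two elementary facts used implicitly, namely that the $h$-transform of a sub-Markov semigroup by a strictly positive eigenfunction is an honest Markov semigroup (as the text notes, this is readily verified), and that conjugation commutes with the substitution operator $\Pi_{n,n+1}$ because the latter commutes with multiplication by functions of the $y$-coordinates alone.
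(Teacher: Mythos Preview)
Your proof is correct and follows exactly the same route as the paper: the paper's proof is the single sentence ``These are immediate consequences of relations (\ref{intermediateintertwining1}) and (\ref{KMintertwining1}) respectively and the discussion above,'' and your argument simply unpacks that sentence by writing out the $h$-transform conjugation explicitly. Your careful bookkeeping of the $\Pi_{n,n+1}$ convention and the eigenfunction property of $\Pi_{n,n+1}\hat h_n$ for $\mathsf Q_t^{n,n+1}$ is exactly the content the paper leaves implicit in ``the discussion above.''
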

\begin{proof}
These are immediate consequences of relations (\ref{intermediateintertwining1}) and (\ref{KMintertwining1}) respectively and the discussion above.
\end{proof}

Moreover, using the theorem just obtained and the Rogers and Pitman Markov functions theory (see Theorem 2 in \cite{RogersPitman} for example) we immediately get the following proposition as a corollary.

\begin{prop}\label{MarkovFunctionProposition}
Consider a Markov process $(X,Y)$ with semigroup $\left(\mathsf{Q}_t^{n,n+1,\hat{h}_{n}};t \ge 0\right)$. Then, the projection on the $X$-components evolves as a Markov process with semigroup $\left(P_t^{n+1,h_{n+1}};t \ge 0\right)$ started from $x$, if $(X,Y)$ is initialized according to $\Lambda^{\hat{h}_{n}}_{n,n+1}(x,\cdot)$. Moreover, in such case, for any fixed $T\ge 0$, the conditional distribution of $\left(X(T),Y(T)\right)$ given $X(T)$ is $\Lambda^{\hat{h}_{n}}_{n,n+1}(X(T),\cdot)$
\end{prop}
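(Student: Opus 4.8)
The plan is to derive Proposition~\ref{MarkovFunctionProposition} as a direct application of the Rogers--Pitman intertwining criterion for Markov functions (Theorem~2 of \cite{RogersPitman}), using Theorem~\ref{Master1} as the sole non-trivial input. Recall the abstract statement: if $\left(\mathsf{Q}(t);t\ge0\right)$ is a Markov semigroup on a space $E$, $\phi:E\to F$ a measurable map, $\left(P(t);t\ge0\right)$ a Markov semigroup on $F$, and $\Lambda$ a Markov kernel from $F$ to $E$ satisfying $\Lambda(\phi^{-1}(A))=\mathbf{1}_A$ (i.e.\ $\Lambda$ is supported on the fibre over its first argument) together with the intertwining $P(t)\Lambda=\Lambda\mathsf{Q}(t)$ for all $t\ge0$, then: started from $\Lambda(y,\cdot)$, the process $\phi(X(t))$ is Markov with semigroup $P(t)$ and initial point $y$, and moreover for each fixed $T$ the conditional law of $X(T)$ given $\phi(X(T))$ is $\Lambda(\phi(X(T)),\cdot)$. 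So the task reduces to matching our objects to this template.

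First I would set $E=W^{n,n+1}(I)$, $F=W^{n+1}(I)$, and take $\phi$ to be the projection $(x,y)\mapsto x$ onto the $X$-component. Take $\mathsf{Q}(t)=\mathsf{Q}_t^{n,n+1,\hat h_n}$, $P(t)=P_t^{n+1,h_{n+1}}$, and $\Lambda=\Lambda^{\hat h_n}_{n,n+1}$. I would then check the two hypotheses of Rogers--Pitman. The projection compatibility is immediate from the explicit formula for $\Lambda^{\hat h_n}_{n,n+1}$: the measure $\Lambda^{\hat h_n}_{n,n+1}(x,\cdot)$ is supported on pairs $(x',y)$ with $x'=x$, equivalently on the fibre $\phi^{-1}(x)$, and $\Lambda^{\hat h_n}_{n,n+1}\mathbf{1}=\mathbf{1}$ was already noted in the text just before Theorem~\ref{Master1}; hence $\Lambda^{\hat h_n}_{n,n+1}(x,\phi^{-1}(A))=\mathbf{1}_A(x)$. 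The intertwining hypothesis $P_t^{n+1,h_{n+1}}\Lambda^{\hat h_n}_{n,n+1}=\Lambda^{\hat h_n}_{n,n+1}\mathsf{Q}_t^{n,n+1,\hat h_n}$ for all $t\ge0$ is precisely equation~\eqref{MasterIntertwining1eq1} of Theorem~\ref{Master1}. Both semigroups are honest Markov semigroups (this was verified in the discussion preceding the theorem — $\hat h_n$ being a positive eigenfunction of $\hat P_t^n$ is also an eigenfunction of $\mathsf{Q}_t^{n,n+1}$, and the $h$-transforms are stochastic).

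With the two hypotheses in hand, Rogers--Pitman immediately gives both assertions of the proposition: that $(X(t);t\ge0)$ is Markov with semigroup $(P_t^{n+1,h_{n+1}};t\ge0)$ started from $x$ when $(X,Y)$ starts from $\Lambda^{\hat h_n}_{n,n+1}(x,\cdot)$, and that for fixed $T$ the conditional law of $(X(T),Y(T))$ given $X(T)$ is $\Lambda^{\hat h_n}_{n,n+1}(X(T),\cdot)$ — noting that since $\Lambda^{\hat h_n}_{n,n+1}(X(T),\cdot)$ is already concentrated on the fibre $\{x'=X(T)\}$, specifying the conditional law of $(X(T),Y(T))$ and of $Y(T)$ amounts to the same thing.

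I do not anticipate a serious obstacle here: the proof is essentially a citation plus bookkeeping, which is why the authors write "we immediately get." The only point requiring a little care is making sure the Rogers--Pitman theorem applies in the present generality — namely that $\mathsf{Q}_t^{n,n+1,\hat h_n}$ is a genuine (conservative) Markov semigroup rather than merely sub-Markovian, so that the process $(X,Y)$ has infinite lifetime and the statement about the fixed-time conditional law is unambiguous. This is ensured because $\hat h_n$ is assumed \emph{strictly positive} on all of $W^n(I)$ (so the $h$-transform does not kill mass) and because the $h$-transformed kernel satisfies $\Lambda^{\hat h_n}_{n,n+1}\mathbf{1}=\mathbf{1}$; if one wanted to be fully rigorous one would also remark that the one-dimensional chains are non-explosive under conditions \eqref{birthdeathcondition1}--\eqref{bilateralcondition4}, which propagates to the Karlin--McGregor semigroups and hence to $\mathsf{Q}$. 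A secondary, purely expository, point is to state clearly which of equations \eqref{MasterIntertwining1eq1}--\eqref{MasterIntertwining1eq2} is being used (the first), since the second — with $\hat P_t^{n,\hat h_n}$ in place of $\mathsf{Q}_t^{n,n+1,\hat h_n}$ — is the companion fact that the $Y$-marginal itself is autonomously Markov, which is not what the proposition asserts.
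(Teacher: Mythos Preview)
Your proposal is correct and follows exactly the same approach as the paper: apply Theorem~2 of \cite{RogersPitman} with $\phi(x,y)=x$ as the Markov function, using the intertwining relation~\eqref{MasterIntertwining1eq1} from Theorem~\ref{Master1} as the key input. The paper's proof is essentially a one-line citation; you have simply unpacked the hypotheses in more detail.
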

\begin{proof}
This is a straightforward application of Theorem 2 of \cite{RogersPitman}, by virtue of the intertwining relation (\ref{MasterIntertwining1eq1}) above, the Markov function $\phi$, being the projection on the $X$-component, $\phi(x,y)=x$. For the conditional distribution statement see Remark (ii) on page 575 immediately after Theorem 2 of \cite{RogersPitman}.
\end{proof}

Similarly, in the setting of having an equal number of particles for the two levels (i.e. for a process in $W^{n,n}(I)$); if $g_n$ is a positive eigenfunction of $P_t^n$ and assuming $\hat{g}_{n}(x)=(\Lambda_{n,n}g_{n})(x)$ is finite, with the analogous definitions as above, we obtain the following theorem.

\begin{thm} \label{Master2} Let $g_n$ be a strictly positive eigenfunction of $P_t^n$. Then, for $t\ge 0$,
\begin{align} 
\hat{P}_t^{n,\hat{g}_n}\Lambda^{g_{n}}_{n,n}&=\Lambda^{g_{n}}_{n,n}\mathsf{Q}_t^{n,n,g_{n}}\label{MasterIntertwining2eq1},\\
\hat{P}_t^{n,\hat{g}_n}\Lambda^{g_{n}}_{n,n}&=\Lambda^{g_{n}}_{n,n}P_t^{n,g_{n}}\label{MasterIntertwining2eq2}.
\end{align}
In particular, the projection on the $X$-components evolves as a Markov process with semigroup $\left(\hat{P}_t^{n,\hat{g}_n};t\ge0\right)$ started from $x$, if $(X,Y)$ is initialized according to $\Lambda^{g_{n}}_{n,n}(x,\cdot)$. Furthermore, for any fixed time $T\ge 0$, the conditional distribution of $\left(X(T),Y(T)\right)$ given $X(T)$ is $\Lambda^{g_{n}}_{n,n}(X(T),\cdot)$.
\end{thm}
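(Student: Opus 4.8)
The plan is to mirror, in the $W^{n,n}(I)$ setting, the argument already carried out for $W^{n,n+1}(I)$ in Theorem \ref{Master1} and Proposition \ref{MarkovFunctionProposition}. First I would establish the two intertwinings (\ref{MasterIntertwining2eq1}) and (\ref{MasterIntertwining2eq2}). The starting points are the intermediate relation (\ref{intermediateintertwining2}), $\hat{P}_t^{n}\Lambda_{n,n}=\Lambda_{n,n}\mathsf{Q}_t^{n,n}$, and the Karlin--McGregor intertwining (\ref{KMintertwining2}), $\hat{P}_t^{n}\Lambda_{n,n}=\Lambda_{n,n}P_t^{n}$ (in the abuse of notation where $\Lambda_{n,n}$ absorbs $\Pi_{n,n}$). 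Given a strictly positive eigenfunction $g_n$ of $P_t^n$ with, say, $P_t^n g_n=e^{\mathfrak{c}_n t}g_n$, I would first observe that $\hat{g}_n(x)=(\Lambda_{n,n}g_n)(x)$ (assumed finite) is then a strictly positive eigenfunction of $\hat{P}_t^n$ with the same eigenvalue: applying (\ref{KMintertwining2}) to $g_n$ gives $\hat{P}_t^n\hat{g}_n=\hat{P}_t^n\Lambda_{n,n}g_n=\Lambda_{n,n}P_t^n g_n=e^{\mathfrak{c}_n t}\Lambda_{n,n}g_n=e^{\mathfrak{c}_n t}\hat{g}_n$. Likewise $g_n$, being an eigenfunction of $P_t^n$, is an eigenfunction of $\mathsf{Q}_t^{n,n}$ with the same eigenvalue — this is the exact analogue of the remark preceding Theorem \ref{Master1} that a positive eigenfunction of $\hat{P}_t^n$ is an eigenfunction of $\mathsf{Q}_t^{n,n+1}$, and follows from (\ref{intermediateintertwining2}) read via the projection structure, or directly from the block-determinant form of $\mathsf{q}_t^{n,n}$ together with Proposition \ref{PropInter1} (the identity $\Pi_{n,n}P_t^n=\mathsf{Q}_t^{n,n}\Pi_{n,n}$). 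With these eigenfunction facts in hand, the Doob transforms $\hat{P}_t^{n,\hat{g}_n}$, $P_t^{n,g_n}$, $\mathsf{Q}_t^{n,n,g_n}$ are all honest Markov semigroups, and $\Lambda_{n,n}^{g_n}$ is a genuine Markov kernel since $\Lambda_{n,n}^{g_n}\mathbf{1}=\hat{g}_n^{-1}\Lambda_{n,n}g_n=\hat{g}_n^{-1}\hat{g}_n=\mathbf{1}$.

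The two intertwinings (\ref{MasterIntertwining2eq1}) and (\ref{MasterIntertwining2eq2}) then follow by conjugating (\ref{intermediateintertwining2}) and (\ref{KMintertwining2}) respectively by the relevant harmonic functions. Concretely, for (\ref{MasterIntertwining2eq2}) one multiplies $\hat{P}_t^n\Lambda_{n,n}=\Lambda_{n,n}P_t^n$ on the left by $e^{-\mathfrak{c}_n t}\hat{g}_n^{-1}$ and inserts $g_n g_n^{-1}$ appropriately: $\hat{P}_t^{n,\hat{g}_n}\Lambda_{n,n}^{g_n}=e^{-\mathfrak{c}_n t}\hat{g}_n^{-1}\hat{P}_t^n\hat{g}_n\cdot\hat{g}_n^{-1}\Lambda_{n,n}g_n\cdot g_n^{-1}=e^{-\mathfrak{c}_n t}\hat{g}_n^{-1}\hat{P}_t^n\Lambda_{n,n}g_n\cdot g_n^{-1}=e^{-\mathfrak{c}_n t}\hat{g}_n^{-1}\Lambda_{n,n}P_t^n g_n\cdot g_n^{-1}=\hat{g}_n^{-1}\Lambda_{n,n}g_n\cdot g_n^{-1}P_t^n g_n\cdot e^{-\mathfrak{c}_n t}\,\dots$ — i.e. after regrouping, $=\Lambda_{n,n}^{g_n}P_t^{n,g_n}$. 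The computation for (\ref{MasterIntertwining2eq1}) is identical with $\mathsf{Q}_t^{n,n}$ in place of $P_t^n$, using (\ref{intermediateintertwining2}); here one needs that $g_n$ (pulled back through $\Pi_{n,n}$) is an eigenfunction of $\mathsf{Q}_t^{n,n}$ with eigenvalue $e^{\mathfrak{c}_n t}$, which is exactly the point noted in the previous paragraph. These are routine but must be written carefully because, as the paper itself flags in the remark after Lemma \ref{ConjugacyLemma}, the asymmetry between $\le$ and $<$ (and the fact that $\hat{\phantom{x}}$ is not an involution) means one cannot simply quote the $W^{n,n+1}$ case verbatim — the roles of $\mathcal{D}$ and $\hat{\mathcal{D}}$ are swapped and $\Lambda_{n,n}$ carries $\pi$ rather than $\hat\pi$.

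Finally, the Markov-function conclusion is obtained exactly as in Proposition \ref{MarkovFunctionProposition}: apply Theorem 2 of \cite{RogersPitman} to the Markov process $(X,Y)$ with semigroup $\mathsf{Q}_t^{n,n,g_n}$, the intertwining kernel being $\Lambda_{n,n}^{g_n}$, the eigen-relation needed being (\ref{MasterIntertwining2eq1}), and the lumping map $\phi(x,y)=x$; then $\phi$-image evolves as $\hat{P}_t^{n,\hat g_n}$ when the initial law is $\Lambda_{n,n}^{g_n}(x,\cdot)$, and the conditional-law statement for $(X(T),Y(T))$ given $X(T)$ is Remark (ii) on page 575 of \cite{RogersPitman}. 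The main obstacle, such as it is, is not any deep difficulty but bookkeeping: verifying with the correct inequalities that $g_n$ pulls back to an eigenfunction of $\mathsf{Q}_t^{n,n}$ and that $\hat g_n = \Lambda_{n,n} g_n$ is finite and positive (finiteness is an explicit hypothesis, positivity is automatic from $\pi>0$ and $g_n>0$), so that all the Doob transforms in the statement are well defined. Everything else is a transcription of the $W^{n,n+1}$ argument with $\mathcal{D}\leftrightarrow\hat{\mathcal{D}}$ and $\pi\leftrightarrow\hat\pi$ interchanged.
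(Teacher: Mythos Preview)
Your proposal is correct and follows exactly the approach the paper takes: the paper states Theorem \ref{Master2} as the direct analogue of Theorem \ref{Master1} and Proposition \ref{MarkovFunctionProposition}, obtained by replacing (\ref{intermediateintertwining1}), (\ref{KMintertwining1}) with (\ref{intermediateintertwining2}), (\ref{KMintertwining2}) and then invoking Rogers--Pitman. Your write-up is in fact more detailed than what the paper provides, since the paper simply says ``with the analogous definitions as above, we obtain the following theorem'' without spelling out the conjugation or the eigenfunction check for $\mathsf{Q}_t^{n,n}$.
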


\begin{rmk}\label{Proofremark}
We now explain the shortest path to a complete proof of the single level intertwining relations (\ref{MasterIntertwining1eq2}), (\ref{MasterIntertwining2eq2}), or more precisely to the proof of (\ref{KMintertwining1}), (\ref{KMintertwining2}). There are two essential ingredients, the Siegmund duality Lemma \ref{ConjugacyLemma} and the rather ingenious introduction of the $\mathsf{q}_t^{\bullet,\star}((x,y),(x',y'))$ transition kernels. Once, we define $\mathsf{q}_t^{\bullet,\star}((x,y),(x',y'))$ by (\ref{flowtransition1}) or (\ref{flowtransition2}), none of its probabilistic properties or the coalescing flows picture are needed. We can then proceed as in the proofs of Propositions \ref{PropInter1} and \ref{PropInter2} by taking the sums over $x'$ and $y$, \textbf{assuming these sums converge}. Of course, if  $\mathsf{q}_t^{\bullet,\star}((x,y),(x',y'))$ is positive we can make use of Tonelli's theorem to interchange the sums, however with the possibility that both sides are infinite.

We also comment on the relation to Borodin and Olshanski's approach in \cite{BorodinOlshanski} (also Cuenca's in \cite{Cuenca}). Their proof checks the intertwining relation at the multivariate infinitesimal level and then concludes by a lift to semigroups. Both of our proofs of the Siegmund duality Lemma \ref{ConjugacyLemma} in the Appendix also contain such a lift, but in the single variable setting. The introduction of the explicit coupling, equivalently of $\mathsf{q}_t^{\bullet,\star}((x,y),(x',y'))$, is what allows us to essentially check such a relation only in a single variable.
\end{rmk}

\begin{rmk}
By the methods presented above, we have identified the finite lifetime of the process $Z=(X,Y)$ as the lifetime of the autonomous component $Y$, which we have described explicitly. Moreover, under special initial conditions we have proven that the projection on the $X$-level turns out to be a Markov process as well, but the interaction between $X$ and $Y$ still remains unclear. It is natural to guess, from the locality of the coalescing flow (namely that particles only interact whence they meet) and the fact that the $Y$-level is autonomous, that the $X$-particles should be blocked and pushed, in order for the interlacing to remain. This turns out to be exactly the case and we pursue it next.
\end{rmk}

\section{Push-block dynamics}\label{sectionpushblock}
\subsection{Push-block dynamics for the two-level process}
In this subsection, we prove that the $\mathsf{q}^{n,n+1}_t$ transition matrix governs the dynamics of a continuous time, possibly with finite lifetime, Markov chain $(X,Y)$ in $W^{n,n+1}$ described informally as follows: The $Y$-level consists of $n$ independent $\hat{\mathcal{D}}$-chains and the $X$-level of $n+1$ independent $\mathcal{D}$-chains that are \textit{"pushed"} and \textit{"blocked"} by the $Y$-particles, when the process is at the \textit{boundary} (precised below) $\partial W^{n,n+1}$, in order for it to remain in $W^{n,n+1}$. The chain is \textit{killed} when two $Y$-particles collide or hit $l^*=l-1$ i.e. at the stopping time,
\begin{align*}
\mathfrak{T}_{W^{n,n+1}}=\inf\{t>0:\exists \ 1\le i<j \le n \ ,\textnormal{ such that } Y_i(t)=Y_j(t) \textnormal{ or } Y_i(t)=l^*\}.
\end{align*}
See Figures \ref{figureXYinteraction1}-\ref{figureXYinteraction4} for an illustration of the four possible types (pushing and blocking from the left and from the right) of interaction between $X$-particles and $Y$-particles in $W^{n,n+1}$.

Similarly, the $\mathsf{q}^{n,n}_t$ transition matrix governs the dynamics of a continuous time, possibly with finite lifetime, Markov chain $(X,Y)$ in $W^{n,n}$ with the following informal description: The $Y$-level consists of $n$ independent $\mathcal{D}$-chains and the $X$-level of $n$ independent $\hat{\mathcal{D}}$-chains that are \textit{"pushed"} and \textit{"blocked"} by the $Y$-particles, when the process is at $\partial W^{n,n}$, in order for it to remain in $W^{n,n}$. The chain is \textit{killed} when two $Y$-particles collide i.e. at the stopping time (note that compared to $W^{n,n+1}$, now $Y_1(t)$ never reaches $l^*=l-1$ since the $\mathcal{D}$-chain is reflecting at $l$),
\begin{align*}
\mathfrak{T}_{W^{n,n}}=\inf\{t>0:\exists \ 1\le i<j \le n \ ,\textnormal{ such that } Y_i(t)=Y_j(t) \}.
\end{align*}
See Figures \ref{figureXYinteraction5}-\ref{figureXYinteraction8} for an illustration of the possible interactions in $W^{n,n}$ and also note the asymmetry (again related to the locations of $\le$ and strict $<$ in the definitions of $W^{n,n},W^{n,n+1}$) compared to the dynamics in $W^{n,n+1}$.

\begin{figure}
\captionsetup{singlelinecheck = false, justification=justified}
\begin{tikzpicture}
\draw[dotted] (0,0) grid (3,1);
\draw[fill] (1,1) circle [radius=0.1];
\node[above right] at (1,1) {$x_{i+1}$};

\draw[fill,blue] (0,0) circle [radius=0.1];
\node[above right] at (0,0) {$y_i$};

\draw[rotate around={135:(0.5,0.5)}] (0.5,0.5) ellipse (0.6cm and 1cm);
\draw[->, very thick,] (0,0) to [out=45, in=135] (1,0);
\node[below] at (0,0) {$z$};
\node[below] at (1,0) {$z+1$};
\node[below] at (2,0) {$z+2$};
\node[below] at (3,0) {$z+3$};

\draw[->,dashed] (4,0.5) to (6,0.5);

\draw[dotted] (7,0) grid (10,1);
\draw[fill] (9,1) circle [radius=0.1];
\node[above right] at (9,1) {$x_{i+1}$};

\draw[fill] (8,0) circle [radius=0.1];
\node[above right] at (8,0) {$y_i$};

\node[below] at (7,0) {$z$};
\node[below] at (8,0) {$z+1$};
\node[below] at (9,0) {$z+2$};
\node[below] at (10,0) {$z+3$};

\end{tikzpicture}
\caption{In $W^{n,n+1}$, a jump of $y_i$ pushes (induces a simultaneous jump of) $x_{i+1}$ to the right so that the interlacing remains. Here, the jump happens with rate $\hat{\lambda}(z)=\mu(z+1)$.}\label{figureXYinteraction1}
\end{figure}
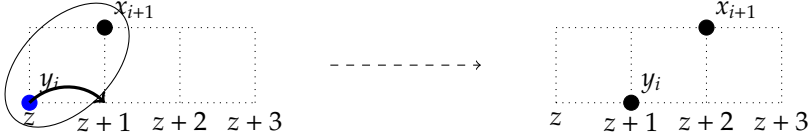

\begin{figure}
\captionsetup{singlelinecheck = false, justification=justified}
\begin{tikzpicture}
\draw[dotted] (0,0) grid (3,1);
\draw[fill,blue] (1,1) circle [radius=0.1];
\node[above right] at (1,1) {$x_{i+1}$};

\draw[fill] (0,0) circle [radius=0.1];
\node[above right] at (0,0) {$y_i$};

\draw[->, very thick] (1,1) to [out=135, in=45] (0,1);
\draw[very thick] (0.75,1) to (0.25,1.5);
\draw[very thick] (0.25,1) to (0.75,1.5);
\node[below] at (0,0) {$z$};
\node[below] at (1,0) {$z+1$};
\node[below] at (2,0) {$z+2$};
\node[below] at (3,0) {$z+3$};

\draw[->,dashed] (4,0.5) to (6,0.5);

\draw[dotted] (7,0) grid (10,1);
\draw[fill] (8,1) circle [radius=0.1];
\node[above right] at (8,1) {$x_{i+1}$};

\draw[fill] (7,0) circle [radius=0.1];
\node[above right] at (7,0) {$y_i$};

\node[below] at (7,0) {$z$};
\node[below] at (8,0) {$z+1$};
\node[below] at (9,0) {$z+2$};
\node[below] at (10,0) {$z+3$};

\end{tikzpicture}
\caption{In $W^{n,n+1}$, a jump of $x_{i+1}$ to the left is blocked by $y_{i}$ so that the interlacing remains. Here, the clock of $x_{i+1}$ rings with rate $\mu(z+1)$.}\label{figureXYinteraction2}
\end{figure}
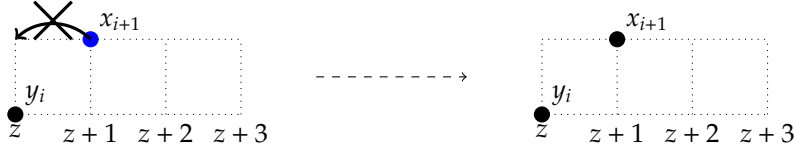

\begin{figure}
\captionsetup{singlelinecheck = false, justification=justified}
\begin{tikzpicture}
\draw[dotted] (0,0) grid (3,1);
\draw[fill] (1,1) circle [radius=0.1];
\node[above left] at (1,1) {$x_{i}$};

\draw[fill,blue] (1,0) circle [radius=0.1];
\node[above right] at (1,0) {$y_i$};

\draw[->, very thick] (1,0) to [out=135, in=45] (0,0);
\draw (1,0.5) ellipse (0.5cm and 1cm);
\node[below] at (0,0) {$z-1$};
\node[below] at (1,0) {$z$};
\node[below] at (2,0) {$z+1$};
\node[below] at (3,0) {$z+2$};

\draw[->,dashed] (4,0.5) to (6,0.5);

\draw[dotted] (7,0) grid (10,1);
\draw[fill] (7,1) circle [radius=0.1];
\node[above right] at (7,1) {$x_{i}$};

\draw[fill] (7,0) circle [radius=0.1];
\node[above right] at (7,0) {$y_i$};

\node[below] at (7,0) {$z-1$};
\node[below] at (8,0) {$z$};
\node[below] at (9,0) {$z+1$};
\node[below] at (10,0) {$z+2$};

\end{tikzpicture}
\caption{In $W^{n,n+1}$, a jump of $y_i$ pushes (induces a simultaneous jump of) $x_{i+1}$ to the left so that the interlacing remains. Here, the jump happens with rate $\hat{\mu}(z)=\lambda(z)$.}\label{figureXYinteraction3}
\end{figure}
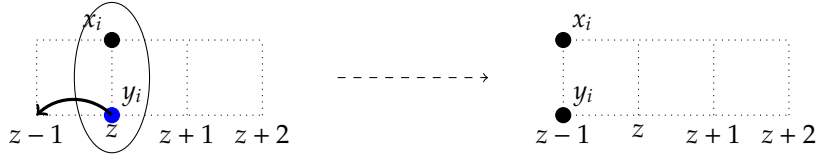

\begin{figure}
\captionsetup{singlelinecheck = false, justification=justified}
\begin{tikzpicture}
\draw[dotted] (0,0) grid (3,1);
\draw[fill,blue] (0,1) circle [radius=0.1];
\node[above left] at (0,1) {$x_{i}$};

\draw[fill] (0,0) circle [radius=0.1];
\node[above right] at (0,0) {$y_i$};

\draw[->, very thick] (0,1) to [out=45, in=135] (1,1);
\draw[very thick] (0.75,1) to (0.25,1.5);
\draw[very thick] (0.25,1) to (0.75,1.5);
\node[below] at (0,0) {$z$};
\node[below] at (1,0) {$z+1$};
\node[below] at (2,0) {$z+2$};
\node[below] at (3,0) {$z+3$};

\draw[->,dashed] (4,0.5) to (6,0.5);

\draw[dotted] (7,0) grid (10,1);
\draw[fill] (7,1) circle [radius=0.1];
\node[above right] at (7,1) {$x_{i}$};

\draw[fill] (7,0) circle [radius=0.1];
\node[above right] at (7,0) {$y_i$};

\node[below] at (7,0) {$z$};
\node[below] at (8,0) {$z+1$};
\node[below] at (9,0) {$z+2$};
\node[below] at (10,0) {$z+3$};

\end{tikzpicture}

\caption{In $W^{n,n+1}$, a jump of $x_{i}$ to the right is blocked by $y_{i}$ so that the interlacing remains. Here, the clock of $x_{i}$ rings with rate $\lambda(z)$.}\label{figureXYinteraction4}
\end{figure}

\begin{figure}
\captionsetup{singlelinecheck = false, justification=justified}
\begin{tikzpicture}
\draw[dotted] (0,0) grid (3,1);
\draw[fill] (0,1) circle [radius=0.1];
\node[above left] at (0,1) {$x_{i}$};

\draw[fill,blue] (0,0) circle [radius=0.1];
\node[above right] at (0,0) {$y_i$};

\draw[->, very thick] (0,0) to [out=45, in=135] (1,0);
\draw (0,0.5) ellipse (0.5cm and 1cm);
\node[below] at (0,0) {$z$};
\node[below] at (1,0) {$z+1$};
\node[below] at (2,0) {$z+2$};
\node[below] at (3,0) {$z+3$};

\draw[->,dashed] (4,0.5) to (6,0.5);

\draw[dotted] (7,0) grid (10,1);
\draw[fill] (8,1) circle [radius=0.1];
\node[above right] at (8,1) {$x_{i}$};

\draw[fill] (8,0) circle [radius=0.1];
\node[above right] at (8,0) {$y_i$};

\node[below] at (7,0) {$z$};
\node[below] at (8,0) {$z+1$};
\node[below] at (9,0) {$z+2$};
\node[below] at (10,0) {$z+3$};

\end{tikzpicture}
\caption{In $W^{n,n}$, a jump of $y_i$ pushes (induces a simultaneous jump of) $x_{i}$ to the right so that the interlacing remains. Here, the jump happens with rate $\lambda(z)$.}\label{figureXYinteraction5}
\end{figure}

\begin{figure}
\captionsetup{singlelinecheck = false, justification=justified}
\begin{tikzpicture}
\draw[dotted] (0,0) grid (3,1);
\draw[fill,blue] (1,1) circle [radius=0.1];
\node[above right] at (1,1) {$x_{i}$};

\draw[fill] (1,0) circle [radius=0.1];
\node[above right] at (1,0) {$y_i$};

\draw[->, very thick] (1,1) to [out=135, in=45] (0,1);
\draw[very thick] (0.75,1) to (0.25,1.5);
\draw[very thick] (0.25,1) to (0.75,1.5);
\node[below] at (0,0) {$z-1$};
\node[below] at (1,0) {$z$};
\node[below] at (2,0) {$z+1$};
\node[below] at (3,0) {$z+2$};

\draw[->,dashed] (4,0.5) to (6,0.5);

\draw[dotted] (7,0) grid (10,1);
\draw[fill] (8,1) circle [radius=0.1];
\node[above right] at (8,1) {$x_{i}$};

\draw[fill] (8,0) circle [radius=0.1];
\node[above right] at (8,0) {$y_i$};

\node[below] at (7,0) {$z-1$};
\node[below] at (8,0) {$z$};
\node[below] at (9,0) {$z+1$};
\node[below] at (10,0) {$z+2$};

\end{tikzpicture}
\caption{In $W^{n,n}$, a jump of $x_{i}$ to the left is blocked by $y_{i}$ so that the interlacing remains. Here, the clock of $x_{i}$ rings with rate $\hat{\mu}(z)=\lambda(z)$.}\label{figureXYinteraction6}
\end{figure}
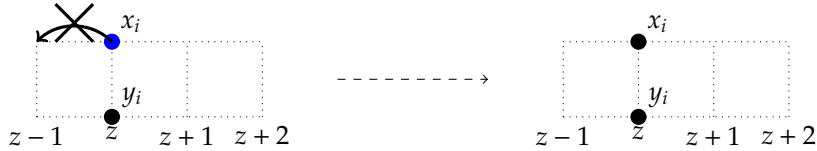

\begin{figure}
\captionsetup{singlelinecheck = false, justification=justified}
\begin{tikzpicture}
\draw[dotted] (0,0) grid (3,1);
\draw[fill] (1,1) circle [radius=0.1];
\node[above right] at (1,1) {$x_{i}$};

\draw[fill,blue] (2,0) circle [radius=0.1];
\node[above right] at (2,0) {$y_{i+1}$};

\draw[rotate around={45:(1.5,0.5)}] (1.5,0.5) ellipse (0.6cm and 1cm);
\draw[->, very thick,] (2,0) to [out=135, in=45] (1,0);
\node[below] at (0,0) {$z-1$};
\node[below] at (1,0) {$z$};
\node[below] at (2,0) {$z+1$};
\node[below] at (3,0) {$z+2$};

\draw[->,dashed] (4,0.5) to (6,0.5);

\draw[dotted] (7,0) grid (10,1);
\draw[fill] (7,1) circle [radius=0.1];
\node[above right] at (7,1) {$x_{i}$};

\draw[fill] (8,0) circle [radius=0.1];
\node[above right] at (8,0) {$y_{i+1}$};

\node[below] at (7,0) {$z-1$};
\node[below] at (8,0) {$z$};
\node[below] at (9,0) {$z+1$};
\node[below] at (10,0) {$z+2$};

\end{tikzpicture}
\caption{In $W^{n,n}$, a jump of $y_{i+1}$ pushes (induces a simultaneous jump of) $x_{i}$ to the left so that the interlacing remains. Here, the jump happens with rate $\mu(z+1)$.}\label{figureXYinteraction7}
\end{figure}

\begin{figure}
\captionsetup{singlelinecheck = false, justification=justified}
\begin{tikzpicture}
\draw[dotted] (0,0) grid (3,1);
\draw[fill,blue] (1,1) circle [radius=0.1];
\node[above right] at (1,1) {$x_{i}$};

\draw[fill] (2,0) circle [radius=0.1];
\node[above right] at (2,0) {$y_{i+1}$};

\draw[->, very thick,] (1,1) to [out=45, in=135] (2,1);
\draw[very thick] (1.75,1) to (1.25,1.5);
\draw[very thick] (1.25,1) to (1.75,1.5);
\node[below] at (0,0) {$z-1$};
\node[below] at (1,0) {$z$};
\node[below] at (2,0) {$z+1$};
\node[below] at (3,0) {$z+2$};

\draw[->,dashed] (4,0.5) to (6,0.5);

\draw[dotted] (7,0) grid (10,1);
\draw[fill] (8,1) circle [radius=0.1];
\node[above right] at (8,1) {$x_{i}$};

\draw[fill] (9,0) circle [radius=0.1];
\node[above right] at (9,0) {$y_{i+1}$};

\node[below] at (7,0) {$z-1$};
\node[below] at (8,0) {$z$};
\node[below] at (9,0) {$z+1$};
\node[below] at (10,0) {$z+2$};

\end{tikzpicture}
\caption{In $W^{n,n}$, a jump of $x_{i}$ to the right is blocked by $y_{i+1}$ so that the interlacing remains. Here, the clock of $x_{i}$ rings with rate $\hat{\lambda}(z)=\mu(z+1)$.}\label{figureXYinteraction8}
\end{figure}

We will only consider the dynamics in $W^{n,n+1}$ in detail, as the case of $W^{n,n}$ is entirely analogous (but see Remark \ref{equalnumberrates} below for a discussion). We define the \textit{boundary of $W^{n,n+1}$} denoted by $\partial W^{n,n+1}$, as follows,
\begin{align*}
\partial W^{n,n+1}=\{(x,y)\in W^{n,n+1}: \exists 1 \le i \le n+1 ,\textnormal{ such that with } x'_i=x_i\pm 1 \textnormal{ then }(x',y) \notin W^{n,n+1} \}.
\end{align*}
Also, define the \textit{interior of $\mathring{W}^{n,n+1}$} by $\mathring{W}^{n,n+1}=W^{n,n+1}\backslash \partial W^{n,n+1}$. Finally, define the following indexing sets, $I^{n,n+1,+}_{adm}(x,y)$ and $I^{n,n+1,-}_{adm}(x,y)$ for $(x,y)\in W^{n,n+1}$ ("adm" stands for admissible jump),
\begin{align*}
I^{n,n+1,+}_{adm}(x,y)&=\{1 \le i \le n+1: (x',y)\in W^{n,n+1} \textnormal{ with } x_i'=x_i+1 \},\\
I^{n,n+1,-}_{adm}(x,y)&=\{1 \le i \le n+1: (x',y)\in W^{n,n+1} \textnormal{ with } x_i'=x_i-1 \}.
\end{align*}

We begin, by observing that we have the following time-$0$ initial condition,
\begin{align}
\mathsf{q}_0((x,y),(x',y'))=\delta_{(x,y),(x',y')}.
\end{align}
This follows directly from the form of $\mathsf{q}_t((x,y),(x',y'))$, by noting that as $t \downarrow 0$, the diagonal entries converge to $\delta_{x_i,x_i'},\delta_{y_i,y_i'}$, while all other contributions to the determinant vanish (or see proof of Proposition \ref*{propositionsubmarkov}).

Moreover, note that the entries of each matrix in the block determinant $\mathsf{q}^{n,n+1}_t$ namely $\mathsf{A}_t(x,x')$, $\mathsf{B}_t(x,x')$, $\mathsf{C}_t(x,x')$, $\mathsf{D}_t(x,x')$ (we are abusing notation slightly by using the same notation for both the matrices and their scalar entries) solve the following differential equations in the backwards variable $x$, for any $x,x' \in I$ fixed and $t>0$,
\begin{align}
\frac{d}{dt}\mathsf{A}_t(x,x')&=\mathcal{D}_x\mathsf{A}_t(x,x') \label{diffA},\\
\frac{d}{dt}\mathsf{B}_t(x,x')&=\mathcal{D}_x\mathsf{B}_t(x,x') \label{diffB},\\
\frac{d}{dt}\mathsf{C}_t(x,x')&=\hat{\mathcal{D}}_x\mathsf{C}_t(x,x')\label{diffC},\\
\frac{d}{dt}\mathsf{D}_t(x,x')&=\hat{\mathcal{D}}_x\mathsf{D}_t(x,x')\label{diffD}.
\end{align}
Observe that the differential equation (\ref{diffC}) for $\mathsf{C}_t$ follows from the Siegmund duality Lemma \ref{ConjugacyLemma} and reversibility with respect to $\hat{\pi}$ of the $\hat{\mathcal{D}}$-chain.

Now, we consider the discrete generator  $\mathfrak{D}^{n,n+1}$, the matrix that gives the rates of the push-block dynamics in $W^{n,n+1}$ (see Figures \ref{figureXYinteraction1}-\ref{figureXYinteraction4} to help visualize the meaning of these rates; also see Remark \ref{equalnumberrates} below for the rates in $W^{n,n}$),
\begin{align*}
\mathfrak{D}^{n,n+1}((x,y),(x',y'))=\begin{cases}
\lambda(x_i)  & x_i'=x_i+1 \textnormal{ and } i \in I^{n,n+1,+}_{adm}(x,y)\\
 \mu(x_i)  & x_i'=x_i-1 \textnormal{ and } i \in I^{n,n+1,-}_{adm}(x,y)  \\
 \hat{\lambda}(y_i)=\mu(y_i+1)  & y_i'=y_i+1 \textnormal{ and } i+1 \in I^{n,n+1,-}_{adm}(x,y) \\
 \hat{\mu}(y_i)=\lambda(y_i)  & y_i'=y_i-1 \textnormal{ and } i \in I^{n,n+1,+}_{adm}(x,y)\\
 \hat{\lambda}(y_i)=\mu(y_i+1) & (x_{i+1},y_i)=(x+1,x), (x'_{i+1},y'_i)=(x+2,x+1)  \\
 \hat{\mu}(y_i)=\lambda(y_i)  & (x_{i},y_i)=(x,x), (x'_{i},y'_i)=(x-1,x-1) \\
 S^{n,n+1}_{(x,y)} & (x',y')=(x,y)\\
  0 & \textnormal{ otherwise }
\end{cases},
\end{align*}
where $S^{n,n+1}_{(x,y)}$ is given by,
\begin{align*}
S^{n,n+1}_{(x,y)}=-\sum_{i \in I^{n,n+1,+}_{adm}(x,y)}^{}\lambda(x_i)-\sum_{i \in I^{n,n+1,-}_{adm}(x,y)}^{}\mu(x_i)-\sum_{i=1}^{n}\left[\hat{\lambda}(y_i)+\hat{\mu}(y_i)\right].
\end{align*}
Observe that, there is a non-zero rate for the transition $(x,y) \in W^{n,n+1} \to (x',y') \notin W^{n,n+1}$, which corresponds to the chain being killed (in the sequel we will identify all such configurations with a cemetery/absorbing state $\dagger$); this of course coincides with the rate of $y \in W^n(I) \to y'\notin W^n(I)$, which is non-zero only for $y \in \partial W^n(I)$ and is furthermore given by,
\begin{align*}
k_{(x,y)}^{n,n+1}=\sum_{i=1}^{n-1}\textbf{1}\left(y_i+1=y_{i+1}\right)\left[\hat{\lambda}(y_i)+\hat{\mu}(y_i+1)\right]+\textbf{1}\left(y_1=l\right)\hat{\mu}(l).
\end{align*}
Moreover, note that the first four conditions, given in terms of the indexing sets $I_{adm}^{n,n+1,+}$ and $I_{adm}^{n,n+1,-}$, in $\mathfrak{D}^{n,n+1}$ above could have been replaced by, $(x',y)\in W^{n,n+1}$ and $(x,y')\in W^{n,n+1}$ respectively. Also, observe that in the definition of $\mathfrak{D}^{n,n+1}$ the first two rates correspond to the free evolution of the $X$-particles as $\mathcal{D}$-chains, the next two to the evolution of the $Y$-particles as $\hat{\mathcal{D}}$-chains and the last two to the pushing mechanism (obviously, blocking corresponds to the $0$ rate).

\begin{lem}
Then, $\mathsf{q}_t^{n,n+1}$ solves the (backwards) differential equation, for $(x,y),(x',y')\in W^{n,n+1}$ and $t>0$:
\begin{align*}
\frac{d}{dt}\mathsf{q}^{n,n+1}_t((x,y),(x',y'))=(\mathfrak{D}^{n,n+1}\mathsf{q}^{n,n+1}_t)((x,y),(x',y')).
\end{align*}
\end{lem}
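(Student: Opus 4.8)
The plan is to differentiate the block determinant defining $\mathsf{q}^{n,n+1}_t$ in $t$, recognise the outcome as the free generator of $n+1$ $\mathcal{D}$-chains and $n$ $\hat{\mathcal{D}}$-chains acting on the backward variable, and then match that free generator against $\mathfrak{D}^{n,n+1}$ by a boundary analysis. For the first step, write $\mathsf{q}^{n,n+1}_t((x,y),(x',y'))$ as the determinant of the $(2n+1)\times(2n+1)$ block matrix, whose first $n+1$ rows (the ``$x$-rows'', entries from $\mathsf{A}_t,\mathsf{B}_t$) depend on $t$ and the backward coordinates only through $x_1,\dots,x_{n+1}$, and whose last $n$ rows (the ``$y$-rows'', entries from $\mathsf{C}_t,\mathsf{D}_t$) depend on them only through $y_1,\dots,y_n$. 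Since the entries obey the scalar Kolmogorov equations $(\ref{diffA})$--$(\ref{diffD})$, expanding $\frac{d}{dt}\det$ along rows and using multilinearity yields
\[
\frac{d}{dt}\mathsf{q}^{n,n+1}_t((x,y),(x',y'))=\Big(\sum_{i=1}^{n+1}\mathcal{D}_{x_i}+\sum_{i=1}^{n}\hat{\mathcal{D}}_{y_i}\Big)\mathsf{q}^{n,n+1}_t((x,y),(x',y'))=:\big(\mathfrak{L}^{n,n+1}\mathsf{q}^{n,n+1}_t\big)((x,y),(x',y')),
\]
where now $\mathsf{q}^{n,n+1}_t$ is read as the explicit determinant formula, which extends verbatim to all $(x,y)$ whose coordinates lie in $I\cup\{l-1\}$ (with the convention $P_t\textbf{1}_{[l,\cdot]}(l-1)\equiv 1$), extended by $0$ to the cemetery $\dagger$, and $\mathcal{D}_{x_i},\hat{\mathcal{D}}_{y_i}$ are the scalar difference operators in the indicated backward coordinate. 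It then remains to prove $\mathfrak{L}^{n,n+1}\mathsf{q}^{n,n+1}_t=\mathfrak{D}^{n,n+1}\mathsf{q}^{n,n+1}_t$ on $W^{n,n+1}$.

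On the interior $\mathring{W}^{n,n+1}$ this is immediate: every single move $x_i\mapsto x_i\pm1$, $y_i\mapsto y_i\pm1$ stays in $W^{n,n+1}$, no pushing is triggered, the killing rate vanishes, and the two generators coincide entry by entry. For the boundary I will use the following structural description. Let $R_k(a)$ be the row vector with $j$-th entry $p_t(a,x'_j)$ for $1\le j\le n+1$ and $\hat{\pi}(y'_{j-n-1})\big(P_t\textbf{1}_{[l,y'_{j-n-1}]}(a)-\textbf{1}(j-n-1\ge k)\big)$ for $n+2\le j\le 2n+1$. Then, using $p_t(a,x'_j)=-\bar{\nabla}_{x'_j}P_t\textbf{1}_{[l,x'_j]}(a)$ and the defining formulas, the $i$-th row of the block matrix equals $R_i(x_i)$, the $(n+1+i)$-th row equals $-\hat{\pi}(y_i)^{-1}\big(R_i(y_i+1)-R_i(y_i)\big)$, and, crucially, $R_k(a)-R_{k'}(a)$ does not depend on $a$, so the $y$-rows may be written with \emph{any} index in place of $i$.

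On $\partial W^{n,n+1}$ the difference $\mathfrak{L}^{n,n+1}\mathsf{q}^{n,n+1}_t-\mathfrak{D}^{n,n+1}\mathsf{q}^{n,n+1}_t$ splits into local contributions, one for each tight interlacing constraint at $(x,y)$, and each reduces to an identity for the extended formula of one of two kinds. First, a \emph{vanishing} identity: the determinant is $0$ at any configuration with two equal $y$-coordinates (two $y$-rows coincide, by the $a$-independence of $R_k-R_{k'}$) and at any configuration with $y_1=l-1$ (the first $y$-row vanishes identically, since $\hat{\pi}(l-1)=\infty$); this is what makes $Y$-collisions and the wall cost nothing beyond the diagonal term, matching the jump to $\dagger$ in $\mathfrak{D}^{n,n+1}$ (recall $\mathsf{q}^{n,n+1}_t(\dagger,\cdot)=0$). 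Second, a \emph{shift} identity, for instance
\[
\mathsf{q}^{n,n+1}_t((x^{(i+1),-},y),\cdot)+\mathsf{q}^{n,n+1}_t((x,y^{i,+}),\cdot)=\mathsf{q}^{n,n+1}_t((x^{(i+1),+},y^{i,+}),\cdot)+\mathsf{q}^{n,n+1}_t((x,y),\cdot)\quad\text{when }x_{i+1}=y_i+1
\]
(here $x^{(i+1),\pm}$ is $x$ with $x_{i+1}$ replaced by $x_{i+1}\pm1$, and similarly $y^{i,\pm}$), together with its mirror images at $x_i=y_i$ and the other strata. Such an identity is precisely what converts a free $x_{i+1}$-jump together with a free $y_i$-jump into the single coupled pushing (or blocking) transition prescribed by $\mathfrak{D}^{n,n+1}$; it collapses to a one-line check by expanding the determinants by multilinearity in the (at most two) rows that change — writing both the $x_{i+1}$-row and the $y_i$-row in terms of the \emph{same} $R_{i+1}$, and using $\det(\dots;v;v;\dots)=0$ and antisymmetry under row swaps.

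The analytic content is negligible: every step is an exact manipulation of a finite determinant. The substantive work — and the place where sign and index errors are most likely to arise — is the boundary analysis: enumerating the boundary strata, including configurations in which several interlacing inequalities are simultaneously tight (handled by superposing the single-constraint identities, the participating rows being essentially disjoint), and verifying case by case that the admissible jumps \emph{omitted} from $\mathfrak{D}^{n,n+1}$ and the coupled pushing jumps \emph{added} to it are accounted for exactly by the vanishing and shift identities, while keeping careful track of the asymmetric placement of $\le$ and $<$ in the definition of $W^{n,n+1}$.
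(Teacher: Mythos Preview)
Your approach is correct and is essentially the paper's own argument: differentiate the block determinant via the scalar backward equations (\ref{diffA})--(\ref{diffD}), observe that on $\mathring{W}^{n,n+1}$ the free and push--block generators agree, and on $\partial W^{n,n+1}$ reduce the discrepancy to determinant identities in the two affected rows. Your row vectors $R_k(a)$ encode exactly the linear relation the paper uses --- the paper's key row operation $\mathsf{A}_t(x,\cdot)-\hat{\pi}(x)\mathsf{C}_t(x,\cdot)=\mathsf{A}_t(x+1,\cdot)$ is just the statement that the $x$-row $R_i(x)$ plus $\hat{\pi}(x)$ times the $y$-row $-\hat{\pi}(x)^{-1}(R_i(x+1)-R_i(x))$ equals $R_i(x+1)$.

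The only genuine organisational difference is this: at each tight constraint the paper proves two \emph{separate} one-term identities --- ``blocked jump leaves $\mathsf{q}_t$ unchanged'' (display (\ref{equality1})) and ``pushed configuration equals the lone $y$-shifted configuration'' (display (\ref{equality2})) --- whereas you prove only their sum, the four-term shift identity. The paper's version is slightly stronger and perhaps more transparent dynamically (it isolates blocking from pushing), while your version is exactly what the backward equation requires and is a bit more uniform to state. You are also more explicit than the paper about the killing strata (the vanishing identities for coinciding $y$'s and for $y_1=l-1$), which the paper sweeps into ``all other cases are completely analogous''. One small caveat: your phrase ``the participating rows being essentially disjoint'' is not literally true when, say, $x_i=y_i$ and $x_{i+1}=y_i+1$ simultaneously (both constraints involve the same $y_i$-row), but the superposition still works because each shift identity is applied to the common base configuration $(x,y)$ and the two brackets carry different rates, so they decouple.
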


\begin{proof}
For $(x,y)\in \mathring{W}^{n,n+1}$, the claim follows immediately from (\ref{diffA}), (\ref{diffB}), (\ref{diffC}), (\ref{diffD}) and the multilinearity of the determinant. We will hence, now concentrate on the case of $(x,y)\in \partial W^{n,n+1}$. We will only consider the case $x_1=y_1=x$, as all others are completely analogous. Moreover, in order to ease notation and make the gist of the simple argument clear we will further restrict our attention to the rows containing $x_1,y_1$ and in fact it is easy to see that it suffices to consider the $2\times 2$ matrix given by, with $x',y'\in I$ fixed,
\begin{align*}
\det\
 \begin{pmatrix}
\mathsf{A}_t(x,x') & \mathsf{B}_t(x,y')\\
  \mathsf{C}_t(x,x') & \mathsf{D}_t(x,y') 
 \end{pmatrix}.
\end{align*}

By taking the $\frac{d}{dt}$-differential of the determinant, we easily see from the differential equations (\ref{diffA}), (\ref{diffB}), (\ref{diffC}), (\ref{diffD}) that we get,

\begin{align*}
\frac{d}{dt}\det\
 \begin{pmatrix}
\mathsf{A}_t(x,x') & \mathsf{B}_t(x,y')\\
  \mathsf{C}_t(x,x') & \mathsf{D}_t(x,y') 
 \end{pmatrix}=\lambda(x)\left[\det\
  \begin{pmatrix}
 \mathsf{A}_t(x+1,x') & \mathsf{B}_t(x+1,y')\\
   \mathsf{C}_t(x,x') & \mathsf{D}_t(x,y') 
  \end{pmatrix}-\det\
   \begin{pmatrix}
 \mathsf{A}_t(x,x') & \mathsf{B}_t(x,y')\\
    \mathsf{C}_t(x,x') & \mathsf{D}_t(x,y') 
   \end{pmatrix}\right]\\
 +\mu(x)\left[\det\
   \begin{pmatrix}
  \mathsf{A}_t(x-1,x') & \mathsf{B}_t(x-1,y')\\
    \mathsf{C}_t(x,x') & \mathsf{D}_t(x,y') 
   \end{pmatrix}-\det\
    \begin{pmatrix}
   \mathsf{A}_t(x,x') & \mathsf{B}_t(x,y')\\
     \mathsf{C}_t(x,x') & \mathsf{D}_t(x,y') 
    \end{pmatrix}\right]\\
    \mu(x+1)\left[\det\
      \begin{pmatrix}
     \mathsf{A}_t(x,x') & \mathsf{B}_t(x,y')\\
       \mathsf{C}_t(x+1,x') & \mathsf{D}_t(x+1,y') 
      \end{pmatrix}-\det\
       \begin{pmatrix}
     \mathsf{A}_t(x,x') & \mathsf{B}_t(x,y')\\
        \mathsf{C}_t(x,x') & \mathsf{D}_t(x,y') 
       \end{pmatrix}\right]\\
       \lambda(x)\left[\det\
         \begin{pmatrix}
        \mathsf{A}_t(x,x') & \mathsf{B}_t(x,y')\\
          \mathsf{C}_t(x-1,x') & \mathsf{D}_t(x-1,y') 
         \end{pmatrix}-\det\
          \begin{pmatrix}
         \mathsf{A}_t(x,x') & \mathsf{B}_t(x,y')\\
           \mathsf{C}_t(x,x') & \mathsf{D}_t(x,y') 
          \end{pmatrix}\right].
\end{align*}
On the other hand, what we would like to have, according to the rates of $\mathfrak{D}^{n,n+1}$, is the following,
\begin{align*}
\frac{d}{dt}\det\
 \begin{pmatrix}
\mathsf{A}_t(x,x') & \mathsf{B}_t(x,y')\\
  \mathsf{C}_t(x,x') & \mathsf{D}_t(x,y') 
 \end{pmatrix}=\mu(x)\left[\det\
   \begin{pmatrix}
  \mathsf{A}_t(x-1,x') & \mathsf{B}_t(x-1,y')\\
    \mathsf{C}_t(x,x') & \mathsf{D}_t(x,y') 
   \end{pmatrix}-\det\
    \begin{pmatrix}
   \mathsf{A}_t(x,x') & \mathsf{B}_t(x,y')\\
     \mathsf{C}_t(x,x') & \mathsf{D}_t(x,y') 
    \end{pmatrix}\right]\\
    \mu(x+1)\left[\det\
      \begin{pmatrix}
     \mathsf{A}_t(x,x') & \mathsf{B}_t(x,y')\\
       \mathsf{C}_t(x+1,x') & \mathsf{D}_t(x+1,y') 
      \end{pmatrix}-\det\
       \begin{pmatrix}
      \mathsf{A}_t(x,x') & \mathsf{B}_t(x,y')\\
        \mathsf{C}_t(x,x') & \mathsf{D}_t(x,y') 
       \end{pmatrix}\right]\\
       \lambda(x)\left[\det\
         \begin{pmatrix}
        \mathsf{A}_t(x-1,x') & \mathsf{B}_t(x-1,y')\\
          \mathsf{C}_t(x-1,x') & \mathsf{D}_t(x-1,y') 
         \end{pmatrix}-\det\
          \begin{pmatrix}
        \mathsf{A}_t(x,x') & \mathsf{B}_t(x,y')\\
           \mathsf{C}_t(x,x') & \mathsf{D}_t(x,y') 
          \end{pmatrix}\right].
\end{align*}
We are thus, required to show that,
\begin{align}\label{equality1}
\det\
  \begin{pmatrix}
 \mathsf{A}_t(x+1,x') & \mathsf{B}_t(x+1,y')\\
   \mathsf{C}_t(x,x') & \mathsf{D}_t(x,y') 
  \end{pmatrix}=\det\
   \begin{pmatrix}
  \mathsf{A}_t(x,x') & \mathsf{B}_t(x,y')\\
    \mathsf{C}_t(x,x') & \mathsf{D}_t(x,y') 
   \end{pmatrix},
\end{align}
which corresponds to $x_1$ being blocked when $x_1=y_1$ and $x_1$ tries to jump to the right (see the configuration in Figure \ref{figureXYinteraction4}) and also,
\begin{align} \label{equality2}
\det\
  \begin{pmatrix}
 \mathsf{A}_t(x,x') & \mathsf{B}_t(x,y')\\
   \mathsf{C}_t(x-1,x') & \mathsf{D}_t(x-1,y') 
  \end{pmatrix}=\det\
   \begin{pmatrix}
  \mathsf{A}_t(x-1,x') & \mathsf{B}_t(x-1,y')\\
    \mathsf{C}_t(x-1,x') & \mathsf{D}_t(x-1,y') 
   \end{pmatrix},
\end{align}
which corresponds to $x_1$ being pushed to the left when $x_1=y_1$ and $y_1$ jumps to the left (see the configuration in Figure \ref{figureXYinteraction3}). Observe that, this latter equality in display (\ref{equality2}) is the same as the one above in display (\ref{equality1}), after replacing $x$ with $x-1$. Both of these equalities follow from simple row and column operations. First recall,
\begin{align*}
\mathsf{A}_t(x,x')&=p_t(x,x')=-\bar{\nabla}_{x'}P_t\textbf{1}_{[l,x']}(x),\\
\mathsf{B}_t(x,y')&=\hat{\pi}(y')\left(P_t\textbf{1}_{[l,y']}(x)-1\right),\\
\mathsf{C}_t(y,x')&=\hat{\pi}^{-1}(y)\nabla_{y}\bar{\nabla}_{x'}P_t\textbf{1}_{[l,x']}(y),\\
\mathsf{D}_t(y,y')&=-\frac{\hat{\pi}(y')}{\hat{\pi}(y)}\nabla_yP_t\textbf{1}_{[l,y']}(y)=\hat{p}_t(y,y').
\end{align*}
In order to obtain (\ref{equality1}) and hence (\ref{equality2}) as well, we work on the RHS and we multiply the second row by $-\hat{\pi}(x)$ and add it to the first row to obtain,
\begin{align*}
\mathsf{A}_t(x,x')-\hat{\pi}(x)\mathsf{C}_t(x,x')&=-\bar{\nabla}_{x'}P_t\textbf{1}_{[l,x']}(x)-\bar{\nabla}_{x'}P_t\textbf{1}_{[l,x']}(x+1)+\bar{\nabla}_{x'}P_t\textbf{1}_{[l,x']}(x)=-\bar{\nabla}_{x'}P_t\textbf{1}_{[l,x']}(x+1)\\
&=\mathsf{A}_t(x+1,x'),
\end{align*}
and similarly for the second column, which then gives us the LHS of  (\ref{equality1}).
\end{proof}

We now add a \textit{cemetery state} $\dagger$ to the state space and to (the transition matrix) $\mathsf{q}^{n,n+1}_t$, to make it an honest (i.e. stochastic) transition matrix, denoted by $\tilde{\mathsf{q}}^{n,n+1}_t$. This corresponds to the process with infinite lifetime, that instead of being killed, gets absorbed at $\dagger$ and stays there forever. Observe that, $\dagger=\{(x,y):y \notin W^n(I) \}$ and so $\tilde{\mathsf{q}}^{n,n+1}_t$ and $\tilde{\mathfrak{D}}^{n,n+1}$ are given by,
\begin{align*}
\tilde{\mathsf{q}}^{n,n+1}_t(z,w)&=\mathsf{q}^{n,n+1}_t(z,w) ,\textnormal{  \ for \ }z,w \ne \dagger,\\
\tilde{\mathsf{q}}^{n,n+1}_t(\dagger,w)&=\delta_{\dagger,w},\\
\tilde{\mathsf{q}}^{n,n+1}_t(z,\dagger)&=1-\sum_{w}^{}\mathsf{q}^{n,n+1}_t(z,w) 
\end{align*}
and,
\begin{align*}
\tilde{\mathfrak{D}}^{n,n+1}(z,w)&=\mathfrak{D}^{n,n+1}(z,w) ,\textnormal{  \ for \ }z,w \ne \dagger,\\
\tilde{\mathfrak{D}}^{n,n+1}(\dagger,w)&=0, \ w \ne \dagger, \\
\tilde{\mathfrak{D}}^{n,n+1}(z,\dagger)&=\textnormal{ rate of transition: } y\in W^n(I) \to y' \notin W^n(I), \textnormal{ for } z=(x,y)\\
&=k_{(x,y)}^{n,n+1}=\sum_{i=1}^{n-1}\textbf{1}\left(y_i+1=y_{i+1}\right)\left[\hat{\lambda}(y_i)+\hat{\mu}(y_i+1)\right]+\textbf{1}\left(y_1=l\right)\hat{\mu}(l).
\end{align*}
Then, from our previous considerations we get: 

\begin{prop}
For fixed $z,w \in W^{n,n+1}\cup \dagger$ we have for $t>0$,
\begin{align}\label{BackwardsEquationCemeteryState}
\frac{d}{dt}\tilde{\mathsf{q}}^{n,n+1}_t(z,w)=(\tilde{\mathfrak{D}}^{n,n+1}\tilde{\mathsf{q}}^{n,n+1}_t)(z,w).
\end{align}
Moreover, $\tilde{\mathsf{q}}^{n,n+1}_0=Id$ and also for $t\ge0, \tilde{\mathsf{q}}^{n,n+1}_t$ is positive.
\end{prop}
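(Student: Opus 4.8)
The plan is to split the verification according to whether $z$ and $w$ equal the cemetery state $\dagger$, reducing each case to the preceding Lemma (the backwards equation $\frac{d}{dt}\mathsf{q}^{n,n+1}_t=\mathfrak{D}^{n,n+1}\mathsf{q}^{n,n+1}_t$ on $W^{n,n+1}$), to Proposition \ref{propositionsubmarkov} (the sub-Markov property, in particular positivity, sub-stochasticity, and $\mathsf{q}^{n,n+1}_0=Id$), and to one elementary observation that I would record at the outset: $\tilde{\mathfrak{D}}^{n,n+1}$ is a \emph{conservative} $Q$-matrix on $W^{n,n+1}\cup\{\dagger\}$, i.e. every row sums to zero. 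Indeed each row of $\mathfrak{D}^{n,n+1}$ has only finitely many nonzero entries; summing the off-diagonal rates over targets in $W^{n,n+1}$ gives the total rate of admissible $X$-jumps, $Y$-jumps and pushes, which is precisely $-S^{n,n+1}_{(x,y)}$ minus the killing rate $k^{n,n+1}_{(x,y)}$, so adding the diagonal $S^{n,n+1}_{(x,y)}$ leaves $-k^{n,n+1}_{(x,y)}$; adding $\tilde{\mathfrak{D}}^{n,n+1}((x,y),\dagger)=k^{n,n+1}_{(x,y)}$ then gives $0$, while the row at $\dagger$ is identically $0$ by definition.

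With that in hand, the initial condition and positivity are quick: for $z,w\in W^{n,n+1}$ they are Proposition \ref{propositionsubmarkov} verbatim; for $z=\dagger$ they are trivial since $\tilde{\mathsf{q}}^{n,n+1}_t(\dagger,\cdot)=\delta_{\dagger,\cdot}$; and for $z\in W^{n,n+1}$, $w=\dagger$ they follow from $\tilde{\mathsf{q}}^{n,n+1}_t(z,\dagger)=1-\sum_{w'}\mathsf{q}^{n,n+1}_t(z,w')$, which is $1-1=0$ at $t=0$ and nonnegative for $t\ge0$ by sub-stochasticity. For the backwards equation, when $z=\dagger$ both sides vanish ($\tilde{\mathsf{q}}^{n,n+1}_t(\dagger,\cdot)$ is constant in $t$ and $\tilde{\mathfrak{D}}^{n,n+1}(\dagger,\cdot)\equiv0$); when $z,w\in W^{n,n+1}$ the only new term in $(\tilde{\mathfrak{D}}^{n,n+1}\tilde{\mathsf{q}}^{n,n+1}_t)(z,w)$ relative to $(\mathfrak{D}^{n,n+1}\mathsf{q}^{n,n+1}_t)(z,w)$ is the intermediate-$\dagger$ contribution $k^{n,n+1}_z\cdot\tilde{\mathsf{q}}^{n,n+1}_t(\dagger,w)=0$, so the product collapses to $(\mathfrak{D}^{n,n+1}\mathsf{q}^{n,n+1}_t)(z,w)=\frac{d}{dt}\mathsf{q}^{n,n+1}_t(z,w)$ by the Lemma.

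The one case carrying content is $z\in W^{n,n+1}$, $w=\dagger$: I would differentiate $\tilde{\mathsf{q}}^{n,n+1}_t(z,\dagger)=1-\sum_{w'}\mathsf{q}^{n,n+1}_t(z,w')$ term by term — legitimate because $\sum_{w'}\mathsf{q}^{n,n+1}_t(z,w')$ is the survival probability of the autonomous, non-explosive $Y$-component (Proposition \ref{PropInter1} and the Remark following it), hence $C^1$ in $t$ — obtaining $\frac{d}{dt}\tilde{\mathsf{q}}^{n,n+1}_t(z,\dagger)=-\sum_{w'}(\mathfrak{D}^{n,n+1}\mathsf{q}^{n,n+1}_t)(z,w')$ via the Lemma. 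On the other side, expanding $(\tilde{\mathfrak{D}}^{n,n+1}\tilde{\mathsf{q}}^{n,n+1}_t)(z,\dagger)$, peeling off the intermediate-$\dagger$ term $k^{n,n+1}_z\cdot\tilde{\mathsf{q}}^{n,n+1}_t(\dagger,\dagger)=k^{n,n+1}_z$, and substituting $\tilde{\mathsf{q}}^{n,n+1}_t(v,\dagger)=1-\sum_{w''}\mathsf{q}^{n,n+1}_t(v,w'')$ for $v\in W^{n,n+1}$ (a finite sum over $v$, so no Fubini issue), one arrives at $k^{n,n+1}_z+\big(\sum_{v\in W^{n,n+1}}\mathfrak{D}^{n,n+1}(z,v)\big)-\sum_{w''}(\mathfrak{D}^{n,n+1}\mathsf{q}^{n,n+1}_t)(z,w'')$; the first two terms cancel by conservativity, leaving exactly $\frac{d}{dt}\tilde{\mathsf{q}}^{n,n+1}_t(z,\dagger)$. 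The only (minor) obstacle is thus the bookkeeping behind conservativity — correctly seeing that $S^{n,n+1}_{(x,y)}$ together with $k^{n,n+1}_{(x,y)}$ exactly balances every outgoing rate — plus the routine justification of the term-by-term differentiation of the survival-probability series.
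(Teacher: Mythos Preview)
Your proposal is correct and is exactly the natural verification the paper has in mind; the paper itself offers no proof beyond the sentence ``Then, from our previous considerations we get,'' and your case analysis ($z=\dagger$; $z,w\in W^{n,n+1}$; $z\in W^{n,n+1}$, $w=\dagger$) together with the conservativity observation is precisely how one unpacks that sentence. The only place any care is needed is the term-by-term differentiation, and your remark that $\sum_{w'}\mathsf{q}^{n,n+1}_t(z,w')$ reduces via Proposition~\ref{PropInter1} to the survival probability $\sum_{y'}\det(\hat p_t(y_i,y'_j))$ of the non-explosive $Y$-chain handles this (alternatively, since $(\mathfrak{D}^{n,n+1}\mathsf{q}^{n,n+1}_t)(z,w')$ is a finite linear combination in the first slot, the series of derivatives is uniformly bounded by a constant times a sub-stochastic sum).
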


We proceed to prove uniqueness of solutions:

\begin{prop}\label{UniquenessBackwardsWithCemetery}
The solution to the backwards equation (\ref{BackwardsEquationCemeteryState}) is unique.
\end{prop}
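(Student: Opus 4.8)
The plan is to deduce uniqueness from the standard dichotomy for continuous-time Markov chains: the backward equation attached to a stable conservative $q$-matrix has a unique non-negative sub-stochastic solution precisely when the associated minimal chain is non-explosive (Reuter's criterion). So I would reduce the proposition to showing that the minimal chain generated by $\tilde{\mathfrak{D}}^{n,n+1}$ does not explode.

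First I would record that $\tilde{\mathfrak{D}}^{n,n+1}$ is a stable, conservative $q$-matrix on the countable state space $W^{n,n+1}(I)\cup\{\dagger\}$: every off-diagonal rate is finite, each diagonal entry $S^{n,n+1}_{(x,y)}$ is a finite sum, and the rows sum to zero because the rate $k^{n,n+1}_{(x,y)}$ into $\dagger$ is exactly the deficit. Hence there is a minimal non-negative solution $P^{\min}_t$ of $\frac{d}{dt}P_t=\tilde{\mathfrak{D}}^{n,n+1}P_t$, $P_0=Id$; it is the transition function of the minimal chain $Z=(X,Y)$, and by Reuter's criterion $P^{\min}_t$ is the unique solution in this class iff $Z$ is non-explosive. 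Since $\tilde{\mathsf{q}}^{n,n+1}_t$ is a non-negative sub-stochastic solution of the same equation with $\tilde{\mathsf{q}}^{n,n+1}_0=Id$, once uniqueness is established it must coincide with $P^{\min}_t$; so it suffices to prove $Z$ does not explode.

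To prove non-explosion I would exploit the layered structure of the dynamics. By Dynkin's criterion -- Proposition \ref{PropInter1} and the remark after it -- the $Y$-component of $Z$ is autonomously Markovian, running as $n$ independent $\hat{\mathcal{D}}$-chains killed when two collide (or when $Y_1$ hits $l^*$). The standing hypotheses (\ref{birthdeathcondition1})--(\ref{birthdeathcondition2}), resp. (\ref{bilateralcondition1})--(\ref{bilateralcondition4}) for $I=\mathbb{Z}$, hold verbatim for the dual rates $(\hat\lambda,\hat\mu)$, so each $\hat{\mathcal{D}}$-chain, and therefore $Y$, is non-explosive; on $[0,T]$ the path of $Y$ thus has a.s. only finitely many jumps before being killed. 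Now condition on the $Y$-trajectory. The interlacing constraint forces $x_{i+1}\ge x_i+1$ at all times, so $X$-particles never interact with one another: every blocking and pushing rule encoded in $\tilde{\mathfrak{D}}^{n,n+1}$ couples an $X$-particle only to an adjacent $Y$-particle. Hence, on each of the finitely many intervals where the $Y$-path is constant, $X_1,\dots,X_{n+1}$ move as independent $\mathcal{D}$-chains: $X_1,\dots,X_n$ confined (reflected) to the finite windows between neighbouring frozen $Y$-particles, and $X_{n+1}$ a $\mathcal{D}$-chain reflected from below at $Y_n+1$. The first $n$ live on finite sets and cannot explode; $X_{n+1}$ only has fewer admissible moves than the free $\mathcal{D}$-chain on $I$, and the series in (\ref{birthdeathcondition1})--(\ref{birthdeathcondition2}) stay divergent after truncating the reflecting boundary, so $X_{n+1}$ is non-explosive too. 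Each jump of $Y$ moreover triggers at most one extra (pushing) jump of some $X_i$. Summing, on $[0,T]$ the full process $Z$ a.s. makes only finitely many jumps, i.e. $Z$ is non-explosive, and the proposition follows.

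The hard part will be the conditional step: making rigorous that, given the $Y$-path, the $X$-component really decouples between $Y$-jumps into independent reflected $\mathcal{D}$-chains with no $X$--$X$ interaction, and that reflecting/truncating the boundary preserves non-explosion so that (\ref{birthdeathcondition1})--(\ref{birthdeathcondition2}) can legitimately be invoked for the truncated chain; everything else is the routine Markov-chain uniqueness/non-explosion dichotomy. As an alternative one could instead verify Reuter's criterion head-on -- that $\tilde{\mathfrak{D}}^{n,n+1}u=cu$ with $c>0$ and $0\le u$ bounded forces $u\equiv 0$ -- but establishing this directly for the multilevel generator does not look any simpler than the structural argument above.
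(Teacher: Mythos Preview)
Your proposal is essentially the same strategy as the paper's: reduce uniqueness of the backward equation to non-explosion of the minimal chain, use that $Y$ is autonomous and non-explosive under the standing assumptions, and then control the $X$-coordinates given $Y$ by comparison with truncated/reflected $\mathcal{D}$-chains. The paper packages the last step slightly differently---rather than decomposing $[0,T]$ into the finitely many intervals on which $Y$ is constant, it dominates $X_{n+1}$ by a single $\mathcal{D}$-chain reflected at the random level $M=\sup_{s\le t}Y_n(s)\vee x_{n+1}$ and then applies dominated convergence---but the idea is the same.

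One small oversight: in the bilateral case $I=\mathbb{Z}$ the particle $X_1$ is \emph{not} confined to a finite window, since it only has $Y_1$ above and $-\infty$ below; so your claim that $X_1,\dots,X_n$ all live on finite sets between frozen $Y$-particles fails for $X_1$. The paper notes this explicitly and treats $X_1$ going to $-\infty$ symmetrically to $X_{n+1}$ going to $+\infty$; you would need to apply your reflected-chain argument to $X_1$ as well, using (\ref{bilateralcondition3})--(\ref{bilateralcondition4}) in place of (\ref{birthdeathcondition1})--(\ref{birthdeathcondition2}). Once that is patched the argument goes through.
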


\begin{proof}
Following \cite{BorodinOlshanski} we write $\tilde{\mathfrak{D}}^{n,n+1}=-\textnormal{diag}(\tilde{\mathfrak{D}}^{n,n+1})+\bar{\mathfrak{D}}^{n,n+1}$ where $\textnormal{diag}(\tilde{\mathfrak{D}}^{n,n+1})(z,w)=-\tilde{\mathfrak{D}}^{n,n+1}(z,w)\textbf{1}_{zw}$ and $\bar{\mathfrak{D}}^{n,n+1}(z,w)=\hat{\mathfrak{D}}^{n,n+1}(z,w)$ if $z \ne w$ and $0$ otherwise. We define the following recursion $\big\{\left(\mathcal{P}^{(k)}(t);t\ge 0\right)\big \}_{k\ge 1}$, of operators (matrices) by, for $t\ge0$,
\begin{align*}
\mathcal{P}^{(0)}(t)&=e^{-\textnormal{diag}(\tilde{\mathfrak{D}}^{n,n+1})t},\\
\mathcal{P}^{(k)}(t)&=\int_{0}^{t}e^{-\textnormal{diag}(\tilde{\mathfrak{D}}^{n,n+1})s}\bar{\mathfrak{D}}^{n,n+1}\mathcal{P}^{(k-1)}(t-s)ds
\end{align*}
and also let $\left(\tilde{\mathcal{P}}(t);t\ge 0\right)$ be given by, for $t \ge 0$,
\begin{align*}
\tilde{\mathcal{P}}(t)=\sum_{k=0}^{\infty}\mathcal{P}^{(k)}(t).
\end{align*}
Then (see Theorem 4.1, Corollary 4.2 of \cite{BorodinOlshanski}), $\left(\tilde{\mathcal{P}}(t);t\ge 0\right)$ is the \textit{minimal} solution of the backwards equation,  $\frac{d}{dt}S(t)=\tilde{\mathfrak{D}}^{n,n+1}S(t)$ for $t>0$ and $S(0)=Id$ and if it is \textit{stochastic} then, it is the \textit{unique} one. So, in such a case it must necessarily coincide with $\tilde{\mathsf{q}}_t^{n,n+1}$. 

By Proposition 4.3 of \cite{BorodinOlshanski}, in order to show that the minimal solution is indeed stochastic it suffices to prove that for $w\in W^{n,n+1}$, we have $\mathbb{P}_w\left((X(t),Y(t))\notin w+[-N,N]^{2n+1}\right)\to 0$ as $N\to \infty$, for fixed $t\ge 0$.

Note that,
\begin{align*}
\mathbb{P}_w\left((X(t),Y(t))\notin w+[-N,N]^{2n+1}\right)\le 2(n+1) \max \{\mathbb{P}_w\left(X_{n+1}(t)> x_{n+1}+N\right),\\ \mathbb{P}_w\left(X_1(t)<x_1-N\right)\}.
\end{align*}

So it suffices to show that the probabilities on the right hand side go to $0$ as $N \to \infty$ and since both cases are completely similar, we will show that,
\begin{align*}
\mathbb{P}\left(X_{n+1}(t)> x_{n+1}+N\right)
\end{align*}
vanishes as $N \to \infty$. This is intuitively obvious, since away from $(Y_n(t);t \ge 0)$, the top particle $(X_{n+1}(t);t \ge 0)$ follows the non-explosive $\mathcal{D}$-chain dynamics and so the only way for it to explode is if $Y_n$ drives it to $+\infty$, which does not happen (since $Y_n$ is itself an autonomous non-exploding $\hat{\mathcal{D}}$-chain ). More formally, we have (the notation is made precise below),
\begin{align*}
\mathbb{P}\left(X_{n+1}(t)> x_{n+1}+N\right)\le \mathbb{E}\left[
\mathbb{P}\left(\bar{D}(t)>x_{n+1}+N\bigg\rvert \bar{D}(0)=\underset{s \le t}{\sup}\hat{D}(s)\vee x_{n+1}\right)\right]
\end{align*}
where $\hat{D}$ is a realization of a $\hat{\mathcal{D}}$-chain and the outer expectation is taken over this. Also note that,
\begin{align*}
M=\underset{s \le t}{\sup}\hat{D}(s)<\infty, \textnormal{ a.s. }
\end{align*}
and conditioned on the realization of $\hat{D}$, the chain $\bar{D}$ is defined as follows: it moves as a $\mathcal{D}$-chain except that, jumps below $M$ are suppressed, namely its rates $(\bar{\lambda},\bar{\mu})$ are given by,
\begin{align*}
\bar{\lambda}(M)=\lambda(M), \bar{\mu}(M)=0 \textnormal{ and }\bar{\lambda}(k)=\lambda(k),\bar{\mu}(k)=\mu(k), \textnormal{ for } k\ge M+1.
\end{align*}
 This is again, non-explosive and hence,
\begin{align*}
\mathbb{P}\left(\bar{D}(t)>x_{n+1}+N\bigg\rvert \bar{D}(0)=\underset{s \le t}{\sup}\hat{D}(s)\vee x_{n+1}\right) \to 0 ,\textnormal{ as } N \to \infty.
\end{align*}
The result now, follows from the dominated convergence theorem.
\end{proof}

Finally, after a Doob's $h$-transform, by a strictly positive eigenfunction $\mathfrak{h}$ of $(\hat{P}_t^n;t \ge 0)$, the rates for the two-level Markov process, evolving according to $\left(\mathsf{Q}^{n,n+1,\mathfrak{h}}_t;t \ge 0\right)$ are given by,
\begin{align*}
\mathfrak{D}^{n,n+1}((x,y),(x',y'))=\begin{cases}
\lambda(x_i)  & x_i'=x_i+1 \textnormal{ and } i \in I^{n,n+1,+}_{adm}(x,y)\\
 \mu(x_i)  & x_i'=x_i-1 \textnormal{ and } i \in I^{n,n+1,-}_{adm}(x,y)  \\
 \hat{\lambda}^i_{\mathfrak{h}}(y_1,\cdots,y_n)  & y_i'=y_i+1 \textnormal{ and } i+1 \in I^{n,n+1,-}_{adm}(x,y) \\
 \hat{\mu}^i_{\mathfrak{h}}(y_1,\cdots,y_n)  & y_i'=y_i-1 \textnormal{ and } i \in I^{n,n+1,+}_{adm}(x,y)\\
 \hat{\lambda}^i_{\mathfrak{h}}(y_1,\cdots,y_n) & (x_{i+1},y_i)=(x+1,x), (x'_{i+1},y'_i)=(x+2,x+1)  \\
 \hat{\mu}^i_{\mathfrak{h}}(y_1,\cdots,y_n)  & (x_{i},y_i)=(x,x), (x'_{i},y'_i)=(x-1,x-1) \\
 S^{n,n+1,\mathfrak{h}}_{(x,y)} & (x',y')=(x,y)\\
  0 & \textnormal{ otherwise }
\end{cases},
\end{align*}
where for $1\le i \le n$,
\begin{align*}
\hat{\lambda}^i_{\mathfrak{h}}(y_1,\cdots,y_n)&=\frac{\mathfrak{h}(y_1,\cdots,y_{i-1},y_i+1,y_{i+1},\cdots,y_n)}{\mathfrak{h}(y_1,\cdots,y_n)}\hat{\lambda}(y_i),\\
\hat{\mu}^i_{\mathfrak{h}}(y_1,\cdots,y_n)&=\frac{\mathfrak{h}(y_1,\cdots,y_{i-1},y_i-1,y_{i+1},\cdots,y_n)}{\mathfrak{h}(y_1,\cdots,y_n)}\hat{\mu}(y_i)
\end{align*}
and $S^{n,n+1,\mathfrak{h}}_{(x,y)}$ is given by,
\begin{align*}
S^{n,n+1,\mathfrak{h}}_{(x,y)}=-\sum_{i \in I^{n,n+1,+}_{adm}(x,y)}^{}\lambda(x_i)-\sum_{i \in I^{n,n+1,-}_{adm}(x,y)}^{}\mu(x_i)-\sum_{i=1}^{n}\left[\hat{\lambda}^i_{\mathfrak{h}}(y_1,\cdots,y_n)+\hat{\mu}^i_{\mathfrak{h}}(y_1,\cdots,y_n)\right].
\end{align*}

\begin{rmk} \label{equalnumberrates}
We list here the rates for the push-block dynamics in $W^{n,n}$, described informally in the second paragraph of this subsection. With the analogous (with minor modifications due to the positions of the $\le$ and $<$ signs, see also Figures \ref{figureXYinteraction5}-\ref{figureXYinteraction8}) definitions for $\partial W^{n,n}$, $\mathring{W}^{n,n}$, $I^{n,n,+}_{adm}(x,y)$ and $I^{n,n,-}_{adm}(x,y)$ we have,
\begin{align*}
\mathfrak{D}^{n,n}((x,y),(x',y'))=\begin{cases}
\hat{\lambda}(x_i)  & x_i'=x_i+1 \textnormal{ and } i \in I^{n,n,+}_{adm}(x,y)\\
 \hat{\mu}(x_i)  & x_i'=x_i-1 \textnormal{ and } i \in I^{n,n,-}_{adm}(x,y)  \\
 \lambda(y_i)  & y_i'=y_i+1 \textnormal{ and } i \in I^{n,n,-}_{adm}(x,y) \\
 \mu(y_i)  & y_i'=y_i-1 \textnormal{ and } i-1 \in I^{n,n,+}_{adm}(x,y)\\
 \lambda(y_i) & (x_{i},y_i)=(x,x), (x'_{i},y'_i)=(x+1,x+1)  \\
 \mu(y_i)  & (x_{i-1},y_i)=(x-1,x), (x'_{i-1},y'_i)=(x-2,x-1) \\
 S^{n,n}_{(x,y)} & (x',y')=(x,y)\\
  0 & \textnormal{ otherwise }
\end{cases},
\end{align*}
where $S^{n,n}_{(x,y)}$ is given by,
\begin{align*}
S^{n,n}_{(x,y)}=-\sum_{i \in I^{n,n,+}_{adm}(x,y)}^{}\hat{\lambda}(x_i)-\sum_{i \in I^{n,n,-}_{adm}(x,y)}^{}\hat{\mu}(x_i)-\sum_{i=1}^{n}\left[\lambda(y_i)+\mu(y_i)\right].
\end{align*}
Again observe that, there is a non-zero rate $(x,y) \in W^{n,n} \to (x',y') \notin W^{n,n}$, which corresponds to killing the chain; this of course coincides with the rate of $y \in W^n(I) \to y'\notin W^n(I)$, which is only non-zero for $y \in \partial W^n(I)$ and is given by,
\begin{align*}
k_{(x,y)}^{n,n}=\sum_{i=1}^{n-1}\textbf{1}\left(y_i+1=y_{i+1}\right)\left[\lambda(y_i)+\mu(y_i+1)\right].
\end{align*}
The scheme of proof for the fact that $\mathsf{q}_t^{n,n}$ describes the dynamics above is exactly the same as the one followed for $W^{n,n+1}$.
\end{rmk}

\begin{rmk}\label{ComparisonToBorodinMultilevel}
Note that $\mathsf{q}_t^{n_1,n_2}$ is the transition kernel of the push-block dynamics in $W^{n_1,n_2}$ starting from \textbf{any} initial distribution $\nu(x,y)$, that is supported in $W^{n_1,n_2}$. One should compare with the "multilevel transition operator" for central or Gibbs measures denoted here by $\mathfrak{A}_t$, considered in Theorem 3.12 of \cite{BorodinKuan} and later used in \cite{Cerenzia} Proposition 5.3 and \cite{CerenziaKuan} section 5.3, that forms a semigroup when restricted to such measures. For the two-level dynamics these correspond to a measure on $W^{n_1,n_2}$ of the form $m_{n_2}(x)\Lambda^{h_{n_1}}_{n_1,n_2}(x,y)$, where $m_{n_2}$ is a measure on $W^{n_2}$ and $\Lambda^{h_{n_1}}_{n_1,n_2}(x,y)$ is a normalized (Markov) intertwining kernel from section \ref{sectionintertwinins}. It is of course clear that, $\mathsf{q}_t^{n_1,n_2,h_{n_1}}$ and $\mathfrak{A}_t$ coincide on such measures. Currently, we have no explicit analogue of the transition kernel for at least $3$ levels starting from any initial condition.
\end{rmk}

\subsection{Multilevel process construction}\label{subsectionmultilevelconstruction}
Let the state space $I$, be fixed. Suppose that, we are given a sequence of positive integers, $\mathsf{n}(1)\le \mathsf{n}(2) \le \cdots \le \mathsf{n}(N)\le \cdots$, so that $\mathsf{n}(k)-\mathsf{n}(k-1)\le 1$. Moreover, we have the following (off-diagonal) jump rates (their purpose is explained below),
\begin{align*}
r_j^+:W^{\mathsf{n}(1)}\to \mathbb{R}_+ &, r_j^-:W^{\mathsf{n}(1)}\to \mathbb{R}_+ , \textnormal{ for } 1 \le j \le \mathsf{n}(1),\\
\lambda^i:I\to \mathbb{R}_+ &, \mu^i:I \to \mathbb{R}_+, \textnormal{ for } i\ge 2.
\end{align*}
For, $k\ge 1$, the $k^{th}$ level will consist of $\mathsf{n}(k)$ (ordered) particles, i.e. will be taking values in $W^{\mathsf{n}(k)}$. We assume that, the rates for the first level, $(r_j^+,r_j^-)$, with $1\le j \le \mathsf{n}(1)$, which correspond to increasing or decreasing the $j^{th}$-coordinate by $1$ respectively (equivalently the $j^{th}$-particle jumping to the right or to the left), give rise to non-explosive dynamics in $W^{\mathsf{n}(1)}$. In the setting studied in this work, these are given by a conditioning, using a Doob's $h$-transformation, of $\mathsf{n}(1)$ independent birth and death chains (see discussion after proof of Proposition \ref*{UniquenessBackwardsWithCemetery} above for example). Furthermore, assume that the rates $(\lambda^i,\mu^i)_{i\ge 2}$ give rise to non-explosive (one-dimensional) birth and death chains in $I$.

Our goal is to construct, for each $N\ge 1$, a multilevel interlaced Markov process $\left(X^1(t),\cdots, X^{N}(t);t \ge 0\right)$ with generator $\mathfrak{D}_{1,\cdots, N}$, such that for each $k\ge 1 $, $\left(X^{k+1}(t);t \ge 0\right)$ consists of $\mathsf{n}(k+1)$ independent birth and death chains, each moving with rates $(\lambda^{k+1},\mu^{k+1})$, pushed and blocked, when at the boundary of $W^{\mathsf{n}(k),\mathsf{n}(k+1)}$ by the (particles of the) process $\left(X^{k}(t);t \ge 0\right)$, as in our two-level couplings from the previous subsection. We do this by induction. For the first level define,
\begin{align*}
\mathfrak{D}_{1}\left(x^1,z^1\right)=\begin{cases}
r^+_i\left(x^1\right)  & z^1_i=x^1_i+1, \ 1 \le i \le \mathsf{n}(1) \\
r^-_i\left(x^1\right)  & z^1_i=x^1_i-1,  \ 1 \le i \le \mathsf{n}(1) \\
-\sum_{i=1}^{\mathsf{n}(1)}\left[r^+_i\left(x^1\right)+r^-_i\left(x^1\right)\right] & x^1=z^1\\
  0 & \textnormal{ otherwise }
\end{cases}.
\end{align*}
 Suppose that we have constructed a process $\left(X^1(t),\cdots, X^{N-1}(t);t \ge 0\right)$, with rates of a transition $\left(x^1,\cdots,x^{N-1}\right)\to\left(z^1,\cdots,z^{N-1}\right)$ given by,
\begin{align*}
\mathfrak{D}_{1,\cdots, N-1}\left(\left(x^1,\cdots,x^{N-1}\right),\left(z^1,\cdots,z^{N-1}\right)\right)
\end{align*}
where, for $i\ge 1$, $x^i$ and $x^{i+1}$, $z^i$ and $z^{i+1}$, interlace. We proceed to define the rates $\mathfrak{D}_{1,\cdots, N}$ giving rise to $\left(X^1(t),\cdots, X^{N}(t);t \ge 0\right)$. First, suppose that $\mathsf{n}(N)=\mathsf{n}(N-1)+1$. Then we let the jump rates $\left(x^1,\cdots,x^{N}\right)\to\left(z^1,\cdots,z^{N}\right)$ ,
\begin{align*}
\mathfrak{D}_{1,\cdots,N}\left(\left(x^1,\cdots,x^{N}\right),\left(z^1,\cdots,z^{N}\right)\right)
\end{align*}
be given by,
\begin{align*}
\begin{cases}
\lambda^N(x^N_i)  & z^N_i=x^N_i+1 \textnormal{ and } i \in I^{\mathsf{n}(N)-1,\mathsf{n}(N),+}_{adm}(x^N,x^{N-1})\\
\mu^N(x^N_i)  & z^N_i=x^N_i-1 \textnormal{ and } i \in I^{\mathsf{n}(N)-1,\mathsf{n}(N),-}_{adm}(x^N,x^{N-1}) \\
 \mathfrak{D}_{1,\cdots, N-1}\left(\left(x^1,\cdots,x^{N-1}\right),\left(z^1,\cdots,z^{N-1}\right)\right)  & x^N=z^N \textnormal{ and } (x^N,z^{N-1})\in W^{\mathsf{n}(N)-1,\mathsf{n}(N)} \\
\mathfrak{D}_{1,\cdots, N-1}\left(\left(x^1,\cdots,x^{N-1}\right),\left(z^1,\cdots,z^{N-1}\right)\right) & (x^N_{i+1},x^{N-1}_i)=(x+1,x), (z^N_{i+1},z^{N-1}_i)=(x+2,x+1)  \\
 \mathfrak{D}_{1,\cdots, N-1}\left(\left(x^1,\cdots,x^{N-1}\right),\left(z^1,\cdots,z^{N-1}\right)\right)  & (x^N_{i},x^{N-1}_i)=(x,x), (z^N_{i},z^{N-1}_i)=(x-1,x-1) \\
S_{1,\cdots,N}^{(x^1,\cdots,x^N)} & \left(x^1,\cdots,x^{N}\right)=\left(z^1,\cdots,z^{N}\right)\\
  0 & \textnormal{ otherwise }
\end{cases},
\end{align*}
where $S_{1,\cdots,N}^{(x^1,\cdots,x^N)}$ is given by,
\begin{align*}
S_{1,\cdots,N}^{(x^1,\cdots,x^N)}=&-\sum_{i \in I^{\mathsf{n}(N)-1,\mathsf{n}(N),+}_{adm}(x^N,x^{N-1})}^{}\lambda^N(x^N_i)-\sum_{i \in I^{\mathsf{n}(N)-1,\mathsf{n}(N),-}_{adm}(x^N,x^{N-1})}^{}\mu^N(x^N_i)\\
&-\sum_{z^1,\cdots,z^{N-1}}\mathfrak{D}_{1,\cdots, N-1}\left(\left(x^1,\cdots,x^{N-1}\right),\left(z^1,\cdots,z^{N-1}\right)\right).
\end{align*}
Similarly, if $\mathsf{n}(N)=\mathsf{n}(N-1)$ we then define $\mathfrak{D}_{1,\cdots,N}\left(\left(x^1,\cdots,x^{N}\right),\left(z^1,\cdots,z^{N}\right)\right)$ as follows,
\begin{align*}
\begin{cases}
\lambda^N(x^N_i)  & z^N_i=x^N_i+1 \textnormal{ and } i \in I^{\mathsf{n}(N),\mathsf{n}(N),+}_{adm}(x^N,x^{N-1})\\
\mu^N(x^N_i)  & z^N_i=x^N_i-1 \textnormal{ and } i \in I^{\mathsf{n}(N),\mathsf{n}(N),-}_{adm}(x^N,x^{N-1}) \\
 \mathfrak{D}_{1,\cdots, N-1}\left(\left(x^1,\cdots,x^{N-1}\right),\left(z^1,\cdots,z^{N-1}\right)\right)  & x^N=z^N \textnormal{ and } (x^N,z^{N-1})\in W^{\mathsf{n}(N),\mathsf{n}(N)} \\
\mathfrak{D}_{1,\cdots, N-1}\left(\left(x^1,\cdots,x^{N-1}\right),\left(z^1,\cdots,z^{N-1}\right)\right) & (x^N_{i},x^{N-1}_i)=(x,x), (z^N_{i+1},z^{N-1}_i)=(x+1,x+1)  \\
 \mathfrak{D}_{1,\cdots, N-1}\left(\left(x^1,\cdots,x^{N-1}\right),\left(z^1,\cdots,z^{N-1}\right)\right)  & (x^N_{i-1},x^{N-1}_i)=(x-1,x), (z^N_{i-1},z^{N-1}_i)=(x-2,x-1) \\
\tilde{S}_{1,\cdots,N}^{(x^1,\cdots,x^N)} & \left(x^1,\cdots,x^{N}\right)=\left(z^1,\cdots,z^{N}\right)\\
  0 & \textnormal{ otherwise }
\end{cases},
\end{align*}
where $\tilde{S}_{1,\cdots,N}^{(x^1,\cdots,x^N)}$ is given by,
\begin{align*}
\tilde{S}_{1,\cdots,N}^{(x^1,\cdots,x^N)}=&-\sum_{i \in I^{\mathsf{n}(N),\mathsf{n}(N),+}_{adm}(x^N,x^{N-1})}^{}\lambda^N(x^N_i)-\sum_{i \in I^{\mathsf{n}(N),\mathsf{n}(N),-}_{adm}(x^N,x^{N-1})}^{}\mu^N(x^N_i)\\
&-\sum_{z^1,\cdots,z^{N-1}}\mathfrak{D}_{1,\cdots, N-1}\left(\left(x^1,\cdots,x^{N-1}\right),\left(z^1,\cdots,z^{N-1}\right)\right).
\end{align*}
Observe that, by construction for any $1 \le k\le N$, the process consisting of the first $k$ levels, $\left(X^1(t),\cdots, X^{k}(t);t \ge 0\right)$ is autonomous, governed by the transition rates $\mathfrak{D}_{1,\cdots,k}$. Moreover, given the trajectories of $\left(X^k(t);t \ge 0\right)$, the very next $(k+1)^{st}$ level $\left(X^{k+1}(t);t \ge 0\right)$, simply moves according to the corresponding push-block dynamics in either $W^{\mathsf{n}(k),\mathsf{n}(k)+1}$ or $W^{\mathsf{n}(k),\mathsf{n}(k)}$.

The fact that, the process with transition matrix $\mathfrak{D}_{1,\cdots,N}$ just defined, is well-posed can be seen inductively as follows. Assume that $\left(X^1(t),\cdots, X^{N-1}(t);t \ge 0\right)$ is almost surely non-explosive. Then by definition, adding level-$N$, $\left(X^{N}(t);t \ge 0\right)$ means introducing $\mathsf{n}(N)$ further independent birth and death chains (particles) each moving according to the non-explosive jump rates $(\lambda^N,\mu^N)$ that only interact with $\left(X^{N-1}(t);t \ge 0\right)$ via the pushing and blocking mechanism. Hence, this new enlarged process is seen to be non-explosive by the exact same argument used at the end of the preceding subsection.

\subsection{Consistent dynamics for multilevel processes}\label{subsectionconsistent}
We will discuss consistency relations under which if the multilevel process, whose construction we have just described, is started according to certain \textit{Gibbs} or \textit{central} initial conditions, then each level evolves as a Markov process and the fixed time $T>0$ distribution of the whole process retains the explicit Gibbs structure. We restrict our attention to multilevel processes taking values in triangular arrays known as Gelfand-Tsetlin patterns. The consistency relations and Propositions \ref{CoherentdynamicsproptypeA} and \ref{CoherentdynamicsproptypeB} below have analogues, with rather obvious modifications, to arbitrary multilevel interlaced processes, so that the number of particles from one level to the next increases by at most 1. We do not spell this out, since the already heavy notation becomes quite cumbersome.

Before we continue, we note that none of the results of this subsection are essentially new. In recent years Borodin and collaborators have many variations of constructions of multilevel processes (see Remark \ref{BorodinMultilevel}). In particular Propositions \ref{CoherentdynamicsproptypeA} and \ref{CoherentdynamicsproptypeB} follow as corollaries, after setting things up carefully, of the results found in Section 8 of \cite{BorodinOlshanski} (see also Section 9 therein). The reader familiar with those constructions can safely skip to the statements of the propositions (or skip the current subsection altogether). The reason we decided to include this rather detailed section, other than for completeness of the paper and sake of exposition, is because our method of proof is different; in particular the explicit form of the transition kernel of the two-level dynamics (cf. Remark \ref{ComparisonToBorodinMultilevel}) does not appear in any of those works (and is special to our setting).

We first consider the Gelfand-Tsetlin patterns of type-A, with $N$ levels. These are defined as follows,
\begin{align}
\mathbb{GT}(N)=\big\{\left(x^1,\cdots,x^N\right): x^i \in W^{i,i+1}(x^{i+1}),\textnormal{ for } 1 \le i\le N-1 \big\}.
\end{align}
See Figure \ref{figuretypeA} for an example.

\begin{figure}
\captionsetup{singlelinecheck = false, justification=justified}
\begin{tikzpicture}
\draw[dotted] (0,0) grid (7,3);
\draw[fill] (1,1) circle [radius=0.1];
\node[above right] at (1,1) {$x_1^{2}$};
\draw[fill] (3,1) circle [radius=0.1];
\node[above right] at (3,1) {$x_2^{2}$};
\draw[fill] (0,2) circle [radius=0.1];
\node[above right] at (0,2) {$x_1^{3}$};
\draw[fill] (2,2) circle [radius=0.1];
\node[above right] at (2,2) {$x_2^{3}$};
\draw[fill] (4,2) circle [radius=0.1];
\node[above right] at (4,2) {$x_3^{3}$};
\node[above right] at (0,3) {$x_1^{4}$};
\draw[fill] (0,3) circle [radius=0.1];
\node[above right] at (2,3) {$x_2^{4}$};
\draw[fill] (2,3) circle [radius=0.1];
\node[above right] at (3,3) {$x_3^{4}$};
\draw[fill] (3,3) circle [radius=0.1];
\node[above right] at (6,3) {$x_4^{4}$};
\draw[fill] (6,3) circle [radius=0.1];
\draw[fill] (1,0) circle [radius=0.1];
\node[above right] at (1,0) {$x_1^{1}$};
\node[below] at (0,0) {$-1$};
\node[below] at (1,0) {$0$};
\node[below] at (2,0) {$1$};
\node[below] at (3,0) {$2$};
\node[below] at (4,0) {$3$};
\node[below] at (5,0) {$4$};
\node[below] at (6,0) {$5$};
\node[below] at (7,0) {$6$};
\end{tikzpicture}
\caption{An example of a Gelfand-Tsetlin pattern of depth $4$ for $I=\mathbb{Z}$, with $x^1=0, x^2=(0,2),x^3=(-1,1,3),x^4=(-1,1,2,5)$.}\label{figuretypeA}
\end{figure}
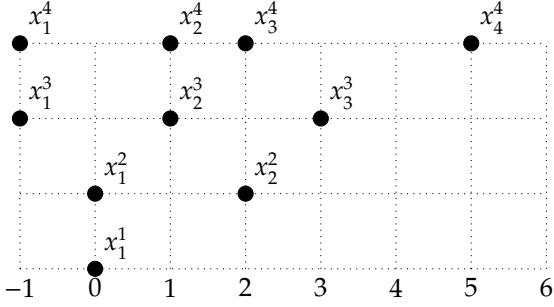

Suppose we have, for $1\le k \le N$, rates $(\lambda^k(\cdot),\mu^k(\cdot))$ governing modulo interactions the $k$ independent birth and death chains of the $k^{th}$ level. Denote by, $p_t^{k}(\cdot,\cdot)$ the transition density of this chain, also let $\hat{p}_t^{k}(\cdot,\cdot)$ be the transition density and $\hat{\pi}^k(\cdot)$ the symmetrizing measure of its Siegmund dual chain (with rates $(\hat{\lambda}^k(\cdot),\hat{\mu}^k(\cdot)$). Finally, with these rates as input, construct the process $\left(X^1(t),\cdots, X^{N}(t);t \ge 0\right)$ via the procedure detailed in subsection \ref{subsectionmultilevelconstruction} above.

We want to be able to apply Proposition \ref{MarkovFunctionProposition} (and Theorem \ref{Master1}) repeatedly recursively, for $k\ge 2$, to each pair $\left(X^{k-1},X^k\right)$. Towards this end, suppose $X^{k-1}$ is distributed as a Markov process in $W^{k-1}$, evolving according to the Doob's $h$-transformed Karlin-McGregor semigroup, by the strictly positive eigenfunction $h_{k-1}$, with eigenvalue $e^{c_{k-1}t}$, having transition density,
\begin{align*}
e^{-c_{k-1}t}\frac{h_{k-1}(y_1,\cdots,y_{k-1})}{h_{k-1}(x_1,\cdots,x_{k-1})}\det \left(\hat{p}_t^{k}\left(x_i,y_j\right)\right)_{i,j=1}^{k-1}.
\end{align*}
Moreover, define for $k \ge 2$ the following strictly positive function on $W^{k}$,
\begin{align}\label{consistencyrelationtypeA1}
H_{k-1}\left(x_1,\cdots,x_k\right)=\sum_{y\in W^{k-1,k}(x)}^{}\prod_{i=1}^{k-1}\hat{\pi}^k(y_i)h_{k-1}(y_1,\cdots,y_{k-1}).
\end{align}
Then, the basic consistency relation at the level of transition densities, which guarantees that the two descriptions of $X^k$ as the non-autonomous component of the coupling $\left(X^{k-1},X^k\right)$ and the autonomous component of the coupling $\left(X^{k},X^{k+1}\right)$ match, becomes for $k\ge 2$,
\begin{align}\label{consistencyrelationtypeA2}
e^{-c_{k-1}t}\frac{H_{k-1}(y_1,\cdots,y_{k})}{H_{k-1}(x_1,\cdots,x_{k})}\det \left(p_t^{k}\left(x_i,y_j\right)\right)_{i,j=1}^{k}=e^{-c_{k}t}\frac{h_{k}(y_1,\cdots,y_{k})}{h_{k}(x_1,\cdots,x_{k})}\det \left(\hat{p}_t^{k+1}\left(x_i,y_j\right)\right)_{i,j=1}^{k}.
\end{align}
For $k=1$ we put by definition $H_0 \equiv 1$ and so,
\begin{align}
p_t^{1}\left(x,y\right)=e^{-c_{1}t}\frac{h_{1}(y)}{h_{1}(x)}\hat{p}_t^{2}\left(x,y\right).
\end{align}
Let $\left(\mathfrak{P}^{k}(t);t \ge 0\right)$, denote the Markov semigroup that these densities give rise to and also define the Markov kernel,
\begin{align*}
\mathfrak{L}^{k}_{k-1}\left(x,y\right)=\frac{\prod_{i=1}^{k-1}\hat{\pi}^k(y_i)h_{k-1}(y_1,\cdots,y_{k-1})}{H_{k-1}(x_1,\cdots,x_{k})}\textbf{1}\left(y \in W^{k-1,k}(x)\right).
\end{align*}
Then, we have the following proposition.

\begin{prop}\label{CoherentdynamicsproptypeA}
Let  $\left(X^1(t),\cdots, X^{N}(t);t \ge 0\right)$ be the Markov process with transition matrix $\mathfrak{D}_{1,\cdots,N}$, built from the non-explosive rates $(\lambda^i(\cdot),\mu^i(\cdot))_{1\le i \le N}$. Suppose the consistency relations (\ref{consistencyrelationtypeA1}) and (\ref{consistencyrelationtypeA2}) hold for $1\le k \le N-1$. Let $\left(\mathfrak{P}^{k}(t);t \ge 0\right)$ and $\mathfrak{L}^{k}_{k-1}$ denote the semigroups and Markov kernels defined above and let $\mathfrak{M}^N(\cdot)$ be a probability measure on $W^N$. Finally, suppose that, $\left(X^1(t),\cdots, X^{N}(t);t \ge 0\right)$ is initialized according to the Gibbs measure with density in $\mathbb{GT}(N)$,
\begin{align}\label{GibbsTypeA}
\mathfrak{M}^N(x^N)\mathfrak{L}^{N}_{N-1}\left(x^N,x^{N-1}\right)\cdots \mathfrak{L}^{2}_{1}\left(x^2,x^{1}\right).
\end{align}
Then, $\left(X^{k}(t);t \ge 0\right)$ for $1 \le k \le N$ is distributed as a Markov process evolving according to $\left(\mathfrak{P}^{k}(t);t \ge 0\right)$ and moreover, for fixed $T>0$, the law of $\left(X^1(T),\cdots, X^{N}(T)\right)$ is given by the evolved Gibbs measure, with density in $\mathbb{GT}(N)$,
\begin{align}\label{EvolvedGibbsTypeA}
\left[\mathfrak{M}^N\mathfrak{P}^{N}(T)\right](x^N)\mathfrak{L}^{N}_{N-1}\left(x^N,x^{N-1}\right)\cdots \mathfrak{L}^{2}_{1}\left(x^2,x^{1}\right).
\end{align}
\end{prop}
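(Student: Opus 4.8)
The plan is a downward induction on the number of levels $N$, at each step peeling off the top level $X^N$ and bringing it under the scope of the Markov function result of Proposition \ref{MarkovFunctionProposition} (equivalently Theorem \ref{Master1}) applied to the two-level coupling $(X^{N-1},X^N)$, while the first $N-1$ levels are controlled by the inductive hypothesis applied to the autonomous sub-process $\left(X^1(t),\dots,X^{N-1}(t);t\ge0\right)$. It is convenient to carry a slightly strengthened statement along the induction: besides the two conclusions of the proposition, one asserts that for each $1\le k\le N$ and each fixed $T$, conditionally on the whole trajectory $\left(X^k(s);0\le s\le T\right)$ the law of $\left(X^1(T),\dots,X^{k-1}(T)\right)$ equals the Gibbs chain $\mathfrak{L}^k_{k-1}(X^k(T),x^{k-1})\cdots\mathfrak{L}^2_1(x^2,x^1)$ and depends on that trajectory only through $X^k(T)$; this path-version of the conditional law is exactly what the Rogers--Pitman theorem (Remark (ii) after Theorem 2 of \cite{RogersPitman}, which underlies Proposition \ref{MarkovFunctionProposition}) supplies. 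The base case $N=1$ is immediate: $\left(X^1(t);t\ge0\right)$ is by definition the Markov chain with generator $\mathfrak{D}_1$, which the $k=1$ instance of the consistency relation identifies with $\left(\mathfrak{P}^1(t);t\ge0\right)$, and $\mathbb{GT}(1)$ has a single level so that (\ref{EvolvedGibbsTypeA}) reduces to $\mathfrak{M}^1\mathfrak{P}^1(T)$.

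For the inductive step, note first that summing the Gibbs density (\ref{GibbsTypeA}) over $x^N$ -- using that $\mathfrak{L}^N_{N-1}$ is a Markov kernel -- shows its marginal on the first $N-1$ coordinates is again a Gibbs density for $N-1$ levels, with top measure $\mathfrak{M}^{N-1}:=\mathfrak{M}^N\mathfrak{L}^N_{N-1}$; since $\left(X^1(t),\dots,X^{N-1}(t)\right)$ is autonomous by construction, the inductive hypothesis applies to it. Now consider the pair $\left(X^{N-1}(t),X^N(t);t\ge0\right)$, which takes values in $W^{N-1,N}$. I claim it is the Markov process with semigroup $\left(\mathsf{Q}_t^{N-1,N,h_{N-1}};t\ge0\right)$: indeed $X^{N-1}$ is Markov with semigroup $\mathfrak{P}^{N-1}$, which by construction is the $h_{N-1}$-transform of the Karlin--McGregor semigroup of $N-1$ dual chains $\hat{\mathcal{D}}^N$; and by the construction of $\mathfrak{D}_{1,\dots,N}$, conditionally on the trajectory of $X^{N-1}$ the level $X^N$ evolves as $N$ independent $\mathcal{D}^N$-chains pushed and blocked by the $X^{N-1}$-particles, which is precisely the conditional evolution of the $X$-component of $\mathsf{Q}_t^{N-1,N}$ given its autonomous $Y$-trajectory, as shown in Section \ref{sectionpushblock} (and this conditional evolution is untouched by the $h_{N-1}$-transform, $h_{N-1}$ being a function of the $Y$-coordinates alone). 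Since $\mathfrak{L}^N_{N-1}=\Lambda^{h_{N-1}}_{N-1,N}$ by (\ref{consistencyrelationtypeA1}), the Gibbs initial law has exactly the form required by Proposition \ref{MarkovFunctionProposition} -- $X^N(0)\sim\mathfrak{M}^N$ and, given $X^N(0)=x$, $X^{N-1}(0)\sim\Lambda^{h_{N-1}}_{N-1,N}(x,\cdot)$ -- so, integrating its conclusion over $x\sim\mathfrak{M}^N$, $\left(X^N(t);t\ge0\right)$ is Markov with semigroup $P_t^{N,H_{N-1}}=\mathfrak{P}^N(t)$, hence $X^N(T)\sim\mathfrak{M}^N\mathfrak{P}^N(T)$, and (path-version) conditionally on the trajectory of $X^N$ one has $X^{N-1}(T)\sim\mathfrak{L}^N_{N-1}(X^N(T),\cdot)$.

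It remains to glue the two halves, and here the point is a conditional independence coming from the locality of the push-block dynamics together with the product structure of the Gibbs measure: conditionally on the trajectory of $X^{N-1}$, the trajectory of $X^N$ -- a measurable function of that trajectory, of $X^N(0)$, and of the independent Poisson clocks driving level $N$ -- is independent of $\left(X^1(\cdot),\dots,X^{N-2}(\cdot)\right)$, because the level-$N$ clocks are independent of everything and, under the Gibbs measure, conditionally on $X^{N-1}(0)$ the value $X^N(0)$ is independent of $\left(X^1(0),\dots,X^{N-2}(0)\right)$ and of all lower-level clocks. Combining this with the (path-version, $k=N-1$) inductive hypothesis for $\left(X^1,\dots,X^{N-1}\right)$, one obtains that conditionally on the trajectory of $X^N$ and on $X^{N-1}(T)=x^{N-1}$ the law of $\left(X^1(T),\dots,X^{N-2}(T)\right)$ is $\mathfrak{L}^{N-1}_{N-2}(x^{N-1},x^{N-2})\cdots\mathfrak{L}^2_1(x^2,x^1)$; multiplying the three conditional laws and using $X^N(T)\sim\mathfrak{M}^N\mathfrak{P}^N(T)$ yields both the evolved Gibbs density (\ref{EvolvedGibbsTypeA}) and the path-version statement for $k=N$. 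Since each $X^k$ with $k\le N-1$ is Markov by the inductive hypothesis and $X^N$ is Markov by the previous paragraph, the induction closes. The case of equal particle numbers, Proposition \ref{CoherentdynamicsproptypeB}, is handled identically, replacing Theorem \ref{Master1} and Proposition \ref{MarkovFunctionProposition} by Theorem \ref{Master2} and its accompanying Markov function statement, and $\mathsf{Q}_t^{N-1,N}$ by $\mathsf{Q}_t^{N,N}$.

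I expect the main obstacle to be this gluing step rather than the two-level input: Proposition \ref{MarkovFunctionProposition} only controls the coupling $(X^{N-1},X^N)$, so one must argue carefully -- from the explicit locality of the rates $\mathfrak{D}_{1,\dots,N}$ and the factorization of the Gibbs measure -- that adjoining the top level does not disturb the conditional Gibbs-chain law of the lower levels given $X^{N-1}$, i.e. that conditioning on $X^N$ and conditioning on $X^{N-1}$ are compatible. A secondary, purely bookkeeping, difficulty is matching the abstract data of Theorem \ref{Master1} ($\hat h_n$, $h_{n+1}$, $\Lambda^{\hat h_n}_{n,n+1}$) with the level-indexed data of the construction ($h_{N-1}$, $H_{N-1}$, $\mathfrak{L}^N_{N-1}$) through the consistency relations (\ref{consistencyrelationtypeA1})--(\ref{consistencyrelationtypeA2}); once this dictionary is pinned down the remaining verifications are routine.
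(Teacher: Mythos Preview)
Your proof is correct and follows essentially the same inductive approach as the paper's own proof: both argue by induction on $N$, apply the inductive hypothesis to the autonomous sub-process $(X^1,\dots,X^{N-1})$, and then invoke Proposition \ref{MarkovFunctionProposition} for the pair $(X^{N-1},X^N)$ to obtain the Markovianity of $X^N$ and the conditional law of $X^{N-1}(T)$ given $X^N(T)$. Your write-up is more explicit about the conditional-independence ``gluing'' step (that conditioning additionally on $X^N$ does not disturb the Gibbs-chain law of the lower levels given $X^{N-1}$), which the paper's proof compresses into the single phrase ``we get by the induction hypothesis''; your path-version strengthening and the locality argument you give for it are a clean way to justify that compression.
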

\begin{proof}
The proof is by induction. For $N=2$, this is Proposition \ref{MarkovFunctionProposition} (see Theorem \ref{Master1} as well). Assume the result is true for $N-1$. Then, $\left(X^{N-1}(t);t \ge 0\right)$ is a Markov process with semigroup $\left(\mathfrak{P}^{N-1}(t);t \ge 0\right)$. Moreover, from the consistency relation (\ref{consistencyrelationtypeA2}) for $k=N-1$, the joint dynamics of $\left(X^{N-1}(t),X^N(t);t \ge 0\right)$ are those considered in Proposition \ref{MarkovFunctionProposition} and thus, we obtain that $\left(X^N(t);t \ge 0\right)$ is distributed as a Markov process with semigroup $\left(\mathfrak{P}^{N}(t);t \ge 0\right)$. Furthermore, for fixed $T>0$, the conditional law of $X^{N-1}(T)$ given $X^{N}(T)$ is $\mathfrak{L}^{N}_{N-1}\left(X^N(T),\cdot\right)$. Hence, since the distribution of $X^N(T)$ has density $\left[\mathfrak{M}^N\mathfrak{P}^{N}(T)\right](\cdot)$, we get by the induction hypothesis, that the fixed time $T>0$, distribution of $\left(X^1(T),\cdots, X^{N}(T)\right)$ is given by (\ref{EvolvedGibbsTypeA}).
\end{proof}

\begin{rmk}\label{RemarkEynardMehtaForEvolved}
If there exist (positive) functions $\{f_k(\cdot)\}^N_{k=2}$ such that, for $2 \le k \le N$,
\begin{align*}
h_k(x_1,\cdots,x_k)=\prod_{i=1}^{k}f_k(x_i)H_{k-1}(x_1,\cdots,x_k)
\end{align*}
and moreover functions  $\{G_k(T,\cdot)\}^N_{k=1}$ so that,
\begin{align*}
\left[\mathfrak{M}^N\mathfrak{P}^{N}(T)\right](x_1,\cdots,x_N)=H_{N-1}(x_1,\cdots,x_N) \det \left(G_i(T,x_j)\right)_{i,j=1}^N
\end{align*}
then (\ref{EvolvedGibbsTypeA}) simplifies to:
\begin{align*}
\det \left(G_i\left(T,x^N_j\right)\right)_{i,j=1}^N \prod_{k=2}^{N}\prod_{i=1}^{k-1}\hat{\pi}^k\left(x_i^{k-1}\right)h_1\left(x_1^1\right)\prod_{k=2}^{N}\prod_{i=1}^{k}f_k\left(x_i^k\right)\prod_{k=1}^{N-1}\textbf{1}\left(x^k \in W^{k,k+1}(x^{k+1})\right).
\end{align*}
Hence, since the interlacing constraints can be written as a determinant, for some function $g(\cdot,\cdot)$ of two variables, see section \ref*{multivariatepolynomialssection} for the details, the display above becomes,
\begin{align*}
\det \left(G_i\left(T,x^N_j\right)\right)_{i,j=1}^N \prod_{k=2}^{N}\prod_{i=1}^{k-1}\hat{\pi}^k\left(x_i^{k-1}\right)h_1\left(x_1^1\right)\prod_{k=2}^{N}\prod_{i=1}^{k}f_k\left(x_i^k\right)\prod_{k=1}^{N-1}\det\left(g\left(x_i^k,x_j^{k+1}\right)\right)_{i,j=1}^{k+1}.
\end{align*}
These types of measures, by the celebrated Eynard-Mehta Theorem (see \cite{BorodinRains}), give rise to determinantal point processes with an extended correlation kernel $\mathsf{K}$, which can in principle be computed. 

In order to obtain this explicitly however, one has to invert a certain matrix or do some kind of bi-orthogonalization which is usually a very daunting task. For a particular, but still quite general, solution of the consistency relations, in the setting of a symplectic Gelfand-Tsetlin pattern, see the discussion after Proposition \ref*{CoherentdynamicsproptypeB} below, we are able to perform such a computation in Section \ref*{sectioncorrelation} later on. In fact these computations carry over to a large class of consistent probability measures, that include the ones corresponding to the dynamics considered in this section as special cases, the reader is referred to sections \ref{sectioncoherentmeasures} to \ref*{sectioncorrelation} for these developments.
\end{rmk}

We shall now consider coherent dynamics in symplectic Gelfand-Tsetlin patterns of depth $N$ defined by,
\begin{align}
\mathbb{GT}_{\textbf{s}}(N)=\big\{\left((x^{(0,1)},x^{(1,1)}\cdots,x^{(N-1,N)}\right): x^{(i-1,i)} \in W^{i,i}(x^{(i,i)}),  x^{(i,i)} \in W^{i,i+1}(x^{(i,i+1)})\big\},
\end{align}
with the notation convention of using two superscript indices to indicate the number of particles at both the preceding and current levels. See Figure \ref{figuretypeBC} for a simple example.

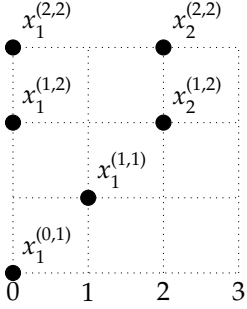
\begin{figure}
\captionsetup{singlelinecheck = false, justification=justified}
\begin{tikzpicture}
\draw[dotted] (0,0) grid (3,3);
\draw[fill] (1,1) circle [radius=0.1];
\node[above right] at (1,1) {$x_1^{(1,1)}$};
\draw[fill] (0,2) circle [radius=0.1];
\node[above right] at (0,2) {$x_1^{(1,2)}$};
\draw[fill] (2,2) circle [radius=0.1];
\node[above right] at (2,2) {$x_2^{(1,2)}$};

\node[above right] at (0,3) {$x_1^{(2,2)}$};
\draw[fill] (0,3) circle [radius=0.1];
\node[above right] at (2,3) {$x_2^{(2,2)}$};
\draw[fill] (2,3) circle [radius=0.1];

\draw[fill] (0,0) circle [radius=0.1];
\node[above right] at (0,0) {$x_1^{{(0,1)}}$};
\node[below] at (0,0) {$0$};
\node[below] at (1,0) {$1$};
\node[below] at (2,0) {$2$};
\node[below] at (3,0) {$3$};

\end{tikzpicture}
\caption{An example of a symplectic Gelfand-Tsetlin pattern of depth $2$ (note that it has 4 levels), for $I=\mathbb{N}$, with $x^{(0,1)}=0, x^{(1,1)}=1,x^{(1,2)}=(0,2),x^{(2,2)}=(0,2)$.}\label{figuretypeBC}
\end{figure}

Suppose that, for each level of $\mathbb{GT}_{\textbf{s}}(N)$ we are given (non-explosive) birth and death rates $(\lambda^{(k,k)}(\cdot),\mu^{(k,k)}(\cdot))$ and  $(\lambda^{(k,k+1)}(\cdot),\mu^{(k,k+1)}(\cdot))$ and from these we construct a Markov process $\left(X^{(0,1)}(t),X^{(1,1)}(t)\cdots, X^{(N-1,N)}(t);t \ge 0\right)$, using the recipe detailed in subsection \ref{subsectionmultilevelconstruction}. In order to proceed and be able to state the basic consistency relations, we need one more piece of notation. Define the operation $\check{{\cdot}}$ on transition matrices of birth and death (or bilateral) chains, as the inverse of the $\hat{{\cdot}}$ operation, i.e. as the inverse of taking the Siegmund dual. More explicitly, for a chain with birth rates $b(\cdot)$ and death rates $d(\cdot)$ this is given by:
\begin{align*}
\left(\check{b}(z),\check{d}(z)\right)\overset{\textnormal{def}}{=}\left(d(z),b(z-1)\right),\ z \in I.
\end{align*}
Observe that, in case $I=\mathbb{N}$ this is only defined on chains absorbed at $-1$. Finally, we shall use the same notations as before, with obvious modifications, for the transition densities and symmetrizing measures of the chains with rates $(\lambda^{(k,k)}(\cdot),\mu^{(k,k)}(\cdot))$, $(\lambda^{(k,k+1)}(\cdot),\mu^{(k,k+1)}(\cdot))$ and their various transforms.

We would like Proposition \ref{MarkovFunctionProposition} (see also Theorem \ref{Master1}) to be applicable, for $1\le k \le N-1$, to each pair $\left(X^{(k,k)},X^{(k,k+1)}\right)$ and Theorem \ref{Master2} to be applicable, for $1\le k \le N-1$, to each pair of the form $\left(X^{(k-1,k)},X^{(k,k)}\right)$, respectively.

Towards this end, suppose that $X^{(k-1,k-1)}$ evolves according to the $h$-transformed, by the strictly positive eigenfunction $h_{k-1,k-1}$ with eigenvalue $e^{c_{k-1,k-1}t}$, Karlin-McGregor semigroup with transition kernel in $W^{k-1}$,
\begin{align*}
e^{-c_{k-1,k-1}t}\frac{h_{k-1,k-1}(y_1,\cdots,y_{k-1})}{h_{k-1,k-1}(x_1,\cdots,x_{k-1})}\det \left(\hat{p}_t^{(k-1,k)}\left(x_i,y_j\right)\right)_{i,j=1}^{k-1}
\end{align*}
and moreover, define for $k \ge 2$ the following strictly positive function on $W^{k}$,
\begin{align}
H_{k-1,k-1}\left(x_1,\cdots,x_k\right)=\sum_{y\in W^{k-1,k}(x)}^{}\prod_{i=1}^{k-1}\hat{\pi}^{(k-1,k)}(y_i)h_{k-1,k-1}(y_1,\cdots,y_{k-1}).
\end{align}
We also define, $H_{0,0}\equiv 1$. Similarly, suppose that $X^{(k-1,k)}$ evolves according to the following $h$-transformed, by the strictly positive eigenfunction $h_{k-1,k}$ with eigenvalue $e^{c_{k-1,k}t}$, Karlin-McGregor semigroup with transition kernel in $W^{k}$,
\begin{align*}
e^{-c_{k-1,k}t}\frac{h_{k-1,k}(y_1,\cdots,y_{k})}{h_{k-1,k}(x_1,\cdots,x_{k})}\det \left(\check{p}_t^{(k,k)}\left(x_i,y_j\right)\right)_{i,j=1}^{k}
\end{align*}
and also, define for $k \ge 1$ the following strictly positive function on $W^{k}$,
\begin{align}
H_{k-1,k}\left(x_1,\cdots,x_k\right)=\sum_{y\in W^{k,k}(x)}^{}\prod_{i=1}^{k}\check{\pi}^{(k,k)}(y_i)h_{k-1,k}(y_1,\cdots,y_{k}).
\end{align}

Then, the basic consistency relations at the level of transition densities, which ensure that the descriptions of the levels $X^{(k-1,k)}$ and $X^{(k,k)}$ in two consecutive two-level couplings match, become,
\begin{align}
e^{-c_{k-1,k-1}t}\frac{H_{k-1,k-1}(y_1,\cdots,y_{k})}{H_{k-1,k-1}(x_1,\cdots,x_{k})}\det \left(p_t^{(k-1,k)}\left(x_i,y_j\right)\right)_{i,j=1}^{k}&=e^{-c_{k-1,k}t}\frac{h_{k-1,k}(y_1,\cdots,y_{k})}{h_{k-1,k}(x_1,\cdots,x_{k})}\det \left(\check{p}_t^{(k,k)}\left(x_i,y_j\right)\right)_{i,j=1}^{k}\label{consistencyrelationtypeB1},\\
e^{-c_{k-1,k}t}\frac{H_{k-1,k}(y_1,\cdots,y_{k})}{H_{k-1,k}(x_1,\cdots,x_{k})}\det \left(p_t^{(k,k)}\left(x_i,y_j\right)\right)_{i,j=1}^{k}&=e^{-c_{k,k}t}\frac{h_{k,k}(y_1,\cdots,y_{k})}{h_{k,k}(x_1,\cdots,x_{k})}\det \left(\hat{p}_t^{(k,k+1)}\left(x_i,y_j\right)\right)_{i,j=1}^{k}\label{consistencyrelationtypeB2}.
\end{align}

Let, $\left(\mathfrak{P}^{(k-1,k)}(t);t \ge 0\right)$ and $\left(\mathfrak{P}^{(k,k)}(t);t \ge 0\right)$ denote the corresponding semigroups these transition densities give rise to and finally define the Markov kernels,
\begin{align*}
\mathfrak{L}^{(k-1,k)}(x,y)&=\frac{\prod_{i=1}^{k-1}\hat{\pi}^{(k-1,k)}(y_i)h_{k-1,k-1}(y_1,\cdots,y_{k-1})}{H_{k-1,k-1}(x_1,\cdots,x_{k})}\textbf{1}\left(y \in W^{k-1,k}(x)\right),\\
\mathfrak{L}^{(k,k)}(x,y)&=\frac{\prod_{i=1}^{k}\check{\pi}^{(k,k)}(y_i)h_{k-1,k}(y_1,\cdots,y_{k})}{H_{k-1,k}(x_1,\cdots,x_{k})}\textbf{1}\left(y \in W^{k,k}(x)\right).
\end{align*}

Then, with similar considerations as in Proposition \ref{CoherentdynamicsproptypeA} above, by inductively applying Proposition \ref{MarkovFunctionProposition} and Theorem \ref{Master2} interchangeably we obtain:

\begin{prop}\label{CoherentdynamicsproptypeB}
Let $\left(X^{(0,1)}(t),X^{(1,1)}(t)\cdots, X^{(N-1,N)}(t);t \ge 0\right)$ be the multilevel Markov process in $\mathbb{GT}_\textbf{s}(N)$ built from the (non-explosive) rates  $(\lambda^{(k,k)}(\cdot),\mu^{(k,k)}(\cdot))$ and  $(\lambda^{(k,k+1)}(\cdot),\mu^{(k,k+1)}(\cdot))$. Suppose that, for all $k$ the consistency relations (\ref{consistencyrelationtypeB2}) hold. Let $\mathfrak{M}^{(N-1,N)}\left(\cdot\right)$ be a probability measure on $W^N$. Suppose that, $\left(X^{(0,1)}(t),X^{(1,1)}(t)\cdots, X^{(N-1,N)}(t);t \ge 0\right)$ is initialized according to the Gibbs measure with density in $\mathbb{GT}_{\textbf{s}}(N)$,
\begin{align}
\mathfrak{M}^{(N-1,N)}(x^{(N-1,N)})\mathfrak{L}^{N}_{N-1}\left(x^{(N-1,N)},x^{(N-1,N-1)}\right)\cdots \mathfrak{L}^{2}_{1}\left(x^{(1,2)},x^{(1,1)}\right)\mathfrak{L}^{1}_{1}\left(x^{(1,1)},x^{(0,1)}\right).
\end{align}
Then, for each $k$ the projections $\left(X^{(k,k)}(t);t \ge 0\right)$ and $\left(X^{(k,k+1)}(t);t \ge 0\right)$ are distributed as Markov processes, evolving according to the semigroups $\left(\mathfrak{P}^{(k,k)}(t);t \ge 0\right)$ and $\left(\mathfrak{P}^{(k,k+1)}(t);t \ge 0\right)$ respectively. Moreover, for fixed times $T>0$, the law of $\left(X^{(0,1)}(T),X^{(1,1)}(T)\cdots, X^{(N-1,N)}(T)\right)$ has density in $\mathbb{GT}_{\textbf{s}}(N)$ given by,
\begin{align}
\left[\mathfrak{M}^{(N-1,N)}\mathfrak{P}^{(N-1,N)}(T)\right](x^{(N-1,N)})\mathfrak{L}^{N}_{N-1}\left(x^{(N-1,N)},x^{(N-1,N-1)}\right)\cdots \mathfrak{L}^{2}_{1}\left(x^{(1,2)},x^{(1,1)}\right)\mathfrak{L}^{1}_{1}\left(x^{(1,1)},x^{(0,1)}\right).
\end{align}
\end{prop}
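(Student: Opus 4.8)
The plan is to run the same inductive argument as in the proof of Proposition \ref{CoherentdynamicsproptypeA}, peeling off the top level one at a time, but now alternating between the two available Markov functions statements: Proposition \ref{MarkovFunctionProposition} (equivalently Theorem \ref{Master1}) for the two-level pieces $\left(X^{(k,k)},X^{(k,k+1)}\right)\in W^{k,k+1}$, and Theorem \ref{Master2} for the two-level pieces $\left(X^{(k-1,k)},X^{(k,k)}\right)\in W^{k,k}$. Before the induction I would record two facts already in the paper. First, by the construction of Subsection \ref{subsectionmultilevelconstruction}, any truncation of $\mathbb{GT}_{\textbf{s}}(N)$ to its bottom $m$ levels is an autonomous Markov process, and, conditionally on the trajectory of any single level, the level immediately above it moves exactly as the push-block dynamics in the appropriate $W^{k,k}$ or $W^{k,k+1}$. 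Second, by Section \ref{sectionpushblock} --- the backward-equation lemma identifying $\mathsf{q}^{n,n+1}_t$ (resp.\ $\mathsf{q}^{n,n}_t$) with the generator $\mathfrak{D}^{n,n+1}$ (resp.\ $\mathfrak{D}^{n,n}$), together with the uniqueness Proposition \ref{UniquenessBackwardsWithCemetery} --- these push-block dynamics are governed by $\left(\mathsf{Q}^{k,k+1}_t;t\ge 0\right)$ (resp.\ $\left(\mathsf{Q}^{k,k}_t;t\ge 0\right)$), and after the Doob $h$-transform by the relevant strictly positive eigenfunction by $\left(\mathsf{Q}^{k,k+1,\mathfrak{h}}_t;t\ge 0\right)$ (resp.\ $\left(\mathsf{Q}^{k,k,\mathfrak{h}}_t;t\ge 0\right)$). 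Hence every two-level piece of the multilevel process is precisely a coupling to which Proposition \ref{MarkovFunctionProposition} or Theorem \ref{Master2} applies.

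The induction is on the number of levels of the truncated pattern. A single level is trivially Markov, and the first genuine step, adding $X^{(1,1)}$ on top of $X^{(0,1)}$, is Theorem \ref{Master2} in $W^{1,1}$. For the inductive step, assume the conclusion for the pattern with the top level removed; in particular the level $\ell$ just below the top is a Markov process with the asserted semigroup, at a fixed time $T>0$ it has marginal $\left[\mathfrak{M}\,\mathfrak{P}^{\ell}(T)\right]$, and the conditional law of the whole truncated array given $X^{\ell}(T)$ is the corresponding tail of the Gibbs product of $\mathfrak{L}$-kernels. Two cases. If the top is a $(k,k)$ level over $\ell=(k-1,k)$, then consistency relation (\ref{consistencyrelationtypeB1}) says that, as the autonomous component of $\left(X^{(k-1,k)},X^{(k,k)}\right)$, the process $X^{(k-1,k)}$ evolves exactly according to the $h$-transformed Karlin-McGregor semigroup built from $\det(\check{p}^{(k,k)}_t)$, which is the autonomous-component semigroup appearing in Theorem \ref{Master2}; moreover the initial Gibbs measure restricts, on this pair, to a measure of the form $m(x)\,\mathfrak{L}^{(k,k)}(x,\cdot)$. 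Theorem \ref{Master2} then gives that $X^{(k,k)}$ is Markov with semigroup $\left(\mathfrak{P}^{(k,k)}(t);t\ge 0\right)$ and that, for fixed $T$, the conditional law of $X^{(k-1,k)}(T)$ given $X^{(k,k)}(T)$ is $\mathfrak{L}^{(k,k)}(X^{(k,k)}(T),\cdot)$. If instead the top is a $(k,k+1)$ level over $\ell=(k,k)$, the same runs with consistency relation (\ref{consistencyrelationtypeB2}), the kernel $\mathfrak{L}^{(k-1,k)}$ in the corresponding role, and Proposition \ref{MarkovFunctionProposition} replacing Theorem \ref{Master2}. Composing this conditional law with the induction hypothesis and using that the new top level has marginal $\left[\mathfrak{M}\,\mathfrak{P}^{\mathrm{top}}(T)\right]$ yields the claimed evolved Gibbs density in $\mathbb{GT}_{\textbf{s}}(N)$, closing the induction.

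The parts I would write out but not dwell on are routine checks that the three descriptions of each intermediate level agree --- as the non-autonomous top of the coupling below it (with the $h$-transform dictated by that coupling's $\Lambda$-kernel), as the autonomous bottom of the coupling above it (with eigenfunction $H_{k-1,k-1}$ or $H_{k-1,k}$), and as the projection of the full generator $\mathfrak{D}$ of Subsection \ref{subsectionmultilevelconstruction} --- which is exactly what the consistency relations (\ref{consistencyrelationtypeB1}) and (\ref{consistencyrelationtypeB2}) encode, together with the verification en route that $H_{k-1,k-1}$, $H_{k-1,k}$ are finite and the relevant eigenfunctions strictly positive, so that every Doob transform and both Markov functions theorems legitimately apply. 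The main obstacle is therefore not any single hard estimate but the bookkeeping: keeping straight, across the alternating $W^{k,k}$ and $W^{k,k+1}$ layers, which transition kernel, which symmetrizing measure ($\pi$, $\hat{\pi}$ or $\check{\pi}$), which eigenvalue $e^{c_{\cdot}t}$ and which intertwining kernel attach to each level, and confirming that consecutive two-level couplings glue consistently --- which, as Remark \ref{ComparisonToBorodinMultilevel} notes, is made transparent here by having the explicit kernel $\mathsf{q}^{n_1,n_2}_t$ at hand.
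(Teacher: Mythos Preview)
Your proposal is correct and follows exactly the approach the paper takes: the paper's own proof is a single sentence stating that one proceeds ``with similar considerations as in Proposition \ref{CoherentdynamicsproptypeA} above, by inductively applying Proposition \ref{MarkovFunctionProposition} and Theorem \ref{Master2} interchangeably,'' which is precisely your alternating induction. One minor notational slip: in the case where the top is $(k,k+1)$ over $(k,k)$, the relevant kernel is $\mathfrak{L}^{(k,k+1)}$ (the $k\mapsto k+1$ shift of the paper's $\mathfrak{L}^{(k-1,k)}$), not $\mathfrak{L}^{(k-1,k)}$ itself.
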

The most natural solution (this fact is readily checked) to the consistency relations (\ref{consistencyrelationtypeB1}) and (\ref{consistencyrelationtypeB2}) in a symplectic Gelfand-Tsetlin pattern, for $I=\mathbb{N}$, is given by, with $(\lambda(\cdot),\mu(\cdot))$ being the rates of a reflecting at the origin (non-exploding) birth and death chain,
\begin{align}
\left(\lambda^{(k,k+1)}(\cdot),\mu^{(k,k+1)}(\cdot)\right)&=\left(\lambda(\cdot),\mu(\cdot)\right), \textnormal{ for } k \ge 0,\\
\left(\lambda^{(k,k)}(\cdot),\mu^{(k,k)}(\cdot)\right)&=\left(\hat{\lambda}(\cdot),\hat{\mu}(\cdot)\right), \textnormal{ for } k\ge 1.
\end{align}
As already stated several times, this particular construction and its intimate relation to orthogonal polynomials will be studied in detail in later sections.

\begin{rmk}\label{BorodinMultilevel}
As already mentioned, a related approach for constructing continuous-time consistent multivariate/multilevel dynamics on countable spaces, which partly inspired our exposition, can be found in Section 8 of \cite{BorodinOlshanski}. This takes as input the following: a sequence $E_1,\cdots, E_N$ of countable sets, $Q_1,\cdots, Q_N$ (regular) matrices of transition rates on these sets (equivalently $(P_1(t);t\ge 0),\cdots, (P_N(t);t\ge 0)$ the Markovian semigroups corresponding to them) and Markov kernels $\Lambda_1^2,\cdots, \Lambda_{N-1}^N$:
\begin{align*}
\Lambda_{k-1}^k:E_k\times E_{k-1}\to [0,1] \ , \ \sum_{y\in E_{k-1}}^{}\Lambda_{k-1}^k(x,y)=1, \forall x \in E_k, \ k=2,\cdots, N.
\end{align*}
Finally, it is assumed that the intertwining/coherency relations between the (single level) semigroups/transition matrices hold, for $k=2,\cdots,N$:
\begin{align*}
Q_k\Lambda_{k-1}^k&=\Lambda_{k-1}^kQ_{k-1},\\
P_k(t)\Lambda_{k-1}^k&=\Lambda_{k-1}^kP_{k-1}(t), \ t\ge 0.
\end{align*}
Then, from this data a consistent coupling is provided, with the analogous consequences of Proposition \ref{CoherentdynamicsproptypeA} and \ref{CoherentdynamicsproptypeB} above, see Proposition 8.6 in \cite{BorodinOlshanski}. In particular, using only the single level intertwining relations (\ref{MasterIntertwining1eq2}) and (\ref{MasterIntertwining2eq2}), which are elementary to obtain c.f. Remark \ref{Proofremark}, we could have made use of the theory developed in Section 8 of \cite{BorodinOlshanski} to construct consistent multilevel dynamics. However, since we already have a two-level coupling, from which as we tried to stress throughout this work (\ref{MasterIntertwining1eq2}) and (\ref{MasterIntertwining2eq2}) originate after all, and for completeness of this paper, we decided to present and discuss in detail the multilevel construction in subsections \ref{subsectionmultilevelconstruction} and \ref{subsectionconsistent}.
\end{rmk}

\section{Branching graphs and Markov processes on their boundaries}\label{sectionbranching} 

\subsection{General setup of branching graphs}
We assume that we are given a set of vertices $V$, decomposed into levels $V=\sqcup^{\infty}_{N=1}V_N$, where each $V_N$ is countable. We moreover, assume that for each $x \in V_{N+1}$ there is at least one edge but not infinitely many connecting it to a vertex in $V_N$ and for each $y\in V_N$ there is at least one edge connecting it to a vertex in $V_{N+1}$. There are no edges between vertices of non-consecutive levels. 

For $N\ge 1$ and each $x\in V_{N+1}$ and $y \in V_N$, let $\mathsf{mult}(x,y)\in \mathbb{R}_+$ denote the multiplicity or weight of the edge connecting $x$ and $y$. If there is no such edge then this is 0. Define inductively the dimension of $x \in V_{N+1}$ by,
\begin{align*}
\mathsf{dim}_{N+1}(x)=\sum_{y\in V_N}^{}\mathsf{mult}(x,y)\mathsf{dim}_N(y).
\end{align*}
Note that, we need to stipulate $\mathsf{dim}_1(\cdot)$ for vertices at the first level. In all the examples that we consider, this will always be 1. We can then define the Markov kernel or link $\Lambda_N^{N+1}:V_{N+1}\to V_N$ (note that this is a generalized map, that maps a point in $V_{N+1}$ to a probability measure on $V_N$) as follows,
\begin{align*}
\Lambda_{N}^{N+1}(x,y)=\frac{\mathsf{mult}(x,y)\mathsf{dim}_N(y)}{\mathsf{dim}_{N+1}(x)}.
\end{align*}
Denoting by $\mathcal{M}_p(E)$ the space of probability measures on a measurable space $E$ ($\mathcal{M}_p(E)$ is a Banach space with the total variation norm), the kernels $\{\Lambda^{N+1}_{N}\}_{N\ge1}$ induce the following projective chain,
\begin{align*}
\mathcal{M}_p(V_1)\leftarrow \mathcal{M}_p(V_2) \leftarrow \cdots \mathcal{M}_p(V_N) \leftarrow \cdots .
\end{align*}
The projective limit $\underset{\leftarrow}{\lim}\mathcal{M}_p(V_N)$, is by definition the convex set consisting of sequences of probability measures $\{\mu_N\}^{\infty}_{N=1}$ that are coherent with respect to the links,
\begin{align*}
\mu_{N+1}\Lambda^{N+1}_N=\mu_N ,
\end{align*}
or more explicitly, for $y \in V_N$,
\begin{align*}
\mu_N(y)=\sum_{x \in V_{N+1}}^{}\mu_{N+1}(x)\Lambda_N^{N+1}(x,y).
\end{align*}
This space is equipped with the projective limit topology. Now, we will call the \textit{extreme points} of $\underset{\leftarrow}{\lim}\mathcal{M}_p(V_N)$ denoted by $V_{\infty}=\textnormal{Ex} \left(\underset{\leftarrow}{\lim}\mathcal{M}_p(V_N)\right)$, the \textit{boundary} of the branching graph (or more generally of the projective chain) with the topology inherited from $\underset{\leftarrow}{\lim}\mathcal{M}_p(V_N)$. Then, from Theorem 9.2 of \cite{OlshanskiHarmonic} we get that if $V_{\infty} \ne 0$, then there exists a natural map,
\begin{align}\label{bijection}
\mathcal{M}_p(V_{\infty})\to \underset{\leftarrow}{\lim}\mathcal{M}_p(V_N),
\end{align} 
that is an isomorphism of measurable spaces. More precisely, $V_{\infty}$ comes along with a family of (abstract) Markov kernels $\Lambda_{N}^{\infty}:V_{\infty} \to V_N$, which induce a map $\mathcal{M}_p(V_{\infty})\to \underset{\leftarrow}{\lim}\mathcal{M}_p(V_N)$, which is an isomorphism of measurable spaces. It is a remarkable fact that in certain concrete situations the (abstract) Markov kernels $\Lambda_{N}^{\infty}:V_{\infty} \to V_N$ can be given explicitly. Moreover, we will say that a Markov kernel from a locally compact space $X$ to a locally compact space $Y$ is Feller if the induced contraction that maps $C(Y)$ to $C(X)$ in fact maps $C_0(Y)$ into $C_0(X)$ , the continuous functions vanishing at infinity. We finally come to the following definition. 

We shall say that, $V_{\infty}$ is the \textit{Feller boundary} of the branching graph if $V_{\infty}$ is locally compact, for all $N\ge 1$ the Markov kernels $\Lambda_N^{N+1}$,$\Lambda_N^{\infty}$ are Feller and furthermore the map (\ref{bijection}) is an isomorphism of measurable spaces.

\subsection{Method of intertwiners and semigroups on the boundary} \label{subsectionintertwiners}

The following theorem is known as the method of intertwiners, first proven by Borodin and Olshanski in \cite{BorodinOlshanski}:

\begin{thm}
Assume that $V_{\infty}$ is the Feller boundary of the branching graph described above. Assume that, $\forall N \ge N_0$ we have Feller semigroups $\left(P_N(t);t\ge 0\right)$ on the levels $V_N$, that satisfy the following intertwining relations, for all $t\ge0$ and $N \ge N_0$,
\begin{align*}
P_{N+1}(t)\Lambda_{N}^{N+1}=\Lambda_N^{N+1}P_{N}(t).
\end{align*}
Then, there exists a unique Feller semigroup $\left(P_{\infty}(t);t\ge 0\right)$ on $V_{\infty}$ such that,
\begin{align*}
P_{\infty}(t)\Lambda_{N}^{\infty}=\Lambda_N^{\infty}P_{N}(t) ,\textnormal{ for } t \ge0 , N \ge N_0.
\end{align*}
Furthermore, if $\mu_N$ is the unique invariant probability measure for $\left(P_N(t);t\ge 0\right)$ then there exists a unique probability measure $\mu_{\infty}$ on $V_{\infty}$ that is invariant with respect to $\left(P_{\infty}(t);t\ge 0\right)$.
\end{thm}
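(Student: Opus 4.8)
The plan is to do everything at the level of coherent sequences and transport the result to $V_\infty$ through the isomorphism (\ref{bijection}). Write $\Lambda_N^\infty\colon V_\infty\to V_N$ for the boundary kernels, so a measure $\mu\in\mathcal{M}_p(V_\infty)$ is identified with the coherent sequence $\{\mu_N\}$, $\mu_N=\mu\Lambda_N^\infty$. The first observation is that the hypothesis makes coherence $P_N$-stable: if $\mu_{N+1}\Lambda_N^{N+1}=\mu_N$ then $\mu_{N+1}P_{N+1}(t)\Lambda_N^{N+1}=\mu_{N+1}\Lambda_N^{N+1}P_N(t)=\mu_N P_N(t)$, so $\{\mu_N P_N(t)\}$ is again coherent (a coherent sequence being determined by its tail, the indices $N<N_0$ cause no trouble). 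Hence each $t$ gives a well-defined affine map $T_t$ of $\mathcal{M}_p(V_\infty)$ into itself, and in particular, applying it to $\delta_\omega$, a probability measure $\widetilde{\delta_\omega}^{\,t}\in\mathcal{M}_p(V_\infty)$ with $N$-th projection $\Lambda_N^\infty(\omega,\cdot)P_N(t)$. The whole content of the theorem is that $T_t$ is the action of a Feller Markov semigroup on $V_\infty$, and the kernel will turn out to be exactly $P_\infty(t)(\omega,\cdot)=\widetilde{\delta_\omega}^{\,t}$.

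The key technical step is to show that the linear span $\mathcal{A}:=\bigcup_{N\ge N_0}\Lambda_N^\infty C_0(V_N)$ is norm-dense in $C_0(V_\infty)$. It is an increasing union of subspaces, since $\Lambda_N^\infty f=\Lambda_{N+1}^\infty(\Lambda_N^{N+1}f)$ and $\Lambda_N^{N+1}$ is Feller, hence a linear subspace, and each summand lies in $C_0(V_\infty)$ because $\Lambda_N^\infty$ is Feller. Density I would get from the injectivity of (\ref{bijection}): if a finite signed measure $\nu$ annihilates $\mathcal{A}$ then $\nu\Lambda_N^\infty=0$ for every $N$; comparing total masses (the $\Lambda_N^\infty$ are Markov) forces $\|\nu^+\|=\|\nu^-\|$, and then $\nu^+/\|\nu^+\|$ and $\nu^-/\|\nu^-\|$ are probability measures with the same coherent sequence, hence equal by (\ref{bijection}); since $\nu^+\perp\nu^-$ this gives $\nu=0$, so $\mathcal{A}^\perp=\{0\}$ in $C_0(V_\infty)^*$.

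Next I would define $P_\infty(t)$ on $\mathcal{A}$ by $P_\infty(t)\big(\Lambda_N^\infty f\big)=\Lambda_N^\infty P_N(t)f$ and check independence of the representative. If $\Lambda_N^\infty f=\Lambda_M^\infty g$ with $M\ge N$, then by iterated intertwining $\Lambda_N^\infty P_N(t)f=\Lambda_M^\infty\Lambda_N^M P_N(t)f=\Lambda_M^\infty P_M(t)\Lambda_N^M f$ (writing $\Lambda_N^M$ for the composite link), so it suffices that $\phi:=\Lambda_N^M f-g\in C_0(V_M)$ with $\Lambda_M^\infty\phi=0$ implies $\Lambda_M^\infty P_M(t)\phi=0$; pairing with arbitrary $\mu\in\mathcal{M}_p(V_\infty)$, the hypothesis reads $\langle\mu_M,\phi\rangle=0$, and since $\mu_M P_M(t)$ is again a boundary projection (first step), $\langle\mu_M P_M(t),\phi\rangle=0$, so $\Lambda_M^\infty P_M(t)\phi$ vanishes against every $\delta_\omega$ and is $0$. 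Evaluating the definition at $\delta_\omega$ gives $\big(P_\infty(t)\Lambda_N^\infty f\big)(\omega)=\langle\widetilde{\delta_\omega}^{\,t},\Lambda_N^\infty f\rangle$, which both identifies $P_\infty(t)$ with integration against the kernel $\omega\mapsto\widetilde{\delta_\omega}^{\,t}$ and yields $\|P_\infty(t)h\|_\infty\le\|h\|_\infty$ on $\mathcal{A}$. By the density step $P_\infty(t)$ extends to a positive contraction on $C_0(V_\infty)$; and since $\omega\mapsto\langle\widetilde{\delta_\omega}^{\,t},h\rangle$ equals $\Lambda_N^\infty P_N(t)f$ at $\omega$ for $h=\Lambda_N^\infty f$, hence is continuous by the Feller properties of $P_N$ and $\Lambda_N^\infty$ (then continuous for all $h\in C_0(V_\infty)$ by density), the map $\omega\mapsto\widetilde{\delta_\omega}^{\,t}$ is a Feller Markov kernel. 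The semigroup law, $P_\infty(0)=\mathrm{Id}$, and strong continuity at $0$ are checked on $\mathcal{A}$ — e.g.\ $P_\infty(s)P_\infty(t)\Lambda_N^\infty f=\Lambda_N^\infty P_N(s)P_N(t)f=P_\infty(s+t)\Lambda_N^\infty f$ and $\|P_\infty(t)\Lambda_N^\infty f-\Lambda_N^\infty f\|\le\|P_N(t)f-f\|\to0$ — then extended by density and uniform contractivity; the relation $P_\infty(t)\Lambda_N^\infty=\Lambda_N^\infty P_N(t)$ is the defining identity, and uniqueness holds since any Feller semigroup satisfying it agrees with $P_\infty(t)$ on the dense set $\mathcal{A}$.

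For the invariant measure, if $\mu_N$ is the unique $P_N(t)$-invariant probability then $\mu_{N+1}\Lambda_N^{N+1}$ is a probability measure and is $P_N(t)$-invariant by the one-line computation of the first paragraph, so it equals $\mu_N$; thus $\{\mu_N\}$ is coherent and defines $\mu_\infty\in\mathcal{M}_p(V_\infty)$, and $\mu_\infty P_\infty(t)$ corresponds to $\{\mu_N P_N(t)\}=\{\mu_N\}$, i.e.\ $\mu_\infty$ is $P_\infty(t)$-invariant; conversely any invariant $\nu_\infty$ projects to $P_N(t)$-invariant probabilities $\nu_\infty\Lambda_N^\infty=\mu_N$, whence $\nu_\infty=\mu_\infty$ by (\ref{bijection}). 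The real obstacle is the passage from the purely measure-theoretic map $T_t$ to an operator on $C_0(V_\infty)$: the well-definedness of $P_\infty(t)$ on representatives and the density $\overline{\mathcal{A}}=C_0(V_\infty)$ both rest on (\ref{bijection}) being a genuine isomorphism, which is precisely what the Feller-boundary assumption provides; everything downstream is routine extension-by-density and contractivity bookkeeping.
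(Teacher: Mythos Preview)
The paper does not actually prove this theorem: it is stated in Section~\ref{subsectionintertwiners} as a known result and attributed to Borodin and Olshanski \cite{BorodinOlshanski}, with no argument given. So there is nothing in the paper to compare your proposal against.

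That said, your argument is sound and is essentially the Borodin--Olshanski proof. The three pillars --- that the level-wise semigroups preserve coherence, that $\mathcal{A}=\bigcup_{N\ge N_0}\Lambda_N^\infty C_0(V_N)$ is norm-dense in $C_0(V_\infty)$, and that $P_\infty(t)$ is well-defined on $\mathcal{A}$ independently of the representative --- are all handled correctly. Your density argument via the Hahn decomposition and injectivity of the map (\ref{bijection}) is clean; your well-definedness check, reducing to the implication ``$\Lambda_M^\infty\phi=0\Rightarrow\Lambda_M^\infty P_M(t)\phi=0$'' and verifying it by pairing with boundary measures, is exactly the right maneuver. The semigroup property, strong continuity, Feller property, uniqueness, and the invariant-measure clause are then routine consequences, as you note. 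One small point worth making explicit: your density argument uses $\nu\Lambda_N^\infty=0$ only for $N\ge N_0$, but this propagates to all $N$ via $\Lambda_N^\infty=\Lambda_{N_0}^\infty\Lambda_N^{N_0}$, so the appeal to (\ref{bijection}) is legitimate.
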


\subsection{Examples of branching graphs} \label{examplesofbranchingsubsection}

In this subsection, we describe three examples of branching graphs. The first two are classical and originated from the representation theory of Lie groups. The third one, the generalized BC-type branching graph, is new and is related to the two-step branching rules for the multivariate Karlin-McGregor polynomials. We will provide rather complete information for the Gelfand-Tsetlin graph, since we will mainly focus on it in Section \ref{sectionExamples}. The same kind of information is available for the BC-type graph, although the notation gets a bit more cumbersome, while for the generalized BC-type branching graph much less is known.

\paragraph{The Gelfand-Tsetlin graph} The vertices at level $N$ of this branching graph are given by \textit{signatures} of length $N$, i.e. integer sequences  $\kappa=(\kappa_1,\cdots,\kappa_N)$ so that $\kappa_1\ge \kappa_2 \ge \cdots \ge \kappa_N$. Moreover, vertices $\kappa$ at level $N$ and $\nu$ at level $N+1$ are connected if they interlace in the following way, $\nu_1\ge\kappa_1 \ge \nu_2 \ge \cdots \ge\kappa_N\ge\nu_{N+1}$, the multiplicity $\mathsf{mult}(\nu,\kappa)$ being equal to $1$ in such a case. To transform this into our notation, note that there is a bijection,
\begin{align*}
(\kappa_1\ge \cdots \ge \kappa_N) \mapsto (y_1<y_2<\cdots <y_N),
\end{align*}
given by,
\begin{align*}
\tilde{\kappa}_i=\kappa_i+N-i \textnormal{ and } y_i=\tilde{\kappa}_{N-i}.
\end{align*}
Observe that, under this bijection if,
\begin{align*}
\nu=(\nu_1\ge \cdots \ge \nu_{N+1})&\mapsto x= (x_1<x_2<\cdots<x_{N+1}),\\
\kappa=(\kappa_1\ge \cdots \ge \kappa_{N})&\mapsto y= (y_1<y_2<\cdots<y_{N}),
\end{align*}
then, $\nu_1\ge\kappa_1 \ge \nu_2 \ge \cdots \ge\kappa_N\ge\nu_{N+1}$ if and only if $y\in W^{N,N+1}(x)$. Hence, observe that a path of length $N$ is given by a Gelfand-Tsetlin pattern (of type-A) of depth $N$. See Figure \ref{figuregraphpatternexample} for an example.

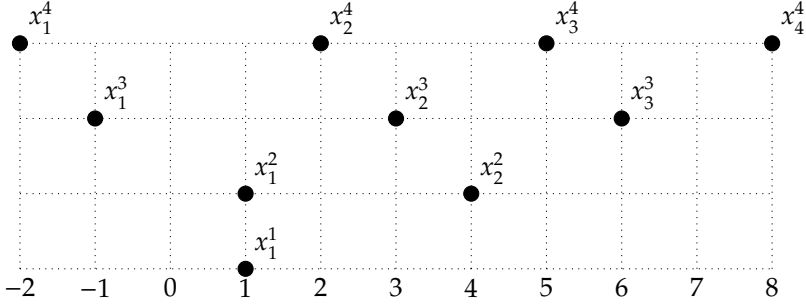
\begin{figure}
\captionsetup{singlelinecheck = false, justification=justified}
\begin{tikzpicture}
\draw[dotted] (0,0) grid (10,3);
\draw[fill] (3,1) circle [radius=0.1];
\node[above right] at (3,1) {$x_1^{2}$};
\draw[fill] (6,1) circle [radius=0.1];
\node[above right] at (6,1) {$x_2^{2}$};
\draw[fill] (1,2) circle [radius=0.1];
\node[above right] at (1,2) {$x_1^{3}$};
\draw[fill] (5,2) circle [radius=0.1];
\node[above right] at (5,2) {$x_2^{3}$};
\draw[fill] (8,2) circle [radius=0.1];
\node[above right] at (8,2) {$x_3^{3}$};
\node[above right] at (0,3) {$x_1^{4}$};
\draw[fill] (0,3) circle [radius=0.1];
\node[above right] at (4,3) {$x_2^{4}$};
\draw[fill] (4,3) circle [radius=0.1];
\node[above right] at (7,3) {$x_3^{4}$};
\draw[fill] (7,3) circle [radius=0.1];
\node[above right] at (10,3) {$x_4^{4}$};
\draw[fill] (10,3) circle [radius=0.1];
\draw[fill] (3,0) circle [radius=0.1];
\node[above right] at (3,0) {$x_1^{1}$};
\node[below] at (0,0) {$-2$};
\node[below] at (1,0) {$-1$};
\node[below] at (2,0) {$0$};
\node[below] at (3,0) {$1$};
\node[below] at (4,0) {$2$};
\node[below] at (5,0) {$3$};
\node[below] at (6,0) {$4$};
\node[below] at (7,0) {$5$};
\node[below] at (8,0) {$6$};
\node[below] at (9,0) {$7$};
\node[below] at (10,0) {$8$};
\end{tikzpicture}
\caption{An example of a path of length 4 in the Gelfand-Tsetlin graph, given by a Gelfand-Tsetlin pattern of depth 4. Here the path in terms of signatures $\kappa^1 \to \kappa^2 \to \kappa^3 \to \kappa^4$ is given by $\kappa^1=1,\kappa^2=(3,1),\kappa^3=(4,2,-1),\kappa^4=(5,3,1,-2)$, which transformed into our notation gives, $x^1=1,x^2=(1,4),x^3=(-1,3,6),x^4=(-2,2,5,8)$.}\label{figuregraphpatternexample}
\end{figure}

The Gelfand-Tsetlin graph has a representation theoretic origin, vertices at level $N$ parametrize the irreducible characters of $\mathbb{U}(N)$, the $N$-dimensional unitary group. The edges correspond to how an irreducible representation of $\mathbb{U}(N)$ when restricted to $\mathbb{U}(N-1)$ splits into irreducibles (since when restricted it becomes reducible).

It is a remarkable Theorem, originally due to Edrei \cite{Edrei} (in an equivalent form) and Voiculescu \cite{Voiculescu} (see also \cite{VershikKerov},  \cite{OkounkovOlshanskiJack}, \cite{BorodinOlshanskiBoundary}) that the boundary of the Gelfand-Tsetlin graph can be described explicitly. In order to do this, we need some more definitions.

 Let $\mathbb{R}^{\infty}_+$ denote the product of countably many copies of $\mathbb{R}_+$ and also write $\mathbb{R}_+^{4\infty+2}=\mathbb{R}^{\infty}_+\times\mathbb{R}^{\infty}_+\times\mathbb{R}^{\infty}_+\times\mathbb{R}^{\infty}_+\times\mathbb{R}_+\times\mathbb{R}_+$, equipped with the product topology. Then, consider $\Omega \subset \mathbb{R}_+^{4\infty+2}$ the set of sextuples,
\begin{align*}
\omega=(\alpha^+,\beta^+;\alpha^-,\beta^-;\delta^+,\delta^-),
\end{align*}
so that,
\begin{align*}
\alpha^{\pm}=(\alpha_1^{\pm}\ge \alpha_2^{\pm}\ge \cdots \ge 0)\in \mathbb{R}^{\infty}_+ \ &\textnormal{and} \ \beta^{\pm}=(\beta_1^{\pm}\ge \beta_2^{\pm}\ge \cdots \ge 0)\in \mathbb{R}^{\infty}_+, \\
\sum_{i=1}^{\infty}(\alpha_i^{\pm}+\beta_i^{\pm})\le \delta^{\pm} \ &\textnormal{and} \ \beta_1^{+}+\beta_1^-\le 1.
\end{align*}
Note that, $\Omega$ is locally compact under the induced topology. Then set,
\begin{align*}
\gamma^{\pm}=\delta^{\pm}-\sum_{i=1}^{\infty}(\alpha_i^{\pm}+\beta_i^{\pm})
\end{align*}
and observe that $\gamma^{\pm}\ge 0$ and define for $u \in \mathbb{C}^*$ and $\omega\in \Omega$ the function $\Phi\left(\omega;u\right)$ given by,
\begin{align*}
\Phi\left(\omega;u\right)=e^{\gamma^+(u-1)+\gamma^{-}(u^{-1}-1)}\prod_{i=1}^{\infty}\frac{1+\beta_i^+(u-1)}{1-\alpha_i^+(u-1)}\frac{1+\beta_1^ -(u^{-1}-1)}{1-\alpha_i^-(u^{-1}-1)}.
\end{align*}
As its poles do not accumulate to $1$, the function $\Phi\left(\omega;u\right)$ is holomorphic in a neighbourhood of the unit circle $\mathbb{T}=\{u\in \mathbb{C}:|u|=1\}$. For $n \in \mathbb{Z}$, we denote its Laurent coefficient by,
\begin{align*}
\phi_{n}(\omega)=\frac{1}{2\pi \mathsf{i}}\oint_{\mathbb{T}}\Phi\left(\omega;u\right)\frac{du}{u^{n+1}}
\end{align*}
and for a signature $\nu=(\nu_1,\cdots,\nu_N)$ of length $N$ define,
\begin{align*}
\phi_{\nu}(\omega)=\det\left(\phi_{\nu_i-i+j}(\omega)\right)^N_{i,j=1}
\end{align*}
and the Markov kernels $\Lambda_N^{\infty}:\Omega \to V_N$ by,
\begin{align*}
\Lambda_N^{\infty}(\omega,\nu)=\mathsf{dim}_N(\nu)\phi_{\nu}(\omega) \ ,\forall { N \ge 1, \omega \in \Omega,\nu=(\nu_1,\cdots,\nu_N)},
\end{align*}
where $\mathsf{dim}_N(\nu)=\prod_{1\le i <j \le N}\frac{\nu_i-\nu_j+j-i}{j-i}$ is the dimension of a level-$N$ signature $\nu=(\nu_1,\cdots,\nu_N)$.

Then, $\Omega$ is the \textit{Feller boundary} of the Gelfand-Tsetlin graph with link from $\Omega$ to level $N$ given by $\Lambda_N^{\infty}$ (for the Feller property in particular, see Corollary 2.11 of \cite{BorodinOlshanskiBoundary}).

\paragraph{BC-type branching graph} This graph has a representation theoretic origin as well. For certain values of its multiplicities it describes the branching of the irreducible characters of the Lie groups $\{\textnormal{SO}(2N+1)\}_{N\ge 1}$, $\{\textnormal{Sp}(2N)\}_{N\ge 1}$ and $\{\textnormal{O}(2N)\}_{N\ge 1}$. Vertices at level $N$ are now given by \textit{positive} signatures of length $N$, namely $\kappa=(\kappa_1\ge \cdots \ge\kappa_N \ge 0)$ with two vertices $\kappa=(\kappa_1\ge \cdots \ge\kappa_N \ge 0)$ and $\nu=(\nu_1\ge \cdots \ge\nu_{N+1} \ge 0)$ being connected by an edge and we write $\kappa\prec_{\textnormal{BC}}\nu$, if and only if there exists an "intermediate" signature $\rho=(\rho_1\ge \cdots \ge\rho_N\ge0)$ such that,
\begin{align*}
\rho_1\ge \kappa_1 \ge \cdots \ge \rho_N \ge \kappa_N \textnormal{ and } \nu_1\ge \rho_1 \ge \cdots \ge \rho_N \ge \nu_{N+1},
\end{align*}
or equivalently in our notation, under the transformation described previously in the context of the Gelfand-Tsetlin graph $\kappa \mapsto y$, $\rho \mapsto z$ and $\nu \mapsto x$,
\begin{align*}
y\in W^{N,N}(z) \textnormal{ and } z\in W^{N,N+1}(x).
\end{align*}
The multiplicities are now given in terms of certain coefficients associated to the multivariate $\theta=1$ Jacobi polynomials, so they depend on two real parameters $a,b$; see Section $3$ of \cite{Cuenca} for more details. It is a theorem, originally of Okounkov and Olshanski \cite{OkounkovOlshanskiJacobi}, but also see Section 3 of \cite{Cuenca} for a nice exposition and a proof of the Feller property, that the boundary of the BC-type branching graph can be parametrized by the space $\Omega_{\textnormal{BC}}$ (which \textit{does not} depend on $a,b$) being the closed subspace of $\mathbb{R}_+^{2\infty+1}$ consisting of points $\omega_{\textnormal{BC}}=(\alpha^{\textnormal{BC}},\beta^{\textnormal{BC}},\delta^{\textnormal{BC}})$ such that,
\begin{align*}
\alpha^{\textnormal{BC}}=(\alpha_1^{\textnormal{BC}}\ge \alpha_2^{\textnormal{BC}}\ge \cdots \ge 0)\in \mathbb{R}^{\infty}_+ \ , \ \beta^{\textnormal{BC}}=(1\ge\beta_1^{\textnormal{BC}}\ge \beta_2^{\textnormal{BC}}\ge \cdots \ge 0)\in \mathbb{R}^{\infty}_+  \textnormal { and }
\sum_{i=1}^{\infty}(\alpha_i^{\textnormal{\textnormal{BC}}}+\beta_i^{\textnormal{BC}})\le \delta^{\textnormal{BC}}.
\end{align*}

\paragraph{Alternating construction and generalized BC-type branching graph} This corresponds to the construction of a general random growth process with a wall in later sections, which we call the \textit{alternating construction}. The graph consists of the vertices and edges of the BC-type branching graph described above, but with more general multiplicities (in particular the BC-type graph is a special case). Of course, these multiplicities are not arbitrary but arise from the \textit{consistent dynamics} between Karlin-McGregor semigroups namely (\ref{KMintertwining1}) and (\ref{KMintertwining2}), or from the branching rules for multivariate Karlin-McGregor polynomials. These polynomials arise as follows: to any family $\{Q_i\}_{i\ge 1}$ of orthogonal polynomials in $[0,\infty)$ we can associate a multivariate determinantal version, indexed by $\nu \in W^N$, by $\det\left(Q_{\nu_i}(x_j)\right)_{i,j=1}^N/\det\left(x^{i-1}_j\right)_{i,j=1}^N$. Then using the branching rules for these polynomials, see Section \ref{multivariatepolynomialssection} (also the Appendix) one can obtain the following general multiplicities. In the notation of this paper, if we define the following (positive) \textit{weight functions} by,
\begin{align*}
(z,y) \in W^{N,N}(\mathbb{N}) \ ,\ w_{N,N}(z,y)=\prod_{i=1}^{N}\pi(y_i),\\
(x,z) \in W^{N,N+1}(\mathbb{N}) \ ,\ w_{N,N+1}(x,z)=\prod_{i=1}^{N}\hat{\pi}(z_i),
\end{align*}
then, the multiplicities are given by,
\begin{align*}
\mathsf{mult}(x,y)=\sum_{z:y\in W^{N,N}(z),z \in W^{N,N+1}(x)}^{}w_{N,N}(z,y)w_{N,N+1}(x,z).
\end{align*}
Moreover, observe that for $x\in W^{N+1}$, its dimension in the branching graph is given by the harmonic function from (\ref*{alternatingharmonic1}),
\begin{align*}
\mathsf{dim}_{N+1}(x)=h_{N,N+1}(x)=(\Lambda_{N,N+1}\Lambda_{N,N}\cdots \Lambda_{1,1}\textbf{1})(x).
\end{align*}
Under a certain \textit{positive definiteness} assumption, which admittedly can be non-trivial to check (see Appendix), our results from sections \ref{sectioncoherentmeasures} and \ref{sectionevolutionoperators} \textit{partially} describe the boundary of these graphs. More precisely, we first introduce a large class of coherent measures for this graph in Section \ref{sectioncoherentmeasures}. Combining Lemma \ref{factorizationforcoherent} (see also subsection \ref{subsectionpositivityofcoherent}) and the results of subsection \ref*{SubsectionFactorizationImpliesExtremality} in the Appendix (under this positive definiteness assumption, see Remark \ref*{RemarkPositiveDefiniteness}) we show that these coherent sequences are actually extremal.

\begin{rmk}
The projective chains associated to all these graphs can also be recast in terms of branching coefficients of certain families of (symmetric) functions (see Appendix).
\end{rmk}

\section{Examples of consistent dynamics}\label{sectionExamples}

Before giving any examples we first record some useful facts and fix notation. Throughout this section we will denote the Vandermonde determinant by,
\begin{align*}
\Delta_n(x)=\prod_{1\le i < j\le n}^{}(x_j-x_i),\  x\in W^n(I).
\end{align*}
We will consider a difference operator $\mathsf{L}$ that is the generator of a birth and death chain or a bilateral birth and death chain with quadratic rates, i.e. so that with $x \in I$,
\begin{align*}
\mathsf{L}=(ax^2+bx+c)\nabla+(ax^2+\bar{b}x+\bar{c})\bar{\nabla}.
\end{align*}
We assume throughout that,  $a,b,c,\bar{b},\bar{c}$ are such that the rates are positive namely,
\begin{align*}
\lambda(x)=(ax^2+bx+c)>0 \textnormal{ and } \mu(x)=(ax^2+\bar{b}x+\bar{c})>0 \ , \ \forall x \in I
\end{align*}
and that conditions (\ref{birthdeathcondition1}),(\ref{birthdeathcondition2}) or (\ref{bilateralcondition1}),(\ref{bilateralcondition2}),(\ref{bilateralcondition3}) and (\ref{bilateralcondition4}) respectively are always satisfied for all chains considered in this subsection. Finally, observe that we need the leading coefficient $a$ to be the same for both rates.

Now, with all these requirements in place a direct computation (see e.g. \cite{Doumerc} Proposition 6.2.1) gives that,
\begin{align*}
\sum_{i=1}^{n}\mathsf{L}_{x_i}\Delta_n(x)=\bigg(a\frac{n(n-1)(n-2)}{3}+(b-\bar{b})\frac{n(n-1)}{2}\bigg) \Delta_n(x), \ x \in W^n(I),
\end{align*}
where each $\mathsf{L}_{x_i}$ is a copy of the difference operator $\mathsf{L}$ acting in the $x_i$ variable. So that, we can $h$-transform $n$ independent copies of $\mathsf{L}$-chains by $\Delta_n$ to stay in $W^{n}(I)$.

Define the following operator from functions on $W^{n}(I)$ to functions on $W^{n+1}(I)$, these when viewed as Markov kernels from $W^{n+1}(I)$ to $W^{n}(I)$ are the links that appear in the Gelfand-Tsetlin graph by,
\begin{align*}
(\mathfrak{L}^{\textnormal{Vnd}}_{n\to n+1}f)(x)=\frac{n!}{\Delta_{n+1}(x)}\sum_{y\in W^{n,n+1}(x)}^{}\Delta_n(y)f(y), \ x \in W^n(I).
\end{align*}
Then, we have the following lemma.
\begin{lem}
For $n\ge 1$, the kernels $\mathfrak{L}^{\textnormal{Vnd}}_{n\to n+1}$ are Feller.
\end{lem}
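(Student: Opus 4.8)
The plan is to show that the operator $\mathfrak{L}^{\textnormal{Vnd}}_{n\to n+1}$, viewed as a Markov kernel from $W^{n+1}(I)$ to $W^n(I)$, maps $C_0(W^n(I))$ into $C_0(W^{n+1}(I))$. Since both $W^n(I)$ and $W^{n+1}(I)$ are discrete countable sets, being ``continuous'' is automatic, so the only real content is the vanishing-at-infinity property: given $f\in C_0(W^n(I))$, I must show that $(\mathfrak{L}^{\textnormal{Vnd}}_{n\to n+1}f)(x)\to 0$ as $x\to\infty$ in $W^{n+1}(I)$, i.e.\ as $\max_i|x_i|\to\infty$ (equivalently, as $x$ leaves every finite subset of $W^{n+1}(I)$). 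First I would record that $\mathfrak{L}^{\textnormal{Vnd}}_{n\to n+1}$ is genuinely a Markov kernel: the combinatorial identity
\begin{align*}
\sum_{y\in W^{n,n+1}(x)}\frac{n!\,\Delta_n(y)}{\Delta_{n+1}(x)}=1, \quad x\in W^{n+1}(I),
\end{align*}
is the classical Gelfand--Tsetlin branching identity (it is exactly the statement $\mathfrak{L}^{\textnormal{Vnd}}_{n\to n+1}\textbf{1}=\textbf{1}$, and follows for instance from $\mathsf{dim}_{N+1}(x)=\sum_{y\prec x}\mathsf{dim}_N(y)$ applied to the Gelfand--Tsetlin graph). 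In particular all summands are nonnegative and sum to $1$, so $(\mathfrak{L}^{\textnormal{Vnd}}_{n\to n+1}f)(x)$ is a probability average of the values $f(y)$ over $y\prec x$.

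The key estimate is then the following: fix $\varepsilon>0$ and choose a finite set $F\subset W^n(I)$ with $|f(y)|<\varepsilon$ for $y\notin F$. Then
\begin{align*}
\big|(\mathfrak{L}^{\textnormal{Vnd}}_{n\to n+1}f)(x)\big|\le \varepsilon+\|f\|_\infty\cdot\mathfrak{L}^{\textnormal{Vnd}}_{n\to n+1}\big(\textbf{1}_F\big)(x),
\end{align*}
so it suffices to show that $\mathfrak{L}^{\textnormal{Vnd}}_{n\to n+1}(\textbf{1}_{\{y\}})(x)\to 0$ as $x\to\infty$ for each fixed $y\in W^n(I)$, and then sum over the finitely many $y\in F$. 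For a fixed $y=(y_1<\cdots<y_n)$, the interlacing condition $y\prec x$ forces $x_1\le y_1$ and $x_{n+1}>y_n$ — more precisely $x_i\in[y_{i-1},y_i]$ (with $y_0=l$, $y_{n+1}=\infty$) — so the only coordinate of $x$ that is unconstrained is $x_{n+1}$. Hence if $x\to\infty$ in $W^{n+1}(I)$ with $y\prec x$, necessarily $x_{n+1}\to\infty$. It therefore remains to bound $\dfrac{n!\,\Delta_n(y)}{\Delta_{n+1}(x)}$ as $x_{n+1}\to\infty$ with the other coordinates confined to a bounded range determined by $y$. In $\Delta_{n+1}(x)=\prod_{i<j}(x_j-x_i)$ the factors involving $x_{n+1}$ are $\prod_{i\le n}(x_{n+1}-x_i)$, which grows like $x_{n+1}^n\to\infty$, while $\Delta_n(y)$ is a fixed constant and the remaining factors of $\Delta_{n+1}(x)$ are bounded below by a positive constant depending only on $y$ (since $x_1,\dots,x_n$ range over a fixed finite set). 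This gives $\mathfrak{L}^{\textnormal{Vnd}}_{n\to n+1}(\textbf{1}_{\{y\}})(x)=O(x_{n+1}^{-n})\to 0$, completing the argument.

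Finally I would assemble these pieces: combine the $\varepsilon$-bound with the fact that $\mathfrak{L}^{\textnormal{Vnd}}_{n\to n+1}(\textbf{1}_F)(x)=\sum_{y\in F}\mathfrak{L}^{\textnormal{Vnd}}_{n\to n+1}(\textbf{1}_{\{y\}})(x)\to 0$ (a finite sum of terms each tending to $0$), to conclude that $\limsup_{x\to\infty}|(\mathfrak{L}^{\textnormal{Vnd}}_{n\to n+1}f)(x)|\le\varepsilon$ for every $\varepsilon>0$, hence the limit is $0$. The main (and really only) obstacle is the bookkeeping of the interlacing constraints: one must verify carefully that $y\prec x$ pins down $x_1,\dots,x_n$ to a bounded set once $y$ is fixed, so that ``$x\to\infty$'' genuinely propagates to ``$x_{n+1}\to\infty$''; once that is pinned down, the decay of the Vandermonde ratio is elementary. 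An alternative, slightly slicker route avoiding explicit Vandermonde estimates would be to use the probabilistic interpretation: $\mathfrak{L}^{\textnormal{Vnd}}_{n\to n+1}(x,\cdot)$ is the law of the $n$ intermediate particles of a uniformly random Gelfand--Tsetlin pattern with top row $x$, and one observes that as $x_{n+1}\to\infty$ the conditional law of $y_n$ (hence of the occupation of any fixed site) disperses to infinity; but the direct estimate above is shortest.
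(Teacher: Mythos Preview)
Your approach is essentially the paper's: reduce to showing $(\mathfrak{L}^{\textnormal{Vnd}}_{n\to n+1}\delta_y)(x)\to 0$ for each fixed $y$, and you supply the Vandermonde estimate that the paper merely gestures at by citing Borodin--Olshanski.

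There is one small oversight worth fixing. When $I=\mathbb{Z}$ (the relevant case for the Gelfand--Tsetlin graph), your own bookkeeping gives $x_1\in[l,y_1]=(-\infty,y_1]$, so $x_1$ is \emph{also} unbounded from below once $y$ is fixed; the claim that ``the only coordinate of $x$ that is unconstrained is $x_{n+1}$'' is false in that case, and $x\to\infty$ with $y\prec x$ can occur via $x_1\to-\infty$ instead of (or in addition to) $x_{n+1}\to\infty$. The repair is immediate with the same idea: the factor $x_2-x_1\ge y_1+1-x_1$ in $\Delta_{n+1}(x)$ blows up as $x_1\to-\infty$ while all other factors remain $\ge 1$, so $\mathfrak{L}^{\textnormal{Vnd}}_{n\to n+1}(\textbf{1}_{\{y\}})(x)\to 0$ in that direction as well. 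With this amendment the argument is complete.
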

\begin{proof}
In order to prove this, it suffices to apply the kernel $\mathfrak{L}^{\textnormal{Vnd}}_{n\to n+1}$ to a delta function $\delta_y$ and show that $(\mathfrak{L}^{Vnd}_{n\to n+1}\delta_y)(x)$ vanishes as $x \to \infty$. This can be readily checked, see e.g. Proposition 3.3 of \cite{BorodinOlshanski} for the details.
\end{proof}

Now, suppose that we are given as above the following birth and death (reflecting at the origin, $\mu(0)=0$) or bilateral ($I=\mathbb{Z}$) chain with generator $\mathcal{D}=\mathsf{L}$ so that,
\begin{align*}
\mathcal{D}(x,y)=\begin{cases}
ax^2+bx+c\ \ & y=x+1\\
-(ax^2+bx+c)-(ax^2+\bar{b}x+\bar{c}) \ \ & y=x \\
ax^2+\bar{b}x+\bar{c} \ \ & y=x-1
\end{cases}.
\end{align*}
Then, a simple computation gives us that the $h$-transform of the chain with generator $\mathcal{D}$ by the strictly positive function $\hat{\pi}^{-1}$ (which is an eigenfunction with eigenvalue $b-\bar{b}$) is the (reflecting) birth and death (or bilateral birth and death chain) with generator $\tilde{\mathcal{D}}$ with rates,
\begin{align*}
\tilde{\mathcal{D}}(x,y)=\begin{cases}
a(x+1)^2+b(x+1)+c\ \ & y=x+1\\
-(a(x+1)^2+b(x+1)+c)-(ax^2+\bar{b}x+\bar{c}) \ \ & y=x \\
ax^2+\bar{b}x+\bar{c} \ \ & y=x-1
\end{cases}.
\end{align*}

Moreover, we define $\left(P^{\Delta_{n+1}}_{n+1}(t);t\ge 0\right)$ to be the Karlin-McGregor semigroup of $n+1$ copies of $\mathcal{D}$-chains $h$-transformed by $\Delta_{n+1}$ and similarly $\left(\tilde{P}_n^{\Delta_n}(t);t\ge 0\right)$ to be the Karlin-McGregor semigroup of $n$ copies of $\tilde{\mathcal{D}}$-chains $h$-transformed by $\Delta_n$. Then as expected these possess the Feller property.

\begin{lem}
The semigroups $\left(P^{\Delta_{n+1}}_{n+1}(t);t \ge 0\right)$ and $\left(\tilde{P}_n^{\Delta_n}(t);t\ge 0\right)$ are Feller for any $n$.
\end{lem}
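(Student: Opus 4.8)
The plan is to reduce the Feller property of the multivariate Karlin--McGregor semigroups to the already established Feller property of the one-dimensional birth and death chains $\mathcal{D}$ and $\tilde{\mathcal{D}}$. Recall that in the single-variable setting the conditions $(\ref{birthdeathcondition1})$, $(\ref{birthdeathcondition2})$ (resp. $(\ref{bilateralcondition1})$--$(\ref{bilateralcondition4})$) guarantee that $p_t(x,y)\to 0$ as $y\to\infty$ and as $x\to\infty$, and that the semigroup $(P_t;t\ge0)$ maps $C_0(I)$ to itself; the same holds for $\tilde{\mathcal{D}}$ since $\tilde{\mathcal{D}}$ is an $h$-transform of $\mathcal{D}$ by the bounded-below-and-above-on-compacts function $\hat{\pi}^{-1}$ and, more importantly, its rates are again quadratic with the same leading coefficient, so it satisfies the same non-explosion hypotheses. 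The point is then that the Karlin--McGregor kernel $p^{n}_t(x,y)=\det(p_t(x_i,y_j))_{i,j=1}^n$ inherits the decay at infinity from its entries, and the $h$-transform by $\Delta_n$ must be controlled.

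First I would record the two ingredients precisely. (i) For a $\mathcal{D}$-chain satisfying our standing assumptions, $(P_t^n f)(x)\to 0$ whenever any coordinate $x_i\to\infty$ with $x$ staying in $W^n(I)$, for $f$ bounded; this follows by expanding the determinant $\det(p_t(x_i,y_j))$, using $|p_t(x_i,y_j)|\le 1$ together with $\sup_j p_t(x_i,y_j)\to 0$ as $x_i\to\infty$ (which is exactly the single-variable statement $p_t(x_i,\cdot)\to 0$ in $C_0$), and dominated convergence for the sum over $y$; the relevant bound is that $\sum_{y\in W^n(I)}|\det(p_t(x_i,y_j))|\le \prod_i \big(\sum_{y}p_t(x_i,y)\big)\le 1$ by Hadamard/permanent domination, so everything is uniformly summable. (ii) Recall the operator $\mathfrak{L}^{\textnormal{Vnd}}_{n\to n+1}$ is Feller by the preceding lemma, and that $\big(P^{\Delta_{n+1}}_{n+1}(t) f\big)(x)=\Delta_{n+1}(x)^{-1}\big(P^{n+1}_t(\Delta_{n+1} f)\big)(x)$ with $\Delta_{n+1}$ a polynomial eigenfunction of $P^{n+1}_t$ up to a scalar.

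The key step is to combine these. I would use the intertwining relation $(\ref{KMintertwining1})$, or rather its $h$-transformed form, to write $P^{\Delta_{n+1}}_{n+1}(t)\,\mathfrak{L}^{\textnormal{Vnd}}_{n\to n+1} = \mathfrak{L}^{\textnormal{Vnd}}_{n\to n+1}\,\tilde{P}^{\Delta_n}_n(t)$ (this is precisely the content of Theorem \ref{TheoremInterIntro}/Theorem \ref{Master1} specialized to the quadratic-rate chains appearing here, with $\hat{\mathcal{D}}$-chain being the $\tilde{\mathcal{D}}$-chain after the $\hat\pi^{-1}$ transform). Starting from $\tilde{P}^{\Delta_1}_1(t)=\tilde P_t$, which is Feller on $W^1(I)=I$ by the single-variable theory, an induction on $n$ using that $\mathfrak{L}^{\textnormal{Vnd}}$ is a surjective-enough Feller link would propagate the Feller property upward: $\mathfrak{L}^{\textnormal{Vnd}}_{n\to n+1}$ maps $C_0(W^n(I))$ to $C_0(W^{n+1}(I))$ and intertwines, hence $P^{\Delta_{n+1}}_{n+1}(t)$ preserves the image; to conclude it preserves all of $C_0(W^{n+1}(I))$ one needs that the range of $\mathfrak{L}^{\textnormal{Vnd}}_{n\to n+1}$ is dense, or alternatively one argues directly. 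I would in fact prefer the direct route: show $\big(P^{\Delta_{n+1}}_{n+1}(t)\delta_y\big)(x)=\Delta_{n+1}(x)^{-1}\Delta_{n+1}(y)\,p^{n+1}_t(x,y)\to 0$ as $x\to\infty$ in $W^{n+1}(I)$. This boils down to a growth estimate: $p_t(x_i,y_j)$ decays fast enough (super-polynomially, using the explicit spectral expansion $p_t(i,j)=\pi(j)\int e^{-tx}Q_i(x)Q_j(x)\,d\mathfrak{w}(x)$ with $\mathfrak{I}$ compact, or a direct moment/martingale bound for quadratic-rate chains) to beat the polynomial factor $\Delta_{n+1}(x)^{-1}$, which in fact \emph{decays} in $x$ rather than grows, so no fight is even needed once one checks $\det(p_t(x_i,y_j))\to 0$; continuity in $x$ is automatic since $W^{n+1}(I)$ is discrete.

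The main obstacle I anticipate is the bookkeeping needed to make the $h$-transform by $\hat\pi^{-1}$ and by $\Delta$ interact cleanly with the intertwining of Theorem \ref{Master1} --- one must check that the eigenfunction $\hat h_n$ there can be taken to be $\hat\pi^{-1}$ applied coordinatewise times $\Delta_n$, and that $h_{n+1}=\Lambda_{n,n+1}\hat h_n$ is (a constant multiple of) $\Delta_{n+1}$, which is a Vandermonde-summation identity (the same computation as the displayed formula $\sum_i \mathsf{L}_{x_i}\Delta_n(x)=(\dots)\Delta_n(x)$ combined with the branching identity $\sum_{y\prec x}\Delta_n(y)=\tfrac{1}{n!}\Delta_{n+1}(x)$). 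Once that identification is in place, the Feller conclusion is a routine consequence of the single-variable $C_0$ statement plus dominated convergence, so I would present it as: state the two auxiliary facts (i) and (ii), invoke the intertwining and the Vandermonde identities to identify $h_{n+1}=c_n\Delta_{n+1}$, and then verify $\big(P^{\Delta_{n+1}}_{n+1}(t)\delta_y\big)(x)\to 0$ as $x\to\infty$ by the determinant-decay estimate, with the analogous (easier) argument for $\tilde P^{\Delta_n}_n(t)$.
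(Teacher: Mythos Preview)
Your ``direct route'' is exactly the paper's proof: apply the semigroup to $\delta_y$, note that $\big(P^{\Delta_{n+1}}_{n+1}(t)\delta_y\big)(x)=\Delta_{n+1}(x)^{-1}\Delta_{n+1}(y)\det(p_t(x_i,y_j))$, use that the one-dimensional densities $p_t(x_i,y_j)\to 0$ as $x_i\to\pm\infty$, and observe that $\Delta_{n+1}(x)\ge 1$ on the \emph{integer} Weyl chamber so the prefactor $\Delta_{n+1}(x)^{-1}$ is bounded. That last point is the crisp version of your remark that $\Delta_{n+1}(x)^{-1}$ ``decays rather than grows''; you should state it as $\Delta_n(x)\ge 1$ for $x\in W^n(I)$, which is immediate since consecutive integer coordinates differ by at least $1$.

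Everything else in your proposal is unnecessary overhead. The intertwining/induction scheme via $\mathfrak{L}^{\mathrm{Vnd}}$ is a detour you yourself abandon, and for good reason: making it work would require the range of $\mathfrak{L}^{\mathrm{Vnd}}_{n\to n+1}$ to be dense in $C_0(W^{n+1}(I))$, which you have not established. Your ``main obstacle'' paragraph about identifying $\hat h_n$ and checking $\Lambda_{n,n+1}\hat h_n=c_n\Delta_{n+1}$ is bookkeeping for that abandoned approach and plays no role in the direct argument. Likewise the spectral-expansion/super-polynomial-decay remark is superfluous: no rate of decay is needed beyond $p_t(x_i,y_j)\to 0$, precisely because $\Delta_{n+1}(x)^{-1}\le 1$. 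Strip all of that out and you have a two-line proof identical to the paper's.
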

\begin{proof}
This, again easily follows by applying these semigroups to $\delta_y$ and making use of the fact that the one dimensional transition densities in the Karlin-McGregor semigroups satisfy $p_t(x_i,y_j),\tilde{p}_t(x_i,y_j)\to 0$ as $x_i\to \infty$ (or $-\infty$) and that moreover $\Delta_n(x) \ge 1$.
\end{proof}

Then, Theorem \ref*{Master1} and in particular, the intertwining relation (\ref{MasterIntertwining1eq2}) immediately gives the following proposition which is the main result of this subsection.
\begin{prop}\label{VandermondeIntertwinings}
 $P^{\Delta_{n+1}}_{n+1}(t)\mathfrak{L}^{\textnormal{Vnd}}_{n\to n+1}f=\mathfrak{L}^{\textnormal{Vnd}}_{n\to n+1}\tilde{P}_n^{\Delta_n}(t)f$, for  $n\ge 1$, $f\in C_0(W^{n}(I))$ and $t\ge 0$ .
\end{prop}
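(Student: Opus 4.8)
The plan is to read the proposition directly off Theorem \ref{Master1}, namely off the single-level intertwining relation (\ref{MasterIntertwining1eq2})
\begin{align*}
P_t^{n+1,h_{n+1}}\Lambda^{\hat{h}_{n}}_{n,n+1}=\Lambda^{\hat{h}_{n}}_{n,n+1}\hat{P}_t^{n,\hat{h}_{n}},
\end{align*}
for the correct choice of the strictly positive eigenfunction $\hat{h}_n$ of $\hat{P}^n_t$. The right guess is $\hat{h}_n(y)=\Delta_n(y)\prod_{i=1}^{n}\hat{\pi}^{-1}(y_i)$ on $W^n(I)$. First I would record the one-dimensional facts behind the construction of $\tilde{\mathcal{D}}$: since $\hat{\pi}^{-1}(x)/\hat{\pi}^{-1}(x-1)=\lambda(x)/\mu(x)$, the Doob $h$-transform of the $\hat{\mathcal{D}}$-chain by $\hat{\pi}^{-1}$ has birth rate $\mu(x+1)\cdot\lambda(x+1)/\mu(x+1)=\lambda(x+1)$ and death rate $\lambda(x)\cdot\mu(x)/\lambda(x)=\mu(x)$, i.e. it is exactly the $\tilde{\mathcal{D}}$-chain; moreover the same elementary computation as in the text gives $\hat{\mathcal{D}}\hat{\pi}^{-1}=(b-\bar b)\hat{\pi}^{-1}$, so $\hat{\pi}^{-1}$ is a strictly positive eigenfunction of $\hat{\mathcal{D}}$.

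Next I would verify that $\hat{h}_n$ is a strictly positive eigenfunction of $\hat{P}^n_t$ and that its $h$-transform equals $\tilde{P}^{\Delta_n}_n(t)$. This is a two-step $h$-transform: conjugating the generator $\sum_{i=1}^n\hat{\mathcal{D}}_{y_i}$ of $n$ independent $\hat{\mathcal{D}}$-chains killed on collision by the factorized weight $\prod_{i=1}^n\hat{\pi}^{-1}(y_i)$ turns it, coordinate by coordinate, into $\sum_{i=1}^n\tilde{\mathcal{D}}_{y_i}$ up to the additive constant $n(b-\bar b)$; and $\Delta_n$, which vanishes on $\partial W^n(I)$, is an eigenfunction of $\sum_{i=1}^n\tilde{\mathcal{D}}_{y_i}$ on $W^n(I)$ by the Vandermonde identity recalled at the beginning of this section applied with $\mathsf{L}=\tilde{\mathcal{D}}$ (here one notes that $\tilde{\mathcal{D}}$ is again a quadratic-rate generator with the same leading coefficient $a$, so that identity applies). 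Hence $\hat{h}_n$ is an eigenfunction of $\sum_i\hat{\mathcal{D}}_{y_i}$, equivalently of $\hat{P}^n_t$, and $\hat{P}^{n,\hat{h}_n}_t$ is exactly $n$ independent $\tilde{\mathcal{D}}$-chains conditioned never to collide, i.e. $\tilde{P}^{\Delta_n}_n(t)$.

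Then I would identify the two Markov kernels in (\ref{MasterIntertwining1eq2}). On the one hand, since $W^{n,n+1}(x)$ is finite for fixed $x\in W^{n+1}(I)$,
\begin{align*}
h_{n+1}(x)=(\Lambda_{n,n+1}\hat{h}_n)(x)=\sum_{y\in W^{n,n+1}(x)}\prod_{i=1}^{n}\hat{\pi}(y_i)\,\Delta_n(y)\prod_{i=1}^{n}\hat{\pi}^{-1}(y_i)=\sum_{y\in W^{n,n+1}(x)}\Delta_n(y)=\frac{\Delta_{n+1}(x)}{n!},
\end{align*}
the last equality being the classical identity equivalent to the fact that $\mathfrak{L}^{\textnormal{Vnd}}_{n\to n+1}$ is a Markov kernel; since $h$-transforming by $\Delta_{n+1}$ or by a positive multiple of it yields the same semigroup, $P^{n+1,h_{n+1}}_t=P^{\Delta_{n+1}}_{n+1}(t)$. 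On the other hand, inserting $h_{n+1}=\Delta_{n+1}/n!$ into the definition of $\Lambda^{\hat{h}_n}_{n,n+1}$ gives $(\Lambda^{\hat{h}_n}_{n,n+1}f)(x)=\frac{n!}{\Delta_{n+1}(x)}\sum_{y\in W^{n,n+1}(x)}\Delta_n(y)f(y)=(\mathfrak{L}^{\textnormal{Vnd}}_{n\to n+1}f)(x)$. Substituting these three identifications into (\ref{MasterIntertwining1eq2}) yields $P^{\Delta_{n+1}}_{n+1}(t)\mathfrak{L}^{\textnormal{Vnd}}_{n\to n+1}=\mathfrak{L}^{\textnormal{Vnd}}_{n\to n+1}\tilde{P}^{\Delta_n}_n(t)$ as an identity of positive kernels; since the two lemmas just proved show all three operators are Feller, hence bounded on $C_0(W^n(I))$, the identity holds in particular when applied to $f\in C_0(W^n(I))$.

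The only point needing genuine care is the middle step: checking cleanly that the $h$-transform of the Karlin--McGregor semigroup $\hat{P}^n_t$ by $\hat{h}_n=\Delta_n\prod_i\hat{\pi}^{-1}(y_i)$ really is $\tilde{P}^{\Delta_n}_n(t)$. The subtlety is purely in using the correct chain --- one must confirm that $\hat{\pi}^{-1}$ is an eigenfunction of $\hat{\mathcal{D}}$ (not of $\mathcal{D}$) with eigenvalue $b-\bar b$, that its one-dimensional $h$-transform is $\tilde{\mathcal{D}}$, and that $\tilde{\mathcal{D}}$ retains the quadratic-rate form with leading coefficient $a$ so that the Vandermonde eigenfunction identity applies to it; each is a one-line verification. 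Everything else is bookkeeping with the definitions of Section \ref{sectionintertwinins}.
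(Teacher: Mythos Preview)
Your approach is correct and is essentially the same as the paper's: apply Theorem \ref{Master1}, specifically relation (\ref{MasterIntertwining1eq2}), with the eigenfunction $\hat{h}_n(y)=\Delta_n(y)\prod_i\hat{\pi}^{-1}(y_i)$, and then identify the three resulting objects with $P^{\Delta_{n+1}}_{n+1}(t)$, $\mathfrak{L}^{\textnormal{Vnd}}_{n\to n+1}$ and $\tilde{P}^{\Delta_n}_n(t)$. The paper states this as an immediate consequence without spelling out the identifications; you have filled in precisely the bookkeeping the reader is expected to do, and your emphasis that $\hat{\pi}^{-1}$ must be checked to be an eigenfunction of $\hat{\mathcal{D}}$ (rather than of $\mathcal{D}$) is well placed.
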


We now, list several interesting applications of this proposition. For $a=b=\bar{b}=0$ and $c,\bar{c}>0$, we obtain the well known intertwining between non-colliding (asymmetric) continuous time random walks. 

For a linear birth and death chain, i.e. with a parameter $\theta>0$ and rates given by,
\begin{align*}
\mathcal{D}_{\theta}(x,y)=\begin{cases}
x+\theta\ \ & y=x+1\\
-2x-\theta \ \ & y=x \\
x \ \ & y=x-1
\end{cases},
\end{align*}
we get that,
\begin{align*}
\tilde{\mathcal{D}}_{\theta}(x,y)=\begin{cases}
x+\theta+1\ \ & y=x+1\\
-2x-\theta-1 \ \ & y=x \\
x \ \ & y=x-1
\end{cases}.
\end{align*}
Observe that $\tilde{\mathcal{D}}_{\theta}=\mathcal{D}_{\theta+1}$,the birth rate or equivalently the drift to the right of the preceding level increased by $1$, in particular such a construction cannot be iterated indefinitely. Moreover, Proposition \ref{VandermondeIntertwinings} gives the discrete analogue of the intertwining between $n+1$ non-intersecting squared Bessel processes of dimension $d$ abbreviated by $\textnormal{BESQ}(d)$ and $n$ non-intersecting $\textnormal{BESQ}(d+2)$ (see Proposition 3.14 of \cite{InterlacingDiffusions}). 

We can also consider the Meixner process, which is the analogue of the Laguerre diffusion (a $\textnormal{BESQ}$ process with a restoring drift towards the origin, for certain choices of parameters the modulus of Ornstein Uhlenbeck processes, for more details see \cite{InterlacingDiffusions}) with parameters $r,\theta>0$,
\begin{align*}
\mathcal{D}^{\textnormal{Me}}_{r,\theta}(x,y)=\begin{cases}
r(x+\theta)\ \ & y=x+1\\
-r(x+\theta)-(r+1)x \ \ & y=x \\
(r+1)x \ \ & y=x-1
\end{cases},
\end{align*}
then,
\begin{align*}
\tilde{\mathcal{D}}^{\textnormal{Me}}_{r,\theta}(x,y)=\begin{cases}
r(x+\theta+1)\ \ & y=x+1\\
-r(x+\theta+1)-(r+1)x \ \ & y=x \\
(r+1)x \ \ & y=x-1
\end{cases}.
\end{align*}
Similarly as above, we see that $\tilde{\mathcal{D}}^{\textnormal{Me}}_{r,\theta}=\mathcal{D}^{\textnormal{Me}}_{r,\theta+1}$, so that the drift to the right has decreased from the preceding level, or when thinking in terms of the couplings, the birth rate for the autonomous particles is greater by 1.

As a final example, we consider the bilateral birth and death chain studied by Borodin and Olshanski in \cite{BorodinOlshanski}, with $u,u',v,v'\in \mathbb{C}$ satisfying the assumptions in section $5.1$ therein (these ensure well-posedness and non-explosion, moreover note that although the parameters can be complex, they really correspond to $4$ free real parameters ),
\begin{align*}
\mathcal{D}^{\mathbb{U}(\infty)}_{u,u',v,v'}(x,y)=\begin{cases}
(x-u)(x-u')\ \ & y=x+1\\
-(x-u)(x-u')-(x+v)(x+v')\ \ & y=x \\
(x+v)(x+v') \ \ & y=x-1
\end{cases},
\end{align*}
so that,
\begin{align*}
\tilde{\mathcal{D}}^{\mathbb{U}(\infty)}_{u,u',v,v'}(x,y)=\begin{cases}
(x+1-u)(x+1-u')\ \ & y=x+1\\
-(x+1-u)(x+1-u')-(x+v)(x+v')\ \ & y=x \\
(x+v)(x+v') \ \ & y=x-1
\end{cases}.
\end{align*}

As before note the following fact, $\tilde{\mathcal{D}}^{\mathbb{U}(\infty)}_{u,u',v,v'}=\mathcal{D}^{\mathbb{U}(\infty)}_{u-1,u'-1,v,v'}$. Then, Proposition \ref*{VandermondeIntertwinings} above immediately gives as a corollary Theorem 6.1 of \cite{BorodinOlshanski}. This along
with the \textit{method of intertwiners} (see Subsection \ref*{subsectionintertwiners}), constructs a Feller process on the boundary $\Omega$ of the Gelfand-Tsetlin graph. We note that the motivation behind these specific rates stems from the fact that the corresponding semigroups leave invariant the so called $zw$-measures, which are consistent measures on the Gelfand-Tsetlin graph and whose decomposition into extremal coherent measures is the \textit{problem of harmonic analysis} on the infinite dimensional unitary group $\mathbb{U}(\infty)$ (for more details see \cite{OlshanskiHarmonic}).

\paragraph{Characterization of Vandermonde intertwiners for push-block dynamics} The choice of quadratic rates might have seemed a bit arbitrary. We now proceed to briefly explain its significance. More specifically,  we show that in order for the Vandermonde links,
\begin{align*}
(\mathfrak{L}^{\textnormal{Vnd}}_{n\to n+1}f)(x)=\frac{n!}{\Delta_{n+1}(x)}\sum_{y\in W^{n,n+1}(x)}^{}\Delta_n(y)f(y), \ x \in W^n(I),
\end{align*}
to intertwine the levels of the (type-A) Gelfand-Tsetlin pattern valued process moving according to the push-block dynamics considered in the two-level couplings of this paper (or c.f. equality (\ref{MasterIntertwining1eq2}), for the semigroups for each level to be consistent with these links) then, the rates $\lambda(x)$ and $\mu(x)$ must be quadratic functions of $x\in I$, with coefficients related as shown below in displays (\ref{vandermonderate1}) and (\ref{vandermonderate2}).  

 Starting from the process of the two first levels, taking values in $W^{1,2}$, it is easy to see from relation (\ref{MasterIntertwining1eq1}) that we need $\hat{\pi}^{-1}$ to be an eigenfunction of the generator $\hat{\mathcal{D}}$ for the resulting intertwining kernel to be given by,
 \begin{align*}
 \frac{1}{x_2-x_1}\textbf{1}(x_1\le y <x_2).
 \end{align*}
 Since $\hat{\mathcal{D}}$ is reversible with respect  to $\hat{\pi}$, this requirement is equivalent to the fact that the transpose (when viewed as an infinite matrix indexed by $\mathbb{N}$ or $\mathbb{Z}$) of $\hat{\mathcal{D}}$ minus some constant times the identity matrix ( $\hat{\mathcal{D}}^{\textnormal{T}}-\textnormal{const}\times Id$) is the generator of a birth and death (or bilateral) chain with rates,
\begin{align*}
\tilde{\mathcal{D}}(x,y)=\begin{cases}
\tilde{\lambda}(x)=\lambda(x+1) \ \ & y=x+1\\
-\lambda(x+1)-\mu(x) \ \ & y=x \\
\tilde{\mu}(x)=\mu(x) \ \ & y=x-1
\end{cases}.
\end{align*}
Now this is true, if and only if, for some constant $c_0$,
\begin{align*}
\lambda(x+1)+\mu(x)-\mu(x+1)-\lambda(x)=c_0 \ , \forall x \in \mathbb{Z}.
\end{align*}
Then, moving to the two-level process taking values in $W^{2,3}$, an analogous consideration (with $\lambda,\mu$ still denoting the birth and death rates of the chains on the $2^{nd}$ level) leads to the extra requirement that,
\begin{align*}
\lambda(x+2)+\mu(x)-\mu(x+1)-\lambda(x+1)=c_1 \ ,\forall x \in \mathbb{Z}.
\end{align*}
These two conditions are now sufficient to characterize $\lambda(x)$ and $\mu(x)$ as quadratic functions of $x$. Let $\Lambda(x)=(\nabla \lambda)(x)$ and $M(x)=(\nabla \mu)(x)$ so that,
\begin{align*}
\Lambda(x)-M(x)&=c_0,\\
\Lambda(x+1)-M(x)&=c_1.
\end{align*}
Observe that, with $n\ge 0$ we have
$\Lambda(x+n)-M(x)=\Lambda(x+n)-\Lambda(x+n-1)+\Lambda(x+n-1)-M(x)=c_1-c_0+\Lambda(x+n-1)-M(x)=\cdots=n(c_1-c_0)+c_0$ and similarly for $n$ negative. Thus,
\begin{align*}
\Lambda(y)&=y(c_1-c_0)+c_0+M(0),\\
M(y)&=y(c_1-c_0)+M(0).
\end{align*}
From these, we obtain,
\begin{align}
\mu(y)&=\frac{y(y-1)}{2}(c_1-c_0)+(\mu(1)-\mu(0))y+\mu(0) \label{vandermonderate1},\\
\lambda(y)&=\frac{y(y-1)}{2}(c_1-c_0)+(c_0+\mu(1)-\mu(0))y+\lambda(0)\label{vandermonderate2},
\end{align}
where $\lambda(1)=c_0+\mu(1)-\mu(0)+\lambda(0)$ so that $c_0=\mu(1)-\mu(0)+\lambda(0)-\lambda(1)$ and $\lambda(2)=c_1+\lambda(0)+\mu(1)+\mu(0)$ so that $c_1=\lambda(2)-\lambda(0)-\mu(1)-\mu(0)$. 

In conclusion, at an \textit{algebraic level} we need to specify five positive real parameters $\lambda(0),\lambda(1),\lambda(2),\mu(0),\mu(1)$. Of course in addition to that, we need $\mu(y),\lambda(y)>0$ and that the well-posedness conditions (\ref{birthdeathcondition1}), (\ref{birthdeathcondition2}) or (\ref{bilateralcondition1}), (\ref{bilateralcondition2}), (\ref{bilateralcondition3}) and (\ref{bilateralcondition4}) respectively are satisfied. Finally, if we denote by $\mathsf{r}^+_1(x),\mathsf{r}^-_1(x)$ the quadratic birth and death rates respectively of the single chain at level $1$ then, the rates for the chains at level $n$ are given by $\mathsf{r}^+_n(x)=\mathsf{r}^+_1(x+n-1)$ and $\mathsf{r}^-_n(x)=\mathsf{r}^-_1(x)$.

\paragraph{Intertwining relations for dynamics on BC-type graphs} The aim of this subsection is to prove Proposition \ref{CuencaIntertwining} below, first proven as Theorem 5.1 in \cite{Cuenca} by Cuenca. We will use the following notation. In all that follows, $I=\mathbb{N}$ and we define,
\begin{align*}
W_{\textnormal{BC}}^{n,n+1}=\{(x,y)\in (W^{n+1},W^n): \exists \ z\in W^n, \ \textnormal{ such that }  \ y\in W^{n,n}(z), z \in  W^{n,n+1}(x)\}.
\end{align*}
Analogously to $W^{n,n+1}$ we define $W_{\textnormal{BC}}^{n,n+1}(x)$ for $x \in W^{n+1}$. 

Moreover, we consider the following rates for a $\mathcal{D}$-chain depending on 4 parameters $(u,u',a,b)$, which satisfy the relations (5.1) in \cite{Cuenca} (these conditions ensure positivity of the rates and non-explosivity of the chain and will not be recalled since they don't affect the essentially algebraic arguments below), with $\beta_{u,u'}$ denoting the \textit{birth rate} and $\delta_{u,u'}$ the \textit{death rate}, for $x \in \mathbb{N}$,
\begin{align*}
\beta_{u,u'}(x)&=\frac{(x+a+b+1)(x+a+1)(x-u)(x-u')}{(2x+a+b+1)(2x+a+b+2)},\\
\delta_{u,u'}(x)&=\frac{x(x+b)(x+u+a+b+1)(x+u'+a+b+1)}{(2x+a+b+1)(2x+a+b)}.
\end{align*}
The parameters $(a,b)$ will be fixed throughout so we suppress any dependence of $\beta_{u,u'}$ and $\delta_{u,u'}$ on them. Now, define the following functions $\mathsf{f},\mathsf{g},\mathsf{B}$ again depending on $(a,b)$ but \textit{not} on $u$ and $u'$ by,
\begin{align*}
\mathsf{f}(x)&=\frac{(2x+a+b+2)x!\Gamma(x+b+1)}{\Gamma(x+a+b+2)\Gamma(x+a+2)},\  x \in \mathbb{N},\\
\mathsf{g}(y)&=\frac{(2y+a+b+1)\Gamma(y+a+b+1)\Gamma(y+a+1)}{y!\Gamma(y+b+1)}, \ y \in \mathbb{N},\\
\mathsf{B}(x,y)&=\frac{1}{2}\mathsf{f}(x)\mathsf{g}(y), \ x ,y\in \mathbb{N}.
\end{align*}
Define the function $\mathsf{F}_n$ on $W^n$ by,
\begin{align*}
\mathsf{F}_n(x)=\prod_{i<j}^{n}\left(\left(x_j+\frac{a+b+1}{2}\right)^2-\left(x_i+\frac{a+b+1}{2}\right)^2\right).
\end{align*}
Furthermore, define the following kernel,
\begin{align*}
(\mathfrak{L}^{\textbf{BC}}_{n\to n+1}f)(x)=\frac{2^nn!\Gamma(n+a+1)}{\Gamma(a+1)\mathsf{F}_{n+1}(x)}\sum_{y\in W_{\textnormal{BC}}^{n,n+1}(x)}^{}\mathsf{F}_n(y)f(y)\sum_{z:y\in W^{n,n}(z), z \in  W^{n,n+1}(x)}^{}\prod_{i=1}^{n}\mathsf{B}(z_i,y_i), \ x \in W^{n+1}.
\end{align*}
Then, we have the following lemma originally proven in \cite{Cuenca}.
\begin{lem}
For $n\ge 1$, the kernels $\mathfrak{L}^{\textbf{BC}}_{n\to n+1}$ are Feller.
\end{lem}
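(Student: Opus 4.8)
The plan is to mimic, in the $\mathsf{BC}$-setting, the proof of the Feller property for the Vandermonde links $\mathfrak{L}^{\textnormal{Vnd}}_{n\to n+1}$, which in turn is the argument of Proposition 3.3 in \cite{BorodinOlshanski}. Since $\mathfrak{L}^{\textbf{BC}}_{n\to n+1}$ is a Markov kernel from the discrete (locally compact) space $W^{n+1}$ to the discrete space $W^n$, showing it is Feller amounts to showing that for each fixed $y_0\in W^n$ the function $x\mapsto (\mathfrak{L}^{\textbf{BC}}_{n\to n+1}\delta_{y_0})(x)$ on $W^{n+1}$ vanishes as $x\to\infty$ in $W^{n+1}$ (i.e. as $x_{n+1}\to\infty$, the other coordinates being forced to stay bounded once $y_0$ is fixed, because the interlacing $y\in W^{n,n}(z)\prec x$ pins $x_1,\dots,x_n$ into a bounded range). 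So first I would write out this single term explicitly:
\begin{align*}
(\mathfrak{L}^{\textbf{BC}}_{n\to n+1}\delta_{y_0})(x)=\frac{2^nn!\Gamma(n+a+1)}{\Gamma(a+1)}\cdot\frac{\mathsf{F}_n(y_0)}{\mathsf{F}_{n+1}(x)}\sum_{z:\,y_0\in W^{n,n}(z),\,z\in W^{n,n+1}(x)}\prod_{i=1}^{n}\mathsf{B}(z_i,y_0{}_{,i}),
\end{align*}
and estimate it as $x_{n+1}\to\infty$.

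Second, I would bound the inner sum over $z$. Because $y_0$ is fixed and $y_0\prec z$, all coordinates $z_1,\dots,z_n$ are confined to a finite set \emph{except} that $z_n$ ranges over $\{y_0{}_{,n},\dots,x_{n+1}-1\}$, which grows with $x_{n+1}$; likewise $z\prec x$ forces $z_n< x_{n+1}$ and $x_n\le z_n$. Hence the only genuinely unbounded contribution comes from the factor $\mathsf{B}(z_n,y_0{}_{,n})=\tfrac12\mathsf{f}(z_n)\mathsf{g}(y_0{}_{,n})$ summed over $z_n$ up to $x_{n+1}-1$. Using Stirling's formula on the ratios of Gamma functions defining $\mathsf{f}$, one sees $\mathsf{f}(m)$ decays polynomially like $m^{-2a-3}$ as $m\to\infty$ (the net power coming from $(2m+a+b+2)\,m!\,\Gamma(m+b+1)/[\Gamma(m+a+b+2)\Gamma(m+a+2)]$), so that $\sum_{z_n\le x_{n+1}}\mathsf{f}(z_n)$ is either bounded or grows at most polynomially in $x_{n+1}$ with a power strictly smaller than the growth of $\mathsf{F}_{n+1}(x)$. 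Third, I would compare this against the denominator: $\mathsf{F}_{n+1}(x)=\prod_{i<j\le n+1}\big((x_j+\tfrac{a+b+1}{2})^2-(x_i+\tfrac{a+b+1}{2})^2\big)$ contains $n$ factors involving $x_{n+1}$, each $\asymp x_{n+1}^2$, so $\mathsf{F}_{n+1}(x)\asymp x_{n+1}^{2n}$ as $x_{n+1}\to\infty$ while the other coordinates stay bounded. Putting the pieces together, $(\mathfrak{L}^{\textbf{BC}}_{n\to n+1}\delta_{y_0})(x)$ is $O(x_{n+1}^{c-2n})$ for some $c$ depending only on $a,b,n$ and strictly less than $2n$ (indeed one expects the true order to be much smaller, since it is a probability and must sum to one), hence it tends to $0$.

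The step I expect to be the main obstacle is the careful asymptotic accounting in the second step: one must track the exact polynomial order of $\mathsf{f}$ and of the partial sum $\sum_{z_n\le m}\mathsf{f}(z_n)$ via Stirling, confirm the constraints (5.1) of \cite{Cuenca} guarantee this order beats $2n$, and check that the auxiliary boundedness of $z_1,\dots,z_{n-1}$ and $x_1,\dots,x_n$ genuinely holds for all $x$ with $(\mathfrak{L}^{\textbf{BC}}_{n\to n+1}\delta_{y_0})(x)\ne 0$ (this is where the nested interlacing $y_0\in W^{n,n}(z)$, $z\in W^{n,n+1}(x)$ is used). A cleaner alternative, which I would mention, is to observe that $\mathfrak{L}^{\textbf{BC}}_{n\to n+1}$ factors through the intermediate variable $z$ as a composition $\Lambda^{g}_{n,n}\,\Lambda^{\hat h}_{n,n+1}$ of the normalized two-level links from Section \ref{sectionintertwinins}, so that it suffices to know each of those two kernels is Feller; but since the text has already isolated the one-variable tail estimates needed (Feller property of $\mathfrak{L}^{\textnormal{Vnd}}$, the bound $p_t(x_i,y_j)\to0$, etc.) and the same Stirling estimate is needed either way, the direct computation is the shortest route and is essentially the content of Proposition 3.3 of \cite{BorodinOlshanski} transplanted to positive signatures, which is precisely how \cite{Cuenca} proves it.
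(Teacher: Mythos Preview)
Your overall strategy---apply the kernel to $\delta_{y_0}$ and show the result vanishes at infinity via Stirling-type estimates on the Gamma factors---is correct and is exactly what the paper does, namely defer to Proposition~3.1 of \cite{Cuenca} while noting that the situation is ``a bit more involved'' than for $\mathfrak{L}^{\textnormal{Vnd}}_{n\to n+1}$.

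However, your sketch contains a genuine error at precisely the point where the BC case departs from the Vandermonde case. You claim the two-step interlacing $y_0\in W^{n,n}(z)$, $z\in W^{n,n+1}(x)$ pins $x_1,\dots,x_n$ into a bounded range once $y_0$ is fixed. This is false for $x_n$. The constraints give $z_i\in[y_{0,i},y_{0,i+1}-1]$ for $i<n$ but only $z_n\ge y_{0,n}$ with no upper bound, and then $x_n\in(z_{n-1},z_n]$; so $x_n$ can be arbitrarily large (take $z_n=x_n$). Only $x_1,\dots,x_{n-1}$ are bounded. In the Vandermonde case the single interlacing $y_0\prec x$ does bound $x_i\le y_{0,i}$ for all $i\le n$, which is why that case is simpler; the extra intermediate layer $z$ is exactly what makes the BC estimate more delicate, as the paper notes. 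You flag this as ``the main obstacle'' to verify, but you have already asserted the false version earlier in the argument.

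The repair is not hard: when both $x_n$ and $x_{n+1}$ are large, $\mathsf{F}_{n+1}(x)$ picks up additional large factors $\big((x_n+c)^2-(x_i+c)^2\big)$ for $i<n$, while the $z_n$-sum is now over $[x_n,x_{n+1}-1]$ and contributes at most the tail of $\sum\mathsf{f}$. A case analysis (or a uniform bound) then closes the estimate. Incidentally, Stirling gives $\mathsf{f}(m)\sim 2m^{-2a-1}$, not $m^{-2a-3}$; this does not affect the conclusion since $a>-1$ still forces decay fast enough against $\mathsf{F}_{n+1}$. Your proposed alternative via the factorization $\mathfrak{L}^{\textbf{BC}}_{n\to n+1}=\mathfrak{L}^{\textbf{BC}}_{n,n+1}\circ\mathfrak{L}^{\textbf{BC}}_{n,n}$ is legitimate, but showing $\mathfrak{L}^{\textbf{BC}}_{n,n}$ is Feller runs into the same unbounded-top-coordinate issue (now for $z_n$), so it does not avoid the work.
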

\begin{proof}
The fact that these are Markov, i.e. correctly normalized, comes from the branching of the normalized Jacobi polynomials, see Section 3 of \cite{Cuenca}. Moreover, to show that they are Feller, it again suffices to check it for a delta function; however the situation is a bit more involved than for $\mathfrak{L}^{Vnd}_{n\to n+1}$, see Proposition 3.1 of \cite{Cuenca} for the details.
\end{proof}

 Denote by $\left(P_n^{u,u'}(t);t\ge 0\right)$ the Karlin-McGregor semigroup associated to  $n$ $\mathcal{D}$-chains with birth and death rates $\beta_{u,u'}$ and $\delta_{u,u'}$ respectively. It can be checked, see Lemma 4.12 of \cite{Cuenca}, that $\mathsf{F}_n$ is a positive eigenfunction of $P_n^{u,u'}(t)$ with eigenvalue $e^{c_nt}$, where $c_n=\frac{n(n-1)(n-2)}{3}-\frac{n(n-1)}{2}(u+u'+b)$ (this fact can also be obtained via iteration of the results below) so that in particular, we can define the honest Markov semigroup $\left(P_n^{u,u',\mathsf{F}_n}(t);t\ge 0\right)$ given by the $h$-transform of $\left(P_n^{u,u'}(t);t \ge 0\right)$ by $\mathsf{F}_n$. Then, under the assumptions on $(u,u',a,b)$ referred to above we have:
 
 \begin{lem}
For $n\ge1$, the semigroups $\left(P_n^{u,u',\mathsf{F}_n}(t);t \ge 0\right)$ are Feller.
 \end{lem}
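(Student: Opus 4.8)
The statement to prove is that the semigroups $\left(P_n^{u,u',\mathsf{F}_n}(t);t \ge 0\right)$, obtained by $h$-transforming the Karlin--McGregor semigroup of $n$ $\mathcal{D}$-chains (with rates $\beta_{u,u'},\delta_{u,u'}$) by the eigenfunction $\mathsf{F}_n$, are Feller for every $n\ge 1$.

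My plan mirrors the proof of the analogous Lemma for the semigroups $\left(\tilde P_n^{\Delta_n}(t);t\ge 0\right)$ earlier in the excerpt. First I would recall that an $h$-transformed semigroup acts as $P_n^{u,u',\mathsf{F}_n}(t)f = e^{-c_n t}\,\mathsf{F}_n^{-1}\,P_n^{u,u'}(t)\,(\mathsf{F}_n f)$, and that $P_n^{u,u'}(t)$ has the Karlin--McGregor determinantal kernel $\det(p_t^{u,u'}(x_i,y_j))_{i,j=1}^n$ where $p_t^{u,u'}$ is the one-dimensional transition density of the $\mathcal{D}$-chain with rates $(\beta_{u,u'},\delta_{u,u'})$. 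To establish the Feller property it suffices (as in the earlier lemmas) to check that $P_n^{u,u',\mathsf{F}_n}(t)$ maps $C_0(W^n(\mathbb{N}))$ into itself, which by a standard density/approximation argument reduces to applying the semigroup to an indicator (delta) function $\delta_y$ and showing that $\left(P_n^{u,u',\mathsf{F}_n}(t)\delta_y\right)(x)\to 0$ as $x\to\infty$ in $W^n(\mathbb{N})$, i.e. as $\max_i x_i\to\infty$ (note contractivity and $P_n^{u,u',\mathsf{F}_n}(t)\mathbf 1=\mathbf 1$ already give boundedness and preservation of continuity). Explicitly,
\begin{align*}
\left(P_n^{u,u',\mathsf{F}_n}(t)\delta_y\right)(x)=e^{-c_nt}\,\frac{\mathsf{F}_n(y)}{\mathsf{F}_n(x)}\det\left(p_t^{u,u'}(x_i,y_j)\right)_{i,j=1}^n,
\end{align*}
so the task is to control the growth of this ratio as $x\to\infty$.

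The key estimates are: (i) the one-dimensional densities satisfy $p_t^{u,u'}(x_i,y_j)\to 0$ as $x_i\to\infty$ for fixed $t,y_j$ — this follows from the non-explosiveness of the $\mathcal{D}$-chain and the remarks after conditions (\ref{birthdeathcondition1}), (\ref{birthdeathcondition2}) which give $p_t(x,y)\to 0$ as $x\to\infty$; and (ii) a polynomial lower bound on $\mathsf{F}_n(x)$ in the large coordinates, coming directly from its product form $\mathsf{F}_n(x)=\prod_{i<j}\left(\left(x_j+\frac{a+b+1}{2}\right)^2-\left(x_i+\frac{a+b+1}{2}\right)^2\right)$, which grows like a fixed power of $\max_i x_i$ away from the diagonal. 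Combining a more refined version of (i) — namely that $p_t^{u,u'}(x_i,y_j)$ decays faster than any fixed inverse power of $x_i$, which one gets from the fact that the chain with quadratic-type rates $\beta_{u,u'}(x)\sim x^2$ has super-polynomially light tails (the expected position at time $t$ of such a chain started from $x$ has all moments controlled, or one can compare with a pure birth chain of rate $\sim x^2$ and use its explicit non-explosive but fast-spreading behaviour) — with the polynomial growth of $\mathsf{F}_n$, one concludes $\left(P_n^{u,u',\mathsf{F}_n}(t)\delta_y\right)(x)\to 0$ as $x\to\infty$. Since this is uniform enough, it also handles continuity at infinity of the image of a general $f\in C_0$.

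I expect the main obstacle to be step (ii) combined with the tail estimate in (i): the earlier lemmas for the Vandermonde and Meixner cases are quick because $\Delta_n(x)\ge 1$ and $p_t\to 0$ suffices, but here $\mathsf{F}_n(x)^{-1}$ genuinely grows (it is $\mathsf{F}_n(y)/\mathsf{F}_n(x)$, so when $x$ is large the ratio is small — actually this helps!). Let me re-examine: $\mathsf{F}_n(x)\to\infty$ as $x\to\infty$, so $\mathsf{F}_n(y)/\mathsf{F}_n(x)\to 0$, and together with the boundedness $|\det(p_t^{u,u'}(x_i,y_j))|\le n!$ (each entry is a probability), we already get decay. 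So in fact the argument is clean: $\left|\left(P_n^{u,u',\mathsf{F}_n}(t)\delta_y\right)(x)\right|\le e^{-c_nt} n!\,\mathsf{F}_n(y)/\mathsf{F}_n(x)\to 0$ as $\min_{i<j}\left|x_j-x_i\right|$ and $\max_i x_i$ tend to infinity appropriately along $W^n(\mathbb{N})$; one must just be slightly careful that "$x\to\infty$ in $W^n(\mathbb{N})$" means leaving every compact set, which forces $x_n\to\infty$ and hence $\mathsf{F}_n(x)\to\infty$ since the gaps are bounded below by $1$. The only real work is citing Proposition 3.1 of \cite{Cuenca} (or reproducing its short argument) to upgrade pointwise decay of $P_n^{u,u',\mathsf{F}_n}(t)\delta_y$ to the full statement that $C_0$ is preserved, exactly as was done for $\mathfrak{L}^{\textbf{BC}}_{n\to n+1}$ in the preceding lemma; the proof then concludes.
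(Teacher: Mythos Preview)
Your proposal is correct and lands on essentially the paper's argument: apply the semigroup to $\delta_y$, write the explicit $h$-transformed kernel, and show it vanishes as $x\to\infty$ in $W^n(\mathbb{N})$. The paper's one-line proof phrases the endgame slightly differently---it uses $\mathsf{F}_n(x)\ge 1$ (so $\mathsf{F}_n(y)/\mathsf{F}_n(x)$ is bounded) together with the Feller property of the one-dimensional densities (so the determinant itself tends to $0$), whereas you use $\mathsf{F}_n(x)\to\infty$ together with $|\det(p_t(x_i,y_j))|\le n!$; either pairing works and both are immediate from the product form of $\mathsf{F}_n$ and the integer spacing $x_j-x_i\ge 1$. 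Your detour through super-polynomial tail estimates for $p_t^{u,u'}$ is unnecessary, as you correctly realised.
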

 \begin{proof}
This as before, immediately follows from the fact that the one dimensional transition densities that go in the Karlin-McGregor semigroups are Feller along with the fact that $F_n(x) \ge 1$.
 \end{proof}
 
 Finally, the following proposition along with the method of intertwiners immediately gives a Feller process on the boundary $\Omega_{\textnormal{BC}}$ of the type-BC branching graph.

\begin{prop}\label{CuencaIntertwining}
$P^{u+1,u'+1,\mathsf{F}_{n+1}}_{n+1}(t)\mathfrak{L}^{\textbf{BC}}_{n\to n+1}f=\mathfrak{L}^{\textbf{BC}}_{n\to n+1}P_n^{u,u',\mathsf{F}_n}(t)f$, for $n\ge 1$, $f\in C_0(W^{n})$, $t\ge 0$.
\end{prop}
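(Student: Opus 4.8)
The plan is to obtain Proposition \ref{CuencaIntertwining} as a two-step application of the general intertwining machinery developed in Sections \ref{sectioncoalescing}--\ref{sectionExamples}, exactly parallel to how Proposition \ref{VandermondeIntertwinings} was deduced for the Gelfand--Tsetlin graph but now composing a $W^{n,n}$-coupling with a $W^{n,n+1}$-coupling, since an edge of the type-BC graph $y \prec_{\textnormal{BC}} x$ factors through an intermediate signature $z$ with $y \in W^{n,n}(z)$ and $z \in W^{n,n+1}(x)$. First I would identify the birth-and-death chain whose $\mathcal{D}$- and $\hat{\mathcal{D}}$-chains generate the two couplings: the rates $\beta_{u,u'},\delta_{u,u'}$ are the quadratic-type rates fitting the framework of this section (products of linear factors), and the key algebraic fact to record is that the Siegmund-dual shift interacts with the parameters $(u,u')$ precisely so that $h$-transforming by the appropriate harmonic function shifts $(u,u') \to (u+1,u'+1)$ — this is the exact analogue of the observation $\tilde{\mathcal{D}}^{\mathbb{U}(\infty)}_{u,u',v,v'}=\mathcal{D}^{\mathbb{U}(\infty)}_{u-1,u'-1,v,v'}$ recorded above, and is what makes the tower of relations self-consistent.

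Concretely, I would proceed as follows. Let $\mathcal{D}$ be the chain with rates $(\beta_{u,u'},\delta_{u,u'})$ and $\hat{\mathcal{D}}$ its Siegmund dual. Step one: apply Theorem \ref{Master2} (the $W^{n,n}$ case) with the eigenfunction $g_n$ of $P_t^n$ taken to be the appropriate power of $\mathsf{f}$-type weights, giving an intertwining $\hat{P}_t^{n,\hat{g}_n}\Lambda^{g_n}_{n,n}=\Lambda^{g_n}_{n,n}P_t^{n,g_n}$. Step two: apply Theorem \ref{Master1} (the $W^{n,n+1}$ case) with $\hat{h}_n$ the eigenfunction whose $\Lambda_{n,n+1}$-lift produces the $\mathsf{F}_{n+1}$ normalization, giving $P_t^{n+1,h_{n+1}}\Lambda^{\hat{h}_n}_{n,n+1}=\Lambda^{\hat{h}_n}_{n,n+1}\hat{P}_t^{n,\hat{h}_n}$. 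Composing these two relations along the intermediate level $z$ — i.e. forming $\Lambda^{\hat{h}_n}_{n,n+1}\circ\Lambda^{g_n}_{n,n}$ — yields the kernel $\mathfrak{L}^{\textbf{BC}}_{n\to n+1}$, whose explicit form $\sum_z \prod_i \mathsf{B}(z_i,y_i)$ I would recognize by noting $\mathsf{B}(x,y)=\tfrac12\mathsf{f}(x)\mathsf{g}(y)$ assembles from the symmetrizing measures $\hat{\pi},\check{\pi}$ appearing in the two links and $\mathsf{F}_n$ is the product of the two Vandermonde-in-$(x+\tfrac{a+b+1}{2})^2$ harmonic functions. The Feller property needed to actually invoke the composed relation on $C_0$ is already supplied by the three lemmas just stated.

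The main obstacle — the one place genuine computation is unavoidable — is verifying that the eigenfunctions produced by the abstract construction (\ref{alternatingharmonic1})--(\ref{alternatingharmonic2}), namely $h_{n+1}=\Lambda_{n,n+1}\hat{h}_n$ and $\hat{g}_n = \Lambda_{n,n}g_n$, really coincide with $\mathsf{F}_{n+1}$ and $\mathsf{F}_n$ up to constants, and that the eigenvalue bookkeeping gives exactly $e^{c_nt}$ with $c_n=\tfrac{n(n-1)(n-2)}{3}-\tfrac{n(n-1)}{2}(u+u'+b)$; equivalently, that $\mathsf{F}_n$ is a $P_n^{u,u'}$-eigenfunction (Lemma 4.12 of \cite{Cuenca}, which I would either cite or re-derive via $\sum_i \mathsf{L}_{x_i}\mathsf{F}_n(x)=c_n\mathsf{F}_n(x)$ for the relevant difference operator $\mathsf{L}$). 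Once these identifications and the parameter-shift identity $(u,u')\to(u+1,u'+1)$ are in hand, the proposition follows immediately by transitivity of intertwinings: $P^{u+1,u'+1,\mathsf{F}_{n+1}}_{n+1}(t)\,\mathfrak{L}^{\textbf{BC}}_{n\to n+1}=\mathfrak{L}^{\textbf{BC}}_{n\to n+1}\,P_n^{u,u',\mathsf{F}_n}(t)$, with both sides acting on $C_0(W^n)$. The normalization constant $\tfrac{2^nn!\Gamma(n+a+1)}{\Gamma(a+1)}$ in $\mathfrak{L}^{\textbf{BC}}_{n\to n+1}$ is then forced by the requirement $\mathfrak{L}^{\textbf{BC}}_{n\to n+1}\mathbf{1}=\mathbf{1}$, which I would check against the branching of normalized Jacobi polynomials rather than by direct summation.
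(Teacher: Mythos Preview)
Your strategy is exactly the paper's: factor $\mathfrak{L}^{\textbf{BC}}_{n\to n+1}=\mathfrak{L}^{\textbf{BC}}_{n,n+1}\circ\mathfrak{L}^{\textbf{BC}}_{n,n}$, apply Theorem~\ref{Master2} for the $W^{n,n}$ step and Theorem~\ref{Master1} for the $W^{n,n+1}$ step, then compose. The paper packages this as Proposition~\ref{BCIntertwinings}, introducing an explicit intermediate semigroup $\mathsf{P}_n^{u,u',\hat{\mathsf F}_n}$ on the $z$-level.

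Where your sketch is too loose is precisely the place you flag as ``the main obstacle''. You propose to use a single chain $\mathcal D$ with rates $(\beta_{u,u'},\delta_{u,u'})$ and its Siegmund dual for both steps, and to obtain the parameter shift $(u,u')\to(u+1,u'+1)$ via an $h$-transform. The paper instead chooses \emph{different} base chains at the two steps: for the $W^{n,n}$ coupling the $\mathcal D$-chain has rates $\lambda(x)=\mathsf g(x+1)\delta_{u,u'}(x+1)/\mathsf g(x)$, $\mu(x)=\mathsf g(x-1)\beta_{u,u'}(x-1)/\mathsf g(x)$, while for the $W^{n,n+1}$ coupling the $\mathcal D$-chain has rates $(\beta_{u+1,u'+1},\delta_{u+1,u'+1})$. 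The entire computational content of the proof is then the pair of compatibility identities
\[
\beta_{u+1,u'+1}(x+1)\,\tfrac{\mathsf f(x+1)}{\mathsf f(x)}=\beta_{u,u'}(x)\,\tfrac{\mathsf g(x)}{\mathsf g(x+1)},\qquad
\delta_{u+1,u'+1}(x)\,\tfrac{\mathsf f(x-1)}{\mathsf f(x)}=\delta_{u,u'}(x+1)\,\tfrac{\mathsf g(x+1)}{\mathsf g(x)},
\]
which say that the $\hat{\mathcal D}$-dynamics arising from the two couplings agree on the intermediate level after the respective $h$-transforms by $\mathsf f$ and $\mathsf g$. This is what actually encodes the parameter shift, and it is checked by direct computation of the four ratios using $\Gamma(x+1)=x\Gamma(x)$. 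Your phrase ``the Siegmund-dual shift interacts with the parameters so that $h$-transforming shifts $(u,u')\to(u+1,u'+1)$'' is correct in spirit but does not yet contain this verification; once you write down and check these two identities, everything else in your outline goes through unchanged. (Note also that the relevant single-particle weight in the $W^{n,n}$ link is $\mathsf g$, not $\mathsf f$ as you wrote.)
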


 Again, the interest in these specific rates stems from the fact that they preserve the so called $z$-measures, which are the analogues of the $zw$-measures mentioned previously, for the problem of harmonic analysis on infinite dimensional BC-type groups. For more details and a complete study of the $z$-measures see the recent paper \cite{Cuenca}.

Proposition \ref{CuencaIntertwining} will follow from the two relations given in Proposition \ref{BCIntertwinings}  below, which reveal a "hidden" dynamic on "intermediate signatures" (see Okounkov's paper \cite{OkounkovIntermediate} and the references therein for more about these). In fact, this is exactly the dynamic followed by the projection on the even levels ($x^{(i,i)}$ in our notation), if one constructs a symplectic Gelfand-Tsetlin pattern valued process, that links (on odd levels) the semigroups $\left(P^{u+1,u'+1,\mathsf{F}_{n+1}}_{n+1}(t);t\ge 0\right)$ and $\left(P^{u,u',\mathsf{F}_{n}}_{n}(t);t\ge 0\right)$ and initializes it according to a Gibbs measure (see Proposition \ref{CoherentdynamicsproptypeB}). 

Some more definitions are necessary. Let the functions $\mathsf{\hat{F}}_n$ and $\mathsf{\bar{F}}_{n+1}$ on $W^n$ and $W^{n+1}$ respectively be given by,
\begin{align*}
\mathsf{\hat{F}}_n(z)&=\sum_{y\in W^{n,n}(z)}^{}\prod_{i=1}^{n}\mathsf{g}(y_i)\mathsf{F}_n(y),\ z \in W^n,\\
\mathsf{\bar{F}}_{n+1}(x)&=\sum_{z\in W^{n,n+1}(x)}^{}\prod_{i=1}^{n}\mathsf{f}(z_i)\mathsf{\hat{F}}_n(z), \ z\in W^{n+1}.
\end{align*}
Moreover, we define the following Markov kernels $\mathfrak{L}^{\textbf{BC}}_{n,n}$ from $W^n$ to $W^n$, and $\mathfrak{L}^{\textbf{BC}}_{n,n+1}$ from $W^{n+1}$ to $W^{n}$ respectively by,
\begin{align*}
(\mathfrak{L}^{\textbf{BC}}_{n,n}f)(z)&=\frac{1}{\mathsf{\hat{F}}_n(z)}\sum_{y\in W^{n,n}(z)}^{}f(y)\prod_{i=1}^{n}\mathsf{g}(y_i)\mathsf{F}_n(y),\ z \in W^n,\\
(\mathfrak{L}^{\textbf{BC}}_{n,n+1}f)(x)&=\frac{1}{\mathsf{\bar{F}}_{n+1}(x)}\sum_{z\in W^{n,n+1}(x)}^{}f(z)\prod_{i=1}^{n}\mathsf{f}(z_i)\mathsf{\hat{F}}_n(z), \ x\in W^{n+1}.
\end{align*}
Observe that, we have the composition property,
\begin{align*}
\mathfrak{L}^{\textbf{BC}}_{n\to n+1}=\mathfrak{L}^{\textbf{BC}}_{n,n+1}\circ\mathfrak{L}^{\textbf{BC}}_{n,n}
\end{align*}
and from comparing the two expressions in order to get the right normalization constant, we have,
\begin{align*}
\mathsf{\bar{F}}_{n+1}(x)=\frac{\Gamma(a+1)}{n!\Gamma(n+a+1)}\mathsf{F}_{n+1}(x), \ x \in W^{n+1}.
\end{align*}
Finally, we denote by $\left(\mathsf{P}_n^{u,u',\mathsf{\hat{F}}_n}(t);t\ge 0\right)$ the Karlin-McGregor semigroup associated with $n$ birth and death chains with \textit{birth rate},
\begin{align*}
\frac{\mathsf{g}(x)\beta_{u,u'}(x)}{\mathsf{g}(x+1)}, \ x \in \mathbb{N},
\end{align*}
and \textit{death rate},
\begin{align*}
\frac{\mathsf{g}(x+1)\delta_{u,u'}(x+1)}{\mathsf{g}(x)}, \ x \in \mathbb{ N},
\end{align*}
that is moreover Doob's $h$-transformed by $\mathsf{\hat{F}}_n$. The fact that, this is indeed an eigenfunction of $n$ copies of such birth and death chains follows (recursively) from relation (\ref{BCIntertwinings1}) of Proposition \ref{BCIntertwinings} below. This semigroup, $\left(\mathsf{P}_n^{u,u',\mathsf{\hat{F}}_n}(t);t\ge 0\right)$ that is driving the evolution of $n$ non-intersecting birth and death chains is the "hidden" dynamic alluded to above. Now, Proposition \ref{CuencaIntertwining} is an immediate consequence of the following result.
\begin{prop}\label{BCIntertwinings} For $n\ge 1$ and $t\ge 0$, we have the intertwining relations:
\begin{align}
\mathsf{P}_n^{u,u',\mathsf{\hat{F}}_n}(t)\mathfrak{L}^{\textbf{BC}}_{n,n}&=\mathfrak{L}^{\textbf{BC}}_{n, n}P_n^{u,u',\mathsf{F}_n}(t)\label{BCIntertwinings1}\\
P^{u+1,u'+1,\mathsf{F}_{n+1}}_{n+1}(t)\mathfrak{L}^{\textbf{BC}}_{n,n+1}&=\mathfrak{L}^{\textbf{BC}}_{n, n+1}\mathsf{P}_n^{u,u',\mathsf{\hat{F}}_n}(t) \label{BCIntertwinings2}
\end{align}
\end{prop}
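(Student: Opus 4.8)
The plan is to derive Proposition \ref{BCIntertwinings} directly from the master intertwining Theorems \ref{Master1} and \ref{Master2}, by identifying the two relations (\ref{BCIntertwinings1}) and (\ref{BCIntertwinings2}) as instances of (\ref{MasterIntertwining2eq2}) and (\ref{MasterIntertwining1eq2}) respectively, applied to the appropriate $\mathcal{D}$-chain. The central observation is that the two one-dimensional birth and death chains entering the picture are Siegmund duals of one another (up to an $h$-transform). Concretely, one should check that the chain with birth rate $\mathsf{g}(x)\beta_{u,u'}(x)/\mathsf{g}(x+1)$ and death rate $\mathsf{g}(x+1)\delta_{u,u'}(x+1)/\mathsf{g}(x)$ is exactly the $\check{\cdot}$-dual (inverse Siegmund dual) of the chain with rates $(\beta_{u,u'},\delta_{u,u'})$, after conjugation by the diagonal matrix with entries $\mathsf{g}(\cdot)$. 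Indeed, writing out $\hat{\lambda}(x)=\mu(x+1)$, $\hat{\mu}(x)=\lambda(x)$ and then $h$-transforming by $\hat\pi$-type weights, one verifies that the functions $\mathsf{f},\mathsf{g}$ are precisely the symmetrizing measures $\hat\pi,\check\pi$ (or their reciprocals) of the relevant dual chains; this is a direct computation with Pochhammer symbols/Gamma functions, exactly parallel to the quadratic-rates computation carried out for the Vandermonde case earlier in this section.

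The key steps, in order, would be: (1) Verify that $(\beta_{u+1,u'+1},\delta_{u+1,u'+1})$ is obtained from $(\beta_{u,u'},\delta_{u,u'})$ by the $h$-transform-then-Siegmund-dual-then-$h$-transform procedure — i.e. the analogue of $\tilde{\mathcal{D}}=\mathcal{D}^{\text{shifted}}$ from the quadratic case, here the shift being $u\mapsto u+1$, $u'\mapsto u'+1$. This is the arithmetic heart of the matter: one needs $\widehat{\mathcal{D}_{u,u'}}$, suitably conjugated, to equal $\mathcal{D}_{u+1,u'+1}$, and similarly for the intermediate chain. (2) Identify $\mathsf{F}_n$ as the Karlin–McGregor eigenfunction $\Delta_n$-type object: since $\mathsf{F}_n(x)=\prod_{i<j}((x_j+c)^2-(x_i+c)^2)$ with $c=(a+b+1)/2$, it is a Vandermonde in the variables $(x_i+c)^2$, and one checks it is a positive eigenfunction of $P_n^{u,u'}(t)$ with the stated eigenvalue $e^{c_n t}$ (either by the cited Lemma 4.12 of \cite{Cuenca} or, preferably for self-containedness, by noting it arises recursively from the $\Lambda_{n,n+1},\Lambda_{n,n}$ links as in (\ref{alternatingharmonic1})–(\ref{alternatingharmonic2})). (3) Identify $\mathsf{\hat F}_n$ and $\mathsf{\bar F}_{n+1}$ with the $H$-type functions (\ref{consistencyrelationtypeA1}) / the functions $h_{n+1}=\Lambda_{n,n+1}\hat h_n$ appearing in Theorem \ref{Master1}: with $\hat h_n = \mathsf{F}_n$ and the symmetrizing weights equal to $\mathsf{g}(\cdot)$ resp. $\mathsf{f}(\cdot)$, the definitions of $\mathsf{\hat F}_n,\mathsf{\bar F}_{n+1}$ match $h$-transform bookkeeping verbatim. (4) Match the Markov kernels: $\mathfrak{L}^{\textbf{BC}}_{n,n}$ is $\Lambda^{g_n}_{n,n}$ and $\mathfrak{L}^{\textbf{BC}}_{n,n+1}$ is $\Lambda^{\hat h_n}_{n,n+1}$ for the appropriate chains, using that the normalization constants $\frac{1}{\mathsf{\hat F}_n},\frac{1}{\mathsf{\bar F}_{n+1}}$ are exactly the ones forcing $\Lambda\mathbf 1=\mathbf 1$. (5) With all dictionary entries in place, relations (\ref{BCIntertwinings1}) and (\ref{BCIntertwinings2}) are literally (\ref{MasterIntertwining2eq2}) and (\ref{MasterIntertwining1eq2}). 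Finally, Proposition \ref{CuencaIntertwining} follows by composing the two, using $\mathfrak{L}^{\textbf{BC}}_{n\to n+1}=\mathfrak{L}^{\textbf{BC}}_{n,n+1}\circ\mathfrak{L}^{\textbf{BC}}_{n,n}$ and the identity $\mathsf{\bar F}_{n+1}=\frac{\Gamma(a+1)}{n!\,\Gamma(n+a+1)}\mathsf{F}_{n+1}$, which reconciles the two normalizations.

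The main obstacle I expect is step (1) together with the precise conjugation in step (2)–(3): one must get exactly right how the three operations ($h$-transform by a power of the symmetrizing measure, Siegmund duality $\hat{\cdot}$ or its inverse $\check{\cdot}$, and a further $h$-transform) combine, and in particular that the parameter shift $u,u'\mapsto u+1,u'+1$ (and the corresponding $b$-shift or lack thereof) is produced correctly — the discrete asymmetries flagged in the Remark after Lemma \ref{ConjugacyLemma} (the $\hat{\cdot}$ operation not being an involution, the careful placement of $\le$ vs $<$) mean one cannot be cavalier. A secondary technical point is checking the eigenvalue $c_n=\frac{n(n-1)(n-2)}{3}-\frac{n(n-1)}{2}(u+u'+b)$, which should come out of the general formula $\sum_i \mathsf L_{x_i}\Delta_n(x)=(a\frac{n(n-1)(n-2)}{3}+(b-\bar b)\frac{n(n-1)}{2})\Delta_n(x)$ recorded at the start of this section, applied in the squared shifted variables; I would present this as a short computation rather than quoting Lemma 4.12 of \cite{Cuenca}. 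Everything else — the Feller property of the kernels and semigroups (already established in the three preceding lemmas) and the passage from infinitesimal to semigroup level — is handled by the theory already in place, so the proof reduces to bookkeeping once the rate-algebra of step (1) is nailed down.
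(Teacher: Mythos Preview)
Your proposal is correct and follows essentially the same approach as the paper's proof: apply Theorem \ref{Master2} (for (\ref{BCIntertwinings1})) and Theorem \ref{Master1} (for (\ref{BCIntertwinings2})) with appropriately chosen $\mathcal{D}$-chains and $h$-transforms, the key arithmetic input being exactly your step (1) --- the paper writes this out as the pair of compatibility relations $\beta_{u+1,u'+1}(x+1)\frac{\mathsf{f}(x+1)}{\mathsf{f}(x)}=\beta_{u,u'}(x)\frac{\mathsf{g}(x)}{\mathsf{g}(x+1)}$ and $\delta_{u+1,u'+1}(x)\frac{\mathsf{f}(x-1)}{\mathsf{f}(x)}=\delta_{u,u'}(x+1)\frac{\mathsf{g}(x+1)}{\mathsf{g}(x)}$, verified by computing the consecutive ratios of $\mathsf{f}$ and $\mathsf{g}$ via $\Gamma(x+1)=x\Gamma(x)$. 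The paper does not compute the eigenvalue $c_n$ here (it is not needed for the proposition itself), so you can safely drop that secondary point.
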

\begin{proof}
In the setting of Theorem \ref{Master2}, with $n$ and $n$ particles on each of the $X$ and $Y$ levels, we choose the $\mathcal{D}$-chains (the $Y$-level) to have rates given by,
\begin{align*}
\lambda(x)&=\frac{\mathsf{g}(x+1)\delta_{u,u'}(x+1)}{\mathsf{g}(x)}, \ x\in \mathbb{N},\\
\mu(x)&=\frac{\mathsf{g}(x-1)\beta_{u,u'}(x-1)}{\mathsf{g}(x)}, \ x \in \mathbb{N}.
\end{align*}
Observe that, by performing an $h$-transform by the function $\prod_{i=1}^{n}\pi^{-1}(y_i)\mathsf{g}(y_i)\mathsf{F}_n(y)$ the evolution of these chains is driven by $\left(P_n^{u,u',\mathsf{F}_n}(t);t\ge 0\right)$ and thus we obtain  (\ref{BCIntertwinings1}).

Now, in the setting of Theorem \ref{Master1} with $n$ and $n+1$ particles, let the $\mathcal{D}$-chains (the $X$-level in this new setting, note that these are different from the ones considered above) have birth rate given by $\beta_{u+1,u'+1}(x)$ and death rate given by $\delta_{u+1,u'+1}(x)$. Then, performing an $h$-transform of the corresponding $n$ $\mathcal{\hat{D}}$-chains (the $Y$-level) by the function $\prod_{i=1}^{n}\hat{\pi}^{-1}(z_i)\mathsf{f}(z_i)\mathsf{\hat{F}}_n(z)$ we obtain (\ref{BCIntertwinings2}) after we observe the following compatibility relations between the jump rates,
\begin{align}
\beta_{u+1,u'+1}(x+1)\frac{\mathsf{f}(x+1)}{\mathsf{f}(x)}&=\mu(x+1)=\beta_{u,u'}(x)\frac{\mathsf{g}(x)}{\mathsf{g}(x+1)} \label{compatibilityrates1}, \ x \in \mathbb{N},\\
\delta_{u+1,u'+1}(x)\frac{\mathsf{f}(x-1)}{\mathsf{f}(x)}&=\lambda(x)=\delta_{u,u'}(x+1)\frac{\mathsf{g}(x+1)}{\mathsf{g}(x)} \label{compatibilityrates2}, \ x \in \mathbb{N}.
\end{align}
To see that these relations hold, first note that by making use of $\Gamma(x+1)=x\Gamma(x)$ we obtain the following, for ratios of $\mathsf{f}$ and $\mathsf{g}$ at consecutive points,
\begin{align*}
\frac{\mathsf{f}(x+1)}{\mathsf{f}(x)}&=\frac{(2x+a+b+4)(x+1)(x+b+1)}{(2x+a+b+2)(x+a+b+2)(x+a+2)}, \ x \in \mathbb{N},\\
\frac{\mathsf{g}(x+1)}{\mathsf{g}(x)}&=\frac{(2x+a+b+4)(x+1)(x+b+1)}{(2x+a+b+2)(x+a+b+2)(x+a+2)}, \ x \in \mathbb{N}.
\end{align*}
Similarly, we have relations for ratios of the birth and death rates with different parameters,
\begin{align*}
\frac{\beta_{u+1,u'+1}(x+1)}{\beta_{u,u'}(x)}&=\frac{(x+a+b+2)(x+a+2)(2x+a+b+1)(2x+a+b+2)}{(2x+a+b+3)(2x+a+b+4)(x+a+b+1)(x+a+1)}, \ x \in \mathbb{N},\\
\frac{\delta_{u+1,u'+1}(x)}{\delta_{u,u'}(x+1)}&=\frac{x(x+b)(2x+a+b+3)(2x+a+b+2)}{(2x+a+b+1)(2x+a+b)(x+1)(x+1+b)}, \ x \in \mathbb{N}.
\end{align*}
Using these, (\ref{compatibilityrates1}) and (\ref{compatibilityrates2}) can be readily checked and we are done.
\end{proof}

\paragraph{Strong Stationary Duals} Here, we briefly point out the close connection to the theory of Strong Stationary Duality. The setup is that of $W^{1,1}$ and with $I=\mathbb{N}$ i.e. $X$ and $Y$ each consist of a single particle. We define the cumulative of $\pi$, by $\sum_{0\le y \le x}^{}\pi(y)$. Thus, Theorem \ref*{Master2} gives that if a $\hat{\mathcal{D}}$-chain ($X$-level) is being kept above a (reflecting) $\mathcal{D}$-chain ($Y$-level) via the push-block mechanism we have been studying; then if the $\mathcal{D}$-chain is distributed initially according to $\frac{\pi(y)}{\sum_{0\le y \le x}^{}\pi(y)}1(y\le x)$, the evolution of the projection on the $X$-particle is that of a $\hat{\mathcal{D}}$-chain $h$-transformed by $\sum_{0\le y \le x}^{}\pi(y)$ (see for example Theorem 5.5 of \cite{DiaconisFill} in the discrete time case).

\begin{rmk}
Using the results of this paper, we can also obtain Theorem 2.3 of \cite{WarrenWindridge} which studies a process in a symplectic Gelfand-Tsetlin pattern. Similarly, we could consider pure-birth chains, which strictly speaking are not covered by the results of this work, since we assume that we are dealing with positive death rates $(\mu(x))_{x\in I}>0$, but with entirely analogous considerations Theorem 2.1 of \cite{WarrenWindridge} can also be recovered by the methods that are presented here.
\end{rmk}

\section{Birth and death chain orthogonal polynomials}\label{sectionorthogonalpolynomials}
 We will now recall the well known connection, between the probabilistic world of birth and death chains and the analytic counterpart of their associated orthogonal polynomials on the positive half line. The main references for this subsection will be the seminal papers of Karlin and McGregor, \cite{KarlinMcGregorClassification} and \cite{KarlinMcGregorDifferential}, where most of the theory was laid out. From here onwards, we fix a birth and death chain with generator $\mathcal{D}$, reflecting at $0$, with rates $(\lambda(\cdot),\mu(\cdot))$ and symmetrizing measure $\pi(\cdot)$. As usual we shall also denote by $\hat{\mathcal{D}}$ the generator of its Siegmund dual (which is absorbed at $-1$) with rates $(\hat{\lambda}(\cdot),\hat{\mu}(\cdot))$ and symmetrizing measure $\hat{\pi}(\cdot)$. We will also, often write $\lambda_k$ for $\lambda(k)$, $\pi_k$ for $\pi(k)$ and so on.
 
  We begin by defining the following family of polynomials $\{Q_i\}_{i\ge 0}$ by the three term recursion (note that $\mu(0)=0$),
\begin{align*}
Q_0(x)&=1,\\
-xQ_0(x)&=-(\lambda(0)+\mu(0))Q_0(x)+\lambda(0)Q_1(x),\\
-xQ_n(x)&=\mu(n)Q_{n-1}(x)-(\lambda(n)+\mu(n))Q_n(x)+\lambda(n)Q_{n+1}(x).
\end{align*} 
Then, see Theorem 1 of \cite{KarlinMcGregorDifferential}, there exists at least one measure $\mathfrak{w}(dx)$ on $\mathbb{R}_+=\{0 \le x <\infty\}$, such that these polynomials are orthogonal with respect to $\mathfrak{w}(dx)$,  so that,
\begin{align*}
\int_{0}^{\infty}Q_i(x)Q_j(x)\mathfrak{w}(dx)=\frac{1}{\pi(j)}\delta_{ij}.
\end{align*}
For such a \textit{moment problem} to be \textit{determinate}, so that the measure $\mathfrak{w}$ is unique, when $\mu(0)=0$, as in the case of the $\mathcal{D}$-chain, it suffices for the backwards equation to have a unique solution (see \cite{KarlinMcGregorDifferential}, Theorem 14). In particular, any of the conditions in section \ref{sectioncoalescing} that ensure the well-posedness of the backwards equation are enough for determinacy. In such a case, we have that,
\begin{align*}
\mathfrak{w}(dx)=d\mathfrak{w}(x),
\end{align*}
where $\mathfrak{w}(x)$ is a real valued non-decreasing function, being continuous on the left, with $\mathfrak{w}(x)=0$ for $x\le0$ and $\mathfrak{w}(\infty)=1$. We will denote by $\mathfrak{I}=[I^-,I^+]\subset [0,\infty]$ the support, $\textnormal{supp} (\mathfrak{w})$ of the measure $\mathfrak{w}$. These orthogonal polynomials provide the following spectral expansion of the transition density (see \cite{KarlinMcGregorDifferential} for example) that will be useful for us,
\begin{align}\label{spectralexpansiontransition}
p_t(i,j)=\pi(j)\int_{0}^{\infty}e^{-tx}Q_i(x)Q_j(x)d\mathfrak{w}(x).
\end{align}

\begin{rmk}[Explicit examples]\label{ExplicitPolynomialsRemark}
We give some simple examples for $\lambda(\cdot),\mu(\cdot)$ such that the corresponding orthogonal polynomials $Q_i(x)$ and spectral measures $\mathfrak{w}(dx)$ are explicit. The following rates were considered by Cerenzia and Kuan in \cite{CerenziaKuan}, depending on two real parameters $\alpha,\beta>-1$:
\begin{align*}
\lambda(n)&=\frac{n+\alpha+\beta+1}{2n+\alpha+\beta+1}\frac{2(n+\alpha+1)}{2n+\alpha+\beta+2},\\
\mu(n)&=\frac{n+\beta}{2n+\alpha+\beta}\frac{2n}{2n+\alpha+\beta+1}.
\end{align*}
They give rise to the Jacobi polynomials $Q_i^{\alpha,\beta}(x)$ orthogonal in $[0,2]$ with respect to the weight $\mathfrak{w}(dx)=\mathfrak{w}_{\alpha,\beta}(dx)$:
\begin{align*}
\mathfrak{w}_{\alpha,\beta}(dx)=Z(\alpha,\beta)x^{\alpha}(2-x)^{\beta}dx,
\end{align*}
for some normalization constant $Z(\alpha,\beta)$. For $\alpha=\beta=-\frac{1}{2}$ these specialize to the model studied by Borodin and Kuan in \cite{BorodinKuan} related to $O(\infty)$ while for $-\alpha=\beta=\frac{1}{2}$ they specialize to the model studied by Cerenzia \cite{Cerenzia} related to $Sp(\infty)$. The associated orthogonal polynomials in both cases are the Chebyshev (which are specializations of the Jacobi polynomials).

The following examples are taken from Section 3.1 of \cite{Schoutens}. Further explicit examples can be found in the references therein. In all cases $\mathfrak{w}(dx)$ is actually a discrete measure with atoms of mass $w(n)$ at the positive integers $n\in \mathbb{N}$. The associated (2+1)-dimensional growth and decay processes were not studied before.

The so called $M/M/\infty$ queue is a birth and death process with rates and symmetrizing measure given by:
\begin{align*}
\lambda(n)\equiv\lambda ,\ \ \mu(n)=\mu n,  \ \ \pi(n)=\left(\frac{\lambda}{\mu}\right)^n/n!.
\end{align*}
The orthogonal polynomials associated to it are $Q_n(x)=C_n\left(\frac{x}{\mu};\frac{\lambda}{\mu}\right)$ where $C_n(x;a)$ are the Charlier polynomials defined by:
\begin{align*}
0=C_{n-1}(x;a)+(x-a-n)C_{n}(x;a)+aC_{n+1}(x;a),
\end{align*}
with $C_0(x;a)=1,C_{-1}(x;a)$. These are orthogonal with respect to the Poisson distribution:
\begin{align*}
w(n)=\frac{a^ne^{-a}}{n!}; \ n=0,1,\cdots
\end{align*}
More precisely:
\begin{align*}
\sum_{n=0}^{\infty}C_i\left(n;\frac{\lambda}{\mu}\right)C_j\left(n;\frac{\lambda}{\mu}\right)\frac{\left(\lambda/\mu\right)^n}{n!}e^{-\left(\lambda/\mu\right)}=\frac{\delta_{ij}}{\pi(j)}.
\end{align*}
Moreover, the polynomials associated to the birth and death chain with linear rates:
\begin{align*}
\lambda(n)=(n+\beta)\lambda, \ \ \mu(n)=n\mu,
\end{align*}
are the so called Meixner polynomials (see Section 1.3.2 in \cite{Schoutens}). Finally for finite birth and death chains one can also obtain the dual Hahn, Krawtchouk and Racah polynomials, see \cite{Schoutens}.
\end{rmk}

We also define the polynomials $\{\hat{Q}_i\}_{i\ge0}$, associated to the dual chain with generator $\hat{\mathcal{D}}$. So that, in the recursion above the rates $\left(\lambda,\mu\right)$ are replaced by the dual rates $\left(\hat{\lambda},\hat{\mu}\right)$. In particular, the new recursion is given by,
\begin{align*}
-x\hat{Q}_n(x)=\lambda(n)\hat{Q}_n(x)-(\mu(n+1)+\lambda(n))\hat{Q}_n(x)+\mu(n+1)\hat{Q}_{n+1}(x).
\end{align*}
Since now $\hat{\mu}(0)=\lambda(0)>0$ (recall the $\hat{\mathcal{D}}$-chain gets absorbed at $-1$), in order for the moment problem to be determinate, we need to further require (see \cite{KarlinMcGregorClassification} or \cite{KarlinMcGregorDifferential}),
\begin{align*}
\sum_{j=0}^{\infty}\hat{\pi}(j)\left(\sum_{k=0}^{j}\pi(k)\right)^2=\infty.
\end{align*}
A sufficient, easier to check in practise, condition for this is (see unnumbered display after equation (0.11) on page 367 of \cite{KarlinMcGregorClassification}),
\begin{align*}
\sum_{n=1}^{\infty}\frac{1}{\hat{\mu}(n)}=\sum_{n=1}^{\infty}\frac{1}{\lambda(n)}=\infty.
\end{align*}
In such a case (of determinacy), the dual spectral measure, denoted by $d\hat{\mathfrak{w}}(x)$, satisfies the following key relation (see \cite{KarlinMcGregorClassification} section 6),
\begin{align*}
d\hat{\mathfrak{w}}(x)=\frac{xd\mathfrak{w}(x)}{\lambda(0)}.
\end{align*}
So that in particular, the supports are equal $\textnormal{supp}(\hat{\mathfrak{w}})=\textnormal{supp}(\mathfrak{w})=\mathfrak{I}$. From now on, we assume that both moment problems are determinate with unique solutions $\mathfrak{w}(\cdot)$ and $\hat{\mathfrak{w}}(\cdot)$ respectively.

We will denote by $\langle \cdot , \cdot \rangle_{\mathsf{m}}$ the $L^2$ inner product with measure $\mathsf{m}$. By Corollary 2.3.3 of \cite{Akhiezer} we obtain that, since the solution of the moment problem is unique, the polynomials $\{Q_i\}_{i\ge0}$ are dense in $L^2\left(\mathfrak{I},\mathfrak{w}\right)$. Hence, for $f\in L^2\left(\mathfrak{I},w\right)$,
\begin{align}
f=\sum_{k=0}^{\infty}\langle Q_k , f \rangle_{\mathfrak{w}}Q_k\pi(k),
\end{align}
with the series converging in the $L^2\left(\mathfrak{I},\mathfrak{w}\right)$ sense. We will furthermore, mainly be interested in functions $f\in L^2$ for which this expansion actually converges uniformly. By Theorem 6 of \cite{KarlinMcGregorDifferential}, we have that for $f(x)=Q_i(x)e^{-tx}$ the series,
\begin{align}\label{uniformconvergence}
f(x)=\sum_{k=0}^{\infty}\langle Q_k , f \rangle_{\mathfrak{w}}Q_k(x)\pi(k),
\end{align}
converges absolutely, for $t\ge 0$ and all $x \in \mathbb{C}$, the convergence being uniform over every bounded set, $\big \{(t,x): 0 \le t \le T \textnormal{ and } |x|\le R \big\}$. Moreover, we have the following bound,
\begin{align*}
\sum_{k=0}^{\infty}|\langle Q_k , f \rangle_{\mathfrak{w}}||Q_k(x)|\pi(k)\le e^{t|x|}Q_i\left(-|x|\right).
\end{align*}
It can be easily seen that, in a little bit more generality, the series (\ref{uniformconvergence}) above converges uniformly on compact sets of $(t,x)$ with $0\le t \le T$ and $|x|\le R$, for $f(x)=p_m(x)e^{-tx}$ where $p_m(x)$ is any polynomial of degree $m$. In particular, if $p_m(x)=\sum_{i=0}^{m}c_i^mQ_i(x)$ the previous bound becomes,
\begin{align*}
\sum_{k=0}^{\infty}|\langle Q_k , f \rangle_{\mathfrak{w}}||Q_k(x)|\pi(k)\le e^{t|x|}\sum_{i=0}^{m}|c_i^m|Q_i\left(-|x|\right).
\end{align*}
\begin{rmk}
Under certain regularity and growth assumptions on $w$ at $I^-$  and $\infty$, one can prove that the series in display $(\ref{uniformconvergence})$  converges uniformly on compact intervals of $\mathfrak{I}$ for \textit{bounded variation} functions $f$, such that their derivative satisfies a certain integrability condition (see in particular Theorem 4.17.2 of \cite{Freud} and the references therein).
\end{rmk} 
We need one more property of functions of the form $f(x)=p_m(x)e^{-tx}$, namely that,
\begin{align*}
\langle Q_n , f \rangle_{\mathfrak{w}} \to 0 \textnormal{ as } n \to \infty.
\end{align*}
This can be seen as follows, by writing $p_m(x)=\sum_{i=0}^{m}\tilde{c}_i^mQ_i(x)\pi_i$ we have by (\ref{spectralexpansiontransition}),
\begin{align*}
\langle Q_n , f \rangle_{\mathfrak{w}}=\sum_{i=0}^{m}\tilde{c}_i^mp_t(n,i) \to 0 \textnormal{ as } n \to \infty,
\end{align*}
since, for any $i \in \mathbb{N}$ and $t\ge 0$, $p_t(n,i) \to 0$ as $n \to \infty$.
Finally, we have the following relations between $\{Q_i\}_{i\ge 0}$ and their duals $\{\hat{Q}_i\}_{i\ge 0}$ (see \cite{Vandoorn} or section 6 of \cite{KarlinMcGregorClassification}),
\begin{align}
\pi_{n+1}Q_{n+1}(x)&=\hat{Q}_{n+1}(x)-\hat{Q}_n(x),\\
-x\hat{Q}_n(x)&=\lambda_n\pi_n(Q_{n+1}(x)-Q_n(x)).
\end{align}

We are now in a position to prove the following result, which is modelled on and is essentially a generalization of Proposition 3.1 of \cite{CerenziaKuan}. It is what makes all subsequent calculations work.
\begin{prop}\label{polynomialrelations}
\begin{enumerate}
\item $\sum_{i=0}^{n}\pi_iQ_i(x)=\hat{Q}_n(x)$.
\item $\sum_{k=0}^{n-1}\hat{\pi}_k\hat{Q}_k(x)=\frac{\lambda_0}{x}(1-Q_n(x))$.
\item $\langle \hat{Q}_n , f(0)-f \rangle_{\mathfrak{w}}=\sum_{k=n+1}^{\infty}\langle \pi_kQ_k , f \rangle_{\mathfrak{w}}$ \ , \ for $f$ in $L^{2}\left(\mathfrak{I},\mathfrak{w}\right)$ so that series (\ref{uniformconvergence}) converges pointwise at 0.
\item $\sum_{k=n}^{\infty}\langle \hat{\pi}_k\hat{Q}_k , f \rangle_{\hat{\mathfrak{w}}}=\langle \hat{Q}_n , f \rangle_{\mathfrak{w}}$ \ , \ for $f$ in $L^{2}\left(\mathfrak{I},\mathfrak{w}\right)$ so that $\langle Q_n , f \rangle_{\mathfrak{w}}\to 0$.
\end{enumerate}
\end{prop}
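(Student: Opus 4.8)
The plan is to reduce all four identities to telescoping sums, after first isolating two elementary auxiliary facts about the chain. The first auxiliary fact is the normalization identity $\pi_k\hat{\pi}_k\lambda_k=\lambda_0$ for every $k\ge 0$; I would obtain it from the detailed-balance ratios $\pi_k/\pi_{k-1}=\lambda_{k-1}/\mu_k$ and $\hat{\pi}_k/\hat{\pi}_{k-1}=\hat{\lambda}(k-1)/\hat{\mu}(k)=\mu_k/\lambda_k$, which telescope from $\pi_0\hat{\pi}_0=1$ to $\pi_k\hat{\pi}_k=\lambda_0/\lambda_k$ (equivalently, by comparing $\int\hat Q_k^2\,d\hat{\mathfrak w}=\hat\pi_k^{-1}$ with its evaluation via $d\hat{\mathfrak w}=x\,d\mathfrak w/\lambda_0$, the relation $-x\hat Q_k=\lambda_k\pi_k(Q_{k+1}-Q_k)$, and part (1)). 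The second auxiliary fact is that $Q_k(0)=1$ for all $k\ge 0$, which follows by induction on $k$ from the three term recurrence evaluated at $x=0$, using $\mu(0)=0$ and $Q_0\equiv 1$.

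With these in hand, part (1) is a direct telescope: the base case is $\pi_0Q_0=\hat Q_0=1$, and summing the relation $\pi_kQ_k=\hat Q_k-\hat Q_{k-1}$ for $1\le k\le n$ gives $\sum_{i=0}^n\pi_iQ_i=\hat Q_n$. For part (2) I would combine the relation $-x\hat Q_k(x)=\lambda_k\pi_k\bigl(Q_{k+1}(x)-Q_k(x)\bigr)$ with the normalization identity to rewrite it as $\hat\pi_k\hat Q_k(x)=\tfrac{\lambda_0}{x}\bigl(Q_k(x)-Q_{k+1}(x)\bigr)$; summing over $0\le k\le n-1$ then telescopes to $\tfrac{\lambda_0}{x}\bigl(Q_0(x)-Q_n(x)\bigr)=\tfrac{\lambda_0}{x}\bigl(1-Q_n(x)\bigr)$, using $Q_0\equiv 1$.

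For part (3), part (1) shows that $\langle\hat Q_n,g\rangle_{\mathfrak w}=\sum_{i=0}^n\pi_i\langle Q_i,g\rangle_{\mathfrak w}$ for any $g\in L^2(\mathfrak I,\mathfrak w)$; applied to $g\equiv 1$ with $\langle Q_i,Q_0\rangle_{\mathfrak w}=\delta_{i0}/\pi_0$ this gives $\langle\hat Q_n,1\rangle_{\mathfrak w}=1$, hence $\langle\hat Q_n,f(0)\rangle_{\mathfrak w}=f(0)$. By hypothesis the expansion $f=\sum_{k\ge 0}\langle Q_k,f\rangle_{\mathfrak w}Q_k\,\pi_k$ converges pointwise at $0$, and since $Q_k(0)=1$ this reads $f(0)=\sum_{k\ge 0}\pi_k\langle Q_k,f\rangle_{\mathfrak w}$; subtracting $\langle\hat Q_n,f\rangle_{\mathfrak w}=\sum_{i=0}^n\pi_i\langle Q_i,f\rangle_{\mathfrak w}$ yields the claim. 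For part (4), I would convert the $\hat{\mathfrak w}$-integral to a $\mathfrak w$-integral by $d\hat{\mathfrak w}=x\,d\mathfrak w/\lambda_0$, then use the same relation $-x\hat Q_k=\lambda_k\pi_k(Q_{k+1}-Q_k)$ together with the normalization identity to see that each summand collapses, $\langle\hat\pi_k\hat Q_k,f\rangle_{\hat{\mathfrak w}}=\langle Q_k-Q_{k+1},f\rangle_{\mathfrak w}$; the partial sums then telescope and the boundary term $\langle Q_{N+1},f\rangle_{\mathfrak w}$ vanishes as $N\to\infty$ by the hypothesis $\langle Q_n,f\rangle_{\mathfrak w}\to 0$.

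I do not expect a serious obstacle: the content is entirely in combining the two three term recurrences with the measure relation $d\hat{\mathfrak w}=x\,d\mathfrak w/\lambda_0$. The only points needing a word of care are that all the inner products are finite — which holds since the $Q_i$ and $\hat Q_i$ are polynomials and hence lie in $L^2(\mathfrak I,\mathfrak w)$ (and in $L^2(\mathfrak I,\hat{\mathfrak w})$ when $\mathfrak I$ is bounded, since $d\hat{\mathfrak w}\le\tfrac{I^+}{\lambda_0}\,d\mathfrak w$) — and that the rearrangements are legitimate, which is immediate in (4), where each partial sum is finite and telescopes before passing to the limit, and in (3) is guaranteed by the convergence hypothesis on $f$.
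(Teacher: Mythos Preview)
Your proof is correct and takes essentially the same approach as the paper: all four parts are reduced to telescoping via the relations $\pi_kQ_k=\hat Q_k-\hat Q_{k-1}$ and $\hat\pi_k\hat Q_k=\tfrac{\lambda_0}{-x}(Q_{k+1}-Q_k)$, the fact $Q_k(0)=1$, and the measure relation $d\hat{\mathfrak w}=x\,d\mathfrak w/\lambda_0$; the only cosmetic difference is that you isolate the normalization identity $\pi_k\hat\pi_k\lambda_k=\lambda_0$ explicitly, whereas the paper uses it implicitly. (Note that both your argument and the paper's proof actually yield $\langle Q_n,f\rangle_{\mathfrak w}$ on the right-hand side of part (4), so the $\hat Q_n$ appearing in the printed statement is a typo.)
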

\begin{proof}
To prove (1), note that by telescoping $\sum_{i=1}^{n}\pi_iQ_i(x)=\hat{Q}_n(x)-\hat{Q}_0(x)=\hat{Q}_n(x)-1$ and that $\pi_0Q_0(x)=1$. To prove (2), first note,
\begin{align*}
\hat{\pi}(n)\hat{Q}_n(x)=\lambda(0)\left(\frac{Q_{n+1}(x)-Q_{n}(x)}{-x}\right)
\end{align*}
and hence by summing,
\begin{align*}
\sum_{k=0}^{n-1}\hat{\pi}(k)\hat{Q}_k(x)=\lambda(0)\left(\frac{Q_{n}(x)-1}{-x}\right).
\end{align*}
To prove (3), observe that $\langle \hat{Q}_n , 1 \rangle_{\mathfrak{w}}=\langle \sum_{i=0}^{n}\pi_iQ_i , 1 \rangle_{\mathfrak{w}}=1$. Also note that $Q_{n+1}(0)=Q_n(0)=\cdots=Q_0(0)=1$ and thus from (1) we also get $\hat{Q}_n(0)=\sum_{k=0}^{n}\pi_k$. Moreover, by convergence of the orthogonal decomposition at $0$ we have,
\begin{align*}
\langle \hat{Q}_n , f(0) \rangle_{\mathfrak{w}}&=f(0)=\sum_{k=0}^{\infty}\langle Q_k , f \rangle_{\mathfrak{w}}Q_k(0)\pi(k)=\sum_{k=0}^{\infty}\langle \pi_kQ_k , f \rangle_{\mathfrak{w}},\\
\langle \hat{Q}_n , f \rangle_{\mathfrak{w}}&=\sum_{k=0}^{n}\langle \pi_k Q_k , f \rangle_{\mathfrak{w}}.
\end{align*}
Subtracting the two we get (3). In order to prove (4), we have,
\begin{align*}
\sum_{k=0}^{n-1}\langle \hat{\pi}_k\hat{Q}_k , f \rangle_{\hat{\mathfrak{w}}}=\langle \lambda(0)\left(\frac{Q_{n}(x)-1}{-x}\right) , f \rangle_{\hat{\mathfrak{w}}}=\langle 1- Q_n , f \rangle_{\mathfrak{w}}\overset{n \to \infty}{\longrightarrow}\langle 1 , f \rangle_{\mathfrak{w}},
\end{align*}
where the limit holds by our assumption that $\langle Q_n , f \rangle_{\mathfrak{w}}\to 0$. Hence,
\begin{align*}
\sum_{k=n}^{\infty}\langle \hat{\pi}_k\hat{Q}_k , f \rangle_{\hat{\mathfrak{w}}}=\langle Q_n , f \rangle_{\mathfrak{w}}.
\end{align*}
\end{proof}

\section{Branching rules for multivariate Karlin-McGregor polynomials}\label{multivariatepolynomialssection}
For $\nu \in W^{n}$, we define the $n$-variate Karlin-McGregor polynomials by, with $x=\left(x_1, \cdots , x_n\right)$ in $\mathbb{R}^n$,
\begin{align}
\mathfrak{Q}_{\nu}(x)&=\frac{\det\left(Q_{\nu_i}(x_j)\right)_{i,j=1}^n}{\det\left(x_j^{i-1}\right)_{i,j=1}^n}=\frac{\det\left(Q_{\nu_i}(x_j)\right)_{i,j=1}^n}{\Delta_n(x)},\\
\hat{\mathfrak{Q}}_{\nu}(x)&=\frac{\det\left(\hat{Q}_{\nu_i}(x_j)\right)_{i,j=1}^n}{\det\left(x_j^{i-1}\right)_{i,j=1}^n}=\frac{\det\left(\hat{Q}_{\nu_i}(x_j)\right)_{i,j=1}^n}{\Delta_n(x)}.
\end{align}
The polynomial systems, $\det\left(Q_{\nu_i}(x_j)\right)_{i,j=1}^n$ and $\det\left(\hat{Q}_{\nu_i}(x_j)\right)_{i,j=1}^n$ were first introduced by Karlin and McGregor, in their seminal study of intersection probabilities of birth and death chains in \cite{KarlinMcGregorCoincidence}. Some further properties were also presented in their subsequent brief note \cite{KarlinMcGregorDeterminants}. Observe that in particular, these multivariate polynomials are orthogonal in the continuous chamber $0\le x_1\le x_2 \cdots\le x_n$ (denoted $x \in W^{n}([0,\infty)$), with respect to the weights $\prod_{i=1}^{n}d\mathfrak{w}(x_i)\Delta_n^2(x)$ and $\prod_{i=1}^{n}d\hat{\mathfrak{w}}(x_i)\Delta_n^2(x)$ respectively.

Most importantly, we have the following \textit{two-step} branching rules. The calculations below are in fact more or less implicitly done on page 1116 of \cite{KarlinMcGregorDeterminants}.

\begin{prop}\label{propositionbranching}
\begin{align}
\left.\frac{\det\left(Q_{\nu_i}(x_j)\right)_{i,j=1}^{n+1}}{\det\left(x_j^{i-1}\right)_{i,j=1}^{n+1}}\right \vert_{x_1=0}&=\frac{(-1)^n}{\lambda_0^n}\sum_{k\in W^{n,n+1}(\nu)}^{}\prod_{i=1}^{n}\hat{\pi}_{k_i}\frac{\det\left(\hat{Q}_{k_i}(x_{j+1})\right)_{i,j=1}^n}{\det\left(x_{j+1}^{i-1}\right)_{i,j=1}^n}\label{branching1},\\
\frac{\det\left(\hat{Q}_{\nu_i}(x_j)\right)_{i,j=1}^n}{\det\left(x_j^{i-1}\right)_{i,j=1}^n}&=\sum_{k\in W^{n,n}(\nu)}^{}\prod_{i=1}^{n}\pi_{k_i}\frac{\det\left(Q_{k_i}(x_j)\right)_{i,j=1}^n}{\det\left(x_j^{i-1}\right)_{i,j=1}^n}\label{branching2}.
\end{align}
\end{prop}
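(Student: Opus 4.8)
Since every quantity appearing in Proposition \ref{propositionbranching} is a finite sum of polynomials, there are no analytic subtleties: both identities are algebraic, and I would prove them by direct manipulation of determinants, using only Proposition \ref{polynomialrelations} and elementary linear algebra. In each case the denominators on the two sides are Vandermonde determinants --- for (\ref{branching1}) one has $\det(x_j^{i-1})_{i,j=1}^{n+1}\big\vert_{x_1=0}=\big(\prod_{j=2}^{n+1}x_j\big)\Delta_n(x_2,\cdots,x_{n+1})$, while for (\ref{branching2}) the denominators are literally the same $\Delta_n(x)$ --- so it suffices to establish the corresponding identities for the numerator determinants.

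For (\ref{branching2}), I would use Proposition \ref{polynomialrelations}(1) to write $\hat{Q}_{\nu_i}(x_j)=\sum_{m\ge 0}\pi_m\mathbf{1}(m\le\nu_i)Q_m(x_j)$, so that the matrix $\big(\hat{Q}_{\nu_i}(x_j)\big)_{i,j=1}^n$ factors as a product $CD$ with $C_{im}=\pi_m\mathbf{1}(m\le\nu_i)$ (a finite matrix, since $m\le\nu_n$) and $D_{mj}=Q_m(x_j)$. The Cauchy-Binet formula then gives $\det\big(\hat{Q}_{\nu_i}(x_j)\big)=\sum_{0\le k_1<\cdots<k_n}\big(\prod_{i}\pi_{k_i}\big)\,\det\big(\mathbf{1}(k_j\le\nu_i)\big)_{i,j=1}^n\,\det\big(Q_{k_i}(x_j)\big)_{i,j=1}^n$, and it remains to identify the coefficient: for strictly increasing integer tuples $k$ and $\nu$, the zero-one determinant $\det\big(\mathbf{1}(k_j\le\nu_i)\big)_{i,j=1}^n$ equals $1$ when $k_i\le\nu_i<k_{i+1}$ for all $i$ (with the convention $k_{n+1}=+\infty$), that is, when $k\in W^{n,n}(\nu)$, and $0$ otherwise. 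This last combinatorial lemma is straightforward: subtracting each row from the next brings the staircase matrix into echelon form, which is nonsingular precisely when none of the resulting rows vanishes.

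For (\ref{branching1}), I would first set $x_1=0$. Since $Q_m(0)=1$ for every $m$ (recorded in the proof of Proposition \ref{polynomialrelations}), the first column of $\big(Q_{\nu_i}(x_j)\big)_{i,j=1}^{n+1}$ becomes the column of all ones; subtracting row $i+1$ from row $i$ for $i=1,\cdots,n$ and expanding along the first column yields $\det\big(Q_{\nu_i}(x_j)\big)_{i,j=1}^{n+1}\big\vert_{x_1=0}=(-1)^n\det\big(Q_{\nu_i}(x_{j+1})-Q_{\nu_{i+1}}(x_{j+1})\big)_{i,j=1}^n$. Next, using $\hat{\pi}_m\hat{Q}_m(x)=\lambda_0\big(Q_{m+1}(x)-Q_m(x)\big)/(-x)$ (from the proof of Proposition \ref{polynomialrelations}(2)) and telescoping, $Q_{\nu_i}(x)-Q_{\nu_{i+1}}(x)=\frac{x}{\lambda_0}\sum_{m=\nu_i}^{\nu_{i+1}-1}\hat{\pi}_m\hat{Q}_m(x)$; substituting this, factoring $x_{j+1}/\lambda_0$ out of each column $j$ (the resulting factor $\prod_j x_{j+1}$ cancels the one coming from $\Delta_{n+1}(0,x_2,\cdots,x_{n+1})$, leaving the constant $(-1)^n/\lambda_0^n$), and applying Cauchy-Binet once more to the matrix $\big(\sum_m\mathbf{1}(\nu_i\le m<\nu_{i+1})\hat{\pi}_m\hat{Q}_m(x_{j+1})\big)_{i,j=1}^n$ produces the asserted sum over $k$, with coefficient the band zero-one determinant $\det\big(\mathbf{1}(\nu_i\le k_j<\nu_{i+1})\big)_{i,j=1}^n$; this matrix has at most one $1$ in each column, so it is nonsingular iff it is the identity, i.e.\ iff $\nu_i\le k_i<\nu_{i+1}$ for all $i$, i.e.\ iff $k\in W^{n,n+1}(\nu)$, in which case it equals $1$.

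The only content beyond Proposition \ref{polynomialrelations} and routine bookkeeping is the pair of elementary zero-one determinant lemmas, and I expect the main thing requiring care to be purely notational: matching the placement of the $\le$ and $<$ signs in these lemmas against the precise definitions of $W^{n,n}(\nu)$ and $W^{n,n+1}(\nu)$, and tracking the sign $(-1)^n$ together with the index shift $x_j\mapsto x_{j+1}$ in (\ref{branching1}).
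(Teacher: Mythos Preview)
Your proposal is correct and follows essentially the same approach as the paper's proof: both rely on $Q_k(0)=1$, Proposition~\ref{polynomialrelations}(1), and the identity $\hat{\pi}_m\hat{Q}_m(x)=\lambda_0(Q_{m+1}(x)-Q_m(x))/(-x)$, and both reduce the claims to row-wise expansions of the numerator determinants. The only cosmetic difference is that the paper first performs the row subtractions (e.g.\ replacing $\sum_{k_i=0}^{\nu_i}$ by $\sum_{k_i=\nu_{i-1}+1}^{\nu_i}$ in (\ref{branching2})) so that the summation ranges become disjoint and multilinearity in rows directly yields the interlacing sum, whereas you apply Cauchy--Binet to the unreduced matrix and then recover the interlacing constraint from your zero--one determinant lemmas; these are two equivalent ways of organizing the same computation.
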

\begin{proof}
We prove (\ref*{branching1}) first. In the first equality below we make use of the fact that $Q_k(0)=1$ and in the last one we make use of the relation  $-x\hat{Q}(x)=\lambda_n\pi_n(Q_{n+1}(x)-Q_n(x))$.
\begin{align*}
\left.\frac{\det\left(Q_{\nu_i}(x_j)\right)_{i,j=1}^{n+1}}{\det\left(x_j^{i-1}\right)_{i,j=1}^{n+1}}\right \vert_{x_1=0}&=\frac{\det\left(Q_{\nu_{i+1}}(x_{j+1})-Q_{\nu_{i}}(x_{j+1})\right)_{i,j=1}^{n}}{\det\left(x_{j+1}^{i-1}\right)_{i,j=1}^{n}\prod_{j=1}^{n}x_{j+1}}\\
&=\frac{\det\left(\frac{Q_{\nu_{i+1}}(x_{j+1})-Q_{\nu_{i}}(x_{j+1})}{x_{j+1}}\right)_{i,j=1}^{n}}{\det\left(x_{j+1}^{i-1}\right)_{i,j=1}^{n}}\\
&=\sum_{k\in W^{n,n+1}(\nu)}^{}\frac{\det\left(\frac{Q_{k_{i}+1}(x_{j+1})-Q_{k_{i}}(x_{j+1})}{x_{j+1}}\right)_{i,j=1}^{n}}{\det\left(x_{j+1}^{i-1}\right)_{i,j=1}^{n}}\\
&=\sum_{k\in W^{n,n+1}(\nu)}^{}\frac{\det\left(-\frac{\hat{\pi}_{k_i}}{\lambda_0}\hat{Q}_{k_i}(x_{j+1})\right)_{i,j=1}^n}{\det\left(x_{j+1}^{i-1}\right)_{i,j=1}^n}.
\end{align*}
In order to prove (\ref*{branching2}) we make use of part 1 of Proposition \ref{polynomialrelations} so that (where we set $\nu_0+1=0$),
\begin{align*}
\frac{\det\left(\hat{Q}_{\nu_i}(x_j)\right)_{i,j=1}^n}{\det\left(x_j^{i-1}\right)_{i,j=1}^n}=\frac{\det\left(\sum_{k_i=0}^{\nu_i}\pi_{k_i}Q_{k_i}(x_j)\right)_{i,j=1}^n}{\det\left(x_j^{i-1}\right)_{i,j=1}^n}=\frac{\det\left(\sum_{k_i=\nu_{i-1}+1}^{\nu_i}\pi_{k_i}Q_{k_i}(x_j)\right)_{i,j=1}^n}{\det\left(x_j^{i-1}\right)_{i,j=1}^n}.
\end{align*}
Note that, we can finally pull out the sum $\sum_{k\in W^{n,n}(\nu)}^{}$ by multilinearity.
\end{proof}

Consider the functions,
\begin{align}
h_{n,n+1}(\nu,x)&=(-1)^{\binom{n}{2}}\lambda_0^{\binom{n}{2}}\mathfrak{Q}_{\nu}(x), \textnormal{ for } \nu \in W^{n+1}(\mathbb{N}) \textnormal{ and } x \in W^{n+1}([0,\infty)), \label{normalizedKMpoly1} \\
h_{n,n}(\nu,x)&=(-1)^{\binom{n-1}{2}}\lambda_0^{\binom{n-1}{2}}\hat{\mathfrak{Q}}_{\nu}(x), \textnormal{ for } \nu \in W^{n}(\mathbb{N}) \textnormal{ and } x \in W^{n}([0,\infty)) \label{normalizedKMpoly2}
\end{align}
and define, for $\nu$ in $W^{n+1}$ and $W^n$ respectively,
\begin{align}
h_{n,n+1}(\nu)&=h_{n,n+1}(\nu,0),\\
h_{n,n}(\nu)&=h_{n,n}(\nu,0).
\end{align}

Now, from the branching rules and our original intertwining relations from section \ref{sectionintertwinins} we prove the following:
\begin{prop}
$h_{n,n+1}$ and $h_{n,n}$ are positive harmonic functions for $n+1$ independent copies of $\mathcal{D}$-chains and $n$ independent copies of $\hat{\mathcal{D}}$-chains in $W^{n+1}$ and $W^n$ respectively.
\end{prop}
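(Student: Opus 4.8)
The plan is to combine three ingredients: the spectral expansion of one-dimensional transition densities in terms of the orthogonal polynomials $Q_i,\hat{Q}_i$, the branching rules of Proposition \ref{propositionbranching}, and the intertwining relations (\ref{KMintertwining1})--(\ref{KMintertwining2}) together with the eigenfunction-building mechanism that produced $h_{n,n+1}$ and $h_{n,n}$ in (\ref{alternatingharmonic1})--(\ref{alternatingharmonic2}). First I would observe that, by the branching rule (\ref{branching1}) evaluated at the vector $x=\vec 0$ (which is the point where $h_{n,n+1}(\nu)$ and $h_{n,n}(\nu)$ are defined, after the normalization constants in (\ref{normalizedKMpoly1})--(\ref{normalizedKMpoly2}) are absorbed), the function $h_{n,n+1}(\cdot)$ expressed via $\mathfrak{Q}_{\nu}(\vec 0)$ is precisely $(\Lambda_{n,n+1}\hat{h}_{n})(\cdot)$ with $\hat h_n(\cdot)$ the analogous $\hat{\mathfrak{Q}}$-function at level $n$; and by (\ref{branching2}), $h_{n,n}(\cdot)$ equals $(\Lambda_{n,n}h_{n})(\cdot)$ with $h_n$ the $\mathfrak{Q}$-function at level $n$. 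Unwinding the recursion down to level $1$ (where $h_{1,1}$ is a scalar multiple of $\hat{\mathfrak{Q}}$ of a single variable, manifestly positive since $\hat Q_0=1$, etc.) shows that $h_{n,n+1}=\Lambda_{n,n+1}\Lambda_{n,n}\cdots\Lambda_{1,1}\mathbf 1$ and $h_{n,n}=\Lambda_{n,n}\Lambda_{n-1,n}\cdots\Lambda_{1,1}\mathbf 1$ up to the explicit positive constants, i.e. they coincide (up to normalization) with the functions in (\ref{alternatingharmonic1})--(\ref{alternatingharmonic2}).

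Once that identification is in place, harmonicity is essentially free: I would invoke the discussion immediately following (\ref{KMintertwining1})--(\ref{KMintertwining2}), where it is shown that $h_{n,n+1}=\Lambda_{n,n+1}\Lambda_{n,n}\cdots\Lambda_{1,1}\mathbf 1$ is a positive harmonic function for $P_t^{n+1}$ and $h_{n,n}=\Lambda_{n,n}\Lambda_{n-1,n}\cdots\Lambda_{1,1}\mathbf 1$ is positive harmonic for $\hat P_t^{n}$. Concretely, harmonicity for $P_t^{n+1}$ follows by induction: $\mathbf 1$ is harmonic (indeed invariant) for the one-particle semigroups and hence, via the Karlin--McGregor determinant and the fact that $W^{1,1}$-level $\Lambda_{1,1}\mathbf 1$ is a genuine eigenfunction, each application of a $\Lambda$ carries an eigenfunction of the lower Karlin--McGregor semigroup to an eigenfunction of the next one by exactly the computation $P_t^{n+1}\Lambda_{n,n+1}\hat h_n=\Lambda_{n,n+1}\hat P_t^{n}\hat h_n=e^{\lambda_n t}\Lambda_{n,n+1}\hat h_n$, and symmetrically with $\hat P_t^{n}\Lambda_{n,n}g_n=\Lambda_{n,n}P_t^{n}g_n$. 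Strict positivity is inherited along the way because each $\Lambda_{n,\star}$ has strictly positive kernel on interlacing pairs and the base case $h_{1,1}>0$.

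The remaining point to nail down carefully is that the polynomial functions $\mathfrak{Q}_{\nu}(\vec 0)$ and $\hat{\mathfrak{Q}}_{\nu}(\vec 0)$ are really \emph{positive} and \emph{finite} (not merely formal). Finiteness is clear since they are finite determinants of polynomial values. Positivity is where I expect the main (minor) obstacle: one must check the sign of $(-1)^{\binom{n}{2}}\lambda_0^{\binom n2}\mathfrak{Q}_{\nu}(\vec 0)$. I would handle this either by the repeated branching argument above — each step of (\ref{branching1}) introduces exactly a factor $(-1)^n\lambda_0^{-n}$ and a sum of strictly positive terms $\prod\hat\pi_{k_i}\,\hat{\mathfrak{Q}}_k$, so the accumulated sign is precisely cancelled by the normalizing $(-1)^{\binom n2}\lambda_0^{\binom n2}$ — or, alternatively, by the Wronskian/confluent-Vandermonde representation $h_{n,n+1}(\nu)=(-1)^{\binom n2}\lambda_0^{\binom n2}\mathsf W(Q_{\nu_1},\dots,Q_{\nu_n})(0)$ and a monotonicity argument for the $Q_i$ near $0$. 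Either way the positivity is a bookkeeping check of signs rather than a substantive difficulty, and it completes the proof that $h_{n,n+1}$ and $h_{n,n}$ are strictly positive harmonic functions for the stated Karlin--McGregor semigroups.
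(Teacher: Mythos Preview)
Your proposal is correct and follows essentially the same route as the paper: identify $h_{n,n+1}$ and $h_{n,n}$ (defined via the Karlin--McGregor polynomials at $\vec 0$) with the iterated compositions $\Lambda_{n,n+1}\Lambda_{n,n}\cdots\Lambda_{1,1}\mathbf{1}$ and $\Lambda_{n,n}\Lambda_{n-1,n}\cdots\Lambda_{1,1}\mathbf{1}$ via the branching rules, and then invoke the harmonicity already established around (\ref{alternatingharmonic1})--(\ref{alternatingharmonic2}). The spectral expansion you list as an ingredient is not actually used, and your extended discussion of positivity (Wronskian alternative, sign bookkeeping) is more than is needed---the paper simply reads positivity off from the iterated $\Lambda$ expression applied to $\mathbf{1}$---but none of this affects correctness.
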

\begin{proof}
Observe that, from the branching relations we get,
\begin{align*}
h_{n,n}(\nu)&=\left(\Lambda_{n,n}h_{n-1,n}\right)(\nu), \textnormal{ for } \nu \in W^n(\mathbb{N}),\\
h_{n,n+1}(\nu)&=\left(\Lambda_{n,n+1}h_{n,n}\right)(\nu), \textnormal{ for } \nu \in W^{n+1}(\mathbb{N})
\end{align*}
and hence,
\begin{align*}
h_{n,n}(\nu)&=\left(\Lambda_{n,n}\Lambda_{n-1,n}\cdots\Lambda_{1,1}\textbf{1}\right)(\nu), \textnormal{ for } \nu \in W^n(\mathbb{N}),\\
h_{n,n+1}(\nu)&=\left(\Lambda_{n,n+1}\Lambda_{n,n}\cdots\Lambda_{1,1}\textbf{1}\right)(\nu), \textnormal{ for } \nu \in W^{n+1}(\mathbb{N}).
\end{align*}
From relations (\ref{alternatingharmonic1}) and (\ref{alternatingharmonic2}) and the discussion around them, the conclusion is now evident.
\end{proof}

\begin{rmk}
In fact, some more general eigenfunction relations exist. For $x_1<x_2<\cdots<x_n \le 0$ we have,
\begin{align*}
(-1)^{\frac{n(n-1)}{2}}\det\left(Q_{\nu_i}(x_j)\right)_{i,j=1}^n>0
\end{align*}
and it can be readily checked that this is an eigenfunction of $n$ independent $\mathcal{D}$-chains in $W^n$ (see for example displays (19) and (30) respectively in \cite{KarlinMcGregorCoincidence}). These eigenfunctions can also be used to construct consistent dynamics and we will pursue this elsewhere.
\end{rmk}

Before continuing, we briefly recall some well known determinantal conditions for interlacing, namely representations of $\textbf{1}\left(k\in W^{n,n+1}(\nu)\right)$ and $\textbf{1}\left(k\in W^{n,n}(\nu)\right)$ in terms of determinants. First of all, we have the following identity for $\textbf{1}\left(y\in W^{n,n}(x)\right)$,
\begin{align*}
\textbf{1}\left(y_1 \le x_1 < y_2 \le \cdots \le x_n\right)=\det\left(\textbf{1}(y_i\le x_j)\right)_{i,j=1}^n.
\end{align*}
From this, by swapping $x$'s and $y$'s and putting $y_{n+1}=\infty$, or by declaring $y_{n+1}=\textnormal{virt}$, a virtual variable and agreeing that $\textbf{1}(x\le \textnormal{virt})=1$, we obtain the analogous identity for $\textbf{1}\left(y\in W^{n,n+1}(x)\right)$,
\begin{align*}
\textbf{1}\left(x_1 \le y_1 < x_2 \le \cdots \le y_n < x_{n+1}\right)=\det\left(\textbf{1}(x_i\le y_j)\right)_{i,j=1}^{n+1}.
\end{align*}
This can also be written as, after subtracting the last column from each of the rest,
\begin{align*}
\textbf{1}\left(x_1 \le y_1 < x_2 \le \cdots < x_{n+1}\right)=\det\left(f_{i,j}\right)_{i,j=1}^{n+1},
\end{align*}
where,
\begin{align*}
f_{i,j}=\begin{cases}
-\textbf{1}(x_i> y_j) &\textnormal{ if } j \le n\\
1 &\textnormal{ if } j=n+1
\end{cases}.
\end{align*} 
Thus, if we define,
\begin{align*}
\phi(i,j)&=\pi_i \textbf{1}(i\le j),\\
\hat{\phi}(i,j)&=-\hat{\pi}_i \textbf{1}(i < j),\\
\hat{\phi}(\textnormal{virt},j)&=1,
\end{align*}
then from Proposition \ref{propositionbranching}, it is easy to see that:
\begin{cor}
The kernels $\Lambda^{h_{n,n+1}}_{n,n+1}(\nu,\cdot)$ and $\Lambda^{h_{n,n}}_{n,n}(\nu,\cdot)$, for any $\nu \in W^{n+1}$ and $\nu \in W^n$ respectively, that are defined by,
\begin{align}
\Lambda^{h_{n,n}}_{n,n+1}(\nu,k)&=\textbf{1}\left(k\in W^{n,n+1}(\nu)\right)\frac{\prod^n_{i=1}\hat{\pi}_{k_i}h_{n,n}(k)}{h_{n,n+1}(\nu)}=\frac{\det\left(\hat{\phi}(k_i,\nu_j)\right)^{n+1}_{i,j=1}h_{n,n}(k)}{h_{n,n+1}(\nu)},\\
\Lambda^{h_{n-1,n}}_{n,n}(\nu,k)&=\textbf{1}\left(k\in W^{n,n}(\nu)\right)\frac{\prod^n_{i=1}\pi_{k_i}h_{n-1,n}(k)}{h_{n,n}(\nu)}=\frac{\det\left(\phi(k_i,\nu_j)\right)^{n}_{i,j=1}h_{n-1,n}(k)}{h_{n,n}(\nu)},
\end{align}
are Markov.
\end{cor}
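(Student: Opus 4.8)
The plan is to check, for each of the two families $\Lambda^{h_{n,n}}_{n,n+1}(\nu,\cdot)$ and $\Lambda^{h_{n-1,n}}_{n,n}(\nu,\cdot)$, the two defining properties of a Markov kernel: that every weight is nonnegative, and that they sum to $1$ over $k$. First I would observe that the two expressions displayed in each line are the same object written twice, so nothing needs to be proved there: pulling the constants $-\hat\pi_{k_1},\dots,-\hat\pi_{k_n}$ out of the first $n$ rows of $\det\left(\hat\phi(k_i,\nu_j)\right)_{i,j=1}^{n+1}$ (the last row being all $1$'s and unchanged) and matching the resulting $0$–$1$ determinant with the interlacing indicator via the identities recalled just above, including the virtual–variable convention, gives $\det\left(\hat\phi(k_i,\nu_j)\right)_{i,j=1}^{n+1}=\prod_{i=1}^{n}\hat\pi_{k_i}\,\textbf{1}\left(k\in W^{n,n+1}(\nu)\right)$ — the two signs $(-1)^n$ produced by the row factoring and by the transpose/complement manipulation cancelling — and similarly $\det\left(\phi(k_i,\nu_j)\right)_{i,j=1}^{n}=\prod_{i=1}^{n}\pi_{k_i}\,\textbf{1}\left(k\in W^{n,n}(\nu)\right)$. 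Substituting these identifies the right-hand sides with the left-hand sides, so it suffices to analyse $\textbf{1}\left(k\in W^{n,n+1}(\nu)\right)\prod_i\hat\pi_{k_i}h_{n,n}(k)/h_{n,n+1}(\nu)$ and $\textbf{1}\left(k\in W^{n,n}(\nu)\right)\prod_i\pi_{k_i}h_{n-1,n}(k)/h_{n,n}(\nu)$.

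For nonnegativity I would first note that $W^{n,n+1}(\nu)\subset W^n(\mathbb N)$ and $W^{n,n}(\nu)\subset W^n(\mathbb N)$: in either case the defining interlacing inequalities sandwiched between the entries of $\nu$ force $k$ to be a strictly increasing sequence of nonnegative integers, so $h_{n,n}(k)$ and $h_{n-1,n}(k)$ are evaluated inside their domains. Then, by the Proposition just proved — equivalently by (\ref{alternatingharmonic1}), (\ref{alternatingharmonic2}) applied after the branching rules — the functions $h_{n-1,n},h_{n,n},h_{n,n+1}$ are strictly positive on $W^n(\mathbb N)$, $W^n(\mathbb N)$, $W^{n+1}(\mathbb N)$ respectively; since moreover $\pi_j,\hat\pi_j>0$ for every $j$, each displayed quantity is a product of strictly positive numbers times an indicator, divided by a strictly positive number, hence lies in $[0,\infty)$.

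For the total mass I would unfold the definition of the operator $\Lambda_{n,n+1}$ from Section~\ref{sectionintertwinins}:
\begin{align*}
\sum_{k}\Lambda^{h_{n,n}}_{n,n+1}(\nu,k)=\frac{1}{h_{n,n+1}(\nu)}\sum_{k\in W^{n,n+1}(\nu)}\prod_{i=1}^{n}\hat\pi_{k_i}h_{n,n}(k)=\frac{(\Lambda_{n,n+1}h_{n,n})(\nu)}{h_{n,n+1}(\nu)}=1,
\end{align*}
the last equality being the branching relation $\Lambda_{n,n+1}h_{n,n}=h_{n,n+1}$, i.e. the $x=\vec0$ specialization of (\ref{branching1}) combined with the definitions (\ref{normalizedKMpoly1}), (\ref{normalizedKMpoly2}), which is precisely what is verified in the Proposition just proved. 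The sum here is a genuinely finite sum — each $k_i$ ranges over $\{\nu_i,\dots,\nu_{i+1}-1\}$ — so no interchange or convergence subtlety arises. The identical computation with $\Lambda_{n,n}$ and (\ref{branching2}) in place of $\Lambda_{n,n+1}$ and (\ref{branching1}) gives $\sum_k\Lambda^{h_{n-1,n}}_{n,n}(\nu,k)=(\Lambda_{n,n}h_{n-1,n})(\nu)/h_{n,n}(\nu)=1$.

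I do not expect a genuine obstacle: this corollary merely repackages Proposition~\ref{propositionbranching} together with the positivity established just before it, and the only content is the (standard) observation that ``Markov'' amounts to nonnegativity plus normalization. The one mildly delicate point — and it has already been dealt with in the Proposition just proved — is the sign and $\lambda_0$ bookkeeping inside the two-step branching rules, namely that the normalising prefactors $(-1)^{\binom{n}{2}}\lambda_0^{\binom{n}{2}}$ entering the definitions of the $h$'s are tuned (using $\binom{m+1}{2}=\binom{m}{2}+m$) so that $\Lambda_{n,n+1}h_{n,n}=h_{n,n+1}$ and $\Lambda_{n,n}h_{n-1,n}=h_{n,n}$ hold with coefficient exactly $1$ rather than up to a constant; if that bookkeeping is taken as given, the above is immediate.
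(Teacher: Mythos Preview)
Your proof is correct and takes essentially the same approach as the paper, which simply asserts that the corollary follows from Proposition~\ref{propositionbranching} together with the determinantal interlacing identities recorded just before the statement. You have merely made explicit the two ingredients the paper leaves implicit: the sign/factor bookkeeping identifying the determinant with the product-times-indicator form, and the observation that normalization is exactly the branching relation $\Lambda_{n,n+1}h_{n,n}=h_{n,n+1}$ (respectively $\Lambda_{n,n}h_{n-1,n}=h_{n,n}$) established in the preceding Proposition.
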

Finally, denoting by $\left(P_{n+1}^{h_{n,n+1}}(t);t\ge 0\right)$ and $\left(\hat{P}_n^{h_{n,n}}(t);t\ge 0\right)$ the Karlin-McGregor semigroups associated with $n+1$ $\mathcal{D}$-chains and $n$ $\hat{\mathcal{D}}$-chains, $h$-transformed by $h_{n,n+1}$ and $h_{n,n}$ respectively, we immediately get the following corollary of Theorems \ref{Master1} and \ref{Master2}.

\begin{cor}\label{corinter} For $t\ge0$, we have the intertwining relations,
\begin{align}
P_{n+1}^{h_{n,n+1}}(t)\Lambda^{h_{n,n}}_{n,n+1}&=\Lambda^{h_{n,n}}_{n,n+1}\hat{P}_n^{h_{n,n}}(t),\\
\hat{P}_n^{h_{n,n}}(t)\Lambda^{h_{n-1,n}}_{n,n}&=\Lambda^{h_{n-1,n}}_{n,n}P_{n}^{h_{n-1,n}}(t).
\end{align}
\end{cor}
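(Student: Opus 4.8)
The plan is to obtain both identities as direct specializations of Theorems \ref{Master1} and \ref{Master2}, choosing the strictly positive eigenfunctions fed into those theorems to be precisely the harmonic functions $h_{n,n}$ and $h_{n-1,n}$ constructed above, and then matching the abstract Doob-transformed semigroups and links appearing there with the concrete objects in the statement.

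The inputs I would assemble first are all already available. By the Proposition proved just above, $h_{n,n}$ is a strictly positive harmonic function for $\hat{P}_n(t)$ (eigenvalue $1$, i.e. constant $0$) and $h_{n-1,n}$ is a strictly positive harmonic function for $P_n(t)$. Moreover, the two-step branching rules of Proposition \ref{propositionbranching}, evaluated at $\vec{0}$, give the pointwise identities $h_{n,n+1}=\Lambda_{n,n+1}h_{n,n}$ and $h_{n,n}=\Lambda_{n,n}h_{n-1,n}$; in particular the relevant sums converge and $h_{n,n}$, $h_{n,n+1}$ are finite, so the hypotheses of Theorems \ref{Master1} and \ref{Master2} (including the finiteness of $\Lambda_{n,n}g_n$ required in Theorem \ref{Master2}) are met.

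Next I would carry out the two substitutions. For the first relation, take $\hat{h}_n=h_{n,n}$ in Theorem \ref{Master1}; then in the notation of that theorem $h_{n+1}=\Lambda_{n,n+1}\hat{h}_n=h_{n,n+1}$, so $P_t^{n+1,h_{n+1}}=P_{n+1}^{h_{n,n+1}}(t)$, $\hat{P}_t^{n,\hat{h}_n}=\hat{P}_n^{h_{n,n}}(t)$, and the normalized kernel $\Lambda_{n,n+1}^{\hat{h}_n}$ coincides with $\Lambda^{h_{n,n}}_{n,n+1}$ as written in the preceding Corollary. Relation (\ref{MasterIntertwining1eq2}) is then precisely the first asserted intertwining. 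For the second relation, take $g_n=h_{n-1,n}$ in Theorem \ref{Master2}; then $\hat{g}_n=\Lambda_{n,n}g_n=h_{n,n}$, so $\hat{P}_t^{n,\hat{g}_n}=\hat{P}_n^{h_{n,n}}(t)$, $P_t^{n,g_n}=P_n^{h_{n-1,n}}(t)$, and $\Lambda^{g_n}_{n,n}=\Lambda^{h_{n-1,n}}_{n,n}$; relation (\ref{MasterIntertwining2eq2}) is then the second asserted intertwining.

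There is no genuine obstacle here — all the analytic content sits in Theorems \ref{Master1}--\ref{Master2} and in the harmonicity of $h_{n,n}$, $h_{n-1,n}$, both already established. The only points deserving care are bookkeeping ones: verifying that the branching identities really do identify $h_{n+1}$ with $h_{n,n+1}$ and $\hat{g}_n$ with $h_{n,n}$, confirming that the explicitly normalized kernels of the preceding Corollary agree with the links produced by the $h$-transform in the Master theorems, and, for Theorem \ref{Master2}, recording that $\Lambda_{n,n}h_{n-1,n}=h_{n,n}<\infty$, which is exactly what legitimizes that Doob transform.
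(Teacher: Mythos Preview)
Your proposal is correct and takes exactly the same approach as the paper, which simply states that the corollary is immediate from Theorems~\ref{Master1} and~\ref{Master2}; you have just spelled out explicitly the substitutions $\hat{h}_n=h_{n,n}$ and $g_n=h_{n-1,n}$ together with the identifications $\Lambda_{n,n+1}h_{n,n}=h_{n,n+1}$ and $\Lambda_{n,n}h_{n-1,n}=h_{n,n}$ coming from the branching rules.
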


\section{Coherent measures}\label{sectioncoherentmeasures}
We now move on towards defining, in displays (\ref{coherentmeasure1})  and (\ref{coherentmeasure2}), measures denoted by $\mathcal{M}^{\psi}_{n,n+1}$ and $\mathcal{M}^{\psi}_{n,n}$, depending on a function $\psi$, that are coherent with respect to the Markov links $\Lambda^{h_{n,n}}_{n,n+1}$ and $\Lambda^{h_{n-1,n}}_{n,n}$. We first need some definitions and technical preliminaries. 

Consider the Taylor remainder for a function $f$, that is $(n-1)$-times differentiable at $0$, given by,
\begin{align*}
\mathsf{R}_n^f(x)=\begin{cases}
f(x) & n \le 0\\
f(x)-\sum_{k=0}^{n-1}\frac{f^{(k)}(0)}{k!}x^k \ & n\ge 1
\end{cases}.
\end{align*} 

Now, define for $f$ that is $(j-n)$-times or $(j-(n+1))$-times continuously differentiable at $0$ respectively, the following functions on $\mathbb{N}$, $\Psi^{n,n+1}_{n+1-j}(\cdot)$ and $\Psi^{n,n}_{n-j}(\cdot)$ (their dependence on $f$ will be suppressed),
\begin{align}
\Psi^{n,n+1}_{n+1-j}(i)&=\langle \pi_iQ_i , (-x)^{n+1-j}\mathsf{R}_{j-(n+1)}^f \rangle_{\mathfrak{w}}, \ i \in \mathbb{N},\\
\Psi^{n,n}_{n-j}(i)&=\langle \hat{\pi}_i\hat{Q}_i , (-x)^{n-j}\mathsf{R}_{j-n}^f \rangle_{\hat{\mathfrak{w}}}, \ i \in \mathbb{N}.
\end{align}

We also define the discrete convolution for functions $h_1,h_2:\mathbb{N}\times \mathbb{N} \to \mathbb{C}$ and $h_3:\mathbb{N}\to \mathbb{C}$ as follows,
\begin{align*}
\left(h_1* h_2\right)(u,v)&=\sum_{k \ge 0}^{}h_1(u,k)h_2(k,v),\\
\left(h_1* h_3\right)(u)&=\sum_{k \ge 0}^{}h_1(u,k)h_3(k).
\end{align*}

The lemma below states that, alternating convolutions of $\phi$ and $\hat{\phi}$ with $\Psi^{n,n}_{n-j}$ and $\Psi^{n,n+1}_{n+1-j}$ respectively are nicely consistent. This will be useful in the computations performed in Proposition \ref{coherencyprop} that proves that the measures introduced below are indeed coherent.
\begin{lem}Assume that $f(x)=p(x)e^{-tx}$, where $p(x)$ is a fixed polynomial of arbitrary degree. Then, we have,
\begin{enumerate} 
\item $\left( \phi * \Psi^{n,n}_{n-j} \right)(i)=\Psi^{n-1,n}_{n-j}(i)$.
\item $\left( \hat{\phi} * \Psi^{n,n+1}_{n+1-j} \right)(i)=-\lambda_0\Psi^{n,n}_{n-j}(i)$.
\end{enumerate}
\end{lem}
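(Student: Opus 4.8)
The plan is to expand the left-hand sides directly from the definitions of $\phi$, $\hat{\phi}$, and the functions $\Psi^{n,n}_{n-j}$, $\Psi^{n,n+1}_{n+1-j}$, and then to recognize each resulting sum as a telescoping series (or a single orthogonality pairing) by invoking the polynomial identities of Proposition \ref{polynomialrelations}. For part (1), writing out $\left(\phi * \Psi^{n,n}_{n-j}\right)(i) = \sum_{k\ge i}\pi_i\langle \hat\pi_k\hat Q_k, (-x)^{n-j}\mathsf{R}^f_{j-n}\rangle_{\hat{\mathfrak{w}}}$, the inner sum over $k$ from $i$ to $\infty$ of $\langle \hat\pi_k\hat Q_k, g\rangle_{\hat{\mathfrak{w}}}$ — with $g(x) = (-x)^{n-j}\mathsf{R}^f_{j-n}(x)$, a function of the form $p(x)e^{-tx}$ so that the relevant convergence and decay hypotheses hold — is exactly $\langle \hat Q_i, g\rangle_{\mathfrak{w}}$ by part 4 of Proposition \ref{polynomialrelations} (the coefficient decay $\langle Q_n,g\rangle_{\mathfrak{w}}\to 0$ for such $g$ was established in Section \ref{sectionorthogonalpolynomials}). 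Multiplying by $\pi_i$ and comparing with $\Psi^{n-1,n}_{n-j}(i) = \langle \pi_i Q_i, (-x)^{n-j}\mathsf{R}^f_{(n-1)-(n-1)}\rangle_{\mathfrak{w}}$ — noting $\mathsf{R}^f_0 = \mathsf{R}^f_{j-n}$ when we track the index shift carefully, or more precisely that both remainders appearing should match after the index bookkeeping — gives the claim. The one subtlety is the Taylor-remainder index: I need to check that the remainder order appearing on the right ($\mathsf{R}_{j-n}^f$ multiplied by $(-x)^{n-j}$, reindexed from $n$ to $n-1$) is consistent, i.e. that $(-x)^{n-j}\mathsf{R}^f_{j-n}$ is the same function whether read as data for level $n$ or level $n-1$; this is a matter of unwinding the definitions and is where I would be most careful.

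For part (2), I expand $\left(\hat\phi * \Psi^{n,n+1}_{n+1-j}\right)(i) = \sum_{k: \text{appropriate range}} \hat\phi(i,k)\,\langle \pi_k Q_k, (-x)^{n+1-j}\mathsf{R}^f_{j-(n+1)}\rangle_{\mathfrak{w}}$. Since $\hat\phi(i,k) = -\hat\pi_i\mathbf{1}(i<k)$, this is $-\hat\pi_i \sum_{k>i}\langle \pi_k Q_k, h\rangle_{\mathfrak{w}}$ with $h = (-x)^{n+1-j}\mathsf{R}^f_{j-(n+1)}$, again of the form $p(x)e^{-tx}$ so that series (\ref{uniformconvergence}) converges pointwise at $0$. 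By part 3 of Proposition \ref{polynomialrelations}, $\sum_{k>i}\langle \pi_k Q_k, h\rangle_{\mathfrak{w}} = \langle \hat Q_i, h(0) - h\rangle_{\mathfrak{w}}$. Now I use the relation $d\hat{\mathfrak{w}}(x) = x\,d\mathfrak{w}(x)/\lambda_0$ from Section \ref{sectionorthogonalpolynomials} to convert: $\langle \hat Q_i, (\text{something})\rangle_{\mathfrak{w}}$ picks up a factor of $x$ when passed to the $\hat{\mathfrak{w}}$ pairing. The key manipulation is that $h(0) - h(x) = h(0) - (-x)^{n+1-j}\mathsf{R}^f_{j-(n+1)}(x)$, and combining the constant term $h(0)$ with the polynomial-times-remainder structure should reassemble $(-x)^{n-j}\mathsf{R}^f_{j-n}$ up to the factor $x$ and the constant $-\lambda_0$. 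Concretely: $(-x)^{n-j}\mathsf{R}^f_{j-n}(x) = (-x)^{n-j}\left(f(x) - \sum_{k=0}^{j-n-1}\frac{f^{(k)}(0)}{k!}x^k\right)$, and multiplying by $(-x)$ and comparing with $(-x)^{n+1-j}\mathsf{R}^f_{j-(n+1)}(x)$ shows the two differ precisely by the lowest-order Taylor term, which is what $h(0)$ supplies. So $\hat\pi_i\langle \hat Q_i, h(0)-h\rangle_{\mathfrak{w}}$ reorganizes, via the $d\hat{\mathfrak{w}} = x\,d\mathfrak{w}/\lambda_0$ substitution, into $\lambda_0\hat\pi_i\langle \hat Q_i, (-x)^{n-j}\mathsf{R}^f_{j-n}\rangle_{\hat{\mathfrak{w}}} = \lambda_0\Psi^{n,n}_{n-j}(i)$, and with the leading minus sign from $\hat\phi$ we obtain $-\lambda_0\Psi^{n,n}_{n-j}(i)$.

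The main obstacle I anticipate is purely bookkeeping: correctly tracking the Taylor-remainder indices $j-n$ versus $j-(n+1)$ and the powers $(-x)^{n-j}$ versus $(-x)^{n+1-j}$ through the multiplication by $x$ and the substitution $d\hat{\mathfrak{w}} = x\,d\mathfrak{w}/\lambda_0$, and verifying that the "boundary term" $h(0)$ in Proposition \ref{polynomialrelations}(3) is exactly the term that must be added back to upgrade $\mathsf{R}^f_{j-(n+1)}$ (multiplied by an extra $-x$) to $\mathsf{R}^f_{j-n}$. There is also a minor convergence check — that $(-x)^m\mathsf{R}^f_\ell(x)$ is indeed of the form (polynomial)$\cdot e^{-tx}$ when $f(x) = p(x)e^{-tx}$, so that all the hypotheses of Proposition \ref{polynomialrelations}(3)--(4) are met — but this is immediate since $\mathsf{R}^f_\ell$ subtracts off a polynomial from $p(x)e^{-tx}$. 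Once these index matchings are pinned down, both identities follow by a direct telescoping computation with no further analytic input.
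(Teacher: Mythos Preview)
Your proposal is correct and follows essentially the same route as the paper: part (1) is exactly the application of Proposition~\ref{polynomialrelations}(4) to the tail sum $\sum_{k\ge i}\langle \hat\pi_k\hat Q_k,\cdot\rangle_{\hat{\mathfrak{w}}}$, and part (2) is the application of Proposition~\ref{polynomialrelations}(3) followed by the measure change $d\hat{\mathfrak{w}}=x\,d\mathfrak{w}/\lambda_0$ and the Taylor-remainder identity $\frac{1}{x}\big(h(0)-h(x)\big)=(-x)^{n-j}\mathsf{R}^f_{j-n}(x)$ for $h=(-x)^{n+1-j}\mathsf{R}^f_{j-(n+1)}$. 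One small correction: in part (1) the tail sum from Proposition~\ref{polynomialrelations}(4) yields $\langle Q_i,g\rangle_{\mathfrak{w}}$ (not $\langle \hat Q_i,g\rangle_{\mathfrak{w}}$ --- the stated form in the paper is a typo, as its own proof shows), which is exactly what you need to match $\Psi^{n-1,n}_{n-j}(i)=\langle\pi_iQ_i,(-x)^{n-j}\mathsf{R}^f_{j-n}\rangle_{\mathfrak{w}}$.
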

\begin{proof}
To prove (1) note,
\begin{align*}
\left( \phi * \Psi^{n,n}_{n-j} \right)(i)&=\sum_{k \ge 0}^{}\pi_i \textbf{1}(i \le k)\Psi^{n,n}_{n-j}(k)\\
&=\sum_{k \ge i}^{}\pi_i \langle \hat{\pi}_k\hat{Q}_k , (-x)^{n-j}\mathsf{R}_{j-n}^f \rangle_{\hat{\mathfrak{w}}}\\
&=\pi_i\langle Q_i , (-x)^{n-j}\mathsf{R}_{j-n}^f \rangle_{\mathfrak{w}}=\Psi^{n-1,n}_{n-j}(i).
\end{align*}
Now to prove (2) first observe that with $\mathsf{T}^f_m(x)=(-x)^{-m}\mathsf{R}_m^f(x)$ then, $\mathsf{T}^f_m(0)=\lim_{x \to 0} \mathsf{T}^f_m(x)=\frac{f^{(m)}(0)}{m!}(-1)^m$ and so,
\begin{align*}
\left( \hat{\phi} * \Psi^{n,n+1}_{n+1-j} \right)(i)&=-\sum_{k \ge 0}^{}\hat{\pi}_i \textbf{1}(i < k)\Psi^{n,n+1}_{n+1-j}(k)\\
&=\sum_{k \ge i+1}^{}\hat{\pi}_i \langle \pi_kQ_k , (-x)^{n+1-j}\mathsf{R}_{j-(n+1)}^f \rangle_{\mathfrak{w}}\\
&=-\hat{\pi}_i\langle \hat{Q}_i , ((-x)^{n+1-j}\mathsf{R}_{j-(n+1)}^f)(0)-(-x)^{n+1-j}\mathsf{R}_{j-(n+1)}^f \rangle_{\mathfrak{w}}.
\end{align*}
Moreover, since $d\hat{\mathfrak{w}}=\frac{xd\mathfrak{w}}{\lambda(0)}$ and $\frac{1}{x}\left(((-x)^{n+1-j}\mathsf{R}_{j-(n+1)}^f)(0)-(-x)^{n+1-j}\mathsf{R}_{j-(n+1)}^f\right)=(-x)^{n-j}\mathsf{R}^f_{n-j}$ we get,
\begin{align*}
\left( \hat{\phi} * \Psi^{n,n+1}_{n+1-j} \right)(i)=-\lambda_0  \langle \hat{\pi}_i\hat{Q}_i , (-x)^{n-j}\mathsf{R}_{j-n}^f \rangle_{\hat{\mathfrak{w}}}.
\end{align*}
\end{proof}
\begin{rmk}\label{relaxremark}
Of course, the condition that $f(x)=p(x)e^{-tx}$ is unnecessarily  restrictive. All that is needed, other than the necessary differentiability assumptions on $f$, in order to prove (1) is that $\langle Q_k , (-x)^{n-j}\mathsf{R}_{j-n}^f \rangle_{\mathfrak{w}}\to 0$ as $k \to \infty$ and for (2) that the orthogonal decomposition of  $\mathsf{T}^f_m$ converges pointwise at 0.
\end{rmk}
We now, define the \textit{coherent measures} $\mathcal{M}^{\psi}_{n,n+1}$ and $\mathcal{M}^{\psi}_{n,n}$, for $\psi$ in $L^2\left(\mathfrak{I},\mathfrak{w}\right)$ or $L^2\left(\mathfrak{I},\hat{\mathfrak{w}}\right)$ respectively as follows,
\begin{align}
\mathcal{M}^{\psi}_{n,n+1}(\nu)&=\frac{(-1)^{\binom{n}{2}}}{\lambda_0^{\binom{n}{2}}}\det\left(\langle \pi_{\nu_i}Q_{\nu_i} , (-x)^{n+1-j}\psi \rangle_{\mathfrak{w}}\right)^{n+1}_{i,j=1} h_{n,n+1}(\nu), \textnormal{ for } \nu \in W^{n+1},\label{coherentmeasure1}\\
\mathcal{M}^{\psi}_{n,n}(\nu)&=\frac{(-1)^{\binom{n-1}{2}}}{\lambda_0^{\binom{n-1}{2}}}\det\left(\langle \hat{\pi}_{\nu_i}\hat{Q}_{\nu_i} , (-x)^{n-j}\psi \rangle_{\hat{\mathfrak{w}}}\right)^{n}_{i,j=1}h_{n,n}(\nu)\label{coherentmeasure2}, \textnormal{ for } \nu \in W^n.
\end{align}
Note that, by simply unpacking the notation and observing that the powers of $(-1)$'s actually cancel out, these can be written as,
\begin{align*}
\mathcal{M}^{\psi}_{n,n+1}(\nu_1,\cdots,\nu_{n+1})&=\frac{1}{\lambda_0^{\binom{n}{2}}}\det\left(\int_{\mathfrak{I}}^{} \pi_{\nu_i}Q_{\nu_i}(x) x^{n+1-j}\psi(x) d\mathfrak{w}(x)\right)^{n+1}_{i,j=1} h_{n,n+1}(\nu_1,\cdots,\nu_{n+1}),\\
\mathcal{M}^{\psi}_{n,n}(\nu_1,\cdots,\nu_n)&=\frac{1}{\lambda_0^{\binom{n-1}{2}}}\det\left(\int_{\mathfrak{I}}^{} \hat{\pi}_{\nu_i}\hat{Q}_{\nu_i}(x)x^{n-j}\psi(x) d\hat{\mathfrak{w}}(x)\right)^{n}_{i,j=1}h_{n,n}(\nu_1,\cdots,\nu_{n}).
\end{align*}

The measures $\mathcal{M}^{\psi}$ are real (not necessarily positive) measures and as we see in Lemma \ref{factorizationforcoherent} below their mass is explicit. Moreover, Lemma \ref{factorizationforcoherent}, shows that the "generating functions" (with respect to the corresponding multivariate orthogonal polynomials) of these measures are \textit{multiplicative}. This property, under some extra assumptions (see Appendix), implies that these coherent measures, when they are positive and normalized to be probability measures (see subsection \ref{subsectionpositivityofcoherent}), are in fact \textit{extremal} (and thus, they correspond to points of the boundary of the branching graph coming from the alternating construction, see subsection \ref{examplesofbranchingsubsection}). 
\begin{lem} \label{factorizationforcoherent}With $\star=n,n+1$, let $\psi\in L^2$ be such that each of the functions $\{(-x)^{n+1-i}\psi(x)\}_{i=1}^{n+1}$ has an orthogonal decomposition converging pointwise at the points $\{x_j\}^{n+1}_{j=1}$. Then,
\begin{align}
\sum_{\nu \in W^{\star}}^{}\mathcal{M}^{\psi}_{n,\star}(\nu)\frac{h_{n,\star}(x,\nu)}{h_{n,\star}(\nu)}&=\prod_{i=1}^{\star}\psi(x_i),
\end{align}
where the functions $h_{n,\star}$ where defined in (\ref{normalizedKMpoly1}), (\ref{normalizedKMpoly2}). In particular, the measures $\mathcal{M}^{\psi}_{n,\star}$ have mass $\psi(0)^{\star}$.  Moreover, if $\psi\equiv1$ then $\mathcal{M}^{\psi}_{n,\star}(\nu)=\textbf{1}(\nu=(0,\cdots,\star-1))$.
\end{lem}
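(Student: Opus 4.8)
\textbf{Proof plan for Lemma \ref{factorizationforcoherent}.}

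The plan is to expand the left-hand side using the definitions of $\mathcal{M}^{\psi}_{n,\star}$ and of the multivariate Karlin--McGregor polynomials, and to recognize the resulting sum as an instance of the Andréief (Cauchy--Binet) identity. For concreteness take $\star=n+1$; the case $\star=n$ is identical with $Q,\pi,\mathfrak{w}$ replaced by $\hat{Q},\hat{\pi},\hat{\mathfrak{w}}$ and the binomial exponent shifted. First I would write out
\begin{align*}
\sum_{\nu\in W^{n+1}}\mathcal{M}^{\psi}_{n,n+1}(\nu)\frac{h_{n,n+1}(x,\nu)}{h_{n,n+1}(\nu)}
=\sum_{\nu\in W^{n+1}}\frac{(-1)^{\binom{n}{2}}}{\lambda_0^{\binom{n}{2}}}\det\left(\langle\pi_{\nu_i}Q_{\nu_i},(-y)^{n+1-j}\psi\rangle_{\mathfrak{w}}\right)_{i,j=1}^{n+1}h_{n,n+1}(x,\nu),
\end{align*}
and then substitute $h_{n,n+1}(x,\nu)=(-1)^{\binom{n}{2}}\lambda_0^{\binom{n}{2}}\mathfrak{Q}_\nu(x)=(-1)^{\binom{n}{2}}\lambda_0^{\binom{n}{2}}\det(Q_{\nu_i}(x_j))_{i,j=1}^{n+1}/\Delta_{n+1}(x)$, so the powers of $-1$ and $\lambda_0$ cancel (as already noted in the text right after the definition of the measures) leaving
\begin{align*}
\frac{1}{\Delta_{n+1}(x)}\sum_{\nu\in W^{n+1}}\det\left(\langle\pi_{\nu_i}Q_{\nu_i},(-y)^{n+1-j}\psi\rangle_{\mathfrak{w}}\right)_{i,j}\det\left(Q_{\nu_i}(x_j)\right)_{i,j}.
\end{align*}

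Next I would apply the Andréief identity (the continuous/discrete Cauchy--Binet) to collapse the sum over the ordered set $W^{n+1}$: the sum of products of two $(n+1)\times(n+1)$ determinants indexed by a common increasing multi-index $\nu$ equals $\det\big(\sum_{k\ge0}\langle\pi_kQ_k,(-y)^{n+1-j}\psi\rangle_{\mathfrak{w}}\,Q_k(x_i)\big)_{i,j=1}^{n+1}$. By the pointwise convergence of the orthogonal decomposition of $(-y)^{n+1-j}\psi(y)$ at each $x_i$ --- which is exactly the hypothesis imposed on $\psi$ --- this inner sum is the reconstruction $\sum_{k\ge0}\langle Q_k,(-y)^{n+1-j}\psi\rangle_{\mathfrak{w}}Q_k(x_i)\pi(k)=(-x_i)^{n+1-j}\psi(x_i)$. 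Hence the left-hand side becomes $\Delta_{n+1}(x)^{-1}\det\big((-x_i)^{n+1-j}\psi(x_i)\big)_{i,j}$. Pulling $\psi(x_i)$ out of row $i$ and $(-1)^{n+1-j}$ out of column $j$ (these signs multiply to an even power, $\prod_j(-1)^{n+1-j}=(-1)^{\binom{n+1}{2}}$, squared-free check aside) leaves $\prod_{i=1}^{n+1}\psi(x_i)$ times $\det(x_i^{n+1-j})_{i,j}/\Delta_{n+1}(x)$, and the latter determinant is, up to the sign reversing column order, the Vandermonde $\Delta_{n+1}(x)$; so the ratio is $1$ and we get $\prod_{i=1}^{n+1}\psi(x_i)$, as claimed. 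I would double-check the sign bookkeeping carefully here since that is the one place errors creep in.

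The mass statement follows by evaluating the identity at $x=(x_1,\dots,x_{n+1})$ and letting the $x_i$ tend to $0$ along a path in $W^{n+1}([0,\infty))$: since $h_{n,n+1}(x,\nu)/h_{n,n+1}(\nu)\to 1$ when $\nu=(0,\dots,n)$ is the minimal signature and the Karlin--McGregor polynomials are continuous, one obtains $\sum_\nu\mathcal{M}^{\psi}_{n,n+1}(\nu)=\psi(0)^{n+1}$ --- alternatively, and more cleanly, observe that $\sum_\nu\mathcal{M}^\psi_{n,n+1}(\nu)\Lambda^{h_{n,n}}_{n,n+1}$-type normalization already gives $h_{n,n+1}(x,\nu)/h_{n,n+1}(\nu)\big|_{x\to 0}$, but the direct limit argument is simplest. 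For the last assertion, when $\psi\equiv 1$ the entries $\langle\pi_{\nu_i}Q_{\nu_i},(-y)^{n+1-j}\rangle_{\mathfrak{w}}$ are, up to sign, the moments $\langle Q_{\nu_i},y^{n+1-j}\rangle_{\mathfrak{w}}\pi_{\nu_i}$, which vanish whenever $\nu_i>n+1-j$ by orthogonality of $Q_{\nu_i}$ to all polynomials of degree $<\nu_i$; a triangularity/support argument on the determinant then forces $\nu=(0,1,\dots,n)$, and the normalization $h_{n,n+1}$ makes the single surviving value equal $1$. The main obstacle I anticipate is purely clerical: getting every sign and every power of $(-1)$ and $\lambda_0$ to cancel correctly across the three determinants, and making the degenerate Vandermonde ratio $\det(x_i^{n+1-j})/\Delta_{n+1}(x)$ come out with the right orientation; there is no analytic difficulty once the pointwise-convergence hypothesis is in hand, as it licenses exactly the term-by-term resummation Andréief requires.
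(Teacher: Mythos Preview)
Your approach is correct and essentially the same as the paper's: apply Cauchy--Binet to collapse the sum over $\nu$, use the pointwise-convergence hypothesis to reconstruct $(-x_j)^{n+1-i}\psi(x_j)$ inside the resulting determinant, and cancel the Vandermonde (the paper carries out explicitly the sign check you flag as the one delicate point). One small correction on the mass: the ratio $h_{n,\star}(x,\nu)/h_{n,\star}(\nu)\to 1$ as $x\to\vec{0}$ for \emph{every} $\nu$, by the very definition $h_{n,\star}(\nu)=h_{n,\star}(\vec{0},\nu)$, so the limit gives $\sum_\nu\mathcal{M}^\psi_{n,\star}(\nu)=\psi(0)^{\star}$ directly---your aside about the minimal $\nu$ is not needed there; your triangularity argument for the $\psi\equiv 1$ case is actually more complete than what the paper spells out.
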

\begin{proof}
We apply the Cauchy-Binet formula (for infinite sums, see for example Lemma 2.1 of \cite{CerenziaKuan}), to obtain with $\star=n+1$ (the case $\star=n$ is exactly the same with only changes in notation),
\begin{align*}
\sum_{\nu \in W^{n+1}}^{}\mathcal{M}^{\psi}_{n,n+1}(\nu)\frac{h_{n,n+1}(x,\nu)}{h_{n,n+1}(\nu)}&=\frac{\det\left(\sum_{k\ge 0}^{}\langle \pi_kQ_k , (-x)^{n+1-i}\mathsf{R}_{i-(n+1)}^{\psi} \rangle_{\mathfrak{w}}Q_k(x_j)\right)^{n+1}_{i,j=1}}{\det\left(x_i^{j-1}\right)^{n+1}_{i,j=1}}\\
&=\frac{\det\left( (-x_j)^{n+1-i}\psi(x_j)\right)^{n+1}_{i,j=1}}{\det\left(x_i^{j-1}\right)^{n+1}_{i,j=1}}=\prod_{i=1}^{n+1}\psi(x_i).
\end{align*}
We have also used the fact that,
\begin{align*}
\frac{\det\left( (-x_i)^{n+1-j}\right)^{n+1}_{i,j=1}}{\det\left(x_i^{j-1}\right)^{n+1}_{i,j=1}}=(-1)^{\binom{n}{2}}\frac{\det\left( x_i^{n+1-j}\right)^{n+1}_{i,j=1}}{\det\left(x_i^{j-1}\right)^{n+1}_{i,j=1}}=(-1)^{\binom{n}{2}}(-1)^{\lfloor \frac{n+1}{2}\rfloor}\equiv 1.
\end{align*}
Moreover, we have,
\begin{align*}
\mathcal{M}^{\psi}_{n,n+1}(0,\cdots,n)&=\left.\frac{\det\left(\sum_{k\ge 0}^{}\langle \pi_kQ_k , (-x)^{n+1-i} \rangle_{\mathfrak{w}}Q_k(x_j)\right)^{n+1}_{i,j=1}}{\det\left(x_i^{j-1}\right)^{n+1}_{i,j=1}}\right \vert_{x_1,\cdots,x_n=0}\\
&=\left.\frac{\det\left( (-x_j)^{n+1-i}\right)^{n+1}_{i,j=1}}{\det\left(x_i^{j-1}\right)^{n+1}_{i,j=1}}\right \vert_{x_1,\cdots,x_n=0}=1.
\end{align*}
\end{proof}

Our interest in these measures, as already anticipated, stems from the fact that they are coherent/consistent with respect to the intertwining kernels.
\begin{prop}\label{coherencyprop} Let $\psi(x)=p(x)e^{-tx}$, where $p(x)$ is a polynomial of arbitrary degree. Then with $k\in W^n$,
\begin{align}
\mathcal{M}^{\psi}_{n,n}(k)&=(\mathcal{M}^{\psi}_{n,n+1}\Lambda^{h_{n,n}}_{n,n+1})(k) \ , \ \textnormal{for}  \ \psi(0)=1, \label{coherency1}\\
\mathcal{M}^{\psi}_{n-1,n}(k)&=(\mathcal{M}^{\psi}_{n,n}\Lambda^{h_{n-1,n}}_{n,n})(k)\label{coherency2}.
\end{align}
\end{prop}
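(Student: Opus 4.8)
The plan is to prove both identities by the mechanism behind Lemma \ref{factorizationforcoherent}: insert the explicit formulas for the measures and the links, cancel the harmonic normalizations, and collapse the resulting sum of products of two determinants by a Cauchy--Binet identity, then identify the entries of the collapsed determinant via the convolution lemma stated just above.

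For the relation (\ref{coherency2}) I would write $(\mathcal{M}^{\psi}_{n,n}\Lambda^{h_{n-1,n}}_{n,n})(k)=\sum_{\nu\in W^n}\mathcal{M}^{\psi}_{n,n}(\nu)\Lambda^{h_{n-1,n}}_{n,n}(\nu,k)$, note that the factor $h_{n,n}(\nu)$ inside $\mathcal{M}^{\psi}_{n,n}(\nu)$ cancels the denominator of $\Lambda^{h_{n-1,n}}_{n,n}(\nu,k)$, and reduce to $\tfrac{(-1)^{\binom{n-1}{2}}}{\lambda_0^{\binom{n-1}{2}}}\,h_{n-1,n}(k)$ times $\sum_{\nu\in W^n}\det\!\big(\langle\hat\pi_{\nu_i}\hat Q_{\nu_i},(-x)^{n-j}\psi\rangle_{\hat{\mathfrak{w}}}\big)_{i,j=1}^n\det\!\big(\phi(k_i,\nu_j)\big)_{i,j=1}^n$. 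Applying Cauchy--Binet for infinite sums (valid here because $\psi=p(x)e^{-tx}$ makes the relevant orthogonal expansions converge, cf.\ Section \ref{sectionorthogonalpolynomials}, exactly as in the proof of Lemma \ref{factorizationforcoherent}) turns this into $\det\!\big(\sum_{m\ge0}\phi(k_i,m)\,\langle\hat\pi_m\hat Q_m,(-x)^{n-j}\psi\rangle_{\hat{\mathfrak{w}}}\big)_{i,j=1}^n$. Since $\mathsf{R}_{j-n}^{\psi}=\psi$ for $j\le n$, the inner sum is precisely $(\phi*\Psi^{n,n}_{n-j})(k_i)$, which by part (1) of the convolution lemma equals $\Psi^{n-1,n}_{n-j}(k_i)=\langle\pi_{k_i}Q_{k_i},(-x)^{n-j}\psi\rangle_{\mathfrak{w}}$; comparing with $\mathcal{M}^{\psi}_{n-1,n}(k)=\tfrac{(-1)^{\binom{n-1}{2}}}{\lambda_0^{\binom{n-1}{2}}}\det\!\big(\langle\pi_{k_i}Q_{k_i},(-x)^{n-j}\psi\rangle_{\mathfrak{w}}\big)_{i,j=1}^n h_{n-1,n}(k)$ shows the two sides coincide with no residual constant.

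For (\ref{coherency1}) the identical scheme applies with $(n{+}1)\times(n{+}1)$ determinants, $\hat\phi$ in place of $\phi$, and the extra feature that the last row of $\det(\hat\phi(k_i,\nu_j))_{i,j=1}^{n+1}$ is the virtual row $\hat\phi(\textnormal{virt},\nu_j)\equiv1$. After Cauchy--Binet the collapsed $(n{+}1)\times(n{+}1)$ determinant has in its rows $i\le n$ the entries $(\hat\phi*\Psi^{n,n+1}_{n+1-j})(k_i)=-\lambda_0\,\Psi^{n,n}_{n-j}(k_i)$ (part (2) of the convolution lemma), and in its last row the entries $\sum_{m\ge0}\langle\pi_mQ_m,(-x)^{n+1-j}\psi\rangle_{\mathfrak{w}}=\big((-x)^{n+1-j}\psi(x)\big)\big|_{x=0}$, which is $0$ for $j\le n$ and $\psi(0)=1$ for $j=n+1$; this is the single point where the hypothesis $\psi(0)=1$ enters, and it is forced anyway by the mass count of Lemma \ref{factorizationforcoherent} ($\psi(0)^{n+1}$ versus $\psi(0)^{n}$). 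Expanding along the last row leaves $\lambda_0^{\,n}\,\det\!\big(\Psi^{n,n}_{n-j}(k_i)\big)_{i,j=1}^n=\lambda_0^{\,n}\,\det\!\big(\langle\hat\pi_{k_i}\hat Q_{k_i},(-x)^{n-j}\psi\rangle_{\hat{\mathfrak{w}}}\big)_{i,j=1}^n$ up to a sign, and reinstating the prefactor $\tfrac{(-1)^{\binom{n}{2}}}{\lambda_0^{\binom{n}{2}}}$ together with $h_{n,n}(k)$ should return $\mathcal{M}^{\psi}_{n,n}(k)$.

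The only genuine work is to verify that the overall constant produced in the last step equals $1$; this is a bookkeeping exercise with the signs $(-1)^{\binom{\cdot}{2}}$ and powers of $\lambda_0$ in the definitions (\ref{coherentmeasure1})--(\ref{coherentmeasure2}) and in the normalizations $h_{n,n+1}(\nu)=(-1)^{\binom{n}{2}}\lambda_0^{\binom{n}{2}}\mathfrak{Q}_\nu(\vec{0})$, $h_{n,n}(\nu)=(-1)^{\binom{n-1}{2}}\lambda_0^{\binom{n-1}{2}}\hat{\mathfrak{Q}}_\nu(\vec{0})$, reconciled through the two-step branching rules of Proposition \ref{propositionbranching} --- the factor $(-1)^n/\lambda_0^n$ occurring there is exactly what absorbs the $-\lambda_0$ per row thrown off by part (2) of the convolution lemma. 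I expect this constant-chasing, and in particular getting the sign of the virtual-row cofactor expansion right, to be the main (and essentially the only) obstacle; the structural content of the Proposition is nothing more than Cauchy--Binet combined with the convolution lemma.
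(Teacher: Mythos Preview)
Your proposal is correct and follows the paper's proof essentially verbatim: cancel the harmonic factors, apply Cauchy--Binet, and invoke the convolution lemma (parts (1) and (2) respectively), with the virtual-row evaluation $(\hat\phi*\Psi^{n,n+1}_{n+1-j})(\textnormal{virt})=\delta_{j,n+1}$ and the hypothesis $\psi(0)=1$ handling the extra row in (\ref{coherency1}). The paper treats the constant bookkeeping exactly as you anticipate, passing directly from the $(n{+}1)\times(n{+}1)$ determinant to the $n\times n$ one without spelling out the sign and $\lambda_0$-power accounting.
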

\begin{proof}
We prove (\ref{coherency2}) first, using the Cauchy-Binet formula for the passage to the second equality,
\begin{align*}
\sum_{\nu \in W^{n}}^{}\mathcal{M}^{\psi}_{n,n}(\nu)\Lambda^{h_{n-1,n}}_{n,n}(\nu,k)&=\frac{(-1)^{\binom{n-1}{2}}}{\lambda_0^{\binom{n-1}{2}}}h_{n-1,n}(k)\sum_{\nu \in W^{n}}^{}\det\left(\phi(k_i,\nu_j)\right)^n_{i,j=1}\det\left(\langle \hat{\pi}_{\nu_i}\hat{Q}_{\nu_i} , (-x)^{n-j}\mathsf{R}_{j-n}^{\psi} \rangle_{\hat{\mathfrak{w}}}\right)^{n}_{i,j=1}\\
&=\frac{(-1)^{\binom{n-1}{2}}}{\lambda_0^{\binom{n-1}{2}}}h_{n-1,n}(k)\det\left((\phi*\Psi^{n,n}_{n-j})(k_i)\right)^n_{i,j=1}\\
&=\frac{(-1)^{\binom{n-1}{2}}}{\lambda_0^{\binom{n-1}{2}}}h_{n-1,n}(k)\det\left(\Psi^{n-1,n}_{n-j}(k_i)\right)^n_{i,j=1}=\mathcal{M}^{\psi}_{n-1,n}(k).
\end{align*}
We now turn to the proof of (\ref{coherency1}) and calculate, again using the Cauchy-Binet formula for the second equality,
\begin{align*}
\sum_{\nu \in W^{n+1}}^{}\mathcal{M}^{\psi}_{n,n+1}(\nu)\Lambda^{h_{n,n}}_{n,n}(\nu,k)&=\frac{(-1)^{\binom{n}{2}}}{\lambda_0^{\binom{n}{2}}}h_{n,n}(k)\sum_{\nu \in W^{n}}^{}\det\left(\hat{\phi}(k_i,\nu_j)\right)^{n+1}_{i,j=1}\det\left(\langle {\pi}_{\nu_i}{Q}_{\nu_i} , (-x)^{n-j}\mathsf{R}_{j-n}^{\psi} \rangle_{{\mathfrak{w}}}\right)^{n+1}_{i,j=1}\\
&=\frac{(-1)^{\binom{n}{2}}}{\lambda_0^{\binom{n}{2}}}h_{n,n}(k)\det\left((\hat{\phi}*\Psi^{n,n+1}_{n+1-j})(k_i)\right)^{n+1}_{i,j=1}\\
&=\frac{(-1)^{\binom{n-1}{2}}}{\lambda_0^{\binom{n-1}{2}}}h_{n,n}(k)\det\left(\Psi^{n,n}_{n-j}(k_i)\right)^n_{i,j=1}=\mathcal{M}^{\psi}_{n,n}(k).
\end{align*}
The penultimate equality, follows from $\left( \hat{\phi} * \Psi^{n,n+1}_{n+1-j} \right)(i)=-\lambda_0\Psi^{n,n}_{n-j}(i)$ and the fact that the last row of $\{(\hat{\phi}*\Psi^{n,n+1}_{n+1-j})(k_i)\}^{n+1}_{i,j=1}$ is given by, with $k_{n+1}=\textnormal{virt}$ (recall for $j\le n+1$ that $\mathsf{R}_{j-(n+1)}^{\psi}=\psi$),
\begin{align*}
(\hat{\phi}*\Psi^{n,n+1}_{n+1-j})(\textnormal{virt})=\sum_{i \ge 0}^{}\Psi^{n,n+1}_{n+1-j}(i)=\sum_{i \ge 0}^{}\langle \pi_iQ_i , (-x)^{n+1-j}\mathsf{R}_{j-(n+1)}^{\psi} \rangle_{\mathfrak{w}}Q_i(0)=((-x)^{n+1-j}\psi)(0)=\delta_{j,n+1},
\end{align*}
where we have assumed $\psi(0)=1$ and also used the fact that $Q_i(0)=1$.
\end{proof}

\begin{rmk}
Again, conditions on $\psi$ can be relaxed c.f. Remark \ref{relaxremark}.
\end{rmk}

\section{Evolution of coherent measures}\label{sectionevolutionoperators}

\subsection{Evolution operators for coherent measures and their basic properties}

We now define some kind of evolution operators acting on the coherent measures, that generalize the $h$-transformed Karlin-McGregor semigroups. For $\psi$ in $L^2\left(\mathfrak{I},\mathfrak{w}\right)$ and $L^2\left(\mathfrak{I},\hat{\mathfrak{w}}\right)$ respectively, define $\mathfrak{P}^{\psi}_{n,n+1}$ and $\mathfrak{P}^{\psi}_{n,n}$ by,
\begin{align}
\mathfrak{P}^{\psi}_{n,n+1}(k,\nu)&=\frac{h_{n,n+1}(\nu)}{h_{n,n+1}(k)}\det\left(\langle Q_{k_i} , \pi_{\nu_j}Q_{\nu_j}\psi \rangle_{\mathfrak{w}}\right)^{n+1}_{i,j=1}, \textnormal{ for } k,\nu \in W^{n+1},\\
\mathfrak{P}^{\psi}_{n,n}(k,\nu)&=\frac{h_{n,n}(\nu)}{h_{n,n}(k)}\det\left(\langle \hat{Q}_{k_i} , \hat{\pi}_{\nu_j}\hat{Q}_{\nu_j}\psi \rangle_{\hat{\mathfrak{w}}}\right)^{n}_{i,j=1}, \textnormal{ for } k,\nu \in W^{n}.
\end{align}
Note that,
\begin{align}\label{coherentmeasuresevolutionrelation}
\mathfrak{P}^{\psi}_{\bullet ,\star}(k_0,\nu)=\mathcal{M}_{\bullet,\star}^{\psi}(\nu) ,\ \textnormal{where} \ k_0=(0,1,\cdots,\star-1).
\end{align}
This is because, by row and column operations both sides are the same up to a multiplicative constant and since, from the following lemma they both sum to $\psi(0)^{\star}$, they must in fact be equal. 

Moreover, observe that by (\ref{spectralexpansiontransition}) for $\psi(x)=\phi_t(x)=e^{-tx}$ then $\left(\mathfrak{P}^{\phi_t}_{n,n+1};t\ge 0\right)$ and $\left(\mathfrak{P}^{\phi_t}_{n,n};t\ge 0\right)$ are exactly the $h$-transformed Karlin-McGregor semigroups $\left(P_{n+1}^{h_{n,n+1}}(t);t \ge 0\right)$ and $\left(\hat{P}_n^{h_{n,n}}(t);t\ge 0\right)$ respectively. We will now study their properties. The non-trivial issue of positivity will be dealt with at the end of this subsection. First, we have the following lemma regarding their normalization.

\begin{lem} If, $\psi$ is such that its orthogonal decomposition converges pointwise in a neighbourhood of $0$, we then have,
\begin{align*}
\sum_{\nu \in W^{n+1}}^{}\mathfrak{P}^{\psi}_{n,n+1}(k,\nu)&=\psi(0)^{n+1},\  \forall k \in W^{n+1},\\
\sum_{\nu \in W^n}^{}\mathfrak{P}^{\psi}_{n,n}(k,\nu)&=\psi(0)^n, \ \forall k \in W^n.
\end{align*}
\end{lem}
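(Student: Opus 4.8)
The plan is to prove, for a generic point $x=(x_1,\dots,x_{n+1})\in W^{n+1}([0,\infty))$ lying in a punctured neighbourhood of $\vec{0}$, the sharper ``generating function'' identity
\begin{align*}
\sum_{\nu \in W^{n+1}} h_{n,n+1}(\nu,x)\,\det\left(\langle Q_{k_i},\pi_{\nu_j}Q_{\nu_j}\psi\rangle_{\mathfrak{w}}\right)_{i,j=1}^{n+1} = \left(\prod_{i=1}^{n+1}\psi(x_i)\right) h_{n,n+1}(k,x),
\end{align*}
and then to let $x\to\vec{0}$ and divide by $h_{n,n+1}(k)>0$ (strict positivity of $h_{n,n+1}$ was shown in Section~\ref{multivariatepolynomialssection}). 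Since $h_{n,n+1}(\nu,\vec{0})=h_{n,n+1}(\nu)$, $h_{n,n+1}(k,\vec{0})=h_{n,n+1}(k)$ and $\prod_i\psi(x_i)\to\psi(0)^{n+1}$, this gives the first assertion; the second, for $W^{n,n}$, follows verbatim upon replacing $Q,\pi,\mathfrak{w}$ by $\hat{Q},\hat{\pi},\hat{\mathfrak{w}}$ and $h_{n,n+1}$ by $h_{n,n}$ (and using $d\hat{\mathfrak{w}}=xd\mathfrak{w}/\lambda_0$ nowhere — the argument is purely formal in the chosen orthogonal system).

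To establish the displayed identity I would first recall from (\ref{normalizedKMpoly1}) that $h_{n,n+1}(\nu,x)=(-1)^{\binom{n}{2}}\lambda_0^{\binom{n}{2}}\det(Q_{\nu_i}(x_j))_{i,j=1}^{n+1}/\Delta_{n+1}(x)$. Transposing this determinant (which leaves it unchanged) places the summation index $\nu$ in the column slot, while transposing $\det(\langle Q_{k_i},\pi_{\nu_j}Q_{\nu_j}\psi\rangle_{\mathfrak{w}})_{i,j}$ places $\nu$ in the row slot. The Cauchy--Binet formula for infinite sums (Lemma 2.1 of \cite{CerenziaKuan}) then yields
\begin{align*}
\sum_{\nu \in W^{n+1}}\det\left(Q_{\nu_j}(x_i)\right)_{i,j=1}^{n+1}\det\left(\langle Q_{k_j},\pi_{\nu_i}Q_{\nu_i}\psi\rangle_{\mathfrak{w}}\right)_{i,j=1}^{n+1} = \det\left(\sum_{m\ge 0}\pi_m\langle Q_m,Q_{k_j}\psi\rangle_{\mathfrak{w}}\,Q_m(x_i)\right)_{i,j=1}^{n+1}.
\end{align*}
By $L^2$-completeness of $\{Q_m\}_{m\ge 0}$ the inner sum is the orthogonal expansion of the function $Q_{k_j}\psi$; wherever that expansion converges pointwise at $x_i$ it equals $Q_{k_j}(x_i)\psi(x_i)$, so the right-hand side becomes $\left(\prod_i\psi(x_i)\right)\det(Q_{k_j}(x_i))_{i,j}$. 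Multiplying through by $(-1)^{\binom{n}{2}}\lambda_0^{\binom{n}{2}}/\Delta_{n+1}(x)$ reproduces exactly the displayed identity. (As a consistency check: for $\psi\equiv 1$ one has $\langle Q_{k_i},\pi_{\nu_j}Q_{\nu_j}\rangle_{\mathfrak{w}}=\delta_{k_i,\nu_j}$, so the sum over $\nu$ collapses to the single term $\nu=k$ and gives $1=\psi(0)^{n+1}$; for $\psi=e^{-tx}$ one recovers, via (\ref{spectralexpansiontransition}), that the $h$-transformed Karlin--McGregor kernel is Markov.)

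The only genuine obstacle is justifying (i) the infinite Cauchy--Binet step, which needs absolute convergence of the series $\sum_m \pi_m\langle Q_m,Q_{k_j}\psi\rangle_{\mathfrak{w}}Q_m(x_i)$, and (ii) the interchange of $\lim_{x\to\vec{0}}$ with the infinite sum over $\nu$. Both are handled by the standing compactness assumption on $\mathfrak{I}$: on a compact interval of orthogonality the expansions of the functions $Q_{k_j}\psi$ (for $\psi$ as in the hypothesis — e.g. $\psi=p\,e^{-tx}$, the conditions being relaxable exactly as in Remark~\ref{relaxremark}) converge absolutely and uniformly on compact sets, which legitimises Cauchy--Binet and permits the termwise limit; meanwhile the right-hand side $\prod_i\psi(x_i)\,h_{n,n+1}(k,x)$ is continuous at $\vec{0}$ because $h_{n,n+1}(k,\cdot)$ is a polynomial. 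Everything else is routine determinant bookkeeping.
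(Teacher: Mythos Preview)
Your proof is correct and follows essentially the same route as the paper: insert the determinantal expression $h_{n,n+1}(\nu,x)=(-1)^{\binom{n}{2}}\lambda_0^{\binom{n}{2}}\det(Q_{\nu_i}(x_j))/\Delta_{n+1}(x)$, apply Cauchy--Binet over $\nu$, recognise the resulting entry $\sum_{m\ge0}\pi_m\langle Q_m,Q_{k_j}\psi\rangle_{\mathfrak w}Q_m(x_i)$ as the orthogonal expansion of $Q_{k_j}\psi$ evaluated at $x_i$, factor out $\prod_i\psi(x_i)$, and then specialise to $x=\vec 0$. The only cosmetic difference is that the paper writes the whole computation with the evaluation $|_{x_1,\dots,x_{n+1}=0}$ carried along from the start rather than first proving a generic-$x$ identity and then passing to the limit, but the content is identical.
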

\begin{proof}
We only prove the first equality, as the second is analogous,
\begin{align*}
\sum_{\nu \in W^{n+1}}^{}\mathfrak{P}^{\psi}_{n,n+1}(k,\nu)&=\frac{1}{h_{n,n+1}(k)}\sum_{\nu \in W^{n+1}}^{}\det\left(\langle Q_{k_i} , \pi_{\nu_i}Q_{\nu_i}\psi \rangle_{\mathfrak{w}}\right)^{n+1}_{i,j=1}(-1)^{\binom{n}{2}}\lambda_0^{\binom{n}{2}}\left.\frac{\det\left(Q_{\nu_i}(x_j)\right)_{i,j=1}^{n+1}}{\det\left(x_j^{i-1}\right)_{i,j=1}^{n+1}}\right \vert_{x_1,\cdots,x_{n+1}=0}\\
&=\frac{(-1)^{\binom{n}{2}}\lambda_0^{\binom{n}{2}}}{h_{n,n+1}(k)}\left.\frac{\det\left(\sum_{m\ge 0}^{}\langle Q_{k_i} , \pi_{m}Q_{m}\psi \rangle_{\mathfrak{w}}Q_{m}(x_j)\right)_{i,j=1}^{n+1}}{\det\left(x_j^{i-1}\right)_{i,j=1}^{n+1}}\right \vert_{x_1,\cdots,x_{n+1}=0}\\
&=\frac{(-1)^{\binom{n}{2}}\lambda_0^{\binom{n}{2}}}{h_{n,n+1}(k)}\left.\frac{\det\left(Q_{k_i}(x_j)\right)_{i,j=1}^{n+1}}{\det\left(x_j^{i-1}\right)_{i,j=1}^{n+1}}\prod_{i=1}^{n+1}\psi(x_i)\right \vert_{x_1,\cdots,x_{n+1}=0}=\psi(0)^{n+1}.
\end{align*}
\end{proof}

The simple, but important proposition below, describes the evolution of coherent measures. Its proof is an easy consequence of the Cauchy-Binet formula and of uniform convergence of the orthogonal decomposition on compact sets for functions of the form $p(x)e^{-tx}$, with $p(x)$ a polynomial.

\begin{prop} \label{evolutionprop}Assume $\mathfrak{I}$ is compact or equivalently $I^+<\infty$ and moreover suppose $\psi_1(x)=p_1(x)e^{-t_1x}$ and $\psi_2(x)=p_2(x)e^{-t_2 x}$, where $p_1,p_2$ are arbitrary polynomials and $t_1,t_2 \ge 0$. We then have the following equalities,  
\begin{align}
\sum_{k \in W^{n+1}}^{}\mathcal{M}^{\psi_1}_{n,n+1}(k)\mathfrak{P}^{\psi_2}_{n,n+1}(k,\nu)&=\mathcal{M}^{\psi_1\psi_2}_{n,n+1}(\nu) \label{evolution1}, \ \forall \nu \in W^{n+1}, \\
\sum_{k \in W^n}^{}\mathcal{M}^{\psi_1}_{n,n}(k)\mathfrak{P}^{\psi_2}_{n,n}(k,\nu)&=\mathcal{M}^{\psi_1\psi_2}_{n,n}(\nu) \label{evolution2}, \ \forall \nu \in W^n.
\end{align}
\end{prop}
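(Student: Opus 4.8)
The plan is to prove both identities in parallel, since they have identical structure; I will write out the $(n,n+1)$ case, equation~(\ref{evolution1}), and remark that~(\ref{evolution2}) follows by the same computation with $Q_i,\pi_i,\mathfrak{w}$ replaced by $\hat{Q}_i,\hat{\pi}_i,\hat{\mathfrak{w}}$ and $n+1$ replaced by $n$. First I would unpack the definitions: in the sum $\sum_{k\in W^{n+1}}\mathcal{M}^{\psi_1}_{n,n+1}(k)\mathfrak{P}^{\psi_2}_{n,n+1}(k,\nu)$ the factor $h_{n,n+1}(k)$ appearing in the denominator of $\mathfrak{P}^{\psi_2}_{n,n+1}(k,\nu)$ cancels exactly against the $h_{n,n+1}(k)$ appearing in $\mathcal{M}^{\psi_1}_{n,n+1}(k)$. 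What remains is
\begin{align*}
\frac{(-1)^{\binom{n}{2}}}{\lambda_0^{\binom{n}{2}}}h_{n,n+1}(\nu)\sum_{k\in W^{n+1}}\det\left(\langle \pi_{k_i}Q_{k_i},(-x)^{n+1-j}\psi_1\rangle_{\mathfrak{w}}\right)_{i,j=1}^{n+1}\det\left(\langle Q_{k_i},\pi_{\nu_j}Q_{\nu_j}\psi_2\rangle_{\mathfrak{w}}\right)_{i,j=1}^{n+1}.
\end{align*}

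Next I would apply the Cauchy--Binet formula for infinite sums (as invoked in the proof of Proposition~\ref{coherencyprop}, citing Lemma~2.1 of~\cite{CerenziaKuan}) to collapse the sum over $k\in W^{n+1}$ into a single determinant whose $(i,j)$ entry is
\begin{align*}
\sum_{m\ge 0}\langle \pi_m Q_m,(-x)^{n+1-i}\psi_1\rangle_{\mathfrak{w}}\,\langle Q_m,\pi_{\nu_j}Q_{\nu_j}\psi_2\rangle_{\mathfrak{w}}.
\end{align*}
To identify this entry I would recognize the inner sum as the orthogonal expansion of the function $(-x)^{n+1-i}\psi_1(x)$ evaluated against $\pi_{\nu_j}Q_{\nu_j}(x)\psi_2(x)$: writing $g_i(x)=(-x)^{n+1-i}\psi_1(x)=\sum_{m}\langle \pi_m Q_m,g_i\rangle_{\mathfrak{w}}Q_m(x)$ with convergence in $L^2(\mathfrak{I},\mathfrak{w})$, and pairing with $\pi_{\nu_j}Q_{\nu_j}\psi_2\in L^2(\mathfrak{I},\mathfrak{w})$ (here compactness of $\mathfrak{I}$ guarantees all these functions are bounded, hence square-integrable, and lets me interchange the sum with the integral), the entry becomes $\langle (-x)^{n+1-i}\psi_1,\pi_{\nu_j}Q_{\nu_j}\psi_2\rangle_{\mathfrak{w}}=\langle \pi_{\nu_j}Q_{\nu_j},(-x)^{n+1-i}\psi_1\psi_2\rangle_{\mathfrak{w}}$. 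Plugging back in, the resulting expression is exactly $\frac{(-1)^{\binom{n}{2}}}{\lambda_0^{\binom{n}{2}}}\det\left(\langle \pi_{\nu_i}Q_{\nu_i},(-x)^{n+1-j}\psi_1\psi_2\rangle_{\mathfrak{w}}\right)_{i,j=1}^{n+1}h_{n,n+1}(\nu)$ (after transposing the determinant, which is harmless), i.e. $\mathcal{M}^{\psi_1\psi_2}_{n,n+1}(\nu)$, since $\psi_1\psi_2=p_1p_2 e^{-(t_1+t_2)x}$ is again of the admissible form.

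The main obstacle, and the only place any real care is needed, is the justification of the two analytic interchanges: the termwise summation $\sum_m \langle \pi_m Q_m, g_i\rangle_{\mathfrak{w}} Q_m$ must converge in a sense strong enough to pair against $\pi_{\nu_j}Q_{\nu_j}\psi_2$ and pull the sum out of the $\mathfrak{w}$-integral, and the infinite Cauchy--Binet identity requires absolute summability of the relevant series. Both are handled by compactness of $\mathfrak{I}$: the orthogonal expansion of $g_i(x)=(-x)^{n+1-i}p_1(x)e^{-t_1 x}$ converges uniformly on $\mathfrak{I}$ by the Karlin--McGregor uniform-convergence estimate recorded in Section~\ref{sectionorthogonalpolynomials} (the bound $\sum_k|\langle Q_k,f\rangle_{\mathfrak{w}}||Q_k(x)|\pi(k)\le e^{t|x|}\sum_i|c_i^m|Q_i(-|x|)$ applied to $f=g_i$), so uniform convergence on the compact set $\mathfrak{I}$ legitimizes both the integral interchange and, combined with the decay $\langle Q_n,f\rangle_{\mathfrak{w}}\to 0$ noted there, the application of the infinite Cauchy--Binet formula exactly as in Proposition~\ref{coherencyprop}. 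This is precisely the role of the hypothesis that $\mathfrak{I}$ be compact. Once these interchanges are granted, the computation is a string of determinant manipulations with no further subtlety, and the $(n,n)$ case is verbatim the same with hats.
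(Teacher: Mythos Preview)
Your proof is correct and follows essentially the same route as the paper's: cancel the $h_{n,n+1}(k)$ factors, apply the infinite Cauchy--Binet identity, and then use the uniform convergence (on the compact $\mathfrak{I}$) of the orthogonal expansion for functions of the form $p(x)e^{-tx}$ to collapse the inner sum into a single inner product. The only cosmetic difference is that the paper expands $Q_{\nu_i}\psi_2$ in the $\{Q_m\}$ basis and pairs against $(-x)^{n+1-j}\psi_1$, whereas you expand $(-x)^{n+1-i}\psi_1$ and pair against $\pi_{\nu_j}Q_{\nu_j}\psi_2$; since both factors are of the admissible form this choice is immaterial and just accounts for the harmless transpose you noted.
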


\begin{proof}
We only prove (\ref{evolution1}), as (\ref{evolution2}) is completely analogous. The passage to the second equality below first uses the Cauchy-Binet formula and secondly the uniform convergence of the orthogonal decomposition on compacts, in order to justify the interchange $\sum \langle \cdot , \cdot \rangle_{\mathfrak{w}}=\langle \sum \cdot , \cdot \rangle_{\mathfrak{w}}$, of summation and integration,
\begin{align*}
\sum_{k \in W^{n+1}}^{}\mathcal{M}^{\psi_1}_{n,n+1}(k)\mathfrak{P}^{\psi_2}_{n,n+1}(k,\nu)&=(-1)^{\binom{n}{2}}\lambda_0^{\binom{n}{2}}h_{n,n+1}(\nu)\sum_{k \in W^{n+1}}^{}\det\left(\langle \pi_{k_i}Q_{k_i} , (-x)^{n+1-j}\psi_1 \rangle_{\mathfrak{w}}\right)^{n+1}_{i,j=1} \det\left(\langle Q_{k_i} , \pi_{\nu_j}Q_{\nu_j}\psi_2 \rangle_{\mathfrak{w}}\right)^{n+1}_{i,j=1}\\
&=(-1)^{\binom{n}{2}}\lambda_0^{\binom{n}{2}}h_{n,n+1}(\nu)\det\left(\langle \sum_{m\ge 0}^{}\langle \pi_m Q_m , \pi_{\nu_i}Q_{\nu_i}\psi_2 \rangle_{\mathfrak{w}}Q_m, (-x)^{n+1-j}\psi_1 \rangle_{\mathfrak{w}}\right)_{i,j=1}^{n+1}\\
&=(-1)^{\binom{n}{2}}\lambda_0^{\binom{n}{2}} h_{n,n+1}(\nu)\det\left(\langle \pi_{\nu_i}Q_{\nu_i} , (-x)^{n+1-j}\psi_1\psi_2 \rangle_{\mathfrak{w}}\right)^{n+1}_{i,j=1}=\mathcal{M}^{\psi_1\psi_2}_{n,n+1}(\nu).
\end{align*}
\end{proof}

\begin{rmk}
In fact, the argument above gives,
\begin{align*}
\sum_{k \in W^{n+1}}^{}\mathfrak{P}^{\psi_1}_{n,n+1}(\mu,k)\mathfrak{P}^{\psi_2}_{n,n+1}(k,\nu)&=\mathfrak{P}^{\psi_1\psi_2}_{n,n+1}(\mu,\nu) , \ \forall \mu,\nu \in W^{n+1}, \\
\sum_{k \in W^{n}}^{}\mathfrak{P}^{\psi_1}_{n,n}(\mu,k)\mathfrak{P}^{\psi_2}_{n,n}(k,\nu)&=\mathfrak{P}^{\psi_1\psi_2}_{n,n}(\mu,\nu) , \ \forall \mu,\nu \in W^{n}.
\end{align*}
Then (\ref{evolution1}) and (\ref{evolution2}) become a consequence of (\ref{coherentmeasuresevolutionrelation}).
\end{rmk}

\begin{rmk}
The assumptions that $\mathfrak{I}$ is compact and that $\psi_1,\psi_2$ are of the special form $p(x)e^{-tx} $ could of course be removed as long as the interchange of summation and integration in the second equality above can be justified.
\end{rmk}

Finally, we give a linear algebraic proof of the following intertwining relations. Although, we have already obtained these equalities in the special case $\psi_t(x)=e^{-tx}$ in Corollary \ref{corinter} by other means and for general functions $\psi$ will not be used in the sequel; we decided to present it, since it sheds some light on the relations between the dual Karlin-McGregor polynomials that are essential for these commutation relations to hold.

\begin{prop}\label{generalfunctionintertwining}
Let $\psi$ be as in the statement of Proposition \ref{evolutionprop} and moreover assume $\psi(0)=1$. Then,
\begin{align*}
\mathfrak{P}^{\psi}_{n,n+1}\Lambda^{h_{n,n}}_{n,n+1}&=\Lambda^{h_{n,n}}_{n,n+1}\mathfrak{P}^{\psi}_{n,n},\\
\mathfrak{P}^{\psi}_{n,n}\Lambda^{h_{n-1,n}}_{n,n}&=\Lambda^{h_{n-1,n}}_{n,n}\mathfrak{P}^{\psi}_{n-1,n}.
\end{align*}
\end{prop}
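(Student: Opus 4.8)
The plan is to prove the two intertwining relations in Proposition~\ref{generalfunctionintertwining} by a direct linear algebra computation, reducing both sides to a single determinant via the Cauchy--Binet formula (for infinite sums, as in Lemma~2.1 of \cite{CerenziaKuan}) and then matching the two expressions using the branching rules of Proposition~\ref{propositionbranching} together with the algebraic identities between $Q_i$ and $\hat Q_i$ collected in Proposition~\ref{polynomialrelations}. Since the two relations are entirely analogous, I would carry out the first one, $\mathfrak{P}^{\psi}_{n,n+1}\Lambda^{h_{n,n}}_{n,n+1}=\Lambda^{h_{n,n}}_{n,n+1}\mathfrak{P}^{\psi}_{n,n}$, in full and indicate the (purely notational) modifications for the second.

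First I would fix $k\in W^{n+1}$ and $\nu\in W^{n}$ and write out the left-hand side as
\[
\big(\mathfrak{P}^{\psi}_{n,n+1}\Lambda^{h_{n,n}}_{n,n+1}\big)(k,\nu)=\sum_{\mu\in W^{n+1}}\mathfrak{P}^{\psi}_{n,n+1}(k,\mu)\,\Lambda^{h_{n,n}}_{n,n+1}(\mu,\nu).
\]
Using the determinantal formula $\mathfrak{P}^{\psi}_{n,n+1}(k,\mu)=\frac{h_{n,n+1}(\mu)}{h_{n,n+1}(k)}\det\big(\langle Q_{k_i},\pi_{\mu_j}Q_{\mu_j}\psi\rangle_{\mathfrak{w}}\big)_{i,j=1}^{n+1}$ and the determinantal expression for the link $\Lambda^{h_{n,n}}_{n,n+1}(\mu,\nu)=\frac{\det(\hat\phi(\mu_i,\nu_j))_{i,j=1}^{n+1}\,h_{n,n}(\mu)}{h_{n,n+1}(\mu)}$ from the Corollary after Proposition~\ref{propositionbranching}, the factors $h_{n,n+1}(\mu)$ cancel, and Cauchy--Binet collapses the sum over $\mu$ into
\[
\frac{h_{n,n}(\mu)\ \text{(wait—this has $\mu$)}}{\cdots}
\]
— more precisely, the summand factors as a product of two $(n+1)\times(n+1)$ determinants whose common index is $\mu$, with one weight $\prod_i\hat\pi_{\mu_i}h_{n,n}$ that must be distributed (one $\hat\pi_{\mu_j}Q_{\mu_j}$ into each column of the first determinant and $h_{n,n}(\mu)$ must itself be expanded via the branching rule \eqref{branching2}, or handled by recognizing $h_{n,n}$ as a Karlin--McGregor polynomial at $\vec 0$). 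The cleaner route is to use the identity $\mathfrak{P}^{\psi}_{\bullet,\star}(k_0,\nu)=\mathcal{M}^{\psi}_{\bullet,\star}(\nu)$ together with the semigroup property of the $\mathfrak{P}^\psi$ from the Remark after Proposition~\ref{evolutionprop}, but since the statement is about general $k$ (not just $k_0$), one genuinely needs the Cauchy--Binet manipulation at the level of the $\mathfrak{P}^\psi$ kernels. So: apply Cauchy--Binet to rewrite the left side as $\frac{h_{n,n}(\nu)}{h_{n,n+1}(k)}\det\big(M_{ij}\big)_{i,j=1}^{n+1}$ where $M_{ij}=\sum_{m\ge 0}\langle Q_{k_i},\hat\pi_m\hat Q_m\psi\rangle_{\hat{\mathfrak{w}}}\,(\text{something})$ — the precise entries come from convolving $\langle Q_{k_i},\pi_m Q_m\psi\rangle_{\mathfrak{w}}$ against $\hat\phi(m,\nu_j)$, and here I would invoke Proposition~\ref{polynomialrelations}(3)(4) to turn that convolution into $\langle Q_{k_i},\hat\pi_{\nu_j}\hat Q_{\nu_j}\psi\rangle$-type quantities, exactly as in the proof of Proposition~\ref{coherencyprop}.

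In parallel I would expand the right-hand side $\big(\Lambda^{h_{n,n}}_{n,n+1}\mathfrak{P}^{\psi}_{n,n}\big)(k,\nu)=\sum_{\rho\in W^{n}}\Lambda^{h_{n,n}}_{n,n+1}(k,\rho)\mathfrak{P}^{\psi}_{n,n}(\rho,\nu)$, again cancel the $h_{n,n}(\rho)$ factors, apply Cauchy--Binet to the sum over $\rho$ (now $n\times n$ determinants), and use the branching rule \eqref{branching1} — which expresses $h_{n,n+1}(k,\cdot)|_0$, equivalently the $(n+1)$-dimensional Karlin--McGregor polynomial evaluated with a zero argument, as a sum over $W^{n,n+1}$ of products of $\hat\pi$'s times $n$-dimensional $\hat{\mathfrak{Q}}$'s — to absorb the link into the determinant. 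The key point is that both sides reduce to the \emph{same} $(n+1)\times(n+1)$ determinant with entries built from $\langle Q_{k_i},\hat\pi_{\nu_j}\hat Q_{\nu_j}\psi\rangle_{\hat{\mathfrak{w}}}$ (for $j\le n$) and a last column/row carrying the "virtual variable" contribution $\hat\phi(\cdot,\mathrm{virt})=1$ that forces $\delta_{j,n+1}$ when $\psi(0)=1$, precisely mirroring the computation in Proposition~\ref{coherencyprop} and in the normalization Lemma of Section~\ref{sectionevolutionoperators}. Matching the two determinants, using $\psi(0)=1$ and $Q_i(0)=\hat Q_i(0)/(\sum_{k\le i}\pi_k)$ normalizations where needed, finishes the first identity; the second identity $\mathfrak{P}^{\psi}_{n,n}\Lambda^{h_{n-1,n}}_{n,n}=\Lambda^{h_{n-1,n}}_{n,n}\mathfrak{P}^{\psi}_{n-1,n}$ follows by the same scheme, swapping the roles of $(Q,\mathfrak{w},\phi)$ and $(\hat Q,\hat{\mathfrak{w}},\hat\phi)$ and using Proposition~\ref{polynomialrelations}(1)(2) and branching rule \eqref{branching2} instead.

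The main obstacle I anticipate is \emph{bookkeeping the convolution and the weight distribution correctly}: when Cauchy--Binet is applied, the weight $\prod_{i=1}^{n+1}\hat\pi_{\mu_i}h_{n,n}(\mu)$ (resp.\ $\prod_{i=1}^{n}\pi_{\rho_i}h_{n-1,n}(\rho)$) sitting inside the link does not split column-by-column on its own — one has to first expand $h_{n,n}$ (resp.\ $h_{n-1,n}$) itself as a Karlin--McGregor determinant at $\vec 0$ via Proposition~\ref{propositionbranching}, producing a \emph{double} sum, and then argue (by antisymmetry, as on p.~1116 of \cite{KarlinMcGregorDeterminants} and in the proof of Proposition~\ref{propositionbranching}) that the inner summation range can be enlarged from the interlacing chamber to all of $\mathbb{N}^{n}$ so that Cauchy--Binet applies cleanly. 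Managing these nested sums, together with the alternating signs $(-1)^{\binom{n}{2}}\lambda_0^{\binom{n}{2}}$ versus $(-1)^{\binom{n-1}{2}}\lambda_0^{\binom{n-1}{2}}$ bookkeeping that must cancel between the two sides (exactly as it does in Proposition~\ref{coherencyprop}), is the delicate part; everything else is a mechanical determinant manipulation. Since compactness of $\mathfrak{I}$ and the special form $\psi=p(x)e^{-tx}$ guarantee the uniform convergence needed to interchange $\sum$ and $\langle\cdot,\cdot\rangle$, there are no further analytic subtleties.
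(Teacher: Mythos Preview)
Your overall strategy is the paper's strategy: apply Cauchy--Binet to each side and compare the resulting $(n+1)\times(n+1)$ determinants using the identities of Proposition~\ref{polynomialrelations}. However, you have misread the indices in the link, and this has led you to invent a phantom obstacle while missing the actual one.

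On the left-hand side, after you correctly cancel $h_{n,n+1}(\mu)$, the remaining $h$ factor is $h_{n,n}(\nu)$, not $h_{n,n}(\mu)$; likewise the $\hat\pi$'s carried by $\hat\phi$ are $\hat\pi_{\nu_j}$, not $\hat\pi_{\mu_i}$. So there is \emph{no} $\mu$-dependent weight left outside the two determinants, Cauchy--Binet applies immediately, and your ``main obstacle'' (distributing $\prod_i\hat\pi_{\mu_i}h_{n,n}(\mu)$, expanding $h_{n,n}$ via a branching rule, enlarging summation ranges by antisymmetry) simply does not arise. The same cancellation happens on the right-hand side with $h_{n,n}(\rho)$.

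What you have not identified is the actual crux. After Cauchy--Binet, the left-hand side is an $(n+1)\times(n+1)$ determinant whose last column is all $1$'s (from $\hat\phi(\mathrm{virt},\cdot)=1$ and $\psi(0)=1$) and whose other entries, via Proposition~\ref{polynomialrelations}(3), take the form $\mathfrak a_{ij}+\mathfrak b_j$ with $\mathfrak a_{ij}=\langle \hat Q_{\nu_j},\hat\pi_{\nu_j}Q_{k_i}\psi\rangle_{\mathfrak w}$. On the right-hand side you have an $(n+1)\times(n+1)$ determinant paired with an $n\times n$ one; the paper handles this size mismatch by expanding $\det(\hat\phi(z_j,k_i))_{i,j=1}^{n+1}$ along its last column of $1$'s, applying Cauchy--Binet to each $n\times n$ cofactor, and then reassembling the Laplace expansion into an $(n+1)\times(n+1)$ determinant whose last column is again all $1$'s and whose other entries, via Proposition~\ref{polynomialrelations}(2), are $\mathfrak a_{ij}+\mathfrak c_j$ with the \emph{same} $\mathfrak a_{ij}$ but a \emph{different} $j$-only additive term $\mathfrak c_j$. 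The two determinants are then seen to be equal by subtracting $(\mathfrak b_j-\mathfrak c_j)$ times the last all-$1$'s column from the $j$-th column. This column-operation step is the genuine trick; your proposal gestures at ``matching the two determinants'' but does not isolate the fact that the entries differ by a column-constant and that the all-$1$'s column is what absorbs the discrepancy.
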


\begin{proof}
We only prove the first relation, as the second is analogous. Observe that (noting also that the dummy variable on the left is $(n+1)$-dimensional while on the left $n$-dimensional),
\begin{align*}
\sum_{z \in W^{n+1}}^{}\mathfrak{P}^{\psi}_{n,n+1}(k,z)\Lambda^{h_{n,n}}_{n,n+1}(z,\nu)&=\sum_{z \in W^{n}}^{}\Lambda^{h_{n,n}}_{n,n+1}(k,z)\mathfrak{P}^{\psi}_{n,n}(z,\nu),
\end{align*}
is equivalent to,
\begin{align*}
\sum_{z \in W^{n+1}}^{}\det\left(\langle Q_{k_i} , \pi_{z_j}Q_{z_j}\psi \rangle_{\mathfrak{w}}\right)^{n+1}_{i,j=1}\det\left(\hat{\phi}(\nu_j,z_i)\right)^{n+1}_{i,j=1}&=\sum_{z \in W^{n}}^{}\det\left(\hat{\phi}(z_j,k_i)\right)^{n+1}_{i,j=1}\det\left(\langle \hat{Q}_{z_i} , \hat{\pi}_{\nu_j}\hat{Q}_{\nu_j}\psi \rangle_{\hat{\mathfrak{w}}}\right)^{n}_{i,j=1}.
\end{align*}
The left hand side, by the Cauchy-Binet formula is equal to,
\begin{align*}
\det \left(\langle Q_{k_i} , \sum_{z \ge 0}\pi_{z}Q_{z}\hat{\phi}(\nu_j,z)\psi \rangle_{\mathfrak{w}}\right)_{i,j=1}^{n+1}.
\end{align*}
For $j \le n$, the entries of the matrix are given by (recall that $Q_{k_i}(0)=\psi(0)=1$),
\begin{align*}
\sum_{z=\nu_{j}+1}^{}\langle\pi_{z}Q_{z} , -\hat{\pi}_{\nu_j}Q_{k_i} \psi \rangle_{\mathfrak{w}}=\langle\hat{Q}_{\nu_j} , \hat{\pi}_{\nu_j}Q_{k_i} \psi -\hat{\pi}_{\nu_j}Q_{k_i}(0) \psi(0)\rangle_{\mathfrak{w}}=\langle\hat{Q}_{\nu_j} , \hat{\pi}_{\nu_j}Q_{k_i} \psi\rangle_{\mathfrak{w}} -\langle\hat{Q}_{\nu_j} , \hat{\pi}_{\nu_j}\rangle_{\mathfrak{w}}=\mathfrak{a}_{ij}+\mathfrak{b}_j.
\end{align*}
While, the entries of the last column $j=n+1$ are,
\begin{align*}
\langle Q_{k_i} , \sum_{z\ge 0}^{}\pi_z Q_z \psi \rangle_{\mathfrak{w}}=Q_{k_i}(0)\psi(0)=1.
\end{align*}
To work on the right hand side, we first expand $\det\left(\hat{\phi}(z_j,k_i)\right)^{n+1}_{i,j=1}$ in the last column which consists of all $1$'s. The $l^{th}$-summand in this expansion is given by,
\begin{align*}
(-1)^{n+1+l}\sum_{z \in W^n}^{}\det\left(\hat{\phi}(z_j,k_i)\right)_{1\le i \neq l \le n+1,1\le j \le n}\det\left(\langle \hat{Q}_{z_i} , \hat{\pi}_{\nu_j}\hat{Q}_{\nu_j}\psi \rangle_{\hat{\mathfrak{w}}}\right)^{n}_{i,j=1}\\
=(-1)^{n+1+l}\det\left(\sum_{z \ge 0}^{}\hat{\phi}(z,k_i)\langle \hat{Q}_{z} , \hat{\pi}_{\nu_j}\hat{Q}_{\nu_j}\psi \rangle_{\hat{\mathfrak{w}}}\right)_{1\le i \neq l \le n+1,1\le j \le n}.
\end{align*}
The entries of the matrix in the determinant are given by,
\begin{align*}
-\langle \sum_{z=0}^{k_{i}-1}\hat{\pi}_z\hat{Q}_{z} , \hat{\pi}_{\nu_j}\hat{Q}_{\nu_j}\psi \rangle_{\hat{\mathfrak{w}}}=\langle (Q_{k_i}-1) \frac{\lambda_0}{x}, \hat{\pi}_{\nu_j}\hat{Q}_{\nu_j}\psi \rangle_{\hat{\mathfrak{w}}}=\langle (Q_{k_i}-1), \hat{\pi}_{\nu_j}\hat{Q}_{\nu_j}\psi \rangle_{{\mathfrak{w}}}=\mathfrak{a}_{ij}+\mathfrak{c}_j.
\end{align*}
Now, by summing over $l$, we obtain the determinant of an $(n+1) \times (n+1)$ matrix with the last column being all $1$'s and the other entries being $\mathfrak{a}_{ij}+\mathfrak{c}_j$.

By column operations, more precisely by subtracting a multiple of the last all $1$'s column from the each of the rest, the equality of the left and right hand sides is immediate.
\end{proof}

\begin{rmk}
Proposition \ref{coherencyprop} can also be seen as a corollary of Proposition \ref{generalfunctionintertwining} using (\ref{coherentmeasuresevolutionrelation}).
\end{rmk}

\subsection{Positivity of evolution operators and coherent measures}\label{subsectionpositivityofcoherent}

We now arrive at the question of positivity of the coherent measures. It will in fact be easier to consider a more general problem, namely to address this question first for the evolution operators. 

As already observed by (\ref*{spectralexpansiontransition}), for $\psi(z)=\phi_t(z)=e^{-tz}$ the determinants $\det\left(\langle Q_{k_i} , \pi_{\nu_i}Q_{\nu_i}\phi_t \rangle_{\mathfrak{w}}\right)^{n+1}_{i,j=1}$ and $\det\left(\langle \hat{Q}_{k_i} , \hat{\pi}_{\nu_i}\hat{Q}_{\nu_i}\phi_t \rangle_{\hat{\mathfrak{w}}}\right)^{n}_{i,j=1}$ are exactly the transition densities of the Karlin-McGregor semigroups associated to $n+1$ birth and death chains with generator $\mathcal{D}$ and $n$ birth and death chains with generator $\hat{\mathcal{D}}$ respectively, killed when they collide and so they are positive. Hence, since $h_{n,n}$ and $h_{n,n+1}$ are positive as well we obtain:
\begin{lem}
$\mathfrak{P}^{\phi_t}_{n,n+1}$ and $\mathfrak{P}^{\phi_t}_{n,n}$ are positive, $\forall t \ge 0$.
\end{lem}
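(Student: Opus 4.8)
The plan is to recognize that, specialized to $\psi=\phi_t$, the determinants occurring in the definitions of $\mathfrak{P}^{\phi_t}_{n,n+1}$ and $\mathfrak{P}^{\phi_t}_{n,n}$ are exactly the Karlin--McGregor transition kernels, whose nonnegativity is already available to us. First I would unpack the matrix entries by means of the spectral expansion $(\ref{spectralexpansiontransition})$: for $i,j\in\mathbb{N}$ and $t\ge 0$ we have
\begin{align*}
\langle Q_i,\pi_jQ_j\phi_t\rangle_{\mathfrak{w}}=\pi(j)\int_{\mathfrak{I}}e^{-tx}Q_i(x)Q_j(x)\,d\mathfrak{w}(x)=p_t(i,j),
\end{align*}
and, applying the same expansion to the $\hat{\mathcal{D}}$-chain (legitimate because both moment problems were assumed determinate in Section~\ref{sectionorthogonalpolynomials}), $\langle \hat{Q}_i,\hat{\pi}_j\hat{Q}_j\phi_t\rangle_{\hat{\mathfrak{w}}}=\hat{p}_t(i,j)$. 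Hence $\det\left(\langle Q_{k_i},\pi_{\nu_j}Q_{\nu_j}\phi_t\rangle_{\mathfrak{w}}\right)_{i,j=1}^{n+1}=p^{n+1}_t(k,\nu)$ and $\det\left(\langle \hat{Q}_{k_i},\hat{\pi}_{\nu_j}\hat{Q}_{\nu_j}\phi_t\rangle_{\hat{\mathfrak{w}}}\right)_{i,j=1}^{n}=\hat{p}^{n}_t(k,\nu)$, so that
\begin{align*}
\mathfrak{P}^{\phi_t}_{n,n+1}(k,\nu)=\frac{h_{n,n+1}(\nu)}{h_{n,n+1}(k)}\,p^{n+1}_t(k,\nu),\qquad \mathfrak{P}^{\phi_t}_{n,n}(k,\nu)=\frac{h_{n,n}(\nu)}{h_{n,n}(k)}\,\hat{p}^{n}_t(k,\nu).
\end{align*}

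Next I would invoke positivity of the Karlin--McGregor kernels themselves: for $k,\nu\in W^{n+1}(I)$ the quantity $p^{n+1}_t(k,\nu)=\det\left(p_t(k_i,\nu_j)\right)_{i,j=1}^{n+1}$ is the probability that $n+1$ independent $\mathcal{D}$-chains started from the coordinates of $k$ occupy the coordinates of $\nu$ at time $t$ without having collided in the meantime --- precisely the non-coincidence identity translated from display~(3) of \cite{KarlinMcGregorCoincidence} and already used in the proof of Proposition~\ref{propflowfdd} --- hence $p^{n+1}_t(k,\nu)\ge 0$; the same reasoning for the $\hat{\mathcal{D}}$-chain gives $\hat{p}^{n}_t(k,\nu)\ge 0$ for $k,\nu\in W^n(I)$. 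Finally, since $h_{n,n+1}$ and $h_{n,n}$ are strictly positive (by the results of Section~\ref{multivariatepolynomialssection}; cf.\ relations $(\ref{alternatingharmonic1})$ and $(\ref{alternatingharmonic2})$), the prefactors $h_{n,n+1}(\nu)/h_{n,n+1}(k)$ and $h_{n,n}(\nu)/h_{n,n}(k)$ are strictly positive, and combining this with the two displays above yields $\mathfrak{P}^{\phi_t}_{n,n+1}(k,\nu)\ge 0$ and $\mathfrak{P}^{\phi_t}_{n,n}(k,\nu)\ge 0$ for all $t\ge 0$, which is the assertion.

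I do not expect a genuine obstacle: once the matrix entries are identified with one-dimensional transition densities through $(\ref{spectralexpansiontransition})$, the statement collapses to the classical nonnegativity of Karlin--McGregor determinants. The only points worth a sentence of care are checking that the spectral expansion applies to the dual chain (granted by the determinacy hypotheses of Section~\ref{sectionorthogonalpolynomials}) and observing that, unlike the later arguments built on uniform convergence of orthogonal decompositions, this lemma needs no compactness of $\mathfrak{I}$, since $\langle Q_i,\pi_jQ_j\phi_t\rangle_{\mathfrak{w}}=p_t(i,j)$ is an equality of honest real numbers with no interchange of summation and integration involved.
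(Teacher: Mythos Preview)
Your proposal is correct and follows essentially the same approach as the paper: identify the matrix entries with one-dimensional transition densities via the spectral expansion, so the determinants become the Karlin--McGregor transition kernels $p^{n+1}_t(k,\nu)$ and $\hat{p}^{n}_t(k,\nu)$, and then use their nonnegativity together with the strict positivity of $h_{n,n+1}$ and $h_{n,n}$. Your added remarks on determinacy of the dual moment problem and on the fact that compactness of $\mathfrak{I}$ is not needed here are accurate and do not depart from the paper's reasoning.
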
 
Our goal now, is to find conditions on $a$ so that with $\psi_a(z)=1-az$ the operator $\mathfrak{P}^{\psi_a}_{n,n+1}$ is positive. We make use of an argument found in Proposition 5.1 of \cite{CerenziaKuan}, that is recalled briefly here (see Proposition 5.1 part (4) of \cite{CerenziaKuan}, in particular the paragraph between equations (23) and (24) therein, for the details). Our computations below, are quite simple (compared to \cite{CerenziaKuan}, although we do follow the same argument) taking advantage of the relation between the normalization constants and the rates of the chain. First, we calculate for $i,j \in \mathbb{N}$,
\begin{align*}
\langle Q_{i} , \pi_{j}Q_{j}(1-az) \rangle_{\mathfrak{w}}&=\delta_{i,j}+a\pi_{j}\langle Q_{i} , \mu_{j}Q_{j-1}-(\lambda_j+\mu_j)Q_j+\lambda_jQ_{j+1} \rangle_{\mathfrak{w}}\\
&=\delta_{i,j}+a\delta_{i,j-1}\frac{1}{\pi_{j-1}}\pi_j\mu_j-a(\lambda_j+\mu_j)\delta_{i,j}+a\delta_{i,j+1}\lambda_j\frac{1}{\pi_{j+1}}\pi_j\\
&=\delta_{i,j}+a\lambda_{j-1}\delta_{i,j-1}-a(\mu_j+\lambda_j)\delta_{i,j}+a\mu_{j+1}\delta_{i,j+1},
\end{align*}
since $\frac{\pi_j}{\pi_{j-1}}=\frac{\lambda_{j-1}}{\mu_j}$.

We now, reduce the problem as in Proposition 5.1 of \cite{CerenziaKuan}. First, note that if $y_i>x_i+1$ for some $i$ then we get $\det\left(\langle Q_{x_i} , \pi_{y_j}Q_{y_j}\psi \rangle_{\mathfrak{w}}\right)_{i,j=1}^n=0$, since the resulting matrix has a $2 \times 2$ block form consisting of an off diagonal block of $0$'s and a diagonal block of $0$'s and the same happens for $x_i>y_i+1$. Thus, we must have $|x_i-y_i|\le 1$ and we can further restrict to the case $|x_i-x_{i+1}|\le 1$, for otherwise $\det\left(\langle Q_{x_i} , \pi_{y_j}Q_{y_j}\psi \rangle_{\mathfrak{w}}\right)_{i,j=1}^n$ breaks into a product of determinants with entries so that $|x_i-x_{i+1}|\le 1$. Hence, we are led to the case $x_i=\mathsf{x},x_{i+1}=\mathsf{x}+1,\cdots,$ which is the same as considering whether the determinant of the tridiagonal matrix $\{A_{i,j}\}_{i,j=\mathsf{x}}^{\mathsf{x}+m}$ with entries, for some $m\le n$,
\begin{align*}
A_{i,j}=\delta_{i,j}+a\lambda_{j-1}\delta_{i,j-1}-a(\mu_j+\lambda_j)\delta_{i,j}+a\mu_{j+1}\delta_{i,j+1}
\end{align*}
is positive. In order to answer this, we recall the following nice property of tridiagonal matrices (see page 5 of \cite{Totallynonnegativebook}): If each diagonal entry is greater than or equal to the sum of the off-diagonal entries in that row then, all its principal minors are non-negative. So, it suffices to find conditions on $a$ such that,
\begin{align*}
A_{i,i}\ge A_{i,i-1}+A_{i,i+1},
\end{align*}
or more explicitly,
\begin{align*}
1-a(\mu_i+\lambda_i)\ge a\mu_i+ a\lambda_{i}.
\end{align*}
So we need,
\begin{align*}
a\le \frac{1}{2}(\lambda_i+\mu_i)^{-1}, \forall i.
\end{align*}
Thus, by letting $C=\underset{i\ge 0}{\sup}\left(\lambda_i+\mu_i\right)$ we have proven that:
\begin{lem} \label{positivity1}
If $a\le \frac{1}{2C}$ then, $\mathfrak{P}^{\psi_a}_{n,n+1}$ is positive.
\end{lem}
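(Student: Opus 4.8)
The plan is to reduce positivity of $\mathfrak{P}^{\psi_a}_{n,n+1}(k,\nu)$ to a total-nonnegativity statement for a tridiagonal matrix. Since $h_{n,n+1}$ is strictly positive, $\mathfrak{P}^{\psi_a}_{n,n+1}(k,\nu)$ has the same sign as $\det\big(\langle Q_{k_i},\pi_{\nu_j}Q_{\nu_j}\psi_a\rangle_{\mathfrak{w}}\big)^{n+1}_{i,j=1}$, so it suffices to prove this determinant is $\ge 0$ for all $k,\nu\in W^{n+1}$.

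First I would use the three term recurrence defining the $Q_i$ to compute, for $i,j\in\mathbb{N}$,
\begin{align*}
\langle Q_i,\pi_j Q_j(1-az)\rangle_{\mathfrak{w}}=\delta_{i,j}+a\lambda_{j-1}\delta_{i,j-1}-a(\lambda_j+\mu_j)\delta_{i,j}+a\mu_{j+1}\delta_{i,j+1},
\end{align*}
using $\pi_j\mu_j=\pi_{j-1}\lambda_{j-1}$ and $\pi_j\lambda_j=\pi_{j+1}\mu_{j+1}$ (and $\mu_0=0$ to handle the boundary). Hence the matrix $\big(\langle Q_{k_i},\pi_{\nu_j}Q_{\nu_j}\psi_a\rangle_{\mathfrak{w}}\big)_{i,j}$ is exactly the submatrix of the infinite tridiagonal matrix $A$ with entries $A_{r,c}=\delta_{r,c}+a\lambda_{c-1}\delta_{r,c-1}-a(\lambda_c+\mu_c)\delta_{r,c}+a\mu_{c+1}\delta_{r,c+1}$ obtained by keeping the rows indexed by $k$ and the columns indexed by $\nu$; its off-diagonal entries $a\lambda_\bullet,a\mu_\bullet$ are nonnegative.

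Next I would invoke the band structure of $A$: a minor of a tridiagonal matrix taken along increasing row and column index sets is either zero or, after deleting the rows and columns forced to vanish, factors as a product of (nonnegative) band entries times determinants of \emph{contiguous} principal blocks $\{A_{r,c}\}_{r,c=\mathsf{x}}^{\mathsf{x}+m}$; in particular it vanishes unless $|k_i-\nu_i|\le 1$ for all $i$. It therefore remains to show each contiguous tridiagonal block of $A$ has nonnegative determinant, and for this I would use the classical fact (p.~5 of \cite{Totallynonnegativebook}) that a tridiagonal matrix whose every diagonal entry dominates the sum of the off-diagonal entries in its row has all principal minors nonnegative. Here the dominance inequality is $1-a(\lambda_i+\mu_i)\ge a\lambda_i+a\mu_i$, i.e. $a\le\tfrac12(\lambda_i+\mu_i)^{-1}$ for every $i$, which holds precisely because $a\le\tfrac{1}{2C}$ with $C=\sup_{i\ge 0}(\lambda_i+\mu_i)$. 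Combining the two steps gives the claim.

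The main obstacle is the combinatorial bookkeeping in the third step: one must make the factorization of an arbitrary tridiagonal minor into a product of contiguous principal minors and band entries fully precise, identify exactly which minors are forced to vanish by the bandwidth-one structure, and be careful at the left boundary index $0$ (where $\mu_0=0$). This is routine — it is essentially the reduction already sketched in the paragraph preceding the statement — but it is the only place where an actual argument, rather than a one-line computation, is required.
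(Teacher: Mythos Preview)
Your proposal is correct and follows essentially the same approach as the paper: compute the inner product via the three-term recurrence, reduce an arbitrary minor of the resulting tridiagonal array to products of contiguous blocks (vanishing otherwise), and then apply the diagonal-dominance criterion from \cite{Totallynonnegativebook} to conclude. The paper's argument is exactly this, with the same reduction and the same row-dominance inequality $1-a(\lambda_i+\mu_i)\ge a(\lambda_i+\mu_i)$.
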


\begin{rmk}\label{remarkendpoint}
We note here, the close connection between the condition $a\le \frac{1}{2C}$ and the true interval of orthogonality. Namely, if the support of the measure $\mathfrak{w}$ is given by $\textnormal{supp}(\mathfrak{w})=[I^-,I^+]$, with $0\le I^- < I^+\le \infty$, then Theorem 14 of \cite{Vandoornoscillation} gives that ( $c_n$ therein is equal to, in our notation, $\mu_n+\lambda_n$),
\begin{align*}
\frac{1}{2}\left(I^-+I^+\right)\le \underset{n \to \infty}{\limsup}\{\lambda_n+\mu_n\}
\end{align*}
and thus,
\begin{align*}
I^+\le 2\underset{n \to \infty}{\limsup}\{\lambda_n+\mu_n\}\le 2 C.
\end{align*}
In particular, since $2C \le \frac{1}{a}$ the root of $\psi_a(z)=1-az$ is not in $[I^-,I^+]$.
\end{rmk}

Moreover, with analogous considerations if we let $\hat{C}=\underset{i\ge 0}{\sup}\left(\hat{\lambda}_i+\hat{\mu}_i\right)$ we obtain the following lemma:

\begin{lem} \label{positivity2}
If $b\le \frac{1}{2\hat{C}}$ then $\mathfrak{P}^{\psi_b}_{n,n}$ is positive.
\end{lem}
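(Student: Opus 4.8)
The proof of Lemma \ref{positivity2} is a word-for-word translation of the argument just given for Lemma \ref{positivity1}, with the roles of the $\mathcal{D}$-chain and the $\hat{\mathcal{D}}$-chain interchanged; so the plan is simply to spell out that dualization carefully. First I would compute, for $i,j\in\mathbb{N}$, the matrix entries
\[
\langle \hat{Q}_i , \hat{\pi}_j\hat{Q}_j(1-bz)\rangle_{\hat{\mathfrak{w}}}
=\delta_{i,j}+b\langle \hat{Q}_i , \hat{\pi}_j(-z)\hat{Q}_j\rangle_{\hat{\mathfrak{w}}},
\]
and then use the three-term recurrence for the dual polynomials, $-x\hat{Q}_n(x)=\hat{\mu}(n)\hat{Q}_{n-1}(x)-(\hat{\lambda}(n)+\hat{\mu}(n))\hat{Q}_n(x)+\hat{\lambda}(n)\hat{Q}_{n+1}(x)$ (equivalently the recurrence displayed in Section \ref{sectionorthogonalpolynomials} with $\hat{\lambda}(n)=\mu(n+1)$, $\hat{\mu}(n)=\lambda(n)$), together with orthogonality $\langle \hat{Q}_i,\hat{Q}_j\rangle_{\hat{\mathfrak{w}}}=\hat{\pi}_j^{-1}\delta_{ij}$ and the ratio identity $\hat{\pi}_j/\hat{\pi}_{j-1}=\hat{\lambda}_{j-1}/\hat{\mu}_j$. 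This yields the tridiagonal shape
\[
\langle \hat{Q}_i , \hat{\pi}_j\hat{Q}_j(1-bz)\rangle_{\hat{\mathfrak{w}}}
=\delta_{i,j}+b\hat{\lambda}_{j-1}\delta_{i,j-1}-b(\hat{\mu}_j+\hat{\lambda}_j)\delta_{i,j}+b\hat{\mu}_{j+1}\delta_{i,j+1},
\]
exactly as before but with hats everywhere.

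Next I would run the same reduction used in the proof of Lemma \ref{positivity1}: if $|x_i-y_i|>1$ for some $i$, or if the index set $\{x_i\}$ has a gap bigger than $1$, the determinant $\det\big(\langle \hat{Q}_{x_i},\hat{\pi}_{y_j}\hat{Q}_{y_j}\psi_b\rangle_{\hat{\mathfrak{w}}}\big)$ either vanishes or factors into a product of such determinants for consecutive blocks of indices; hence it suffices to show that the principal minors of the tridiagonal matrix $\{A_{i,j}\}_{i,j=\mathsf{x}}^{\mathsf{x}+m}$ with $A_{i,j}=\delta_{i,j}+b\hat{\lambda}_{j-1}\delta_{i,j-1}-b(\hat{\mu}_j+\hat{\lambda}_j)\delta_{i,j}+b\hat{\mu}_{j+1}\delta_{i,j+1}$ are non-negative. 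By the cited property of tridiagonal matrices (page 5 of \cite{Totallynonnegativebook}: diagonal dominance by rows with non-negative entries implies non-negative principal minors), a sufficient condition is $A_{i,i}\ge A_{i,i-1}+A_{i,i+1}$, i.e. $1-b(\hat{\mu}_i+\hat{\lambda}_i)\ge b\hat{\mu}_i+b\hat{\lambda}_i$, that is $b\le \tfrac12(\hat{\lambda}_i+\hat{\mu}_i)^{-1}$ for all $i$. Taking $\hat{C}=\sup_{i\ge 0}(\hat{\lambda}_i+\hat{\mu}_i)$, the bound $b\le \tfrac{1}{2\hat{C}}$ makes every such determinant non-negative, and since $h_{n,n}$ is strictly positive it follows that $\mathfrak{P}^{\psi_b}_{n,n}(k,\nu)\ge 0$ for all $k,\nu\in W^n$.

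There is essentially no hard part here — the only thing to be careful about is bookkeeping: making sure the recurrence used is the one for $\hat{Q}$ (not $Q$), that the ratio identity is the hatted one, and that the off-diagonal entries $A_{i,i\pm1}$ come out with the right rates $\hat{\lambda}_{i-1}$ and $\hat{\mu}_{i+1}$ so that the dominance inequality reduces cleanly to $b\le \tfrac12(\hat{\lambda}_i+\hat{\mu}_i)^{-1}$. One could also remark, paralleling Remark \ref{remarkendpoint}, that $b\le \tfrac{1}{2\hat{C}}$ forces the root $1/b$ of $\psi_b$ to lie outside the common interval of orthogonality $\mathfrak{I}=\textnormal{supp}(\hat{\mathfrak{w}})=\textnormal{supp}(\mathfrak{w})$, but this is not needed for the statement itself.
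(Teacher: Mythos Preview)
Your proposal is correct and is exactly the approach the paper takes: the paper simply writes ``with analogous considerations'' and states the lemma, and your dualization---replacing $(Q,\pi,\lambda,\mu,\mathfrak{w},a)$ by $(\hat Q,\hat\pi,\hat\lambda,\hat\mu,\hat{\mathfrak{w}},b)$ in the tridiagonal computation and the diagonal-dominance reduction---is precisely what is meant.
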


Finally, from Lemma \ref{positivity1} and Lemma \ref{positivity2} and Proposition \ref{evolutionprop} we obtain as a corollary the positivity of the coherent measures:

\begin{cor}\label{positivitymeasures}
Assume $\mathfrak{I}$ is compact and let $a\le \frac{1}{2C},b\le \frac{1}{2\hat{C}}$ then, $\mathcal{M}^{\psi_a}_{n,n+1}$ and $\mathcal{M}^{\psi_b}_{n,n}$ are positive.
\end{cor}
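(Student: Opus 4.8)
The plan is to deduce Corollary~\ref{positivitymeasures} directly from the positivity of the evolution operators together with the evolution identity of Proposition~\ref{evolutionprop}. The key observation is relation (\ref{coherentmeasuresevolutionrelation}): taking $k_0=(0,1,\dots,\star-1)$ one has $\mathfrak{P}^{\psi}_{n,\star}(k_0,\nu)=\mathcal{M}^{\psi}_{n,\star}(\nu)$, so it suffices to exhibit each coherent measure as a non-negative combination of values of a positive evolution operator. Concretely, I would write, for the case $\star=n+1$, $\psi_a(x)=1-ax$ as a pointwise product. Since $\mathfrak{I}$ is compact and $a\le\frac{1}{2C}$, Lemma~\ref{positivity1} gives that $\mathfrak{P}^{\psi_a}_{n,n+1}$ is positive, and Lemma~\ref{positivity1} applies to $\psi_a$ itself. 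But the statement is about $\mathcal{M}^{\psi_a}_{n,n+1}=\mathfrak{P}^{\psi_a}_{n,n+1}(k_0,\cdot)$, which is therefore a single row of a positive kernel — hence non-negative. So in fact the corollary for $\psi=\psi_a$ is immediate from (\ref{coherentmeasuresevolutionrelation}) and Lemma~\ref{positivity1}, with no need for Proposition~\ref{evolutionprop} at all; the analogous remark handles $\mathcal{M}^{\psi_b}_{n,n}$ via Lemma~\ref{positivity2}.

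First I would state this carefully. For $\mathcal{M}^{\psi_a}_{n,n+1}$: by (\ref{coherentmeasuresevolutionrelation}), $\mathcal{M}^{\psi_a}_{n,n+1}(\nu)=\mathfrak{P}^{\psi_a}_{n,n+1}(k_0,\nu)$ with $k_0=(0,1,\dots,n)$; since $a\le\frac{1}{2C}$ and $\mathfrak{I}$ is compact, Lemma~\ref{positivity1} yields $\mathfrak{P}^{\psi_a}_{n,n+1}(k,\nu)\ge 0$ for all $k,\nu\in W^{n+1}$, and in particular for $k=k_0$. Hence $\mathcal{M}^{\psi_a}_{n,n+1}(\nu)\ge 0$ for all $\nu\in W^{n+1}$. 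The same argument, now using Lemma~\ref{positivity2} and the relation $\mathcal{M}^{\psi_b}_{n,n}(\nu)=\mathfrak{P}^{\psi_b}_{n,n}(k_0',\nu)$ with $k_0'=(0,1,\dots,n-1)$, gives $\mathcal{M}^{\psi_b}_{n,n}(\nu)\ge 0$.

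If one instead wants the more general coherent measures $\mathcal{M}^{\psi}_{n,\star}$ with $\psi(x)=\prod_{i=1}^{\mathfrak{N}}(1-\alpha_i x)e^{-tx}$ (the parameters appearing in the statements $\Xi^{\psi}$, $\mathcal{K}^{\psi}$), this is where Proposition~\ref{evolutionprop} genuinely enters: starting from $\mathcal{M}^{\mathbf 1}_{n,\star}$, which by Lemma~\ref{factorizationforcoherent} equals $\textbf{1}(\nu=(0,\dots,\star-1))\ge 0$, one applies the evolution operators one factor at a time, $\mathcal{M}^{(1-\alpha_1 x)}_{n,\star}=\sum_k \mathcal{M}^{\mathbf 1}_{n,\star}(k)\mathfrak{P}^{(1-\alpha_1 x)}_{n,\star}(k,\cdot)$, then $\mathcal{M}^{(1-\alpha_1 x)(1-\alpha_2 x)}_{n,\star}=\sum_k \mathcal{M}^{(1-\alpha_1 x)}_{n,\star}(k)\mathfrak{P}^{(1-\alpha_2 x)}_{n,\star}(k,\cdot)$, and so on, finishing with $\mathfrak{P}^{\phi_t}_{n,\star}=\mathfrak{P}^{e^{-tx}}_{n,\star}$, each step preserving non-negativity since each operator is a positive kernel (Lemma~\ref{positivity1}, Lemma~\ref{positivity2}, and the preceding lemma on $\mathfrak{P}^{\phi_t}$) and each $\mathcal{M}$ stays non-negative by induction, the interchange of summation and integration in Proposition~\ref{evolutionprop} being licensed by compactness of $\mathfrak{I}$ and the special form $p(x)e^{-tx}$ of the functions involved.

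The main obstacle — and it is a genuinely mild one for the narrow statement as written — is simply making sure the hypotheses of Lemma~\ref{positivity1} and Lemma~\ref{positivity2} are literally satisfied by $\psi_a$, $\psi_b$: that $C=\sup_{i\ge 0}(\lambda_i+\mu_i)$ and $\hat C=\sup_{i\ge 0}(\hat\lambda_i+\hat\mu_i)$ are finite, which is exactly guaranteed by the compactness of $\mathfrak{I}$ via Remark~\ref{remarkendpoint} ($I^+\le 2C$, and symmetrically for the dual chain). Beyond that, the proof is a one-line bookkeeping argument reading off a row of a positive kernel. For the generalized version the only real care needed is the order of application of the factors and the justification of the Cauchy--Binet/interchange step, both of which are already covered by Proposition~\ref{evolutionprop} under the stated compactness assumption.
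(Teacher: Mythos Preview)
Your proof is correct and essentially matches the paper's intended argument. Your direct route via (\ref{coherentmeasuresevolutionrelation}) is in fact a slight shortcut: the paper cites Proposition~\ref{evolutionprop} together with Lemmas~\ref{positivity1} and~\ref{positivity2}, which amounts to writing $\mathcal{M}^{\psi_a}_{n,n+1}=\mathcal{M}^{\mathbf 1}_{n,n+1}\,\mathfrak{P}^{\psi_a}_{n,n+1}$ with $\mathcal{M}^{\mathbf 1}_{n,n+1}=\delta_{k_0}$ --- precisely your second approach, and the one genuinely needed for the general products $\psi=\prod_i(1-\alpha_i x)e^{-tx}$ you describe in your third paragraph.

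One small correction: you say compactness of $\mathfrak{I}$ guarantees $C,\hat C<\infty$ via Remark~\ref{remarkendpoint}, but the inequality $I^+\le 2C$ there runs the other way (finiteness of $C$ forces $\mathfrak{I}$ compact, not conversely). This does not affect the argument --- Lemmas~\ref{positivity1} and~\ref{positivity2} only require the stated bounds $a\le\frac{1}{2C}$, $b\le\frac{1}{2\hat C}$, which are vacuous unless $C,\hat C<\infty$ anyway; the compactness hypothesis in the corollary is there because the paper's route passes through Proposition~\ref{evolutionprop}, whose interchange of summation and integration is justified under that assumption.
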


\section{Correlation kernels}\label{sectioncorrelation}

\subsection{Computation of the correlation kernel}

In this subsection we assume that $\textnormal{supp}(\mathfrak{w})=\mathfrak{I}$ is compact and that $\psi$ is of the form,
\begin{align}\label{formpsi}
\psi(x)=\psi_{t,\vec{\alpha}}(x)=\prod_{i=1}^{\mathfrak{N}}(1-\alpha_ix)e^{-tx},
\end{align}
for some $\mathfrak{N} \in \mathbb{N}$ and $\frac{1}{2}\left(\frac{1}{C}\wedge\frac{1}{\hat{C}}\right)\ge \alpha_1 \ge \alpha_2 \ge \cdots \ge 0$ and $t \ge 0$. We denote by $\mathbb{GT}_\textbf{s}(\infty)$ the set of all infinite symplectic Gelfand-Tsetlin patterns, namely infinite interlacing sequences of the following form:
\begin{align*}
\mathbb{GT}_\textbf{s}(\infty)=\bigg\{\mathbb{X}=\left(\mathbb{X}^{(0,1)},\mathbb{X}^{(1,1)},\mathbb{X}^{(1,2)},\cdots\right):\mathbb{X}^{(i-1,i)}\in W^{i,i}\left(\mathbb{X}^{(i,i)}\right),\mathbb{X}^{(i,i)}\in W^{i,i+1}\left(\mathbb{X}^{(i,i+1)}\right)\bigg\}.
\end{align*}
Define for $n \in \mathbb{N}$, the following cylinder sets $\mathfrak{C}_{n,n}\left(\mathfrak{x}^{(0,1)},\cdots,\mathfrak{x}^{(n,n)}\right),\mathfrak{C}_{n,n+1}\left(\mathfrak{x}^{(0,1)},\cdots,\mathfrak{x}^{(n,n+1)}\right)$ in $\mathbb{GT}_\textbf{s}(\infty)$, given by,
\begin{align*}
\mathfrak{C}_{n,n}\left(\mathfrak{x}^{(0,1)},\cdots,\mathfrak{x}^{(n,n)}\right)&=\bigg\{\mathbb{X}\in\mathbb{GT}_\textbf{s}(\infty):\mathbb{X}^{(0,1)}=\mathfrak{x}^{(0,1)},\cdots, \mathbb{X}^{(n,n)}=\mathfrak{x}^{(n,n)}\bigg\},\\
\mathfrak{C}_{n,n+1}\left(\mathfrak{x}^{(0,1)},\cdots,\mathfrak{x}^{(n,n+1)}\right)&=\bigg\{\mathbb{X}\in\mathbb{GT}_\textbf{s}(\infty):\mathbb{X}^{(0,1)}=\mathfrak{x}^{(0,1)},\cdots, \mathbb{X}^{(n,n+1)}=\mathfrak{x}^{(n,n+1)}\bigg\}.
\end{align*}
We consider the random variable $\mathsf{X}^{\psi}$, taking values in $\mathbb{GT}_\textbf{s}(\infty)$, with distribution $\Xi^{\psi}$ defined by its values on the cylinder sets as follows,

\begin{align}
\Xi^{\psi}\left[\mathfrak{C}_{n,n}\left(\mathfrak{x}^{(0,1)},\cdots,\mathfrak{x}^{(n,n)}\right)\right]&=\mathcal{M}^{\psi}_{n,n}\left(\mathfrak{x}^{(n,n)}\right)\Lambda_{n,n}^{h_{n-1,n}}\left(\mathfrak{x}^{(n,n)},\mathfrak{x}^{(n-1,n)}\right)\times \cdots \times \Lambda_{1,1}^{h_{0,1}}\left(\mathfrak{x}^{(1,1)},\mathfrak{x}^{(0,1)}\right)\nonumber\\
&=\prod_{k=1}^{n-1}\det\left(\phi(\mathfrak{x}_i^{(k-1,k)},\mathfrak{x}_i^{(k,k)})\right)^k_{i,j=1}\det\left(\hat{\phi}(\mathfrak{x}_i^{(k,k)},\mathfrak{x}_i^{(k,k+1)})\right)^{k+1}_{i,j=1}\nonumber\\
&\times \det\left(\phi(\mathfrak{x}^{(n-1,n)}_i,\mathfrak{x}^{(n,n)}_i)\right)^n_{i,j=1}\frac{(-1)^{\binom{n-1}{2}}}{\lambda_0^{\binom{n-1}{2}}}\det\left(\langle \hat{\pi}_{\mathfrak{x}^{(n,n)}_i}\hat{Q}_{\mathfrak{x}^{(n,n)}_i} , (-x)^{n-j}\psi \rangle_{\hat{\mathfrak{w}}}\right)^{n}_{i,j=1},\\
\Xi^{\psi}\left[\mathfrak{C}_{n,n+1}\left(\mathfrak{x}^{(0,1)},\cdots,\mathfrak{x}^{(n,n+1)}\right)\right]&=\mathcal{M}^{\psi}_{n,n+1}\left(\mathfrak{x}^{(n,n+1)}\right)\Lambda_{n,n+1}^{h_{n,n}}\left(\mathfrak{x}^{(n,n+1)},\mathfrak{x}^{(n,n)}\right)\times \cdots \times \Lambda_{1,1}^{h_{0,1}}\left(\mathfrak{x}^{(1,1)},\mathfrak{x}^{(0,1)}\right)\nonumber\\
&=\prod_{k=1}^{n}\det\left(\phi(\mathfrak{x}_i^{(k-1,k)},\mathfrak{x}_i^{(k,k)})\right)^k_{i,j=1}\det\left(\hat{\phi}(\mathfrak{x}_i^{(k,k)},\mathfrak{x}_i^{(k,k+1)})\right)^{k+1}_{i,j=1}\nonumber\\
&\times\frac{(-1)^{\binom{n}{2}}}{\lambda_0^{\binom{n}{2}}}\det\left(\langle \pi_{\mathfrak{x}^{(n,n+1)}_i}Q_{\mathfrak{x}^{(n,n+1)}_i} , (-x)^{n+1-j}\psi \rangle_{\mathfrak{w}}\right)^{n+1}_{i,j=1}.
\end{align}

Note that, $\mathsf{X}^{\psi}$ is well defined by the coherency property of Proposition \ref{coherencyprop} and positivity of Corollary \ref{positivitymeasures}.
Moreover, observe that for $\psi(x)=\psi_{t,\vec{0}}(x)=\phi_t(x)=e^{-tx}$ then (see Proposition \ref{CoherentdynamicsproptypeB} and the discussion following it), $\Xi^{\phi_t}$ gives the distribution at time $t$ of $\mathcal{D}$-chains on odd levels and $\hat{\mathcal{D}}$-chains on even levels in $\mathbb{GT}_{\textbf{s}}(\infty)$ interacting via the push-block dynamics, started from the fully packed initial condition.

Equivalently, we can view $\mathsf{X}^{\psi}$ as a random point configuration in $\mathbb{N}\times \mathbb{N}$, so that $\Xi^{\psi}$ determines a probability measure on $2^{\mathbb{N}\times \mathbb{N}}$. Abusing notation, we will also denote this by $\Xi^{\psi}$. Our goal, is to calculate explicitly the correlation functions (defined below) $\{\rho_k^{\psi}\}_{k\ge 0}$ of this point process in Theorem \ref{correlationkernelmain}. As above, we will denote by $(n_1,n_2)\in\{(n,n),(n,n+1)\}$ the levels of $\mathbb{GT}_\textbf{s}(\infty)$. For example, $(0,1)$ denotes the first level, $(1,1)$ the second level, $(1,2)$ the third level and so on. For a point $z$ of the form $\left((n_1,n_2),x\right)$ with $(n_1,n_2)$ as above and $x\in \mathbb{N}$ we will say that $z\in \mathsf{X}^{\psi}$, if $z$ belongs to the point configuration corresponding to $\mathsf{X}^{\psi}$. 

In what follows, we will denote by $\mathsf{C}(\mathfrak{I})$, a positively oriented (counter-clockwise) loop around $[0,I^+]$ (and \textit{not just} around $\mathfrak{I}=[I^-,I^+]$, recall $I^-\ge 0$) that is chosen in such a way that it contains \textit{no zeros} of $\psi$. Observe that, this is always possible by Remark \ref{remarkendpoint}. Our method of proof is essentially an application (of a variant) of the famous Eynard-Mehta theorem (see \cite{BorodinRains}). 

We begin with some technical preliminaries but first a comment on notations. In all that follows, all the real weighted integrals over the interval $\mathfrak{I}$, for which we use the notation $\langle \cdot , \cdot \rangle_{\mathsf{m}}$, will be in the $x$-variable, while all the contour integrals over $\mathsf{C}(\mathfrak{I})$ will be in the variable $u$.

\begin{lem} We have the following contour integral expressions for alternating convolutions of $\phi$ and $\hat{\phi}$. In the $1^{st}$ and $3^{rd}$ equalities below we have a total of $2n$ terms in the convolutions, in the $2^{nd}$ a total of $2n+1$ terms and in the $4^{th}$ one $2n-1$ terms.
\begin{align*}
\left(\phi*\frac{\hat{\phi}}{\lambda_0}*\cdots*\phi*\frac{\hat{\phi}}{\lambda_0}\right)(i,j)&=-\frac{1}{2\pi \mathsf{i}}  \oint_{\mathsf{C}(\mathfrak{I})} \langle \pi_i Q_i ,\frac{Q_j(u)}{x-u}\rangle_{\mathfrak{w}}\frac{1}{u^n}du,\\
\left(\phi*\frac{\hat{\phi}}{\lambda_0}*\cdots*\frac{\hat{\phi}}{\lambda_0}*\phi\right)(i,j)&=-\frac{1}{2\pi \mathsf{i}}  \oint_{\mathsf{C}(\mathfrak{I})} \langle \pi_i Q_i ,\frac{\hat{Q}_j(u)}{x-u}\rangle_{\mathfrak{w}}\frac{1}{u^n}du,\\
\left(\frac{\hat{\phi}}{\lambda_0}*\phi*\cdots*\frac{\hat{\phi}}{\lambda_0}*\phi\right)(i,j)&=-\frac{1}{2\pi \mathsf{i}}  \oint_{\mathsf{C}(\mathfrak{I})} \langle \hat{\pi}_i \hat{Q}_i ,\frac{\hat{Q}_j(u)}{x-u}\rangle_{\hat{\mathfrak{w}}}\frac{1}{u^n}du,\\
\left(\frac{\hat{\phi}}{\lambda_0}*\phi*\cdots*\phi*\frac{\hat{\phi}}{\lambda_0}\right)(i,j)&=-\frac{1}{2\pi \mathsf{i}}  \oint_{\mathsf{C}(\mathfrak{I})} \langle \hat{\pi}_i \hat{Q}_i ,\frac{Q_j(u)}{x-u}\rangle_{\hat{\mathfrak{w}}}\frac{1}{u^n}du.
\end{align*}
\end{lem}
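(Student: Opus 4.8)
The plan is to establish all four identities by the same mechanism, so I would prove the first one in detail and indicate that the others follow by identical manipulations with $\phi,\hat\phi,\mathfrak{w},Q$ replaced by their hatted/unhatted counterparts. The starting point is the contour-integral representation of a single convolution: for $a,b\in\mathbb{N}$ one checks directly from the definitions $\phi(i,k)=\pi_i\mathbf{1}(i\le k)$ and $\hat\phi(k,j)=-\hat\pi_k\mathbf{1}(k<j)$, together with the residue calculus and the relations of Proposition \ref{polynomialrelations} (parts (1)--(2) relating partial sums of $\pi_iQ_i$ and $\hat\pi_i\hat Q_i$ to $\hat Q_n$ and $\lambda_0 x^{-1}(1-Q_n)$), that each elementary convolution step can be written as an integral against a kernel of the form $\frac{Q(u)}{x-u}$ (or $\frac{\hat Q(u)}{x-u}$) with an extra factor $u^{-1}$ picked up from the geometric-type summation. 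The key bookkeeping fact is that alternating a $\phi$-step and a $\hat\phi/\lambda_0$-step takes the polynomial label from $Q$-type to $\hat Q$-type and back, while accumulating exactly one power of $u^{-1}$ per pair; this is precisely the two-step structure already seen in the branching rules of Proposition \ref{propositionbranching}.

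Concretely, I would proceed by induction on $n$. For the base case $n=1$ the first identity reads $(\phi*\tfrac{\hat\phi}{\lambda_0})(i,j) = -\tfrac{1}{2\pi\mathsf{i}}\oint_{\mathsf{C}(\mathfrak{I})}\langle\pi_iQ_i,\tfrac{Q_j(u)}{x-u}\rangle_{\mathfrak{w}}\,u^{-1}du$, which I would verify by computing the left side as $-\sum_{k\ge 0}\pi_i\mathbf{1}(i\le k)\hat\pi_k\lambda_0^{-1}\mathbf{1}(k<j)$, summing the geometric-type series using $\hat\pi_k\hat Q_k(x)=\lambda_0\frac{Q_{k+1}(x)-Q_k(x)}{-x}$ from the display before Proposition \ref{polynomialrelations}, recognizing the telescoping sum via Proposition \ref{polynomialrelations}(2), and then identifying the resulting closed form with the residue at $u=0$ of the stated integrand — here the contour $\mathsf{C}(\mathfrak{I})$ encircling $[0,I^+]$ lets me deform and pick out exactly that residue, using that $Q_j$ is a polynomial so the only pole inside is at $u=0$ after expanding $\frac{1}{x-u}$. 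For the inductive step, I would convolve the $n$-term expression with one more $\phi*\tfrac{\hat\phi}{\lambda_0}$ pair (or use the intermediate results for the mixed identities), interchange the sum over the convolution index with the contour integral (legitimate since $\mathfrak{I}$ is compact, the spectral series converge uniformly on compacts, and the $\mathsf{C}(\mathfrak{I})$ contour is bounded away from $\mathfrak{I}$), and apply the $n=1$ computation to the kernel $\frac{Q_j(u)}{x-u}$, which reproduces the same structure with $n$ replaced by $n+1$.

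I expect the main obstacle to be the careful justification of the interchanges of (infinite) summation and contour integration, and of summation and the weighted $x$-integration $\langle\cdot,\cdot\rangle_{\mathfrak{w}}$ — essentially the same analytic point flagged throughout Sections 9--10 of the paper, which is exactly why compactness of $\mathfrak{I}$ is assumed (cf. the remarks on uniform convergence of orthogonal expansions for functions $p(x)e^{-tx}$ in Section \ref{sectionorthogonalpolynomials}). A secondary, purely combinatorial nuisance is getting the signs and the powers $u^{-n}$ right: one must track that each $\hat\phi$ contributes a $-1$, each division by $\lambda_0$ cancels the $\lambda_0$ from $\hat\pi_k\hat Q_k$, and that the parity of the number of $\phi$ versus $\hat\phi$ factors determines whether the surviving polynomial in the numerator is $Q_j$ or $\hat Q_j$ and whether the outer measure is $\mathfrak{w}$ or $\hat\mathfrak{w}$ (using $d\hat{\mathfrak{w}}(x)=x\,d\mathfrak{w}(x)/\lambda_0$ to pass between the two). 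Once the $n=1$ identity and these two bookkeeping lemmas are in place, the four stated formulas are immediate by induction, with the only difference between them being which of the four alternating patterns one starts and ends with.
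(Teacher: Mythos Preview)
Your proposal is correct and follows essentially the same route as the paper: the paper first writes $\phi(i,j)$ and $\hat\phi(i,j)$ individually as contour integrals of the form $-\tfrac{1}{2\pi\mathsf{i}}\oint\langle\,\cdot\,,\frac{P(u)}{x-u}\rangle\,du$ using Proposition~\ref{polynomialrelations} parts (1)--(2), then computes the two-term convolutions $(\phi*\hat\phi)$ and $(\hat\phi*\phi)$ explicitly (gaining one factor of $u^{-1}$ via the identity $\oint u^{-n}(x-u)^{-1}du=0$ for $x\in[0,I^+]$), and finishes by induction. Your base case and inductive mechanism are the same; the only minor difference is that the paper isolates the single-factor representations before the pair, whereas you go straight to the $n=1$ pair.
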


\begin{proof}
We begin by writing,
\begin{align*}
\phi(i,j)&=\pi_i \textbf{1}(i\le j) =\pi_i\langle Q_i , \sum_{k=0}^{j}\pi_k Q_k \rangle_{\mathfrak{w}}=\langle \pi_i Q_i ,\hat{Q}_j\rangle_{\mathfrak{w}}\\
&=-\frac{1}{2\pi \mathsf{i}} \oint_{\mathsf{C}(\mathfrak{I})} \langle \pi_i Q_i ,\frac{\hat{Q}_j(u)}{x-u}\rangle_{\mathfrak{w}}du
\end{align*}
and in a similar fashion,
\begin{align*}
\hat{\phi}(i,j)&=-\pi_i \textbf{1}(i< j) =-\hat{\pi}_i\langle \hat{Q}_i , \sum_{k=0}^{j-1}\hat{\pi}_k \hat{Q}_k \rangle_{\hat{\mathfrak{w}}}=\langle \hat{\pi}_i\hat{Q}_i , \frac{\lambda_0}{x}(Q_j(x)-1) \rangle_{\hat{\mathfrak{w}}}\\
&=-\frac{1}{2\pi \mathsf{i}} \oint_{\mathsf{C}(\mathfrak{I})} \langle \hat{\pi}_i\hat{Q}_i , \lambda_0\frac{Q_j(u)}{x-u} \rangle_{\hat{\mathfrak{w}}}\frac{1}{u}du.
\end{align*}
The last equality holds because,
\begin{align*}
\frac{Q_j(x)-1}{x}=-\frac{1}{2\pi \mathsf{i}}\oint_{\mathsf{C}(\mathfrak{I})}\frac{Q_j(u)}{u(x-u)}du \ \textnormal{ for } x \in [0,I^+].
\end{align*}
Moreover,
\begin{align*}
(\phi * \hat{\phi})(i,j)&=-\frac{1}{2\pi \mathsf{i}} \sum_{k \ge 0}^{}-\hat{\pi}_k \textbf{1}(k<j)\oint_{\mathsf{C}(\mathfrak{I})} \langle \pi_i Q_i ,\frac{\hat{Q}_k(u)}{x-u}\rangle_{\mathfrak{w}}du\\
&=-\frac{1}{2\pi \mathsf{i}}  \oint_{\mathsf{C}(\mathfrak{I})} \langle \pi_i Q_i ,-\frac{\sum_{k=0}^{j-1}\hat{\pi}_k\hat{Q}_k(u)}{x-u}\rangle_{\mathfrak{w}}du\\
&=-\frac{1}{2\pi \mathsf{i}}  \oint_{\mathsf{C}(\mathfrak{I})} \langle \pi_i Q_i ,\frac{\lambda_0}{u}\frac{Q_j(u)-1}{x-u}\rangle_{\mathfrak{w}}du\\
&=-\frac{1}{2\pi \mathsf{i}}  \oint_{\mathsf{C}(\mathfrak{I})} \langle \pi_i Q_i ,\lambda_0\frac{Q_j(u)}{x-u}\rangle_{\mathfrak{w}}\frac{1}{u}du,
\end{align*}
where the last equality follows from the fact that for all $n\ge 1$,
\begin{align*}
\oint_{\mathsf{C}(\mathfrak{I})} \frac{1}{u^n(x-u)}du=0\textnormal{ for } x \in [0,I^+].
\end{align*}
Similarly,
\begin{align*}
(\hat{\phi} * \phi)(i,j)&=-\frac{1}{2\pi \mathsf{i}} \sum_{k \ge 0}^{}\pi_k \textbf{1}(k \le j)\oint_{\mathsf{C}(\mathfrak{I})} \langle \hat{\pi}_i\hat{Q}_i , \lambda_0\frac{Q_k(u)}{x-u} \rangle_{\hat{\mathfrak{w}}}\frac{1}{u}du\\
&=-\frac{1}{2\pi \mathsf{i}} \oint_{\mathsf{C}(\mathfrak{I})} \langle \hat{\pi}_i\hat{Q}_i , \lambda_0\frac{\sum_{k = 0}^{j}\pi_kQ_k(u)}{x-u} \rangle_{\hat{\mathfrak{w}}}\frac{1}{u}du\\
&=-\frac{1}{2\pi \mathsf{i}} \oint_{\mathsf{C}(\mathfrak{I})} \langle \hat{\pi}_i\hat{Q}_i , \lambda_0\frac{\hat{Q}_j(u)}{x-u} \rangle_{\hat{\mathfrak{w}}}\frac{1}{u}du.
\end{align*}
By induction, we easily obtain the statement of the lemma.
\end{proof}
We now define the following functions $\mathsf{\Phi}^{(n_1,n_2)}_{(k_1,k_2)}(\cdot, \cdot)$, that will come up in the computation of the correlation kernel, on $\mathbb{N}\times\mathbb{N}$ for $(n_1,n_2)\ge (k_1,k_2)$ given by the convolutions in the Lemma above, but with $\frac{\hat{\phi}}{\lambda_0}$ replaced by $-\frac{\hat{\phi}}{\lambda_0}$ (we just put the factors $(-1)^{\binom{n}{2}}$ and $(-1)^{\binom{n-1}{2}}$ from the cylinder set distributions in the $\hat{\phi}$'s). More explicitly, we define,
\begin{align*}
\mathsf{\Phi}^{(n,n+1)}_{(k,k+1)}(i,j)&=\left(\phi*\left(-\frac{\hat{\phi}}{\lambda_0}\right)*\cdots*\phi*\left(-\frac{\hat{\phi}}{\lambda_0}\right)\right)(i,j)=(-1)^{n-k}\left(-\frac{1}{2\pi \mathsf{i}}\right)  \oint_{\mathsf{C}(\mathfrak{I})} \langle \pi_i Q_i ,\frac{Q_j(u)}{x-u}\rangle_{\mathfrak{w}}\frac{1}{u^{n-k}}du,\\
\mathsf{\Phi}^{(n,n+1)}_{(k,k)}(i,j)&=\left(\left(-\frac{\hat{\phi}}{\lambda_0}\right)*\phi*\cdots*\phi*\left(-\frac{\hat{\phi}}{\lambda_0}\right)\right)(i,j)=(-1)^{n+1-k}\left(-\frac{1}{2\pi \mathsf{i}}\right)  \oint_{\mathsf{C}(\mathfrak{I})} \langle \hat{\pi}_i \hat{Q}_i ,\frac{Q_j(u)}{x-u}\rangle_{\hat{\mathfrak{w}}}\frac{1}{u^{n+1-k}}du,\\
\mathsf{\Phi}^{(n,n)}_{(k,k)}(i,j)&=\left(\left(-\frac{\hat{\phi}}{\lambda_0}\right)*\phi*\cdots*\left(-\frac{\hat{\phi}}{\lambda_0}\right)*\phi\right)(i,j)=(-1)^{n-k}\left(-\frac{1}{2\pi \mathsf{i}}\right)  \oint_{\mathsf{C}(\mathfrak{I})} \langle \hat{\pi}_i \hat{Q}_i ,\frac{\hat{Q}_j(u)}{x-u}\rangle_{\hat{\mathfrak{w}}}\frac{1}{u^{n-k}}du,\\
\mathsf{\Phi}^{(n,n)}_{(k-1,k)}(i,j)&=\left(\phi*\left(-\frac{\hat{\phi}}{\lambda_0}\right)*\cdots*\left(-\frac{\hat{\phi}}{\lambda_0}\right)*\phi\right)(i,j)=(-1)^{n-k}\left(-\frac{1}{2\pi \mathsf{i}}\right)  \oint_{\mathsf{C}(\mathfrak{I})} \langle \pi_i Q_i ,\frac{\hat{Q}_j(u)}{x-u}\rangle_{\mathfrak{w}}\frac{1}{u^{n-k}}du
\end{align*}
and note that, when $(n_1,n_2)=(k_1,k_2)$ then,
\begin{align*}
\mathsf{\Phi}^{(n,n+1)}_{(n,n+1)}(i,j)&=\delta_{i,j},\\
\mathsf{\Phi}^{(n,n)}_{(n,n)}(i,j)&=\delta_{i,j}.
\end{align*}
Moving on, for $\psi$ as in (\ref{formpsi}) we define the following functions for $n,j,i \in \mathbb{N}$,
\begin{align}
\mathcal{E}^{n,n+1}_{n+1-j}(i)&=-\frac{1}{2\pi \mathsf{i}}\oint_{\mathsf{C}(\mathfrak{I})}\frac{Q_i(u)}{\psi(u)(-u)^{n+1-j+1}}du,\\
\mathcal{E}^{n,n}_{n-j}(i)&=-\frac{1}{2\pi \mathsf{i}}\oint_{\mathsf{C}(\mathfrak{I})}\frac{\hat{Q}_i(u)}{\psi(u)(-u)^{n-j+1}}du.
\end{align}
Then, we have the following \textit{biorthogonality} relations between the $\Phi$'s and $\Psi$'s as functions of $i \in \mathbb{N}$.
\begin{lem} \label{orthogonality}
\begin{align*}
\sum_{i\ge0}^{}\Psi^{n,n+1}_{n+1-k}(i)\mathcal{E}^{n,n+1}_{n+1-l}(i)&=\delta_{k,l} ,\textnormal{ for } k,l \le n+1,\\
\sum_{i\ge0}^{}\Psi^{n,n}_{n-k}(i)\mathcal{E}^{n,n}_{n-l}(i)&=\delta_{k,l} ,\textnormal{ for } k,l \le n.
\end{align*}
\end{lem}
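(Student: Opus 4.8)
\textbf{Proof proposal for Lemma \ref{orthogonality}.} The plan is to verify the two biorthogonality identities by contour integration, exploiting the fact that $\Psi^{n,n+1}_{n+1-k}(i)=\langle \pi_iQ_i,(-x)^{n+1-k}\psi\rangle_{\mathfrak{w}}$ (and similarly $\Psi^{n,n}_{n-k}(i)=\langle\hat\pi_i\hat Q_i,(-x)^{n-k}\psi\rangle_{\hat{\mathfrak w}}$, both with $\mathsf{R}^{\psi}_{\bullet}=\psi$ since the Taylor-remainder index is non-positive in the relevant range), while $\mathcal{E}^{n,n+1}_{n+1-l}$ and $\mathcal{E}^{n,n}_{n-l}$ are contour integrals of $Q_i(u)/\psi(u)$, resp. $\hat Q_i(u)/\psi(u)$, against a power of $u$. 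The point is that summing over $i$ collapses the family $\{\pi_iQ_i(x)\}$ (resp. $\{\hat\pi_i\hat Q_i(x)\}$) against $Q_i(u)$ (resp. $\hat Q_i(u)$) into a reproducing-type object.

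First I would focus on the first identity. Insert the definitions and interchange the (finite after truncation / justified by uniform convergence on the compact $\mathfrak{I}$, cf. the discussion around (\ref{uniformconvergence}) and Remark~9.4) sum over $i$ with the contour integral and the real integral:
\begin{align*}
\sum_{i\ge 0}\Psi^{n,n+1}_{n+1-k}(i)\mathcal{E}^{n,n+1}_{n+1-l}(i)
=-\frac{1}{2\pi\mathsf{i}}\oint_{\mathsf{C}(\mathfrak{I})}\Big\langle \sum_{i\ge0}\pi_iQ_i(x)Q_i(u),\ (-x)^{n+1-k}\psi(x)\Big\rangle_{\mathfrak{w}}\frac{du}{\psi(u)(-u)^{n+1-l+1}}.
\end{align*}
Now the inner sum $\sum_{i\ge 0}\pi_iQ_i(x)Q_i(u)$ is, formally, the reproducing kernel for $L^2(\mathfrak{I},\mathfrak{w})$; pairing it against the polynomial-times-$\psi$ function $(-x)^{n+1-k}\psi(x)$ reproduces that function evaluated at $u$ — more carefully, one uses the spectral expansion in the form that $\langle Q_i,(-x)^{n+1-k}\psi\rangle_{\mathfrak{w}}\pi_i$ are precisely the coefficients whose $Q_i(u)$-series converges (uniformly on the compact set, since $(-x)^{n+1-k}\psi(x)=p(x)e^{-tx}$ is polynomial times exponential) to $(-u)^{n+1-k}\psi(u)$. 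This reduces the expression to
\begin{align*}
-\frac{1}{2\pi\mathsf{i}}\oint_{\mathsf{C}(\mathfrak{I})}(-u)^{n+1-k}\psi(u)\cdot\frac{du}{\psi(u)(-u)^{n+1-l+1}}
=-\frac{1}{2\pi\mathsf{i}}\oint_{\mathsf{C}(\mathfrak{I})}(-1)^{-(l-k)-1}u^{\,l-k-1}\,du=\delta_{k,l},
\end{align*}
the last step being the residue computation $\frac{1}{2\pi\mathsf{i}}\oint u^{m-1}du=\delta_{m,0}$ for integer $m$, valid because $\mathsf{C}(\mathfrak{I})$ encircles $0$ and (by the choice of contour, Remark~\ref{remarkendpoint}) the integrand $Q_i(u)/\psi(u)$ has no poles inside other than possibly at $u=0$ — the zeros of $\psi$ lie outside $\mathsf{C}(\mathfrak{I})$, and $Q_i$ is a polynomial. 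The sign bookkeeping: $(-u)^{n+1-k}/(-u)^{n+1-l+1}=(-1)^{l-k-1}u^{l-k-1}$, and since $|l-k|\le n$ the exponent $l-k-1$ ranges so that the residue picks out exactly $l=k$, where $(-1)^{l-k-1}=-1$ cancels the leading minus sign to give $+1$.

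The second identity is proved the same way, now with the $\hat{\mathfrak w}$-inner product and the dual reproducing sum $\sum_{i\ge0}\hat\pi_i\hat Q_i(x)\hat Q_i(u)$, pairing against $(-x)^{n-k}\psi(x)$ to reproduce $(-u)^{n-k}\psi(u)$, followed by the identical residue computation with $n+1$ replaced by $n$ throughout. The main obstacle, as usual in these arguments, is the rigorous justification of interchanging the infinite sum over $i$ with the two integrals: this is exactly where compactness of $\mathfrak{I}$ is used, via the uniform convergence on compact sets of the orthogonal expansion of $p(x)e^{-tx}$ recalled in Section~\ref{sectionorthogonalpolynomials} (and the uniform bound $\sum_k|\langle Q_k,f\rangle_{\mathfrak{w}}||Q_k(x)|\pi(k)\le e^{t|x|}\sum|c^m_i|Q_i(-|x|)$), together with the boundedness of $1/\psi(u)$ on the compact contour $\mathsf{C}(\mathfrak{I})$. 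Once the interchange is licensed, everything reduces to the reproducing property and a one-line residue calculation.
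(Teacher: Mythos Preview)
Your proof is correct and follows essentially the same approach as the paper: interchange the sum over $i$ with the contour integral, use the uniformly convergent orthogonal expansion $\sum_{i\ge 0}\langle \pi_iQ_i,(-x)^{n+1-k}\psi\rangle_{\mathfrak{w}}Q_i(u)=(-u)^{n+1-k}\psi(u)$ to collapse the sum, cancel $\psi(u)$, and finish with the residue at $u=0$. Your write-up is somewhat more explicit than the paper's about why the interchange is justified (compactness of $\mathfrak{I}$, the uniform bounds from Section~\ref{sectionorthogonalpolynomials}, and absence of zeros of $\psi$ inside $\mathsf{C}(\mathfrak{I})$), but the argument is the same.
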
 
\begin{proof}
We only prove the first equality, as the second is entirely analogous,
\begin{align*}
\sum_{i\ge0}^{}\Psi^{n,n+1}_{n+1-k}(i)\mathcal{E}^{n,n+1}_{n+1-l}(i)&=-\frac{1}{2\pi \mathsf{i}}\sum_{i\ge0}^{}\langle \pi_iQ_i , (-x)^{n+1-k}\psi \rangle_{\mathfrak{w}}\oint_{\mathsf{C}(\mathfrak{I})}\frac{Q_i(u)}{\psi(u)(-u)^{n+1-l+1}}du\\
&=-\frac{1}{2\pi \mathsf{i}}\oint_{\mathsf{C}(\mathfrak{I})}\sum_{i\ge0}^{}\langle \pi_iQ_i , (-x)^{n+1-k}\psi \rangle_{\mathfrak{w}}\frac{Q_i(u)}{\psi(u)(-u)^{n+1-l+1}}du\\
&=-\frac{1}{2\pi \mathsf{i}}\oint_{\mathsf{C}(\mathfrak{I})} \frac{1}{(-u)^{k-l+1}}du=\delta_{k,l}.
\end{align*}
\end{proof}

The last technical ingredient that we need is:

\begin{lem}\label{basis}
For all $n \in \mathbb{N}$, the functions $\mathcal{E}^{n,n+1}_{1}(\cdot),\cdots,\mathcal{E}^{n,n+1}_{n+1}(\cdot)$ form a basis for the linear span of the functions $\left(\hat{\phi}*\mathsf{\Phi}^{(n,n+1)}_{(0,1)}\right)(\textnormal{virt},\cdot),\cdots,\left(\hat{\phi}*\mathsf{\Phi}^{(n,n+1)}_{(n,n+1)}\right)(\textnormal{virt},\cdot)$ and similarly $\mathcal{E}^{n,n}_{1}(\cdot),\cdots,\mathcal{E}^{n,n}_{n}(\cdot)$ form a basis for the linear span of $\left(\hat{\phi}*\mathsf{\Phi}^{(n,n)}_{(0,1)}\right)(\textnormal{virt},\cdot),\cdots,\left(\hat{\phi}*\mathsf{\Phi}^{(n,n)}_{(n-1,n)}\right)(\textnormal{virt},\cdot)$.
\end{lem}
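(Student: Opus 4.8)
## Proof plan for Lemma~\ref{basis}

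The plan is to show both sides span the same $(n+1)$-dimensional (resp.\ $n$-dimensional) space of functions on $\mathbb{N}$, by computing the ``virtual-variable'' convolutions $\left(\hat{\phi}*\mathsf{\Phi}^{(n,n+1)}_{(k,k)}\right)(\textnormal{virt},\cdot)$ and $\left(\hat{\phi}*\mathsf{\Phi}^{(n,n+1)}_{(k,k+1)}\right)(\textnormal{virt},\cdot)$ explicitly as contour integrals and recognizing them, after a triangular change of basis, as the $\mathcal{E}^{n,n+1}_{\bullet}(\cdot)$ (and similarly in the $(n,n)$ case). Since $\mathcal{E}^{n,n+1}_1,\dots,\mathcal{E}^{n,n+1}_{n+1}$ are already known to be linearly independent --- indeed by Lemma~\ref{orthogonality} they are dual to the independent system $\Psi^{n,n+1}_{n+1-1},\dots,\Psi^{n,n+1}_{0}$ --- it suffices to produce an invertible (in fact triangular) matrix relating the two systems.

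First I would recall that $\hat\phi(\textnormal{virt},j)=1$, so $\left(\hat{\phi}*g\right)(\textnormal{virt},j) = \sum_{i\ge 0} g(i,j)$ for any $g$, i.e.\ these are the column sums of the kernels $\mathsf{\Phi}^{(n,n+1)}_{(k_1,k_2)}$. Using the contour-integral formulas for $\mathsf{\Phi}^{(n,n+1)}_{(k,k)}$ and $\mathsf{\Phi}^{(n,n+1)}_{(k,k+1)}$ from the displays just above the statement, the sum over $i$ hits either $\sum_{i\ge 0}\hat\pi_i\hat Q_i(x)\hat Q_i(u)$ or $\sum_{i\ge 0}\pi_i Q_i(x) Q_i(u)$ type expressions inside the integral; but more efficiently, one uses that $\mathsf{\Phi}^{(n,n+1)}_{(k_1,k_2)}$ is itself a convolution ending in $\phi$ or $\hat\phi$, and that $\sum_{i}\phi(i,j)$ and $\sum_i \hat\phi(i,j)$ have simple closed forms (the relations $\phi(i,j)=\langle\pi_iQ_i,\hat Q_j\rangle_{\mathfrak w}$ and the telescoping identities in Proposition~\ref{polynomialrelations}). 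The cleanest route: the column-sum of the whole alternating convolution equals the single contour integral $-\frac{1}{2\pi\mathsf{i}}\oint_{\mathsf{C}(\mathfrak{I})}\frac{R(u)}{\psi(u)}\cdot\frac{1}{(-u)^{m}}du$ for the appropriate polynomial $R\in\{Q_j,\hat Q_j\}$ and appropriate power $m$ --- but one must insert the $\psi$: since $\Psi^{n,n+1}_{n+1-j}$ already carries $\psi$ (and the $\mathcal{E}$'s carry $1/\psi$), I would instead directly verify, via Lemma~\ref{orthogonality}, that each $\left(\hat{\phi}*\mathsf{\Phi}^{(n,n+1)}_{(k_1,k_2)}\right)(\textnormal{virt},\cdot)$ pairs with $\Psi^{n,n+1}_{n+1-l}$ to give a scalar depending only on $(k_1,k_2)$ and $l$ in a triangular fashion; combined with the biorthogonality this pins down each column-sum as an explicit triangular combination of the $\mathcal{E}^{n,n+1}_l$'s.

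Concretely, the key step is the identity, for $(n_1,n_2)\ge(k_1,k_2)$,
\begin{align*}
\sum_{i\ge 0}\mathsf{\Phi}^{(n_1,n_2)}_{(k_1,k_2)}(i,j)\,\Psi^{n,n+1}_{n+1-l}(j)\ \text{summed over }j
\end{align*}
reducing (by the convolution identities of the Lemma preceding Proposition~\ref{coherencyprop}, i.e. $\phi*\Psi^{n,n}_{n-j}=\Psi^{n-1,n}_{n-j}$ and $\hat\phi*\Psi^{n,n+1}_{n+1-j}=-\lambda_0\Psi^{n,n}_{n-j}$) to $\pm\lambda_0^{\#}\Psi^{k_2-1\text{ or }k_1,\,k_2}_{\bullet}(\text{virt})$, which by the last-row computation in the proof of Proposition~\ref{coherencyprop} is $\delta$-like in $l$. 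Running this against all $l\le n+1$ shows the $(n+1)\times(n+1)$ matrix expressing $\{(\hat\phi*\mathsf{\Phi}^{(n,n+1)}_{(k_1,k_2)})(\textnormal{virt},\cdot)\}_{(k_1,k_2)}$ in the $\mathcal{E}$-basis is triangular with nonzero diagonal, hence invertible; the $(n,n)$ case is identical with $n+1\rightsquigarrow n$ and the roles of $Q,\hat Q$ and $\mathfrak{w},\hat{\mathfrak{w}}$ swapped. Throughout, compactness of $\mathfrak{I}$ and the special form $\psi(x)=\prod(1-\alpha_i x)e^{-tx}$ (with $\mathsf{C}(\mathfrak{I})$ avoiding the zeros of $\psi$) justify interchanging $\sum_i$ with $\oint$ and with $\langle\cdot,\cdot\rangle_{\mathfrak w}$ via uniform convergence of the orthogonal expansions of $p(x)e^{-tx}$.

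I expect the main obstacle to be purely bookkeeping: tracking the exact powers of $(-u)$, the powers of $\lambda_0$, and the signs $(-1)^{n-k}$ vs $(-1)^{n+1-k}$ that distinguish the four families $\mathsf{\Phi}^{(n,n+1)}_{(k,k)},\mathsf{\Phi}^{(n,n+1)}_{(k,k+1)},\mathsf{\Phi}^{(n,n)}_{(k,k)},\mathsf{\Phi}^{(n,n)}_{(k-1,k)}$, so that the triangular change of basis really is triangular with the indexing aligned correctly. Once the indexing convention ($n+1-j$ vs $n-j$, and $k$ running over intermediate levels) is fixed, the linear-algebra conclusion is immediate from Lemma~\ref{orthogonality}; no genuinely new analytic input is needed beyond what is already established.
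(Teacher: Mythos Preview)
Your approach has a genuine gap. You propose to compute, for each column-sum function $h_k(\cdot):=(\hat\phi*\mathsf{\Phi}^{(n,n+1)}_{(k,k+1)})(\textnormal{virt},\cdot)$, the pairings $\sum_j h_k(j)\Psi^{n,n+1}_{n+1-l}(j)$, and to use the biorthogonality $\sum_i \mathcal{E}^{n,n+1}_{n+1-m}(i)\Psi^{n,n+1}_{n+1-l}(i)=\delta_{ml}$ to conclude that each $h_k$ is a triangular combination of the $\mathcal{E}^{n,n+1}_{n+1-m}$. But this inference fails in the present infinite-dimensional setting: the $\Psi^{n,n+1}_{n+1-l}$, $l=1,\dots,n+1$, are only $n+1$ functionals on the space of all functions $\mathbb{N}\to\mathbb{C}$, and they do not separate points. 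Knowing that $h_k-\sum_m M_{km}\mathcal{E}^{n,n+1}_{n+1-m}$ is annihilated by each $\Psi^{n,n+1}_{n+1-l}$ does not force it to vanish, so it does not place $h_k$ in $\textnormal{span}\{\mathcal{E}^{n,n+1}_1,\dots,\mathcal{E}^{n,n+1}_{n+1}\}$. Your argument would go through if you first knew that both families live in a common $(n+1)$-dimensional subspace, but that is precisely the content of the lemma.

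The paper's proof supplies exactly this missing piece by a direct residue computation. Writing $Q_i(x)=\sum_{k}a_k(i)x^k$, one evaluates the $\mathcal{E}$'s by the residue at $u=0$ (the only pole of $Q_i(u)/(\psi(u)(-u)^{n+2-j})$ inside $\mathsf{C}(\mathfrak{I})$, since $\psi$ has no zeros there) and obtains $\mathcal{E}^{n,n+1}_{n+1-j}(i)=\sum_{l=0}^{n+1-j}\tilde f_l^{\,n+1-j}a_l(i)$ with $\tilde f_{n+1-j}^{\,n+1-j}\ne 0$; hence $\textnormal{span}\{\mathcal{E}^{n,n+1}_1,\dots,\mathcal{E}^{n,n+1}_{n+1}\}=\textnormal{span}\{a_0(\cdot),\dots,a_n(\cdot)\}$. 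Separately, summing the contour-integral formula for $\mathsf{\Phi}^{(n,n+1)}_{(k,k+1)}(i,j)$ over $i$ (here the orthogonal expansion of $1/(x-u)$ and $Q_i(0)=1$ collapse the sum) gives $(\hat\phi*\mathsf{\Phi}^{(n,n+1)}_{(k,k+1)})(\textnormal{virt},j)=(-1)^{n-k}a_{n-k}(j)$, so the virtual functions span the \emph{same} explicit space $\{a_0(\cdot),\dots,a_n(\cdot)\}$. No $\psi$ appears in the column-sums, which is why your worry about ``inserting the $\psi$'' is resolved not by a pairing trick but by recognizing that the $1/\psi$ in the $\mathcal{E}$'s only contributes to the (nonzero) coefficients $\tilde f_l$ via its derivatives at $0$. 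Your first instinct, to compute the column-sums as a single contour integral, was in fact the correct route; the pivot to the dual-pairing argument is where the proof breaks.
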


\begin{proof}
Write $Q_i(x)=\sum_{k=0}^{i}a_k(i)x^k$. By using residue calculus and moreover since we only have a singularity at $0$, we obtain that,
\begin{align*}
\mathcal{E}^{n,n+1}_{n+1-j}(i)&=-\frac{1}{2\pi \mathsf{i}}\oint_{\mathsf{C}(\mathfrak{I})}\frac{Q_i(u)}{\psi(u)(-u)^{n+1-j+1}}du=-\frac{(-1)^{n+1-j+1}}{(n+1-j)!}\frac{d^{n+1-j}}{du^{n+1-j}}\left(\frac{Q_i(u)}{\psi(u)}\right)\bigg\rvert_{u=0}\\
&=-\frac{(-1)^{n+1-j+1}}{(n+1-j)!}\sum_{l=0}^{n+1-j}f_l^{n+1-j}\frac{d^l}{du^l}Q_i(u)\bigg\rvert_{u=0}\\
&=\sum_{l=0}^{n+1-j}\tilde{f}_l^{n+1-j}a_l(i),
\end{align*}
where the coefficients $\{f_l^{n+1-j}\}^{n+1-j}_{l=1}$ only depend on the derivatives of $1/\psi(u)$ at $u=0$. In particular $f_{n+1-j}^{n+1-j}=\frac{1}{\psi(0)}=1\ne 0$ and hence also the leading coefficient $\tilde{f}_{n+1-j}^{n+1-j} \ne 0$. Thus we have,
\begin{align*}
span\{\mathcal{E}^{n,n+1}_{1}(\cdot),\cdots,\mathcal{E}^{n,n+1}_{n+1}(\cdot)\}=span\{a_0(\cdot),\cdots, a_n(\cdot)\}.
\end{align*}
Similarly, if we write $\hat{Q}_i(x)=\sum_{k=0}^{i}\hat{a}_k(i)x^k$ then,
\begin{align*}
\mathcal{E}^{n,n}_{n-j}(i)=\sum_{l=0}^{n-j}\tilde{g}_l^{n-j}\hat{a}_l(i),
\end{align*}
with $\tilde{g}_{n-j}^{n-j} \ne 0$. Hence,
\begin{align*}
span\{\mathcal{E}^{n,n}_{1}(\cdot),\cdots,\mathcal{E}^{n,n}_{n}(\cdot)\}=span\{\hat{a}_0(\cdot),\cdots,\hat{a}_{n-1}(\cdot)\}.
\end{align*}
On the other hand, for $0\le k \le n$, we have that,
\begin{align*}
\left(\hat{\phi}*\mathsf{\Phi}^{(n,n+1)}_{(k,k+1)}\right)(\textnormal{virt},j)&=\sum_{i\ge 0}^{}(-1)^{n-k}\left(-\frac{1}{2\pi \mathsf{i}}\right)  \oint_{\mathsf{C}(\mathfrak{I})} \langle \pi_i Q_i ,\frac{Q_j(u)}{x-u}\rangle_{\mathfrak{w}}\frac{1}{u^{n-k}}du\\
&=(-1)^{n-k}\left(-\frac{1}{2\pi \mathsf{i}}\right)  \oint_{\mathsf{C}(\mathfrak{I})} \frac{Q_j(u)}{-u}\frac{1}{u^{n-k}}du\\
&=(-1)^{n-k}a_{n-k}(j).
\end{align*}
Hence,
\begin{align*}
span\{\left(\hat{\phi}*\mathsf{\Phi}^{(n,n+1)}_{(0,1)}\right)(\textnormal{virt},\cdot),\cdots,\left(\hat{\phi}*\mathsf{\Phi}^{(n,n+1)}_{(n,n+1)}\right)(\textnormal{virt},\cdot)\}=span\{a_0(\cdot),\cdots,a_n(\cdot)\}.
\end{align*}
Similarly, for $1 \le k \le n$, we have,
\begin{align*}
\left(\hat{\phi}*\mathsf{\Phi}^{(n,n)}_{(k-1,k)}\right)(\textnormal{virt},j)=\textnormal{const}_{n,k}a_{n-k}(j)
\end{align*}
and thus,
\begin{align*}
span\{\left(\hat{\phi}*\mathsf{\Phi}^{(n,n)}_{(0,1)}\right)(\textnormal{virt},\cdot),\cdots,\left(\hat{\phi}*\mathsf{\Phi}^{(n,n)}_{(n-1,n)}\right)(\textnormal{virt},\cdot)\}=span\{\hat{a}_0(\cdot),\cdots,\hat{a}_{n-1}(\cdot)\}.
\end{align*}
The statement of the lemma is now evident.
\end{proof}

We finally arrive at our main result, that $\Xi^{\psi}$ is a determinantal point process with an explicit kernel given in terms of the orthogonal polynomials $\{Q_i\}_{i\ge 0},\{\hat{Q}_i\}_{i\ge 0}$ and the spectral measures $\mathfrak{w},\hat{\mathfrak{w}}$.

\begin{thm}\label{correlationkernelmain}
Let $\mathfrak{I}$ be compact and $\psi$ be of the form (\ref{formpsi}). Then, the correlation functions $\{\rho^{\psi}_k\}_{k\ge0}$ of $\Xi^{\psi}$ are determinantal,
\begin{align}
 \rho^{\psi}_k(z_1,\cdots,z_k)\overset{\textnormal{def}}{=}\Xi^{\psi}(\{E \in \mathbb{GT}_{\mathbf{s}}(\infty) \textnormal{ s.t. } \{z_1,\cdots,z_k\} \subset E\})=\det\left(\mathcal{K}^{\psi}(z_i,z_j)\right)^k_{i,j=1}
\end{align}
where $\mathcal{K}^{\psi}$ is given by,
\begin{align}\label{correlationcontourintegral}
\mathcal{K}^{\psi}\left(((n_1,n_2),i),(m_1,m_2),j)\right)=\frac{1}{2\pi \mathsf{i}}\oint_{\mathsf{C}(\mathfrak{I})} \tilde{\mathcal{P}_j}(u)\langle \bar{\mathcal{P}}_i(x) , \frac{x^{n_2}}{u^{m_2}}\frac{\psi(x)}{(x-u)\psi(u)} \rangle_{\mathfrak{m}}du\nonumber\\
+\textbf{1}\left((n_1,n_2)\ge (m_1,m_2)\right)\langle \bar{\mathcal{P}}_i(x) , x^{n_2-m_2}\tilde{\mathcal{P}_j}(x) \rangle_{\mathfrak{m}}
\end{align}
and,
\begin{align}
(\bar{\mathcal{{P}}},\tilde{\mathcal{P}},\mathfrak{m})=\begin{cases}
(\pi_iQ_i,Q_j,\mathfrak{w}) \textnormal{ if } (n_1,n_2),(m_1,m_2)=(n,n+1),(m,m+1)\\
(\pi_iQ_i,\hat{Q}_j,\mathfrak{w}) \textnormal{ if } (n_1,n_2),(m_1,m_2)=(n,n+1),(m,m)\\
(\hat{\pi}_i\hat{Q}_i,Q_j,\hat{\mathfrak{w}}) \textnormal{ if } (n_1,n_2),(m_1,m_2)=(n,n),(m,m+1)\\
(\hat{\pi}_i\hat{Q}_i,\hat{Q}_j,\hat{\mathfrak{w}}) \textnormal{ if } (n_1,n_2),(m_1,m_2)=(n,n),(m,m)
\end{cases}.
\end{align}
\end{thm}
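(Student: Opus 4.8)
The strategy is to recognize the joint distribution $\Xi^{\psi}$, restricted to finitely many levels, as a measure of ``Eynard--Mehta type'' and to invoke the standard machinery (in the form recorded in \cite{BorodinRains}) producing a determinantal correlation kernel. Concretely, the cylinder-set formulas for $\Xi^{\psi}\left[\mathfrak{C}_{n,n}\right]$ and $\Xi^{\psi}\left[\mathfrak{C}_{n,n+1}\right]$ are products of determinants $\det\left(\phi\right)$, $\det\left(\hat\phi\right)$ over consecutive levels, times a final determinant whose entries are the linear functionals $\langle \hat\pi_i\hat Q_i,(-x)^{n-j}\psi\rangle_{\hat{\mathfrak w}}$ or $\langle \pi_iQ_i,(-x)^{n+1-j}\psi\rangle_{\mathfrak w}$ (these are exactly the $\Psi^{n,\star}_{\star-j}$ introduced in Section~\ref{sectioncoherentmeasures}). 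This is precisely the input format of the Eynard--Mehta theorem for interlacing arrays: transition ``kernels'' $\phi,\hat\phi$ between levels, a fixed initial configuration (the fully packed one, encoded by the geometric structure of $\phi,\hat\phi$ starting at level $(0,1)$), and final functions $\Psi$ on the top level. The conclusion of that theorem is that the point process is determinantal with a kernel built from three ingredients: the forward convolutions $\mathsf{\Phi}^{(n_1,n_2)}_{(k_1,k_2)}$ of the $\phi$'s and $\hat\phi$'s between two levels (which I have already expressed as contour integrals in the Lemma preceding the theorem), a biorthogonal dual family, and the Gram-type correction term $\mathbf{1}\!\left((n_1,n_2)\ge(m_1,m_2)\right)\mathsf{\Phi}^{(n_1,n_2)}_{(m_1,m_2)}$.

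\textbf{Steps, in order.} First I would fix a cutoff level $(N,N)$ or $(N,N+1)$, large enough to contain all the $z_1,\dots,z_k$ appearing in the correlation function, and write $\Xi^{\psi}$ restricted to levels $\le N$ in the canonical Eynard--Mehta product-of-determinants form; this requires only bookkeeping of the factors $(-1)^{\binom{n}{2}}$, $(-1)^{\binom{n-1}{2}}$ and powers of $\lambda_0$, and noting that the fully packed initial condition means the process ``starts'' from the state $(0,1,\dots)$ at the bottom, which is exactly what makes the $\phi,\hat\phi$ convolutions from level $(0,1)$ up have the clean form appearing in $\mathsf{\Phi}$ (the contour-integral identities $\oint_{\mathsf C(\mathfrak I)}u^{-n}(x-u)^{-1}du=0$ for $n\ge1$ absorb the lower-order terms). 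Second, I would identify the biorthogonal dual functions: by Lemma~\ref{basis} the span of $\left(\hat\phi*\mathsf{\Phi}^{(N,\star)}_{(k_1,k_2)}\right)(\mathrm{virt},\cdot)$ over all levels equals the span of the $\mathcal E^{N,\star}_{\star-l}(\cdot)$, and by Lemma~\ref{orthogonality} the latter are exactly dual to the top-level functions $\Psi^{N,\star}_{\star-k}(\cdot)$; hence the Gram matrix that Eynard--Mehta requires one to invert is already in triangular/identity form and no genuine matrix inversion is needed --- this is the whole point of the preparatory lemmas. Third, I would assemble the kernel: the Eynard--Mehta formula gives
\begin{align*}
\mathcal K^{\psi}\left(((n_1,n_2),i),((m_1,m_2),j)\right)=-\mathsf{\Phi}^{(m_1,m_2)}_{(n_1,n_2)}(j,i)\,\mathbf 1\!\left((m_1,m_2)>(n_1,n_2)\right)+\sum_{(k_1,k_2)}\mathsf{\Psi}^{(n_1,n_2)}_i(k_1,k_2)\,\mathsf{\Phi}^{(m_1,m_2)}_{(k_1,k_2)}(\star,j),
\end{align*}
where the precise index juggling matches the four cases in the statement; substituting the contour-integral representations of $\mathsf{\Phi}$ and the definitions of $\mathcal E$, and using the uniform convergence of the orthogonal expansions on the compact set $\mathfrak I$ (here is where compactness of $\mathfrak I$ is used, to interchange $\sum$ and $\langle\cdot,\cdot\rangle_{\mathfrak m}$ and $\oint$), collapses the double sum over $(k_1,k_2)$ to the single contour integral $\frac{1}{2\pi\mathsf i}\oint_{\mathsf C(\mathfrak I)}\tilde{\mathcal P}_j(u)\langle\bar{\mathcal P}_i(x),\frac{x^{n_2}}{u^{m_2}}\frac{\psi(x)}{(x-u)\psi(u)}\rangle_{\mathfrak m}\,du$, while the ``$>$'' term becomes the $\mathbf 1((n_1,n_2)\ge(m_1,m_2))\langle\bar{\mathcal P}_i(x),x^{n_2-m_2}\tilde{\mathcal P}_j(x)\rangle_{\mathfrak m}$ correction via a residue computation (the pole at $x=u$ inside $\mathsf C(\mathfrak I)$). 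Finally I would check that the resulting kernel is independent of the cutoff $N$, which follows because adding a level acts by a convolution that Lemma~\ref{orthogonality} shows fixes the relevant spans; this legitimizes passing to $\mathbb{GT}_\mathbf{s}(\infty)$ by Kolmogorov's extension / consistency.

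\textbf{Main obstacle.} The routine-but-delicate part is the careful identification of exactly which variant of the Eynard--Mehta / Borodin--Rains theorem applies, since here the array is of the interlacing (Gelfand--Tsetlin) type with two kinds of links $\phi$ (even-to-odd) and $\hat\phi$ (odd-to-even) alternating, the top-level weight is given by linear functionals rather than a measure, and the ``initial condition'' enters only implicitly through the structure of the convolutions from level $(0,1)$; getting all the signs, the powers of $\lambda_0$, and the role of the virtual variable $\mathrm{virt}$ consistent is where almost all the real work lies. The genuinely substantive analytic step --- and the one I expect to be the true crux --- is justifying the interchange of the infinite sum $\sum_{i\ge 0}$ (over the intermediate index coming from the orthogonal expansion) with the contour integral over $\mathsf C(\mathfrak I)$ and the real integral $\langle\cdot,\cdot\rangle_{\mathfrak m}$, which is exactly why $\mathfrak I$ must be compact and $\psi$ of the special form $(\ref{formpsi})$: one needs the uniform-convergence bound $\sum_k|\langle Q_k,f\rangle_{\mathfrak w}||Q_k(x)|\pi(k)\le e^{t|x|}\sum|c_i^m|Q_i(-|x|)$ from Section~\ref{sectionorthogonalpolynomials}, valid uniformly for $u$ ranging over the compact contour $\mathsf C(\mathfrak I)$, to dominate the series and apply Fubini/Tonelli. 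Once that interchange is licensed, the collapse of the sum to a single contour integral is a short residue calculation, and the determinantal conclusion is immediate from the chosen Eynard--Mehta statement.
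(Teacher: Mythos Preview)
Your high-level strategy matches the paper's: apply the Eynard--Mehta theorem (in the interlacing variant of \cite{BorodinRains}, specifically Proposition~A.2 of \cite{Cerenzia}), using Lemma~\ref{orthogonality} and Lemma~\ref{basis} to identify the biorthogonal family $\{\mathcal{E}^{m_1,m_2}_{m_2-k}\}$ so that no genuine Gram-matrix inversion is needed. That part of your plan is fine and is exactly how the paper proceeds.

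However, you have misidentified the crux. After Eynard--Mehta, the kernel takes the form
\[
\mathcal{K}^{\psi}\!\left(((n_1,n_2),i),((m_1,m_2),j)\right)=-\mathsf{\Phi}^{(m_1,m_2)}_{(n_1,n_2)}(i,j)\,\mathbf{1}\!\left((n_1,n_2)<(m_1,m_2)\right)+\sum_{k=1}^{m_2}\Psi^{n_1,n_2}_{n_2-k}(i)\,\mathcal{E}^{m_1,m_2}_{m_2-k}(j),
\]
and the remaining sum over $k$ is \emph{finite} (the paper says so explicitly: ``all the sums that are encountered in the sequel are finite, so there are no further issues with convergence''). So the analytic interchange you flag as ``the true crux'' --- swapping an infinite $\sum_{i\ge 0}$ with $\oint$ and $\langle\cdot,\cdot\rangle_{\mathfrak m}$ --- is not what happens here; those infinite-sum interchanges were already absorbed into the preparatory lemmas (that is where compactness of $\mathfrak I$ is used). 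Your proposed mechanism for collapsing the sum, ``uniform convergence of the orthogonal expansions,'' is not the tool that does the job at this stage.

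The actual work, which occupies almost the entire proof in the paper and which your plan skips over, is an explicit algebraic reduction of the finite sum $\sum_{k=1}^{m_2}\Psi^{n_1,n_2}_{n_2-k}(i)\,\mathcal{E}^{m_1,m_2}_{m_2-k}(j)$. One writes out the definitions, pulls the finite sum inside the $\oint$ and $\langle\cdot,\cdot\rangle_{\mathfrak m}$, and then evaluates $\sum_{k}\frac{(-x)^{n_2-k}}{(-u)^{m_2+1-k}}$ via geometric series. When $m_2\le n_2$ this is straightforward and a residue at $u=x$ produces the extra term $\langle\bar{\mathcal P}_i,x^{n_2-m_2}\tilde{\mathcal P}_j\rangle_{\mathfrak m}$. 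When $m_2>n_2$ one must split the sum at $k=n_2$, and for $k>n_2$ the Taylor remainders $\mathsf{R}^{\psi}_{k-n_2}$ enter; a second layer of geometric summation reorganizes these into $\mathsf{R}^{\psi}_{m_2-n_2}(u)$ and $\mathsf{R}^{\psi}_{m_2-n_2}(x)$ contributions, which then cancel against each other after taking residues, leaving exactly the contour integral in the statement plus a term that cancels the $-\mathsf{\Phi}^{(m_1,m_2)}_{(n_1,n_2)}$ piece. None of this is hard, but it is the substance of the proof, and your sketch does not indicate awareness that this computation (rather than a Fubini-type interchange) is what remains to be done.
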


\begin{proof}
This is an application of a variant of the Eynard-Mehta Theorem, more specifically Proposition A.2 of \cite{Cerenzia}. Identifying the functions therein from Lemma \ref{orthogonality} and Lemma \ref{basis} we get that, 
\begin{align}
\mathcal{K}^{\psi}\left(((n_1,n_2),i),(m_1,m_2),j)\right)=-\mathsf{\Phi}^{(m_1,m_2)}_{(n_1,n_2)}(i,j)\textbf{1}\left((n_1,n_2)< (m_1,m_2)\right)+\sum_{k=1}^{m_2}\Psi_{n_2-k}^{n_1,n_2}(i)\mathcal{E}_{m_2-k}^{m_1,m_2}(j).
\end{align}
So, we need to calculate $\sum_{k=1}^{m_2}\Psi_{n_2-k}^{n_1,n_2}(i)\mathcal{E}_{m_2-k}^{m_1,m_2}(j)$. The calculation of this sum is elementary but rather tedious. Moreover, all the sums that are encountered in the sequel are finite, so there are no further issues with convergence other than the ones encountered already. We can assume $(n_1,n_2)=(n,n+1),(m_1,m_2)=(m,m+1)$, as all other cases are analogous; we just need to change $Q_i$'s to $\hat{Q}_i$'s and $\mathfrak{w}$ to $\hat{\mathfrak{w}}$, note that in particular we are not using any specific properties of the $Q_i$'s or $\mathfrak{w}$ below.

We first assume that $m \le n$. Then (note that, for $k\le m+1$ we have $\mathsf{R}_{k-(n+1)}^{\psi}=\psi$),
\begin{align*}
\sum_{k=1}^{m+1}\Psi_{n+1-k}^{n,n+1}(i)\mathcal{E}_{m+1-k}^{m,m+1}(j)&=-\frac{1}{2\pi \mathsf{i}}\sum_{k=1}^{m+1}\langle \pi_iQ_i , (-x)^{n+1-k}\psi \rangle_{\mathfrak{w}}\oint_{\mathsf{C}(\mathfrak{I})}\frac{Q_j(u)}{\psi(u)(-u)^{m+2-k}}du\\
&=-\frac{1}{2\pi \mathsf{i}}\oint_{\mathsf{C}(\mathfrak{I})}\langle \pi_iQ_i , \sum_{k=1}^{m+1}\frac{(-x)^{n+1-k}}{(-u)^{m+2-k}}\psi \rangle_{\mathfrak{w}}\frac{Q_j(u)}{\psi(u)}du.
\end{align*}
By using,
\begin{align*}
\sum_{k=1}^{m+1}\frac{(-x)^{n+1-k}}{(-u)^{m+2-k}}=\frac{u}{x-u}\left(1-\left(\frac{u}{x}\right)^{m+1}\right)\frac{(-x)^{n+1}}{(-u)^{m+2}},
\end{align*}
we get,
\begin{align*}
\sum_{k=1}^{m+1}\Psi_{n+1-k}^{n,n+1}(i)\mathcal{E}_{m+1-k}^{m,m+1}(j)&=-\frac{1}{2\pi \mathsf{i}}\oint_{\mathsf{C}(\mathfrak{I})}\langle \pi_iQ_i , \frac{u}{x-u}\left(1-\left(\frac{u}{x}\right)^{m+1}\right)\frac{(-x)^{n+1}}{(-u)^{m+2}}\psi \rangle_{\mathfrak{w}}\frac{Q_j(u)}{\psi(u)}du\\
&=\frac{1}{2\pi \mathsf{i}}\oint_{\mathsf{C}(\mathfrak{I})}\langle \pi_iQ_i , \frac{1}{x-u}\frac{(-x)^{n+1}}{(-u)^{m+1}}\psi \rangle_{\mathfrak{w}}\frac{Q_j(u)}{\psi(u)}du+\langle \pi_iQ_i ,(-x)^{(n+1)-(m+1)}Q_j \rangle_{\mathfrak{w}},
\end{align*}
where we have taken the residue at $u=x$ in the second term.

We now assume that $m\ge n+1$. We split the sum into two,
\begin{align}
\sum_{k=1}^{m+1}\Psi_{n+1-k}^{n,n+1}(i)\mathcal{E}_{m+1-k}^{m,m+1}(j)=\sum_{k=1}^{n+1}\Psi_{n+1-k}^{n,n+1}(i)\mathcal{E}_{m+1-k}^{m,m+1}(j)+\sum_{k=n+2}^{m+1}\Psi_{n+1-k}^{n,n+1}(i)\mathcal{E}_{m+1-k}^{m,m+1}(j).
\end{align}
We calculate the first summand as before,
\begin{align}
\sum_{k=1}^{n+1}\Psi_{n+1-k}^{n,n+1}(i)\mathcal{E}_{m+1-k}^{m,m+1}(j)=-\frac{1}{2\pi \mathsf{i}}\oint_{\mathsf{C}(\mathfrak{I})}\langle \pi_iQ_i , \frac{u}{x-u}\left(1-\left(\frac{u}{x}\right)^{n+1}\right)\frac{(-x)^{n+1}}{(-u)^{m+2}}\psi \rangle_{\mathfrak{w}}\frac{Q_j(u)}{\psi(u)}du.
\end{align}
For the second summand first recall that $\Psi_{n+1-k}^{n,n+1}(i)=\langle \pi_iQ_i , (-x)^{n+1-k}\mathsf{R}_{k-(n+1)}^{\psi} \rangle_{\mathfrak{w}}$ where $\mathsf{R}_{k-(n+1)}^{\psi}(x)=\psi(x)-\sum_{l=0}^{k-(n+1)-1}\frac{\psi^{(l)}(0)}{l!}(-x)^l(-1)^l$ and thus,
\begin{align}
\sum_{k=n+2}^{m+1}\Psi_{n+1-k}^{n,n+1}(i)\mathcal{E}_{m+1-k}^{m,m+1}(j)=-\frac{1}{2\pi \mathsf{i}}\oint_{\mathsf{C}(\mathfrak{I})}\langle \pi_iQ_i , \sum_{k=n+2}^{m+1}\frac{(-x)^{n+1-k}}{(-u)^{m+2-k}}\left[\psi(x)-\sum_{l=0}^{k-(n+1)-1}\frac{\psi^{(l)}(0)}{l!}(-x)^l(-1)^l\right] \rangle_{\mathfrak{w}}\frac{Q_j(u)}{\psi(u)}du.
\end{align}
So, we need to calculate,
\begin{align*}
\sum_{k=n+2}^{m+1}\frac{(-x)^{n+1-k}}{(-u)^{m+2-k}}\left[\psi(x)-\sum_{l=0}^{k-(n+1)-1}\frac{\psi^{(l)}(0)}{l!}(-x)^l(-1)^l\right]=\frac{1}{(-u)^{m+2}}\left[\sum_{k=n+2}^{m+1}(\psi(x)-\psi(0))\frac{(-u)^k}{(-x)^{k-(n+1)}}\right.\\
-\left.\sum_{k=n+3}^{m+1}\sum_{l=0}^{k-(n+2)}\frac{\psi^{(l)}(0)}{l!}\frac{(-u)^k(-1)^l}{(-x)^{k-(n+1)-l}}\right].
\end{align*}
Repeatedly using the geometric summation identity we get that this is equal to,
\begin{align*}
\frac{1}{(-u)^{m+2}}\left[(\psi(x)-\psi(0))\frac{(-1)(-u)^{n+2}}{x-u}\left(1-\left(\frac{u}{x}\right)^{(m+1)-(n+1)}\right)\right.\\
-\left.\sum_{r=1}^{(m+1)-(n+1)-1}\frac{\psi^{(r)}(0)}{r!}\frac{(-1)(-u)^{n+2+r}(-1)^r}{x-u}\left(1-\left(\frac{u}{x}\right)^{(m+1)-(n+1)-r}\right)\right]\\
=-\frac{(-u)^{(n+1)-(m+1)}}{x-u}\left[\psi(x)-\psi(0)-\sum_{r=1}^{(m+1)-(n+1)-1}\frac{\psi^{(r)}(0)}{r!}(-u)^r(-1)^r\right]\\
+\frac{(-x)^{(n+1)-(m+1)}}{x-u}\left[\psi(x)-\psi(0)-\sum_{r=1}^{(m+1)-(n+1)-1}\frac{\psi^{(r)}(0)}{r!}(-x)^r(-1)^r\right]\\
=-\frac{(-u)^{(n+1)-(m+1)}}{x-u}\left[\mathsf{R}^{\psi}_{(m+1)-(n+1)}(u)-\psi(u)+\psi(x)\right]+\frac{(-x)^{(n+1)-(m+1)}}{x-u}\mathsf{R}^{\psi}_{(m+1)-(n+1)}(x).
\end{align*}
Hence,
\begin{align*}
\sum_{k=n+2}^{m+1}\Psi_{n+1-k}^{n,n+1}(i)\mathcal{E}_{m+1-k}^{m,m+1}(j)=\frac{1}{2\pi \mathsf{i}}\oint_{\mathsf{C}(\mathfrak{I})}\langle \pi_iQ_i , \frac{(-u)^{(n+1)-(m+1)}}{x-u}\left[\mathsf{R}^{\psi}_{(m+1)-(n+1)}(u)-\psi(u)+\psi(x)\right] \rangle_{\mathfrak{w}}\frac{Q_j(u)}{\psi(u)}du\\
-\frac{1}{2\pi \mathsf{i}}\oint_{\mathsf{C}(\mathfrak{I})}\langle \pi_iQ_i ,\frac{(-x)^{(n+1)-(m+1)}}{x-u}\mathsf{R}^{\psi}_{(m+1)-(n+1)}(x) \rangle_{\mathfrak{w}}\frac{Q_j(u)}{\psi(u)}du.
\end{align*}
Now, by taking the residue at $u=x$, in both contour integrals in the terms involving $\mathsf{R}^{\psi}_{(m+1)-(n+1)}$ we get (note that there is no pole at $u=0$ in the first contour integral),
\begin{align*}
&\frac{1}{2\pi \mathsf{i}}\oint_{\mathsf{C}(\mathfrak{I})}\langle \pi_iQ_i , \frac{(-u)^{(n+1)-(m+1)}}{x-u}\mathsf{R}^{\psi}_{(m+1)-(n+1)}(u) \rangle_{\mathfrak{w}}\frac{Q_j(u)}{\psi(u)}du
-\frac{1}{2\pi \mathsf{i}}\oint_{\mathsf{C}(\mathfrak{I})}\langle \pi_iQ_i ,\frac{(-x)^{(n+1)-(m+1)}}{x-u}\mathsf{R}^{\psi}_{(m+1)-(n+1)}(x) \rangle_{\mathfrak{w}}\frac{Q_j(u)}{\psi(u)}du\\
&=-\langle \pi_iQ_i ,\frac{(-x)^{(n+1)-(m+1)}}{\psi(x)}\mathsf{R}^{\psi}_{(m+1)-(n+1)}(x)Q_j(x) \rangle_{\mathfrak{w}}+\langle \pi_iQ_i ,\frac{(-x)^{(n+1)-(m+1)}}{\psi(x)}\mathsf{R}^{\psi}_{(m+1)-(n+1)}(x)Q_j(x) \rangle_{\mathfrak{w}}=0.
\end{align*}
So,
\begin{align*}
&\sum_{k=n+2}^{m+1}\Psi_{n+1-k}^{n,n+1}(i)\mathcal{E}_{m+1-k}^{m,m+1}(j)=\frac{1}{2\pi \mathsf{i}}\oint_{\mathsf{C}(\mathfrak{I})}\langle \pi_iQ_i , \frac{(-u)^{(n+1)-(m+1)}}{x-u}\left[-\psi(u)+\psi(x)\right] \rangle_{\mathfrak{w}}\frac{Q_j(u)}{\psi(u)}du\\
&=-\frac{1}{2\pi \mathsf{i}}\oint_{\mathsf{C}(\mathfrak{I})}\langle \pi_iQ_i , \frac{(-u)^{(n+1)-(m+1)}}{x-u} \rangle_{\mathfrak{w}}Q_j(u)du+\frac{1}{2\pi \mathsf{i}}\oint_{\mathsf{C}(\mathfrak{I})}\langle \pi_iQ_i , \frac{(-u)^{(n+1)-(m+1)}}{x-u}\psi(x) \rangle_{\mathfrak{w}}\frac{Q_j(u)}{\psi(u)}du.
\end{align*}
Thus, combining with the first summand we get that for $m>n$,
\begin{align}
\sum_{k=1}^{m+1}\Psi_{n+1-k}^{n,n+1}(i)\mathcal{E}_{m+1-k}^{m,m+1}(j)&=-\frac{1}{2\pi \mathsf{i}}\oint_{\mathsf{C}(\mathfrak{I})}\langle \pi_iQ_i , \frac{(-u)^{(n+1)-(m+1)}}{x-u} \rangle_{\mathfrak{w}}Q_j(u)du\nonumber\\
&+\frac{1}{2\pi \mathsf{i}}\oint_{\mathsf{C}(\mathfrak{I})}\langle \pi_iQ_i , \frac{(-x)^{n+1}\psi(x)}{(-u)^{m+1}(x-u)\psi(u)} \rangle_{\mathfrak{w}}Q_j(u)du.
\end{align}
To obtain the correlation kernel for $m>n$, recall that there is also a contribution from $\mathsf{\Phi}^{(m,m+1)}_{(n,n+1)}$ which is given by,
\begin{align*}
\mathsf{\Phi}^{(m,m+1)}_{(n,n+1)}(i,j)&=(-1)^{n-m}\left(-\frac{1}{2\pi \mathsf{i}}\right)  \oint_{\mathsf{C}(\mathfrak{I})} \langle \pi_i Q_i ,\frac{Q_j(u)}{x-u}\rangle_{\mathfrak{w}}\frac{1}{u^{m-n}}du\\
&=-\frac{1}{2\pi \mathsf{i}}  \oint_{\mathsf{C}(\mathfrak{I})} Q_j(u)\langle \pi_i Q_i ,\frac{(-u)^{(n+1)-(m+1)}}{x-u}\rangle_{\mathfrak{w}}du.
\end{align*}
Putting it all together, we get that,
\begin{align}
\mathcal{K}^{\psi}\left(((n,n+1),i),(m,m+1),j)\right)=\frac{1}{2\pi \mathsf{i}}\oint_{\mathsf{C}(\mathfrak{I})} Q_j(u)\langle \pi_i Q_i , \frac{(-x)^{n+1}}{(-u)^{m+1}}\frac{\psi(x)}{(x-u)\psi(u)} \rangle_{\mathfrak{w}}du\nonumber\\
+\textbf{1}\left(n \ge m\right)\langle \pi_i Q_i , (-x)^{(n+1)-(m+1)}Q_j \rangle_{\mathfrak{w}}.
\end{align}
Multiplying by the conjugating factor $(-1)^{(n+1)-(m+1)}$ (these do not alter the correlation kernel since they vanish when we take the determinant), we obtain the statement of the Theorem.
\end{proof}

\subsection{Large time and finite distance from wall limit}\label{SubsectionScalingLimit} We now take $\psi(u)=\psi_t(u)=e^{-tu}$ so that we are considering the push-block dynamics and we want to take a large time limit while zooming in and looking at particles being at a finite distance from the wall.

More precisely, let $t\sim N \tau$ and $m,n \sim N\eta$ so that moreover, the differences between the different levels $m-n$ is constant. Furthermore note, that $i,j$ which govern the position of the particles will be fixed and not scaled with $N$. This of course, avoids any delicate asymptotics involving the orthogonal polynomials $Q_i,\hat{Q}_i$ or the spectral measures $\mathfrak{w}, \hat{\mathfrak{w}}$. The exact statement of the result is as follows:

\begin{thm} Let $t(N)=N \tau$ and
\begin{align*}
\left(\tilde{m}_1(N),\tilde{m}_2(N)\right)&=\left(\lfloor N \eta\rfloor+m_1,\lfloor N \eta\rfloor+m_2\right),\\ \left(\tilde{n}_1(N),\tilde{n}_2(N)\right)&=\left(\lfloor N \eta\rfloor+n_1,\lfloor N \eta\rfloor+n_2\right),
\end{align*}
with $\alpha=\frac{\eta}{\tau}$. Then we have:
\begin{align*}
\lim_{N\to \infty}\mathcal{K}^{\psi_{t(N)}}\left(\left(\left(\tilde{n}_1(N),\tilde{n}_2(N)\right),i),\left(\tilde{m}_1(N),\tilde{m}_2(N)\right),j)\right)\right)=\mathfrak{K}_{\alpha}\left(\left(\left(n_1,n_2\right),i),\left(m_1,m_2\right),j)\right)\right)\\
=\int_{I^-}^{I^+}\left[-\textbf{1}(x\ge \alpha)+\textbf{1}\left((n_1,n_2)\ge (m_1,m_2)\right)\right]\bar{\mathcal{P}}_i(x) x^{n_2-m_2}\tilde{\mathcal{P}_j}(x)d\mathfrak{m}(x).
\end{align*}
\end{thm}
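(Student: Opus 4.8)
The plan is to perform a steepest descent analysis on the contour integral term of $\mathcal{K}^{\psi_{t(N)}}$ from Theorem~\ref{correlationkernelmain}, while noting that the second (non-integral) term $\textbf{1}((n_1,n_2)\ge(m_1,m_2))\langle \bar{\mathcal{P}}_i(x), x^{n_2-m_2}\tilde{\mathcal{P}}_j(x)\rangle_{\mathfrak{m}}$ is completely independent of $N$ and hence survives the limit verbatim, already matching the $\textbf{1}((n_1,n_2)\ge(m_1,m_2))$ piece in $\mathfrak{K}_\alpha$. So the whole content is to show that the contour integral
\[
\frac{1}{2\pi\mathsf{i}}\oint_{\mathsf{C}(\mathfrak{I})}\tilde{\mathcal{P}}_j(u)\,\Big\langle \bar{\mathcal{P}}_i(x),\ \frac{x^{\tilde{n}_2(N)}}{u^{\tilde{m}_2(N)}}\frac{e^{-t(N)x}}{(x-u)e^{-t(N)u}}\Big\rangle_{\mathfrak{m}}\,du
\]
converges to $-\int_{I^-}^{I^+}\textbf{1}(x\ge\alpha)\,\bar{\mathcal{P}}_i(x)\,x^{n_2-m_2}\,\tilde{\mathcal{P}}_j(x)\,d\mathfrak{m}(x)$.

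First I would write the large parameters explicitly: since $\tilde{n}_2(N)=\lfloor N\eta\rfloor+n_2$ and $\tilde{m}_2(N)=\lfloor N\eta\rfloor+m_2$ and $t(N)=N\tau$, the $N$-dependent part of the integrand is $\big(x/u\big)^{\lfloor N\eta\rfloor}e^{-N\tau(x-u)} = e^{N(\eta\log x-\eta\log u-\tau x+\tau u)}$ times the bounded factor $x^{n_2}u^{-m_2}(x-u)^{-1}e^{-\tau(x-u)\cdot 0}$ — more precisely the $N$-free remainder $x^{n_2-m_2}$... let me be careful: after pulling out $\lfloor N\eta\rfloor$ powers there remains $x^{n_2}/u^{m_2}$ and a $1/(x-u)$. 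Define the phase $S(u)=\eta\log u+\tau u$ (the $u$-dependent exponent, up to sign); then the $u$-integral is dominated by $e^{-NS(u)}$ while the $x$-integral carries $e^{NS(x)}$ weighted by $d\mathfrak{m}(x)$ on $\mathfrak{I}\subset[I^-,I^+]$. The critical point of $S$ is $u_*=-\eta/\tau=-\alpha$... so actually $S'(u)=\eta/u+\tau=0$ gives $u_*=-\eta/\tau$, which is negative; this is not on the positive axis, so the right picture is that one deforms the $u$-contour $\mathsf{C}(\mathfrak{I})$ — which encircles $[0,I^+]$ — so that along it $\mathrm{Re}\,S(u)$ is strictly larger than $\max_{x\in\mathfrak{I}}\mathrm{Re}\,S(x)$ except where the contour is pinned near the real axis; the competition between $e^{NS(x)}$ (maximized at the right end of $\mathfrak{I}$) and $e^{-NS(u)}$ localizes the double integral. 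The cleaner route, and the one I expect the paper uses, is: deform $\mathsf{C}(\mathfrak{I})$ to pass through the real point $\alpha=\eta/\tau$ (when $\alpha\in(I^-,I^+)$) where $S$ restricted to the reals has the relevant saddle behaviour, split the $x$-integral at $x=\alpha$, and observe that for $x<\alpha$ the contour can be pushed to make the contribution vanish exponentially, whereas for $x>\alpha$ picking up the residue at $u=x$ (as one collapses the contour) produces exactly $-\bar{\mathcal{P}}_i(x)x^{n_2-m_2}\tilde{\mathcal{P}}_j(x)$.

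Concretely the key steps, in order: (1) substitute the explicit form $\psi_{t(N)}(u)=e^{-N\tau u}$ and $\tilde n_2,\tilde m_2$, and isolate the exponential factor $e^{N(\tau(u-x)+\eta\log(u/x)\cdot(-1))}$ — i.e. $e^{N(\eta\log x - \tau x)}e^{-N(\eta\log u-\tau u)}$, up to the sign bookkeeping; (2) use compactness of $\mathfrak{I}$ (invoked precisely for this, cf.\ the remark after Theorem~\ref{IntroductionCorrelationKernelTheorem}) to bound the $x$-side uniformly and legitimately interchange the $u$-contour integral with the $x$-integral and with the $N\to\infty$ limit via dominated convergence, after the contour deformation; (3) deform $\mathsf{C}(\mathfrak{I})$ to a contour through $\alpha$ along which $\mathrm{Re}(\eta\log u-\tau u)$ achieves its minimum at $u=\alpha$ with the standard steepest-descent quadratic behaviour, so that $\int_{\mathsf{C}}(\cdots)e^{-N(\eta\log u-\tau u)}du\sim$ Gaussian $\times e^{-N(\eta\log\alpha-\tau\alpha)}$; (4) on the $x$-side, for $x\in\mathfrak{I}$ with $x<\alpha$ we have $\eta\log x-\tau x<\eta\log\alpha-\tau\alpha$ (since $x\mapsto\eta\log x-\tau x$ is increasing on $(0,\alpha)$), so that contribution dies; for $x>\alpha$ it is decreasing, so again $\eta\log x-\tau x<\eta\log\alpha-\tau\alpha$, but now in collapsing the $u$-contour onto the real segment one crosses the pole at $u=x$ and the residue $\mathrm{Res}_{u=x}\frac{\tilde{\mathcal{P}}_j(u)x^{n_2}u^{-m_2}}{x-u}\cdot(\text{exp factors})= -\tilde{\mathcal{P}}_j(x)x^{n_2-m_2}$ (the exponentials cancel exactly at $u=x$); assembling, the surviving contribution is $-\int_{\mathfrak{I}\cap\{x>\alpha\}}\bar{\mathcal{P}}_i(x)x^{n_2-m_2}\tilde{\mathcal{P}}_j(x)\,d\mathfrak{m}(x)=-\int_{I^-}^{I^+}\textbf{1}(x\ge\alpha)\bar{\mathcal{P}}_i(x)x^{n_2-m_2}\tilde{\mathcal{P}}_j(x)\,d\mathfrak{m}(x)$; (5) handle the boundary cases $\alpha\le I^-$ (then $\textbf{1}(x\ge\alpha)\equiv 1$ on $\mathfrak{I}$ and the contour can be taken so the residue is always picked up) and $\alpha\ge I^+$ (then $\textbf{1}(x\ge\alpha)\equiv 0$ and no residue is crossed) separately, which are easier.

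The main obstacle is step (2)–(3): making the contour deformation and the interchange of limit, $u$-integration, and $x$-integration fully rigorous — in particular verifying that the deformed $u$-contour can simultaneously (a) avoid all zeros of $\psi$ (automatic here since $\psi_t=e^{-tu}$ has none), (b) keep encircling $[0,I^+]$ and $0$ as required by $\mathsf{C}(\mathfrak{I})$, and (c) have the clean steepest-descent profile with a single dominant point at $u=\alpha$, together with uniform exponential decay estimates on the rest of the contour so that dominated convergence applies. The residue computation and the monotonicity comparison of $\eta\log x-\tau x$ are routine once the geometry is set up; the analytic control of the double integral (and the edge behaviour of $\mathfrak{m}$ near $I^\pm$, again where compactness of $\mathfrak{I}$ is used) is where the real work lies, but it is a by-now-standard saddle-point argument and I would present it in that spirit rather than grinding through every estimate.
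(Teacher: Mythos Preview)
Your proposal is correct and follows the same steepest-descent route as the paper: the second term is $N$-independent, and for the double integral one deforms the $u$-contour through $\alpha=\eta/\tau$ (the paper uses an explicit rectangle with right edge at $\Re u=\alpha$ and left edge at some $r_2<\beta<0$), picks up the residue at $u=x$ for $x\ge\alpha$, and shows the remainder vanishes since $f(z)=\tau z-\eta\log z$ satisfies $f(x)\ge f(\alpha)$ on $[0,I^+]$ while $\Re f(u)\le f(\alpha)$ on the new contour. Your early sign slip (writing $S(u)=\eta\log u+\tau u$ and finding a spurious critical point at $-\alpha$) is harmless since you self-correct by step~(3), and the paper's case split $\alpha\le I^+$ versus $\alpha>I^+$ matches your step~(5).
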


\begin{proof} First, note that the term:
\begin{align*}
\textbf{1}\left(\left(\tilde{n}_1(N),\tilde{n}_2(N)\right)\ge \left(\tilde{m}_1(N),\tilde{m}_2(N)\right)\right)\langle \bar{\mathcal{P}}_i(x) , x^{\tilde{n}_2(N)-\tilde{m}_2(N)}\tilde{\mathcal{P}_j}(x) \rangle_{\mathfrak{m}}=\textbf{1}\left((n_1,n_2)\ge (m_1,m_2)\right)\langle \bar{\mathcal{P}}_i(x) , x^{n_2-m_2}\tilde{\mathcal{P}_j}(x) \rangle_{\mathfrak{m}}
\end{align*}
remains constant in $N$. We hence, focus on the double integral term of the kernel $\mathcal{K}^{\psi_{t(N)}}$ and write it as (recall $\mathfrak{I}=[I^-,I^+]$),
\begin{align*}
\frac{1}{2\pi \mathsf{i}}\int_{I^-}^{I^+}\oint_{\mathsf{C}(\mathfrak{I})} \frac{e^{-t(N)x+\tilde{n}_2(N)log(x)}}{e^{-t(N)u+\tilde{m}_2(N)log(u)}}\frac{\tilde{\mathcal{P}_j}(u)\bar{\mathcal{P}}_i(x)}{(x-u)} d\mathfrak{m}(x)du.
\end{align*}
Write the term involving exponentials as,
\begin{align*}
\frac{e^{-t(N)x+\tilde{n}_2(N)log(x)}}{e^{-t(N)u+\tilde{m}_2(N)log(u)}}=\frac{e^{-N(\tau x-\eta log(x))}}{e^{-N(\tau u-\eta log(u))}}+o_N(1).
\end{align*}
Let $f(z)=\tau z-\eta log(z)$. Then $f'(z)=\tau-\frac{\eta}{z}$ and so $z=\alpha\overset{\textnormal{def}}{=}\frac{\eta}{\tau}$ is a critical point. Write,
\begin{align*}
\frac{e^{-N(\tau x-\eta log(x))}}{e^{-N(\tau u-\eta log(u))}}=\frac{e^{-N(f(x)-f(\alpha))}}{e^{-N(f(u)-f(\alpha))}}.
\end{align*}
We would like to deform the ${\mathsf{C}(\mathfrak{I})}$ contour to a contour $\mathsf{C}_{\mathsf{s}}$ so that,
\begin{align*}
& \Re\left(f(x)-f(\alpha)\right)\ge 0 \ , \ \text{for} \ \ x \in [0,I^+],\\
& \Re\left(f(u)-f(\alpha)\right)< 0 \ , \ \text{for u on the } \mathsf{C}_{\mathsf{s}}  \text{ contour} \ \ 
\end{align*}
and thus, the double integral will converge uniformly to zero as $N\to \infty$. In the process however, we might pick some residues from the pole of $\frac{1}{x-u}$ depending on how $\alpha$ compares with $I^+$. First note that for $x \in \mathbb{R},  \ \Re(f(x)-f(\alpha))\le 0$ is equivalent to,
\begin{align*}
\alpha e^{\frac{x}{\alpha}-1}\ge |x|.
\end{align*}
Hence, there exists $\beta<0$ so that $\Re(f(x)-f(\alpha))< 0$ for $x < \beta$ and $\Re(f(x)-f(\alpha))> 0$ for $x>\beta$ except at $\alpha$. Similarly, with $u=x+iy$ the inequality $\Re\left(f(u)-f(\alpha)\right)< 0$ is then equivalent to,
\begin{align*}
\alpha e^{\frac{x}{\alpha}-1}< (x^2+y^2)^{\frac{1}{2}}
\end{align*}
and note that $\sup_{\beta \le x \le \alpha}\alpha e^{\frac{x}{\alpha}-1}=\alpha$. We can thus deform the ${\mathsf{C}(\mathfrak{I})}$ contour to a contour $\mathsf{C}_{\mathsf{s}}$ that is equal to a rectangle with sides parallel to the real and imaginary axes so that the two sides that are parallel to the imaginary axis have real parts $r_1=\alpha$ and $r_2<\beta$ and the two sides that are parallel to the real axis have imaginary parts $im_1>\alpha$ and $im_2<-\alpha$. Then, on this contour we have $\Re\left(f(u)-f(\alpha)\right)< 0$ except at $\alpha$, where it vanishes. If $\alpha \le I^+$ in the course of this deformation we also pick the residue at $u=x$ which gives the single integral,
\begin{align*}
-\int_{I^-}^{I^+}\textbf{1}(x\ge \alpha)\bar{\mathcal{P}}_i(x) x^{n_2-m_2}\tilde{\mathcal{P}_j}(x)d\mathfrak{m}(x).
\end{align*}
Thus, for $\alpha > I^+$ the kernel $\mathcal{K}^{\psi_{t(N)}}$ converges to a triangular matrix whose diagonal entries are $1$. This corresponds to the frozen or fully packed region; the particles at high levels haven't had time to move yet since $\eta>\tau I^+$. On the other hand, for $\alpha \le I^+$, in the scaling regime considered here, $\mathcal{K}^{\psi_{t(N)}}$  converges to a kernel $\mathfrak{K}_{\alpha}$ with entries,
\begin{align}\label{discreteensemblekernel}
\mathfrak{K}_{\alpha}\left(((n_1,n_2),i),((m_1,m_2),j)\right)=\int_{I^-}^{I^+}\left[-\textbf{1}(x\ge \alpha)+\textbf{1}\left((n_1,n_2)\ge (m_1,m_2)\right)\right]\bar{\mathcal{P}}_i(x) x^{n_2-m_2}\tilde{\mathcal{P}_j}(x)d\mathfrak{m}(x).
\end{align}

\end{proof}

\begin{rmk}\label{DiscreteEnsemblesRemark}[Multilevel extension of discrete ensembles]
As already mentioned in the introduction Borodin and Olshanski in Section 3 of \cite{BorodinOlshanskiAsep} introduced the so called discrete determinantal ensembles associated to continuous orthogonal polynomials.

Their definition goes as follows: suppose $\mathcal{W}(dx)$ is a weight on $\mathbb{R}$ for which the moment problem is determinate (see \cite{BorodinOlshanskiAsep} for the precise statements). Let $P_k^*(x)$ be the $k^{th}$ orthonormal polynomial with respect to this weight with positive leading coefficient. The discrete ensemble associated to the weight $\mathcal{W}(dx)$ (or equivalently to the polynomials $P_k^*(x)$) is the determinantal point process with the following kernel $\mathsf{K}^{\mathcal{W}}_{r}\left(i,j\right)$:
\begin{align*}
\mathsf{K}^{\mathcal{W}}_{r}\left(i,j\right)=\int_{r}^{\infty}P_i^*(x)P_j^*(x)\mathcal{W}(dx).
\end{align*}

It is easy to see that if restricted to single levels $\mathfrak{K}_{\alpha}\left(\left(\left(n,n+1\right),i),\left(n,n+1\right),j)\right)\right)$ gives rise to the determinantal ensemble with kernel $\mathsf{K}_{\alpha}^{\mathfrak{w}}(i,j)$ and also $\mathfrak{K}_{\alpha}\left(\left(\left(n,n\right),i),\left(n,n\right),j)\right)\right)$ gives rise to the ensemble governed by the kernel $\mathsf{K}_{\alpha}^{\hat{\mathfrak{w}}}(i,j)$; since conjugation by a function does not alter the correlation functions and thus the determinantal measure. 

Thus, $\mathfrak{K}_{\alpha}\left(((n_1,n_2),i),(m_1,m_2),j)\right)$ provide a novel multilevel determinantal extension of these discrete ensembles, so that particles on consecutive levels interlace (by construction). Moreover, in this generality, it is the first time that these ensembles appear in a concrete interacting particle system.
\end{rmk}

\section{Appendix}

\subsection{Technical results}

\begin{proof}[Proof of Lemma \ref*{ConjugacyLemma}]
We will show that for $x,y \in \mathbb{Z}$ and $t\ge 0$,
\begin{align*}
p_t(x,y)=-\bar{\nabla}_y \sum_{w=x}^{\infty}\hat{p}_t(y,w),
\end{align*}
from which the statement of Lemma \ref*{ConjugacyLemma} follows. It will be more convenient to write this equality in matrix form. Define the doubly infinite matrices $U,V$ as follows,
\begin{align*}
A_{ij}=\begin{cases}
1 \ \ j \ge i \\
0 \ \ \textnormal{otherwise}
\end{cases}, \ \ \ \ B_{ij}=\begin{cases}
1 \ \ & i = j \\
-1 \ \ & j=i+1\\
0 \ \ & \textnormal{otherwise}
\end{cases}.
\end{align*}
Observe that, $AB=BA=Id$ and moreover and this is the key relation, $B\mathcal{D}=\hat{\mathcal{D}}^{\textnormal{T}}B$ where $\hat{\mathcal{D}}^{\textnormal{T}}$ denotes the transpose of $\hat{\mathcal{D}}$. Then, with this notation in place we want to show,
\begin{align*}
P(t)=A\hat{P}^{\textnormal{T}}(t)B\overset{\textnormal{def}}{=}P_*(t),  \textnormal{ for } t \ge 0.
\end{align*}
First note that $P_*(0)=Id$ and moreover,where in the first equality we interchange $\frac{d}{dt}$ and an infinite sum which will be justified below, and in the second we use the backwards equation, for $t>0$,
\begin{align*}
\frac{d}{dt}P_*(t)&=A\left(\frac{d}{dt}\hat{P}(t)\right)^{\textnormal{T}} B\\
&=A\left(\hat{\mathcal{D}}\hat{P}(t)\right)^{\textnormal{T}} B\\
&=A\hat{P}^{\textnormal{T}}(t)\hat{\mathcal{D}}^TB\\
&=A\hat{P}^{\textnormal{T}}(t)B\mathcal{D}=P_*(t)\mathcal{D}.
\end{align*}
Finally, note that $-\bar{\nabla}_y \sum_{w=x}^{\infty}\hat{p}_t(y,w) \ge 0$ and $\sum_{y \in \mathbb{Z}}^{}-\bar{\nabla}_y \sum_{w=x}^{\infty}\hat{p}_t(y,w) =1$. Hence, by uniqueness of solutions to the forwards equation we obtain that for $t\ge 0$,  $P_{*}(t)=P(t)$. Now, in order to justify the interchange of summation and differentiation it suffices to show that the series,
\begin{align*}
\sum_{w=x}^{\infty}\frac{d}{dt}\hat{p}_t(y,w)
\end{align*}
converges uniformly on compact intervals of $t$, where $x,y \in \mathbb{Z}$ are fixed. First, note that for $n\ge 1$ we have,
\begin{align}\label{sumderivativesrelation}
\sum_{w=x}^{x+n}\frac{d}{dt}\hat{p}_t(y,w)=\hat{\lambda}(y)\sum_{w=x}^{x+n}\hat{p}_t(y-1,w)-\left(\hat{\lambda}(y)+\hat{\mu}(y)\right)\sum_{w=x}^{x+n}\hat{p}_t(y,w)+\hat{\mu}(y)\sum_{w=x}^{x+n}\hat{p}_t(y+1,w).
\end{align}
Hence, $\sum_{w=x}^{x+n}\frac{d}{dt}\hat{p}_t(y,w)$ converges on $0\le t <\infty$ and moreover, has uniformly bounded partial sums. More specifically,
\begin{align*}
\sum_{w=x}^{x+n}\left|\frac{d}{dt}\hat{p}_t(y,w)\right|\le 2 \left(\hat{\lambda}(y)+\hat{\mu}(y)\right)\ , \ \forall t \ge 0, \forall n \ge 1.
\end{align*}
Thus, the partial sums of,
\begin{align*}
\sum_{w=x}^{\infty}\hat{p}_t(y,w)
\end{align*}
are uniformly bounded and equicontinuous, which can be seen as follows. If we define, for fixed $x,y \in \mathbb{Z}$, $f_n(t)=\sum_{w=x}^{x+n}\hat{p}_t(y,w)$ we obviously have $|f_n(t)| \le 1, \forall t \ge 0$ and $ n \ge 1$. Moreover, for $s\le t$ in $[0,T]$ we have by the Mean Value Theorem, for some $u \in (s,t)$,
\begin{align*}
f_n(t)-f_n(s)=(t-s)\frac{d}{du}f_n(u)
\end{align*}
and hence,
\begin{align*}
|f_n(t)-f_n(s)| \le \left|\sum_{w=x}^{x+n}\frac{d}{du}\hat{p}_u(x,y)\right| &\le |t-s| \underset{u \in [0,T]}{\sup}\sum_{w=x}^{x+n}\left|\frac{d}{du}\hat{p}_u(y,w)\right|\\
&\le 2 \left(\hat{\lambda}(y)+\hat{\mu}(y)\right) |t-s| \ , \ \forall n \ge 1.
\end{align*}
So, by the Arzela Ascoli Theorem we obtain that the series $\sum_{w=x}^{\infty}\hat{p}_t(y,w)$ converges uniformly on every finite interval in $t$ and hence by equality (\ref*{sumderivativesrelation}) the series $\sum_{w=x}^{\infty}\frac{d}{dt}\hat{p}_t(y,w)$ does so as well. By iterating the same argument, we also see that this holds for $\sum_{w=x}^{\infty}\frac{d^k}{dt^k}\hat{p}_t(y,w)$ for any $k\ge 1$.
\end{proof}

\begin{proof}[Proof of Proposition \ref*{Pathwise}]
The result is implied from the following two claims, for $s\le t$, $x,x',x'',w \in I$:

\textbf{1}. If $F_{s,t}(w)= x' \le x$ then $G_{s,t}(x) \ge w$.

\textbf{2}. If $F_{s,t}(w)= x'' > x$ then $G_{s,t}(x) < w$.

To show the first one, observe that without loss of generality we can assume that $F_{s,t}(w)=x$. Then, \textit{attempt} to follow the original/forwards path starting from $w$ at time $s$ and that ends at $x$ at time $t$ backwards in time, using only the $\color{red}{red}$ arrows, until the first time this is no longer possible. This happens iff the original/forwards path/chain came up using an up $\uparrow$ arrow or the chain running backwards encounters a $\color{red}{red}$ up $\color{red}{\uparrow}$ arrow. The claim then follows, since the backwards path always stays above the original/forwards path.

To show the second one, note that without loss of generality we can assume that $F_{s,t}(w)=x+1$. Consider the last instance (if they never meet the claim is trivial) $\tau<t$ the forwards path starting from $w$ at time $s$ and moving according to the original arrows and the backwards path starting from $x$ at time $t$ and using the $\color{red}{red}$ arrows are together. This is equivalently, the first instance (cf. right continuity) they meet, with time running backwards from $t$. This can only happen if the forwards path encounters an up $\uparrow$ arrow which means the backwards path encountered a down $\color{red}{red \downarrow}$ arrow, which gives a contradiction. This is since the paths would split at $\tau$, with time running backwards in such cases.
\end{proof}

\subsection{Projective chains from branching of functions}
Suppose we are given $\forall n \in \mathbb{N}$, indexing sets $I_n \subset \mathbb{Z}^n$, Polish spaces $\mathcal{X}^n=\mathcal{X}\times \cdots \times \mathcal{X}$, a distinguished point $\bar{u}\in \mathcal{X}$, Borel measures $w_n$ on $\mathcal{X}^n$ and finally families of functions $\{F_n\left(x;u_1,\cdots,u_n\right)\}_{x\in I_n}$ orthogonal in $L^2\left(\mathcal{X}^n,w_n\right)$ normalized so that $F_n\left(x;\bar{u},\cdots,\bar{u}\right)=1$, $\forall n \in \mathbb{N}, x \in I_n$. Consider the convex set, denoted by $\mathcal{Y}_n$, consisting of functions $\mathcal{F}_n$ such that the following series converges uniformly in $ \mathcal{X}^n$ (this can be relaxed) and in $L^2\left(\mathcal{X}^n,w_n\right)$,
\begin{align}
\mathcal{F}^{M_n}_n(u_1,\cdots,u_n)=\sum_{x\in I_n}^{}M_n(x)F_n\left(x;u_1,\cdots,u_n\right),
\end{align}
where,
\begin{align}
M_n(x)\ge 0 \ , \ \forall x \in I_n \ \textnormal{and} \ \sum_{x\in I_n}^{}M_n(x)=1.
\end{align}
Note that, by the orthogonality of the $\{F_n\left(x;\cdot\right)\}_{x\in I_n}$ we obtain that the $\{M_n(x)\}_{x\in I_n}$ are determined uniquely by the $\mathcal{F}_n(\cdot)$ as follows,
\begin{align}
M_n(x)=\frac{\langle\mathcal{F}_n\left(\cdot\right),F_n\left(x;\cdot\right)\rangle_{w_n}}{\langle F_n\left(x;\cdot\right),F_n\left(x;\cdot\right)\rangle_{w_n}}.
\end{align}
Now, further assume that,
\begin{align}\label{generatingbranching}
F_n(x;u_1,\cdots,u_{n-1},\bar{u})=\sum_{y\in I_{n-1}}^{}\Lambda^n_{n-1}(x,y)F_{n-1}\left(y;u_1,\cdots,u_{n-1}\right),
\end{align}
for some Markov kernels, $\Lambda^n_{n-1}$ from $I_n$ to $I_{n-1}$ i.e.
\begin{align*}
\Lambda^n_{n-1}(x,y)\ge 0 \ , \ \forall x \in I_n, y \in I_{n-1} \ \textnormal{and (necessarily)} \ \sum_{y\in I_{n-1}}^{}\Lambda^n_{n-1}(x,y)=1.
\end{align*}
Moreover, we assume that for any fixed $x \in I_n$ the measure $\Lambda_{n-1}^n(x,\cdot)$ is supported on \textit{finitely many} $y\in I_{n-1}$. Observe that, this is always the case for branching graphs by definition. In particular, the functions $\{F_n\left(x;\cdot\right)\}_{x\in I_n,n\ge 1}$ generate a projective chain with levels $\{I_n\}_{n\ge 1}$ and Markov links from $I_n$ to $I_{n-1}$ given by $\Lambda_{n-1}^{n}(x,y)$ with $x\in I_n$ and $y\in I_{n-1}$. 
\begin{rmk}\label{RemarkMarkovKernelFromBranchingRule}
In the case of the alternating construction, $I_n=W^n(\mathbb{N})$, $\mathcal{X}=[0,I^+]$ and $\bar{u}=0$. For $\nu \in I_n$ and $u_1,\cdots,u_n \in [0,I^+]$, the functions $F_n(\nu;u_1,\cdots,u_n)$ are given by (cf. (\ref{normalizedKMpoly1})),
\begin{align*}
F_n(\nu;u_1,\cdots,u_n)=\frac{h_{n-1,n}(\nu;u_1,\cdots,u_n)}{h_{n-1,n}(\nu;0,\cdots,0)}=\frac{h_{n-1,n}(\nu;u_1,\cdots,u_n)}{h_{n-1,n}(\nu)}
\end{align*}
and the Markov kernels $\Lambda^{n}_{n-1}(\nu,\kappa)$, for $\nu\in W^{n}$ and $\kappa \in W^{n-1}$, as follows,
\begin{align*}
\Lambda^{n}_{n-1}(\nu,\kappa)=\left(\Lambda_{n-1,n}^{h_{n-1,n-1}}\Lambda_{n-1,n-1}^{h_{n-2,n-1}}\right)(\nu,\kappa).
\end{align*}
\end{rmk}
Moving on to coherent measures, the fact that $M_n\Lambda^n_{n-1}=M_{n-1}$ is equivalent to,
\begin{align} \label{generatingcoherency}
\mathcal{F}^{M_n}_n(u_1,\cdots,u_{n-1},\bar{u})=\sum_{y\in I_{n-1}}^{}M_{n-1}(y)F_
{n-1}(y;u_1,\cdots,u_{n-1}).
\end{align}
This can be seen as follows. If $M_n\Lambda_{n-1}^{n}=M_{n-1}$, we multiply both sides of (\ref{generatingbranching}) by $M_n(x)$ and sum over $x \in I_n$ first (there is only one infinite sum here so we can interchange them without any issues) to arrive at (\ref{generatingcoherency}). On the other hand, if (\ref{generatingcoherency}) holds we can again multiply (\ref{generatingbranching}) by $M_n(x)$ and sum over $x \in I_n$ to obtain using (\ref{generatingcoherency}),
\begin{align*}
\sum_{y\in I_{n-1}}^{}M_{n-1}(y)F_
{n-1}(y;u_1,\cdots,u_{n-1})=\sum_{y\in I_{n-1}}^{}\sum_{x \in I_n}^{}M_n(x)\Lambda^n_{n-1}(x,y)F_{n-1}\left(y;u_1,\cdots,u_{n-1}\right),
\end{align*}
with both series converging uniformly and in $L^{2}\left(\mathcal{X}^{n-1},w_{n-1}\right)$ and by taking the inner product with $F_{n-1}(z; \cdot)$ we get,
\begin{align*}
M_{n-1}(z)= \sum_{x \in I_n}^{}M_n(x) \Lambda_{n-1}^n(x,z).
\end{align*}
Thus (truncated) coherent measures up to level $N$, namely sequences of probability measures $\{M_n\}_{n\le N}$ such that $M_n\Lambda^n_{n-1}=M_{n-1}$ for $n \le N$ are in bijection with sequences $\{\mathcal{F}_n\}_{n\le N}$ such that $\mathcal{F}_n\in \mathcal{Y}_n$ with $\mathcal{F}_n(u_1,\cdots,u_n)=\mathcal{F}_N(u_1,\cdots,u_n,\bar{u},\cdots,\bar{u})$. Thus, if we define $\left(\mathcal{S}\mathcal{F}_n\right)\left(u_1,\cdots,u_{n-1}\right)=\mathcal{F}_n\left(u_1,\cdots,u_{n-1},\bar{u}\right)$ which is an affine map from $\mathcal{Y}_n$ to $\mathcal{Y}_{n-1}$ and consider the projective limit,
\begin{align}
\mathcal{Y}=\underset{\leftarrow}{\lim}\mathcal{Y}_n
\end{align}
consisting of functions $\mathcal{F}_{\infty}$ on the space $\mathcal{X}_0^{\infty}=\left(u_1,u_2,\cdots\right)\in \mathcal{X} \times \mathcal {X} \times \cdots$ (having only finitely many coordinates not equal to $\bar{u}$) such that,
\begin{align}
\mathcal{F}^{\mathcal{F}_{\infty}}_n(u_1,\cdots,u_n)\overset{\textnormal{def}}{=}\mathcal{F}_{\infty}(u_1,\cdots,u_n,\bar{u},\bar{u},\cdots) \in \mathcal{Y}_n  \ , \forall n \in \mathbb{N},
\end{align} 
then studying the extremal coherent measures is equivalent to the study of $\textnormal{Ex}\left(\mathcal{Y}\right)$.

\subsection{Factorization implies extremality}\label{SubsectionFactorizationImpliesExtremality}
We now aim to prove under several assumptions that if $\mathcal{F}_{\infty}$ factorizes then, the corresponding coherent measure is extremal. We will reduce the problem to an application of de Finetti's theorem, following an argument which in this particular setting, as far as we know, originates with Okounkov's and Olshanski's paper \cite{OkounkovOlshanskiJack}. 
 
We assume that, $\forall n \in \mathbb{N}$ and $ x \in I_n$, the functions $F_n\left(x;u_1,\cdots,u_n\right)$ are symmetric polynomials on $[0,I^+]^n$, orthogonal with respect to a weight $w_n$ and $\bar{u}=0$. It will be more convenient to work on the $n$-dimensional torus $\mathbb{T}^n=\{(z_1,\cdots,z_n)\subset \mathbb{C}:|z_i|=1\}$ rather than the cube. We let $\mathfrak{W}$ denote the $\textnormal{BC}_n$ Weyl group namely,
\begin{align*}
\mathfrak{W}=S(n) \ltimes \mathbb{Z}_2^n,
\end{align*}
where the symmetric group $S(n)$ acts by permuting the variables and $\mathbb{Z}_2^n$ acts as follows,
\begin{align*}
f(z_1,\cdots,z_n) \mapsto f(z^{\pm 1}_1,\cdots,z^{\pm 1}_n).
\end{align*}
We will be interested in $\mathfrak{W}$-invariant Laurent polynomials in $n$ variables on $\mathbb{T}^n$. It is a well known fact, that the algebra of $n$-variable $\mathfrak{W}$-invariant Laurent polynomials can be identified with the standard algebra of symmetric polynomials in $n$ variables (see first paragraph of Section 2 of \cite{Veselov} for a discussion). More concretely, under the change of variables,
\begin{align*}
u_i=\frac{I^+}{2}\left(1-\frac{z_i+z_i^{-1}}{2}\right)=\mathfrak{g}(z_i),
\end{align*}
we can map symmetric polynomials on the cube $[0,I^+]^n$ to $\mathfrak{W}$-invariant Laurent polynomials on $\mathbb{T}^n$ and vice versa and note that the distinguished point $\bar{u}=0$ gets mapped to $z=1$. We can thus, consider the corresponding $\mathfrak{W}$-invariant Laurent polynomial to $F_n(x;u_1,\cdots,u_n)$, denoted by $G_n(x;z_1,\cdots,z_n)=F_n(x;\mathfrak{g}(z_1),\cdots,\mathfrak{g}(z_n))$, orthogonal in $L^2\left(\mathbb{T}^n,\tilde{w}_n\right)$ where $\tilde{w}_n$ is obtained by the change of variables formula. Finally, we denote the corresponding convex set $\tilde{\mathcal{Y}}_n$ consisting of functions $\mathcal{G}_n(z_1,\cdots,z_n)=\mathcal{F}_n(\mathfrak{g}(z_1),\cdots,\mathfrak{g}(z_n))$ so that,
\begin{align}\label{series}
\mathcal{G}_n(z_1,\cdots,z_n)&=\sum_{x\in I_n}^{}M_n(x)G_n\left(x;z_1,\cdots,z_n\right),\\
G_n(x;z_1,\cdots,z_{n-1},1)&=\sum_{y\in I_{n-1}}^{}\Lambda^n_{n-1}(x,y)G_{n-1}\left(y;z_1,\cdots,z_{n-1}\right) \nonumber.
\end{align}

We make the following essential (and rather non-trivial to check) \textit{positive definiteness} assumption, namely that $\forall x \in I_n$,
\begin{align*}
G(x;z_1,\cdots,z_n)=\underset{\lambda_1,\cdots,\lambda_n \in \mathbb{Z}}{\sum}\mathfrak{a}(x;\lambda_1,\cdots\lambda_n)z_1^{\lambda_1} \cdots z_n^{\lambda_n} \ , \ \textnormal{with} \  \mathfrak{a}(x;\lambda_1,\cdots\lambda_n)\ge 0 \ ,  \forall{\lambda_1,\cdots,\lambda_n}\in \mathbb{Z}.
\end{align*}
Note that, since $G(x;z_1,\cdots,z_n)=1$ this implies that,
\begin{align*}
\underset{\lambda_1,\cdots,\lambda_n \in \mathbb{Z}}{\sum}\mathfrak{a}(x;\lambda_1,\cdots\lambda_n)=1
\end{align*}
and so by the positivity of the $\mathfrak{a}(x;\lambda_1,\cdots\lambda_n)$ for $(z_1,\cdots,z_n) \in \mathbb{T}^n$, $|G_n(x;z_1,\cdots,z_n)|\le 1$ and in particular the series (\ref{series}) converges uniformly. Thus, $\mathcal{G}_n$ is a continuous,normalized, positive definite, symmetric function on $\mathbb{T}^n$. 

Hence, and this is the key observation, the convex set $\tilde{\mathcal{Y}}_n$ is a subset of the convex set of characteristic functions of measures on $\mathbb{Z}^n$ invariant under the action of $S(n)$. Thus, $\tilde{\mathcal{Y}}=\underset{\leftarrow}{\lim}\tilde{\mathcal{Y}}_n$ the set of functions $\mathcal{G}_{\infty}$ on $(z_1,z_2,\cdots)\in \mathbb{T}_0^{\infty}$ such that,
\begin{align}
\mathcal{G}_n(z_1,\cdots,z_n)\overset{\textnormal{def}}{=}\mathcal{G}_{\infty}(z_1,\cdots,z_n,1,1,\cdots) \in \tilde{\mathcal{Y}}_n  \ , \forall n \in \mathbb{N},
\end{align} 
is a (convex) subset of the convex set $\mathcal{Z}$ of characteristic functions of probability measures on $\mathbb{Z}^{\infty}=\mathbb{Z}\times\mathbb{Z}\times\cdots$, invariant under the action of $S(\infty)$. We have thus arrived at the following result.

\begin{prop}
Under the assumptions above, for $\mathcal{G}_{\infty} \in \tilde{\mathcal{Y}}$ further assume that there exists $\mathcal{G}_1\in\tilde{\mathcal{Y}}_1$ such that $\forall n \ge 1$,
\begin{align}\label{factorization}
\mathcal{G}_{\infty}(z_1,\cdots,z_n,1,1,\cdots)=\prod_{i=1}^{n}\mathcal{G}_1(z_i).
\end{align}
Then, $\mathcal{G}_{\infty}\in \textnormal{Ex}(\tilde{\mathcal{Y}})$.
\end{prop}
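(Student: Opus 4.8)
The strategy is to transfer the extremality question into the world of $S(\infty)$-invariant probability measures on $\mathbb{Z}^{\infty}$, where de Finetti's theorem provides a complete description of the extreme points, and then use the factorization hypothesis (\ref{factorization}) to identify $\mathcal{G}_{\infty}$ with the characteristic function of an i.i.d.\ (hence product, hence extremal) measure. Recall from the discussion preceding the proposition that every $\mathcal{G}_{\infty}\in\tilde{\mathcal{Y}}$ lies in the convex set $\mathcal{Z}$ of characteristic functions of $S(\infty)$-invariant probability measures on $\mathbb{Z}^{\infty}$, via the bijection between continuous normalized positive-definite symmetric functions on $\mathbb{T}^{\infty}_0$ and such measures (Bochner/Herglotz together with Kolmogorov extension). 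The first step is therefore to record this identification cleanly: let $P$ be the $S(\infty)$-invariant probability measure on $\mathbb{Z}^{\infty}$ whose characteristic function is $\mathcal{G}_{\infty}$, i.e. $\mathcal{G}_{\infty}(z_1,z_2,\cdots)=\int_{\mathbb{Z}^{\infty}}\prod_{i}z_i^{\lambda_i}\,dP(\lambda)$, the product being effectively finite on $\mathbb{T}^{\infty}_0$.

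\textbf{Step two: use factorization to pin down $P$.} Setting $z_{n+1}=z_{n+2}=\cdots=1$ in the formula for $\mathcal{G}_{\infty}$ and comparing with (\ref{factorization}) gives, for every $n$,
\begin{align*}
\int_{\mathbb{Z}^{\infty}}\prod_{i=1}^{n}z_i^{\lambda_i}\,dP(\lambda)=\prod_{i=1}^{n}\mathcal{G}_1(z_i)=\prod_{i=1}^{n}\int_{\mathbb{Z}}z^{\mu}\,dp_1(\mu),
\end{align*}
where $p_1$ is the probability measure on $\mathbb{Z}$ with characteristic function $\mathcal{G}_1$ (it exists and is a genuine probability measure since $\mathcal{G}_1\in\tilde{\mathcal{Y}}_1$, which by construction consists of characteristic functions of probability measures on $\mathbb{Z}$). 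By uniqueness of characteristic functions on $\mathbb{T}^n$ for measures on $\mathbb{Z}^n$, the $n$-dimensional marginal of $P$ equals the $n$-fold product $p_1^{\otimes n}$; since this holds for all $n$, Kolmogorov's extension theorem forces $P=p_1^{\otimes\infty}$, the i.i.d.\ measure with one-dimensional marginal $p_1$.

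\textbf{Step three: i.i.d.\ measures are extreme in $\mathcal{Z}$, then descend.} By de Finetti's theorem (in the form: the $S(\infty)$-invariant, equivalently exchangeable, probability measures on $\mathbb{Z}^{\infty}$ are exactly the mixtures $\int p^{\otimes\infty}\,d\nu(p)$ over $\mathcal{M}_p(\mathbb{Z})$, and this representation is unique), the extreme points of the set of exchangeable measures are precisely the i.i.d.\ ones $p^{\otimes\infty}$. Hence $P=p_1^{\otimes\infty}$ is extreme in the set of $S(\infty)$-invariant measures on $\mathbb{Z}^{\infty}$, so $\mathcal{G}_{\infty}$ is extreme in $\mathcal{Z}$. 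Finally, since $\tilde{\mathcal{Y}}$ is a convex \emph{subset} of $\mathcal{Z}$ that contains $\mathcal{G}_{\infty}$, any convex decomposition $\mathcal{G}_{\infty}=\tfrac12(\mathcal{G}'+\mathcal{G}'')$ with $\mathcal{G}',\mathcal{G}''\in\tilde{\mathcal{Y}}\subset\mathcal{Z}$ is a fortiori a decomposition inside $\mathcal{Z}$, which forces $\mathcal{G}'=\mathcal{G}''=\mathcal{G}_{\infty}$; thus $\mathcal{G}_{\infty}\in\textnormal{Ex}(\tilde{\mathcal{Y}})$.

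\textbf{Main obstacle.} The routine parts are the characteristic-function bookkeeping and the descent from $\mathcal{Z}$ to $\tilde{\mathcal{Y}}$. The one genuinely delicate point is making the correspondence "$\mathcal{G}_{\infty}\leftrightarrow$ measure $P$ on $\mathbb{Z}^{\infty}$" rigorous and compatible with marginals: one must check that the pointwise-convergent, uniformly bounded series representation of each $\mathcal{G}_n$ (guaranteed by the positive-definiteness assumption, which yields $|G_n(x;\cdot)|\le 1$ and nonnegative Fourier coefficients) indeed assembles into a consistent family of probability measures on the finite-dimensional tori's dual lattices, so that Kolmogorov extension applies; and that restricting $z_{n+1}=\cdots=1$ on the function side corresponds exactly to taking marginals on the measure side. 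Once this dictionary is in place, de Finetti does the rest, and the factorization hypothesis is exactly what is needed to land on an extreme point rather than a nontrivial mixture.
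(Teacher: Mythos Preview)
Your proof is correct and follows exactly the same route as the paper: identify $\mathcal{G}_{\infty}$ with the characteristic function of an exchangeable measure on $\mathbb{Z}^{\infty}$, use the factorization hypothesis together with de Finetti's theorem to conclude it is extreme in $\mathcal{Z}$, and then descend to $\tilde{\mathcal{Y}}$ using that $\tilde{\mathcal{Y}}\subset\mathcal{Z}$ is convex. The paper compresses all of this into two sentences; you have simply unpacked the same argument in detail.
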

\begin{proof}
By de Finetti's theorem and the factorization property  (\ref{factorization}) we have $\mathcal{G}_{\infty}\in \textnormal{Ex}(\mathcal{Z})$. Since  $\tilde{\mathcal{Y}}$ is a convex subset of $\mathcal{Z}$ we get $\mathcal{G}_{\infty}\in \textnormal{Ex}(\tilde{\mathcal{Y}})$.
\end{proof}

\begin{rmk}
We have a Markov kernel $\Lambda^{\infty}_{n}:\textnormal{Ex}(\tilde{\mathcal{Y}}) \to I_n$, defined for $\mathcal{G}_{\infty}\in \textnormal{Ex}(\tilde{\mathcal{Y}})$ such that (\ref*{factorization}) holds, that is given as follows,
\begin{align}
\Lambda_n^{\infty}\left(\mathcal{G}_1,x\right)\overset{\textnormal{def}}{=}M^{\mathcal{G}_1}_n(x)\overset{\textnormal{def}}{=}\frac{\langle\prod_{}^{n}\mathcal{G}_1\left(\cdot\right),G_n\left(x;\cdot\right)\rangle_{\tilde{w}_n}}{\langle G_n\left(x;\cdot\right),G_n\left(x;\cdot\right)\rangle_{\tilde{w}_n}}.
\end{align}
\end{rmk}
\begin{rmk}\label{RemarkPositiveDefiniteness}
Note that, the assumptions considered in this section are satisfied in the case of general $\beta$ normalized Jack (see \cite{OkounkovOlshanskiJack}) and Jacobi (see \cite{OkounkovOlshanskiJacobi}) polynomials. Checking the positive definiteness of $G_n(\nu;\cdot)$ corresponding to $F_n(\nu;\cdot)=\frac{h_{n-1,n}(\nu;\cdot)}{h_{n-1,n}(\nu)}$, cf. (\ref{normalizedKMpoly1}) which would imply the extremality of $M_n=\mathcal{M}_{n-1,n}^{\psi}$ for $\psi(x)=p(x)e^{-tx}$ where $p(x)$ is an arbitrary polynomial is in general non-trivial.
\end{rmk}

\bigskip
\noindent
{\sc Mathematics Institute, University of Warwick, Coventry CV4 7AL, U.K.}\newline
\href{mailto:T.Assiotis@warwick.ac.uk}{\small T.Assiotis@warwick.ac.uk}

\end{document}